
\documentclass[reqno]{amsart}

\setcounter{tocdepth}{2}
\usepackage{amsthm}   
\usepackage{amsfonts, amsmath, amscd}
\usepackage{amssymb, latexsym}
\usepackage[all,cmtip]{xy}     
\usepackage{enumerate} 
\usepackage{color}
\usepackage[usenames,dvipsnames]{xcolor}
\usepackage{verbatim}
\usepackage{xcolor}
 
\usepackage{hyperref}
 \hypersetup{colorlinks=false}
 \usepackage{cite}
 \usepackage{ amssymb }

 \usepackage{graphicx, psfrag}
 \usepackage{pstool}

    \usepackage{xspace}   
    \usepackage{tikz-cd}
\usepackage{tikz}
\usetikzlibrary{arrows}

\usepackage[titletoc]{appendix}
\usepackage[normalem]{ulem}     
\usepackage{amsthm}
\usepackage{amsmath}
\usepackage{amscd}
\usepackage[latin2]{inputenc}
\usepackage{t1enc}
\usepackage[mathscr]{eucal}
\usepackage{indentfirst}
\usepackage{graphicx}
\usepackage{graphics}
\usepackage{pict2e}
\usepackage{epic}

\usepackage{amssymb}
\usepackage{amsmath}
\usepackage{amsfonts}
\usepackage{amscd}
\usepackage{amsthm}
\usepackage{graphicx,psfrag}
\usepackage{wrapfig}
\usepackage{subeqnarray}
\usepackage{amssymb}
\usepackage{yfonts}
\usepackage[makeroom]{cancel}
\usepackage{trimclip}
\usepackage{cancel}
\newif\iflclip
\newif\ifbclip
\newif\ifrclip
\newif\iftclip
\def\CLIP{\dimexpr\fboxrule+.2pt\relax}
\def\nulclip{0pt}
\newcommand\partbox[2]{%
  \lclipfalse\bclipfalse\rclipfalse\tclipfalse%
  \let\lkern\relax\let\rkern\relax%
  \let\lclip\nulclip\let\bclip\nulclip\let\rclip\nulclip\let\tclip\nulclip%
  \parseclip#1\relax\relax%
  \iflclip\def\lkern{\kern\CLIP}\def\lclip{\CLIP}\fi
  \ifbclip\def\bclip{\CLIP}\fi
  \ifrclip\def\rkern{\kern\CLIP}\def\rclip{\CLIP}\fi
  \iftclip\def\tclip{\CLIP}\fi
  \lkern\clipbox{\lclip{} \bclip{} \rclip{} \tclip}{\fbox{#2}}\rkern%
}
\def\parseclip#1#2\relax{%
  \ifx l#1\lcliptrue\else
  \ifx b#1\bcliptrue\else
  \ifx r#1\rcliptrue\else
  \ifx t#1\tcliptrue\else
  \fi\fi\fi\fi
  \ifx\relax#2\relax\else\parseclip#2\relax\fi
}
\parskip 1ex

\usepackage{slashed} 

\setcounter{tocdepth}{1}

\pagestyle{plain} 
\oddsidemargin .0in
 \evensidemargin .0in
\marginparsep 0pt
 \topmargin -0.2in
 \marginparwidth 0pt 
\textwidth 6.3in 
\textheight 8.6in

\theoremstyle{definition}

\theoremstyle{remark}

\numberwithin{equation}{section}

\usepackage{amsmath}
\usepackage{amssymb}
\usepackage{amsfonts}
\usepackage{amsthm}
\usepackage{mathtools}
\usepackage{graphicx}
\usepackage{colonequals}
\usepackage{booktabs}
\usepackage{cancel}
\usepackage[numbers]{natbib}
\usepackage{hyperref}
\usepackage{fancyhdr}
\usepackage{multirow}
\usepackage{multirow}
\usepackage{braket}
\usepackage{tikz}
\usetikzlibrary{arrows,shapes,decorations.pathmorphing}
\usepackage{tikz-cd}
\usepackage[margin=2.9cm]{geometry}
\usepackage{mathabx}
\usepackage{ esint }
\usepackage{slashed}
\usepackage{mathrsfs}
\usepackage{amssymb}
\usepackage{blkarray}
\usepackage{xcolor}
\usepackage{pinlabel}   
\usepackage{scalerel,stackengine}
\stackMath
\newcommand\reallywidehat[1]{%
\savestack{\tmpbox}{\stretchto{%
  \scaleto{%
    \scalerel*[\widthof{\ensuremath{#1}}]{\kern-.6pt\bigwedge\kern-.6pt}%
    {\rule[-\textheight/2]{1ex}{\textheight}}
  }{\textheight}%
}{0.5ex}}%
\stackon[1pt]{#1}{\tmpbox}%
}
\parskip 0ex




\newcommand{\R}{\mathbb{R}}

\newcommand{\Z}{\mathbb{Z}}

\newcommand{\N}{\mathbb{N}}

\newcommand{\C}{\mathbb{C}}

\newcommand{\br}{\langle}
\newcommand{\kt}{\rangle}

\usepackage{graphicx}
\newcommand{\intprod}{\mathbin{\raisebox{\depth}{\scalebox{1}[-1]{$\lnot$}}}}

\theoremstyle{definition}
\newtheorem{thm}{Theorem}[section]
\newtheorem{prop}[thm]{Proposition}
\newtheorem{lm}[thm]{Lemma}
\newtheorem{defn}[thm]{Definition}
\newtheorem{rem}[thm]{Remark}
\newtheorem{eg}[thm]{Example}
\newtheorem{claim}{Claim}[thm]
\newtheorem{cor}[thm]{Corollary}

\newtheorem{assumption}[thm]{Assumption}

\newtheorem{notation}[thm]{Notation}

\newtheorem*{hyp1*}{Hypothesis (I)}
\newtheorem*{hyp2*}{Hypothesis (II)}
\newtheorem*{hyp3*}{Hypothesis (III)}


\newtheoremstyle{exercise}{}{}{\itshape}{}{\bfseries}{:}{.5em}{\thmname{#1} \thmnumber{#2}\thmnote{(#3)}}
\theoremstyle{exercise}


\usepackage{amsfonts}
\DeclareFontFamily{U}{wncy}{}
\DeclareFontShape{U}{wncy}{m}{n}{<->wncyr10}{}
\DeclareSymbolFont{mcy}{U}{wncy}{m}{n}
\DeclareMathSymbol{\sha}{\mathord}{mcy}{"58}


\newcommand{\xlim}[2]{\underset{#1\to#2}\lim}

\renewcommand{\d}[2]
{\frac{d#1}{d#2}}


\newcommand{\nc}{\newcommand}
\nc {\Prod}{(\underset{I}\ph\, r_I^{n_I})}
\nc {\wlw}{\wedge\ldots\wedge}
\nc {\olo}{\otimes\ldots\otimes}
\nc{\bea}{\begin{eqnarray*}}
\nc{\eea}{\end{eqnarray*}}
\nc {\e}{\varepsilon}
\nc{\de}{\delta}
\nc{\A}{\hat a}
\nc{\Ad}{\hat a^\dag}
\nc{\grad}{\nabla}
\nc{\nlim}{\underset{n\to\infty}\lim}
\nc{\ilim}{\underset{i\to\infty}\lim}
\nc{\isum}{\sum\limits_{i=1}^N}
\nc{\xx}{\underset{x\to x_0}\lim}
\nc{\Bnrm}{\Big |\Big|}
\nc{\sym}{\text{Sym}}
\nc{\inte}{\overset{\circ}}
\nc{\del}{\partial}
\nc{\plp}{+\ldots+}
\nc{\lre}{\longrightarrow}
\nc{\be}{\begin{equation}}
\nc{\ee}{\end{equation}}
\nc{\CP}{\mathbb{CP}}
\nc{\End}{\text{End}}
\nc{\mf}{\mathfrak}
\nc{\Hom}{\text{Hom}}
\nc{\spec}{\text{Spec}}
\nc{\sub}{\subseteq}
\nc{\weakto}{\rightharpoonup}
\nc{\ph}{\varphi}
\nc{\leqc}{\lesssim}
\nc{\delbar}{\overline \del}
\nc{\Tr}{\text{Tr}}
\nc{\Lhe}{\mathcal L_{(\Phi^{h_\e}, A^{h_\e})}}

\author{Gregory J. Parker}
\address{Department of Mathematics, Stanford University}
\email{gjparker@stanford.edu}
\begin{document}


\title{Deformations of $\Z_2$-Harmonic Spinors on 3-Manifolds}

\maketitle

\begin{abstract} 

A $\Z_2$-harmonic spinor on a 3-manifold $Y$ is a solution of the Dirac equation on a bundle that is twisted around a submanifold $\mathcal Z$ of codimension 2 called the singular set.  This article investigates the local structure of the universal moduli space of $\Z_2$-harmonic spinors over the space of parameters $(g,B)$ consisting of a metric and perturbation to the spin connection. The main result states that near a $\Z_2$-harmonic spinor with $\mathcal Z$ smooth, the universal moduli space projects to a codimension 1 submanifold in the space of parameters.  The analysis is complicated by the presence of an infinite-dimensional obstruction bundle and a loss of regularity in the first variation of the Dirac operator with respect to deformations of the singular set $\mathcal Z$, necessitating the use of the Nash-Moser Implicit Function Theorem.

\end{abstract}

\tableofcontents

\section{Introduction}

The notion of a $\Z_2$-harmonic spinor was introduced by C. Taubes to describe the limits of renormalized sequences of solutions to generalized Seiberg-Witten equations. $\Z_2$-harmonic spinors are also the simplest type of Fueter section, and are therefore of interest in the study of gauge theories and enumerative theories on manifolds with special holonomy. Beyond their appearance in these theories, $\Z_2$-harmonic spinors are intrinsic objects on low-dimensional manifolds and can be studied independently.

This article investigates the local structure of the universal moduli space of $\Z_2$-harmonic spinors over the space of parameters on a compact 3-manifold. The main result states that this universal moduli space  locally projects to a codimension 1 submanifold, i.e. a ``wall'',  in the space of parameters. This provides a key step toward confirming expectations that $\Z_2$-harmonic spinors should enter into the above theories via wall-crossing formulas. Results in this direction have also been obtained by R. Takahashi using different techniques \cite{RyosukeThesis}. The present work grew out of attempts to develop a more robust analytic framework for these results, with an eye towards applications to gluing problems \cite{PartIII} and other deformation problems. As observed by S. Donaldson \cite{DonaldsonMultivalued}, the same analytic issues arise in many distinct geometric contexts, many of which remain unexplored \cite{SiqiSLag}.    

\subsection{Main Results}
\label{section1.1}

 Let $(Y,g)$ be a closed, oriented, Riemannian 3-manifold, and fix a spin structure with spinor bundle $S\to Y$. Given a smooth, closed submanifold $\mathcal Z\subset Y$ of codimension 2, choose a real line bundle $\ell\to Y\setminus\mathcal Z$. The spinor bundle $S\otimes_\R \ell$ carries a Dirac operator denoted $\slashed D_{\mathcal Z}$ formed from the spin connection and the unique flat connection on $\ell$ with holonomy in $\Z_2$. 
 
 A {\bf $\Z_2$-harmonic spinor} is a solution  $\Phi \in \Gamma(S\otimes_\R \ell)$ of the twisted Dirac equation on $Y\setminus \mathcal Z$ satisfying \be  \slashed D_{\mathcal Z}\Phi=0 \hspace{1.5cm} \text{and}\hspace{1.5cm}\nabla \Phi \in L^2. \label{Z2prelimdef}\ee

\noindent  The submanifold $\mathcal Z$ is called the {\bf singular set}. The latter requirement implies (non-trivially) that $|\Phi|$ extends continuously to the closed manifold $Y$ with $\mathcal Z\subseteq |\Phi|^{-1}(0)$. The existence and abundance of $\Z_2$-harmonic spinors with $\mathcal Z\neq \emptyset$ on closed 3-manifolds was established by Doan--Walpuski in \cite{DWExistence} and strengthened in \cite{ConnectSumSiqi}.

In addition to the submanifold $\mathcal Z$, the Dirac operator relies on a background choice of a Riemannian metric $g$ on $Y$ and possibly a perturbation $B$ to the spin-connection. Let $\mathcal P=\{(g,B)\}$ denote the parameter space of possible smooth choices. Given a pair $(g_0,B_0)$ and a $\Z_2$-harmonic spinor $(\mathcal Z_0, \ell_0, \Phi_0)$ with respect to this pair, the goal of the present work is to study the local deformation problem, i.e. to describe the structure of the set of nearby pairs $(g,B)\in \mathcal P$ for which there exists a $\Z_2$-harmonic spinor.

This problem cannot be addressed with the standard elliptic theory used for classical harmonic spinors \cite{HitchinHarmonicSpinors,LawsonSpinGeometry}. Indeed, if $\ell$ has a non-trivial twist around $\mathcal Z_0$, the  Dirac operator $\slashed D_{\mathcal Z_0}$ degenerates along the singular set $\mathcal Z_0$ and fails to be uniformly elliptic. Instead, it is an {\bf elliptic edge operator} -- a class of operators well-studied in microlocal analysis \cite{MazzeoEdgeOperators, MazzeoEdgeOperatorsII, Melrosebcalculus}. For such operators elliptic regularity fails, nor must the extension to Sobolev spaces necessarily be Fredholm. In particular, for natural function spaces where the integrability condition in (\refeq{Z2prelimdef}) holds, $\slashed D_{\mathcal Z_0}$ possesses an infinite-dimensional cokernel. As a result, the problem of deforming a solution to a one for a nearby parameter cannot be addressed in a straightforward way by an application of the implicit function theorem. The following key idea, first described by Takahshi in \cite{RyosukeThesis}, addresses this issue: {\it the infinite-dimensional obstruction is cancelled by deformations of the singular set $\mathcal Z$.}

Since the Dirac equation $\slashed D_\mathcal Z$ depends on $\mathcal Z$, but $\mathcal Z$ is in turn determined by the vanishing of the norm $|\Phi|$ of a spinor solving (\refeq{Z2prelimdef}), the singular set and the spinor are coupled and must be solved for simultaneously. The problem thus has a similar character to a free-boundary problem, where the domain and solution must be found concurrently, though the ``boundary'' here has codimension 2. In particular, this analysis requires an understanding  of the derivative of the Dirac operator with respect to deformations of the singular set $\mathcal Z$.

Upgrading the singular set $\mathcal Z$ to a variable, define the {\bf universal Dirac operator} to be the operator acting on pairs $(\mathcal Z,\Phi)$ of a singular set and spinor with reference to a background parameter $p\in \mathcal P$ by 
$$\slashed {\mathbb D}_p(\mathcal Z,\Phi):=\slashed D_{\mathcal Z}\Phi$$ 
\noindent where the choice of parameter $p=(g,B)$ is implicit on the right-hand side. 
\begin{defn}\label{modulispacedef}
Given a parameter pair $p=(g,B) \in \mathcal P$ the {\bf moduli space of smooth $\Z_2$-harmonic spinors} is the space \be \mathscr M_{\Z_2}(p):=\left\{(\mathcal Z, \ell , \Phi) \ \Big | \  \slashed{\mathbb D}_p(\mathcal Z,\Phi)=0 \  \ ,   \  \|\Phi\|_{L^2}=1\ \right\}\Big / \Z_2\label{modulidef}\ee 
\noindent where lines bundles $\ell$ are considered up to topological isomorphism. The {\bf universal moduli space of smooth $\Z_2$-harmonic spinors} is the union \be \widehat{\mathscr M}_{\Z_2}:= \bigcup_{p\in \mathcal P} \mathscr M_{\Z_2}(p).\label{universalmoduli}\ee
\end{defn}

\noindent Because $\slashed D_\mathcal Z$ is $\R$-linear and $\Z_2$ acts by $\Phi\mapsto -\Phi$, the moduli space $\mathscr M_{\Z_2}(p)$ at $p\in \mathcal P$ is a real projective space for each fixed pair $(\mathcal Z,\ell)$.

\begin{rem}\label{regularityrem}
It is expected that there exist $\Z_2$-harmonic spinors where $\mathcal Z$ is not smooth, even when the parameter $p=(g,B)$ is. Results of Taubes and Zhang show that, in general, $\mathcal Z$ must be a closed, rectifiable, subset of (Hausdorff) codimension 2 \cite{ZhangRectifiability,TaubesZeroLoci}. Definition \ref{modulispacedef} could be revised to define a larger moduli space
\be\widehat{\mathscr M}_{\Z_2} \subseteq \widehat{\mathscr M}_{\Z_2}^\text{rec}\label{rectifiablemoduli}\ee

\noindent requiring only this weaker degree of regularity of $\mathcal Z$. Taubes has conjectured \cite[pg. 9]{Taubes3dSL2C} that the singular set is a smooth submanifold of codimension 2 for generic $p$; more generally it is expected that it has the structure of an embedded graph except possibly on a set of parameters of infinite codimension. The results of \cite{TaubesWu,MazzeoHaydysTakahashi, SiqiStudentJDG} support this picture. This article considers only the case that $\mathcal Z$ is smooth (although Banach manifolds of finite regularity curves are used along the way). 
\end{rem}

\bigskip 

We now state the main results. The first result, Theorem \ref{maina} describes the linearized deformation theory near a $\Z_2$-harmonic spinor; the next result, Theorem \ref{mainb}, addresses the non-linear version. Throughout, we fix a central parameter $p_0=(g_0,B_0)$ such that there exists a $\Z_2$-harmonic spinor $(\mathcal Z_0, \ell_0, \Phi_0)$ with respect to $p_0$ meeting the following requirements. 

\medskip 

\begin{defn} \label{regulardef}A $\Z_2$-harmonic spinor $(\mathcal{Z}_0, \ell_0, \Phi_0)$ with respect to a parameter pair $p_0=(g_0, B_0)$ is said to be {\bf regular} if the following three conditions hold: 
\begin{enumerate}
\item[(i)] { {(\underline{{{Smooth}}})}} the singular set $\mathcal Z_0\subset Y$ is a smooth, embedded link, and $\ell_0$ restricts to the m\"obius bundle on every sufficiently small disk normal to $\mathcal Z_0$.
\smallskip 

\item[(ii)] { ({\underline{{Isolated}})}} $\Phi_0$ is the unique $\Z_2$-harmonic spinor for the pair $(\mathcal Z_0, A_0)$ with respect to $p_0=(g_0,B_0)$ up to normalization and sign.
\smallskip 

\item[(iii)]{({\underline{{Non-degenerate}})}} $\Phi_0$ has non-vanishing leading-order, i.e. there is a constant $c>0$ such that $$|\Phi_0| \geq c\cdot \text{dist}(-,\mathcal Z_0)^{1/2},$$

\noindent holds on a tubular neighborhood of $\mathcal Z_0$. 
\end{enumerate}


\end{defn}
\smallskip

When $\mathcal Z_0$ is smooth,  
 \be \slashed D_{\mathcal Z_0}:H^1(S\otimes_\R \ell )\to L^2(S\otimes_\R \ell ) \label{fakeL^12}\ee  
 
 \noindent has closed range and infinite-dimensional cokernel, where $H^1$ is the Sobolev space of sections whose  covariant derivative is $L^2$. Let $\Pi_0$ denote the $L^2$-orthogonal projection to the orthogonal complement of the range, which is naturally isomorphic to the cokernel. The linearized deformation theory is described by the following theorem, which gives a precise manifestation of the key idea explained above: 

\bigskip 
\begin{thm}\label{maina} Let $\text{d}_{(\mathcal Z_0, \Phi_0)}\slashed {\mathbb D}$ denote the linearization of the universal Dirac operator at a regular $\Z_2$-harmonic spinor $(\mathcal Z_0,\Phi_0)$. Then the cokernel component of the partial derivative
\be \Pi_0\circ  \text{d}_{(\mathcal Z_0, \Phi_0)}\slashed{\mathbb D}: H^2(\mathcal Z_0; N\mathcal Z_0) \lre \text{coker}(\slashed D_{\mathcal Z_0})\label{mainamap}\ee
\noindent with respect to the singular set is an elliptic pseudo-differential operator, and its Fredholm extension has index $-1$.  
\end{thm} 

\bigskip 

\noindent Here, sections of the normal bundle $N\mathcal Z_0$ is the tangent space to the space of embeddings of $\mathcal Z_0$. In Section \ref{section4}, it is shown that there is an isomorphism $\text{coker}(\slashed D_{\mathcal Z_0})\simeq \Gamma(\mathcal Z_0;\mathcal C_0)$ of the infinite-dimensional cokernel with a space of sections of a vector bundle on $\mathcal Z_0$; composing with this isomorphism, (\refeq{mainamap}) is a map of sections of vector bundles on $\mathcal Z_0$ and the meaning of pseudodifferential operator is the standard one. The order of this pseudodifferential operator depends on the order of chosen isomorphism with $\Gamma(\mathcal Z_0; \mathcal C_0)$, but the image in $\text{coker}(\slashed D_{\mathcal Z_0})$ is independent of this choice (see Remark \ref{obconvention}).

The proof of Theorem \ref{maina} shows that the image of (\refeq{mainamap}) is $\text{coker}(\slashed D_{\mathcal Z_0})\cap H^{3/2}(Y\setminus \mathcal Z_0)$, up to a finite-dimensional space. As a consequence, $\slashed {\mathbb D}$ displays a {\bf loss of regularity}. Here, this loss of regularity manifests as follows: the map (\refeq{mainamap}) has finite-dimensional kernel, and the closure of its range in $L^2$ has finite codimension. The range, however, is not closed since $ \text{coker}(\slashed D_{\mathcal Z_0})\cap H^{3/2}$ is only dense in the $L^2$-norm, thus in particular, the linearization is not surjective. One cannot circumvent this simply by considering singular sets of $3/2$ lower regularity so that the image in $\text{coker}(\slashed D_{\mathcal Z_0})$ is closed, because this makes the $\text{range}(\slashed D_{\mathcal Z_0})$ component and the non-linear terms unbounded. Thus the loss of regularity means one cannot simultaneously arrange that the operator is both bounded and has surjective linearization.

Loss of regularity is an intriguing phenomenon intrinsic to many types of PDE \cite{HamiltonNashMoser, HormanderNashMoser, PsiDONashMoser}. In general, it arises just as above when for every natural function space $\mathcal X$ for the domain, the codomain $\mathcal Y$ of the operator $\slashed {\mathbb D}: \mathcal X\to \mathcal Y$ may be chosen {\it either} so that the non-linear part of $\slashed{\mathbb D}$ is  bounded, in which case the derivative $\text{d}\slashed{\mathbb D}$ does not have closed range, {\it or} it may be chosen so that the derivative is Fredholm, in which case non-linear part is unbounded. Deformation problems for equations displaying a loss of regularity cannot be addressed using the standard Implicit Function Theorem on Banach spaces; instead they usually must invoke a version of the Nash-Moser Implicit Function Theorem on tame Fr\'echet manifolds, denoted in our case by $\mathcal X$ and $\mathcal Y$.   Using the linearized result Theorem \ref{maina} and the Nash-Moser Implicit Function Theorem leads to our main result:

\begin{thm}\label{mainb} 
There exists an open neighborhood $\mathscr U_0$ of the universal moduli space $\widehat{\mathscr M}_{\Z_2}$ centered at $(p_0,(\mathcal Z_0, \ell_0,\Phi_0))$ such that the projection $\pi$ to the parameter space
\begin{center}
\tikzset{node distance=3.0cm, auto}
\begin{tikzpicture}
\node(C){$\mathcal P$};
\node(D)[yshift=-1.8cm][above of=C]{$\widehat {\mathscr M}_{\Z_2}  \subseteq \mathcal P \times \mathcal X $};
\node(E)[right of=D]{$\mathcal Y$};
\draw[<-] (C) to node {$\pi$} (D);
\draw[->] (D) to node {$\slashed{\mathbb D}_p$} (E);
\end{tikzpicture}
\end{center}
restricts to a homeomorphism from $\mathscr U_0$ to $\pi(\mathscr U_0)$, and the image $\pi(\mathscr U_0)$ posseses a Kuranishi chart of virtual codimension 1. 
\end{thm}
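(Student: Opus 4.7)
\textbf{Proof plan for Theorem \ref{mainb}.}

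The plan is to recast the equation $\slashed{\mathbb D}_p(\mathcal Z,\Phi)=0$ as the zero locus of a tame map between tame Fr\'echet manifolds and apply the Nash--Moser Implicit Function Theorem in a parametric Kuranishi form. First, I would set up the tame Fr\'echet function spaces: let $\mathcal X$ consist of smooth pairs $(\mathcal Z,\Phi)$ near $(\mathcal Z_0,\Phi_0)$, with $\mathcal Z$ parameterized locally by smooth sections of $N\mathcal Z_0$ and $\Phi$ a smooth section of $S\otimes_\R\ell$ on $Y\setminus\mathcal Z$ having the leading-order behavior of Assumption~\ref{assumption2}. Grade $\mathcal X$ by Sobolev norms adapted to the edge structure along $\mathcal Z$, and take $\mathcal Y$ similarly. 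Theorem~\ref{maina} then shows $\slashed{\mathbb D}_p:\mathcal X\to\mathcal Y$ is a smooth tame map displaying a loss of regularity of $\tfrac32$: $\tfrac12$ from the pseudodifferential order of \eqref{mainamap}, plus a further loss of $1$ inherent in pulling sections back across a deforming singular set.

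Second, I would identify the structure of the full linearization
\[
d\slashed{\mathbb D}_{p_0}:(\eta,\phi)\longmapsto d_\mathcal Z\slashed{\mathbb D}(\eta)+\slashed D_{\mathcal Z_0}\phi.
\]
Projection by $\Pi_0$ splits $\mathcal Y$ into the closed range of $\slashed D_{\mathcal Z_0}$ and $\text{Coker}(\slashed D_{\mathcal Z_0})$. Assumption~\ref{assumption3} forces $\ker\slashed D_{\mathcal Z_0}$ to be spanned by $\Phi_0$, while Theorem~\ref{maina} identifies the $\Pi_0$-component as elliptic pseudodifferential of order $\tfrac12$ and index $-1$. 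Consequently, $d\slashed{\mathbb D}_{p_0}$ is tame Fredholm of index $-1$ with finite-dimensional kernel $K$ and cokernel $C$ satisfying $\dim C-\dim K=1$. On the complements of $K$ and $C$, I would construct a tame right inverse $Q$ by combining the pseudo-inverse of $\slashed D_{\mathcal Z_0}$ on its range with a parametrix for the operator in \eqref{mainamap} on $\mathcal Z_0$, arranged so that
\[
\|Qu\|_s \leq C_s\,\|u\|_{s+3/2}
\]
uniformly on a tame Fr\'echet neighborhood in $\mathcal X\times\mathcal P$.

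Third, with $Q$ in hand I would invoke the Nash--Moser Implicit Function Theorem with parameters in its Kuranishi form. Splitting $T_{(\mathcal Z_0,\Phi_0)}\mathcal X=K\oplus K^\perp$ and $\mathcal Y=R\oplus C$, the $R$-component $\Pi_R\,\slashed{\mathbb D}_p=0$ is tamely solvable: Nash--Moser produces a smooth tame map $\sigma:\mathcal P\times K\to K^\perp$ with $\sigma(p_0,0)=0$. Substitution yields a smooth finite-dimensional Kuranishi obstruction
\[
\kappa:\mathcal P\times K\lre C,\qquad (p,k)\longmapsto \Pi_C\,\slashed{\mathbb D}_p\bigl(\exp(k+\sigma(p,k))\bigr),
\]
whose zero set, modulo the fiberwise $\Z_2$-action, parameterizes the local universal moduli space $\widetilde{\mathscr M}_{\Z_2}$. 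This is the desired Kuranishi chart of virtual codimension $\dim C-\dim K=1$ on $\pi(\mathscr U_0)\subset\mathcal P$. Uniqueness in Nash--Moser, together with Assumption~\ref{assumption3}, ensures the fiber of $\pi$ contains at most one point after shrinking $\mathscr U_0$, giving the homeomorphism; continuity of $\pi^{-1}$ follows from the tame estimates on $\sigma$, and restriction to any tame submanifold $\mathcal P'\subseteq\mathcal P$ is immediate since the whole construction is parametric.

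The principal obstacle will be the second step: establishing the uniform tame estimates for the parametrix $Q$ as $(\mathcal Z,\Phi,p)$ vary. This demands proving that the edge parametrix of $\slashed D_{\mathcal Z}$, the identification $\text{Coker}(\slashed D_{\mathcal Z})\simeq L^2(\mathcal Z;\mathcal S)$ from Section~\ref{section4}, and the order-$\tfrac12$ pseudodifferential operator of Theorem~\ref{maina} all depend tamely on $(\mathcal Z,\Phi,p)$ in the Nash--Moser sense. Without these uniform tame estimates the iteration will not converge, so neither the Kuranishi chart nor the local homeomorphism can be produced.
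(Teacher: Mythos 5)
Your proposal diverges from the paper in two critical respects. First, an index bookkeeping error that leads to the wrong structure: the full linearization $d\slashed{\mathbb D}_{p_0}$ on $L^{2,2}(\mathcal Z_0;N\mathcal Z_0)\oplus rH^1_e$ has index $0$, not $-1$. Its kernel is $\ker(T_{\Phi_0})\oplus\R\Phi_0$ (the second summand being the scaling direction, one-dimensional by Assumption~\ref{assumption3}) and its cokernel is $\text{coker}(T_{\Phi_0})$, so the total index is $\text{ind}(T_{\Phi_0})+1=0$, consistent with the corollary following Theorem~\ref{mainaprecise}. The codimension-$1$ conclusion only emerges after you quotient by, or normalize away, the $\R^+$-scaling of $\Phi$. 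The paper handles this by running Nash--Moser for the extended operator $\overline{\slashed{\mathbb D}}_p=(\slashed{\mathbb D}_p-\Lambda\,\text{Id},\,1-\|\Phi\|_{L^2}^2)$: the normalization constraint kills $\R\Phi_0$ in the domain and the spectral parameter $\Lambda$ absorbs the residual cokernel direction, yielding an index-$0$ problem solved directly for a smooth family $(\mathcal Z_p,\Phi_p,\Lambda_p)$; Theorem~\ref{mainb} is then read off as $\Lambda^{-1}(0)$. Your Kuranishi-reduction-inside-Nash--Moser is a legitimate alternative in principle, but it requires an additional layer of tame estimates on the partially-solving map $\sigma$ that the paper's $\Lambda$-trick avoids, and without the normalization you cannot even define the reduced $\kappa$ with the correct virtual dimension.

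Second, and more fundamentally, you omit the propagation of polyhomogeneity through the iteration. The tame estimates verifying Hypotheses~{\bf (I)}--{\bf (III)} (particularly Lemma~\ref{hypotheses1check}) do not close on the spaces $rH^{m,1}_{\text{b},e}$ alone: the conormal-regularity argument that produces the $\tfrac32$-gain in the $\text{\bf Ob}$-component genuinely requires pointwise $O(r^{1/2})$ bounds on the spinor coming from its polyhomogeneous expansion (Lemma~\ref{coefficientregularity}), not merely $H^m_\text{b}$-bounds. The paper builds this in via the ``propagated property'' form of the Nash--Moser theorem, augments the Fr\'echet norms with the $(r\partial_r\mp\tfrac12)$-terms to control higher-order expansion coefficients, and explicitly verifies that each iteration step preserves the $\Z^++\tfrac12$ index set and the $\theta$-mode constraints that preclude spurious $\log r$ terms. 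Your plan has no mechanism for this, and without it the tame estimates on your parametrix $Q$ --- which you correctly flag as the bottleneck --- cannot be established. Relatedly, the one choice that makes those estimates work is also absent from your plan: the obstruction bundle must be trivialized by $L^2$-orthogonal projection rather than by asserting continuity of the Section~\ref{section4} basis construction (Proposition~\ref{Obtrivialization}), and the linearization must be split using the pulled-back parameter $p_\eta$ rather than $p_0$; the wrong splitting produces an unbounded off-diagonal block in the decomposition and destroys tameness.
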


\medskip 
\noindent To possess a Kuranishi chart of virtual codimension 1 means that the set is locally modeled by the zero-locus of a smooth map $\kappa: \mathcal P\to \R$ (see e.g. Section 3.3 of \cite{DWDeformations}). In particular, if the map $(\refeq{mainamap})$ has trivial kernel, then $\kappa$ is transverse to 0 and $\pi(\mathscr U_0)$ is a smooth Fr\'echet submanifold of codimension 1. In either case, $\mathscr U_0$ also consists of regular $\Z_2$-harmonic spinors.

More generally, the universal eigenvalue problem has a spectral crossing along $\pi(\mathscr U_0)$:  
\begin{cor}\label{maind}

There is an open neighborhood $\mathscr V_0 \subseteq \mathcal P$ of $p_0$ possessing a Kuranishi chart of virtual codimension 0  such that for $p\in \mathscr V_0$ there exists triples  $(\mathcal Z_p, \Phi_p, \Lambda_p)$ defined implicitly as smooth functions of $p$ satisfying 
\be \slashed D_{\mathcal Z_p}\Phi_p =\Lambda_p \Phi_p \label{eigenvector}\ee
\noindent  for $\Lambda_p \in \R$ and such that $\pi(\mathscr U_0)=\Lambda^{-1}(0)$. 

 \end{cor}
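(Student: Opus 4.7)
The plan is to deduce Corollary \ref{maind} from Theorem \ref{mainb} by applying the same Nash--Moser argument to an augmented universal Dirac operator in which the eigenvalue $\Lambda$ is promoted to a free real parameter. Define
\[ \widetilde{\slashed{\mathbb D}}_p(\mathcal Z, \Phi, \Lambda) \ce \slashed{\mathbb D}_p(\mathcal Z,\Phi)-\Lambda\Phi, \]
viewed as a map $\mathcal X \times \R \to \mathcal Y$. Its zeros with $\|\Phi\|_{L^2}=1$ are precisely the normalized eigenspinors of $\slashed D_{\mathcal Z}$ with real eigenvalue $\Lambda$, and the case $\Lambda=0$ recovers the $\Z_2$-harmonic condition; hence $\pi(\mathscr U_0)\subseteq \Lambda^{-1}(0)$ is automatic, and the reverse inclusion will follow from the uniqueness (homeomorphism) statement of Theorem \ref{mainb}.

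Next I would compute the first variation of $\widetilde{\slashed{\mathbb D}}$ at $(\mathcal Z_0,\Phi_0,0)$. The $(\mathcal Z,\Phi)$-directions reproduce the derivative $\text{d}\slashed{\mathbb D}$ analyzed in Theorem \ref{maina}, while the derivative along the new $\Lambda$-direction is $-\Phi_0$. Because $\slashed D_{\mathcal Z_0}$ is formally self-adjoint on $Y-\mathcal Z_0$ and Assumption \ref{assumption3} singles out $\Phi_0$ as a generator of $\ker \slashed D_{\mathcal Z_0}$, the spinor $\Phi_0$ lies in the $L^2$-orthogonal complement of the range, so $\Pi_0\Phi_0=\Phi_0\neq 0$. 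Consequently, the augmented linearized operator
\[ \Pi_0\circ \text{d}_{(\mathcal Z_0,\Phi_0,0)}\widetilde{\slashed{\mathbb D}}:L^{2,2}(\mathcal Z_0;N\mathcal Z_0)\oplus \R \lre \text{Coker}(\slashed D_{\mathcal Z_0}) \]
differs from (\ref{mainamap}) by adjoining the single independent direction $\Phi_0$ in the target. Whether or not $\Phi_0$ already lies in the image of (\ref{mainamap}), this adjunction shifts the Fredholm index on the spaces of Theorem \ref{maina}(iii) from $-1$ to $0$.

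I would then re-run the Nash--Moser scheme used to prove Theorem \ref{mainb} with $\slashed{\mathbb D}$ replaced by $\widetilde{\slashed{\mathbb D}}$. The additional term $-\Lambda\Phi$ is a smooth bilinear map of order zero in $\Phi$, hence tame on the Fr\'echet spaces in play; the loss-of-regularity structure of the first variation, the construction of tame approximate inverses, and the smoothing-operator estimates are all unaffected. The output is a Kuranishi chart of virtual codimension $0$ for the zero set of $\widetilde{\slashed{\mathbb D}}$ near $(p_0,\mathcal Z_0,\Phi_0,0)$, and the implicit-function stage of the argument produces smooth functions $p\mapsto (\mathcal Z_p,\Phi_p,\Lambda_p)$ solving (\ref{eigenvector}) on a neighborhood $V_0\subseteq \mathcal P$ of $p_0$. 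Identifying $\pi(\mathscr U_0)=\Lambda^{-1}(0)$ then reduces to matching these implicit branches with those of Theorem \ref{mainb} using the uniqueness clause of that theorem.

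The main obstacle I anticipate is not analytic but structural: ensuring that $\Lambda_p$ is genuinely real, rather than drifting into $\C$ as a pair of complex conjugate eigenvalues. This requires checking that every stage of the Nash--Moser iteration respects the $\R$-linearity of $\slashed D_{\mathcal Z}$ on $S\otimes_\R \ell$, i.e. that the smoothing operators, parametrices, and projections $\Pi_0$ can all be chosen real-linear, and that simplicity of the eigenvalue (guaranteed by Assumption \ref{assumption3}) propagates to small perturbations so the eigenvalue cannot leave the real axis. Given the real structure already built into the setup of Theorem \ref{mainb}, this is a bookkeeping exercise rather than a new analytic difficulty.
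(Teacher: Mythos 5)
Your core idea—promote $\Lambda$ to a free variable, note that the derivative in the $\Lambda$-direction is $-\Phi_0$ which lies in the cokernel and shifts the index from $-1$ to $0$, and re-run Nash--Moser—is exactly what the paper does, and your analysis of the index shift and the reality of $\Lambda$ is sound. However, the logical dependency you propose is backwards: the paper proves Corollary \ref{maind} \emph{first}, directly, by applying Theorem \ref{nashmoser} to the augmented operator $\overline{\slashed{\mathbb D}}_p = (\slashed{\mathbb D}_p - \Lambda\,\mathrm{Id},\ 1 - \|\Phi\|^2_{L^2})$, and only then derives Theorem \ref{mainb} as the level set $\pi(\mathscr U_0) = \Lambda^{-1}(0)$. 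Invoking the ``uniqueness clause of Theorem \ref{mainb}'' to close your argument would be circular against the paper's own proof of that theorem; the identification $\pi(\mathscr U_0)=\Lambda^{-1}(0)$ should instead be read off directly from the uniqueness of the Nash--Moser solution to the augmented problem. Note also that the paper builds the normalization $1-\|\Phi\|^2_{L^2}$ into the operator itself: without this extra component, your $\widetilde{\slashed{\mathbb D}}_p$ on $\mathcal X \times \R$ has linearized index $+1$, not $0$, because $\slashed D: rH^1_e\to \mathrm{Range}(\slashed D)$ is surjective with one-dimensional kernel $\R\Phi_0$, and it is the pairing $\langle -, \Phi_0\rangle$ in the target that kills the scaling degree of freedom.

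The more substantive gap is that you say ``the loss-of-regularity structure of the first variation \ldots are all unaffected'' and leave it at that, whereas the paper's proof devotes a nontrivial paragraph to showing that the property $(\mathrm{P})$ of polyhomogeneity (expansions with index set $\Z^++\tfrac12$, no stray logarithm terms at order $r^{1/2}$) is propagated by the iteration scheme in the sense of Definition \ref{propagated}. This is not cosmetic: the invertibility of $\overline T_{p_\eta}$ in Proposition \ref{Invertiblepreliminary} and Lemma \ref{hypotheses1check} rests on the iterate $\varphi$ having leading order exactly $r^{1/2}$, so that the conormal-regularity counting gives $5/2$ on the obstruction component, and so that the weighted $(r\partial_r - \tfrac12)$-terms in the $r\mathcal H^{m,1}$-norm are controlled (Lemma \ref{coefficientregularity}). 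If the iteration produced spinors with worse asymptotics, or with $e^{\pm 3i\theta/2}$ modes at order $r^{-1/2}$ generating $r^{1/2}\log r$ terms, the tame estimates would fail. The paper handles this by inspecting the block structure of $(\mathrm d\overline{\slashed{\mathbb D}}_p)^{-1}$ and using edge regularity (Mazzeo) together with the specific $\theta$-mode restrictions; you would need to supply that argument.
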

 
 \noindent Of course, the triple coincides with $(\mathcal Z_0, \Phi_0, 0)$ at $p_0$. Analogous to Theorem \ref{mainb}, $\mathscr V_0$ consists of regular $\Z_2$-harmonic eigenvectors, and if the map $(\refeq{mainamap})$ has trivial kernel then $\mathscr V_0$ is an open neighborhood of $p_0$ and $\Lambda: \mathscr V_0\to \R$ is transverse to 0. Once again, the conclusion holds replacing $\mathcal P$ by any tame Fr\'echet submanifold $\mathcal P'\subseteq \mathcal P$ such that $\Lambda$ remains transverse.

\begin{rem}\label{smoothnessrem2}
Theorem \ref{mainb} provides additional evidence for Taubes's conjecture (Remark \ref{regularityrem}) that smoothness of the singular set is a generic property. It shows that smoothness is stable in the sense that if $(\mathcal Z_0, \ell_0, \Phi_0)$ is regular, then there exists a neighborhood $\mathscr U_0$ in the universal moduli space (\refeq{universalmoduli}) consisting of $\Z_2$-harmonic spinors whose singular sets are also smooth. Theorem \ref{mainb} does not rule out the possibility that there are also other points nearby $(\mathcal Z_0, \ell_0, \Phi_0)$ in the larger moduli space (\refeq{rectifiablemoduli}). 
\end{rem}

\subsection{Relations to Gauge Theory}
\label{section1.2}

$\Z_2$-harmonic spinors appear as limiting objects into two distinct settings in gauge theory: i) generalized Seiberg-Witten theory in 2,3, and 4 dimensions, and ii) Yang-Mills and enumerative theories on manifolds with special holonomy in 6,7, and 8 dimensions.  

\smallskip   
\subsubsection{Gauge Theory in Low-Dimensions.}

Generalized Seiberg--Witten theory unifies the majority of noteworthy equations in mathematical gauge theory \cite{DoanThesis, WalpuskiZhangCompactness}, including the standard Seiberg-Witten equations \cite{MorganSW, KM}, the Vafa-Witten equations \cite{VWOriginalPaper, TT1, TT2}, the Kapustin-Witten equations \cite{WittenKhovanovGaugeTheory,WittenFivebranesKnots, MazzeoWittenNahmI,MazzeoWittenNahmII}, the complex ASD equations \cite{Taubes4dSL2C,HaydysFSCategory}, and the ADHM-Seiberg-Witten equations \cite{HaydysG2SW, DWAssociatives}. Generalized Seiberg--Witten equations are systems of non-linear first order PDEs whose variables are a connection $A$ on a principal $G$-bundle for a compact Lie group $G$, and a spinor $\Psi$. 

 When there is an {\it a priori} bound on the $L^2$-norm of $\Psi$, as for the standard Seiberg--Witten equations, the moduli space of solutions (modulo gauge transformations) is compact. In general, the moduli space may include sequences of solutions along which the $L^2$-norm of $\Psi$ diverges. A variety of convergence theorems following pioneering work of Taubes \cite{Taubes3dSL2C} have shown that for many generalized Seiberg--Witten equations such sequences converge after renormalization to a $\Z_2$-harmonic spinor. Thus $\Z_2$-harmonic spinors are the natural candidates for constructing boundary strata to compactify these moduli spaces. 
 
 True $\Z_2$-harmonic spinors as defined in (\refeq{Z2prelimdef}) arise as limits of renormalized sequences of solutions to the 2-spinor Seiberg--Witten equations \cite{HWCompactness} (see \cite{PartI, PartIII} for more detailed exposition). Generalized $\Z_2$-harmonic spinors, for which the spinor bundle $S$ is replaced by an arbitrary Clifford module of real rank $4$, appear as limits of a variety of other equations \cite{Taubes3dSL2C, Taubes4dSL2C, TaubesKWNahmPole, TaubesVW, TaubesU1SW, WalpuskiZhangCompactness}.  In particular, the {\it limiting configurations} of Hitchin's equations (square roots of holomorphic quadratic differentials) on a Riemann surface are a dimensional reduction of (generalized) $\Z_2$-harmonic spinors; thus these objects extend well-studied phenomena on the boundary of the Hitchin moduli space \cite{MWWW, FredricksonSLnC, HitchinAsymptoticGeometry} to the higher-dimensional and non-holomorphic setting. The deformation theory of the case of $\Z_2$-harmonic 1-forms, for which one takes the Clifford module $S=\Omega^0(\R)\oplus \Omega^1(\R)$ has been treated by Donaldson \cite{DonaldsonMultivalued} using a reduction to a scalar equation (the analogue of which is not available in the present setting); various other cases in dimension 4 are the subject of forthcoming work \cite{CoassociativeDeformations}.

\subsubsection{Fueter Sections.}

The {\it Fueter equation} is a non-linear generalization of the Dirac equation on 3 and 4-manifolds for spinors taking values in a bundle of hyperk\"ahler orbifolds rather than a Clifford module \cite{PidstrigachFueter, TaubesNonlinearDirac}. Solutions of the Fueter equation are called {\bf Fueter Sections}. 

Fueter sections play a key role in proposals for constructing gauge-theoretic and enumerative invariants on manifolds with special holonomy in dimensions 6, 7, and 8. In particular, in both cases, they are expected to contribute terms to wall-crossing formulas which relate these theories to generalized Seiberg-Witten theories on low-dimensional calibrated submanifolds and compensate for losses of compactness as parameters vary. See \cite{DoanThesis, DonaldsonSegal, DWAssociatives, HaydysG2SW, HWCompactness, SiqiSLag, WalpuskiG2Gluing,WalpuskiSpin7Gluing} for more detailed exposition (all of which rely on the earlier work \cite{TianYangMillsCompactness,TianTaoCompactness}). In another directions, there are putative applications of Fueter sections to symplectic geometry \cite{WalpuskiFueterCompactness,SalamonHypercontact,SalamonFueter,DoanRezchikov}, and to constructing generalized Floer theories on 3-manifolds \cite{SamanThesis, SamanFueterFloer}. In all these cases, a well-developed theory of Fueter sections is lacking and many aspects remain speculative.

At least In the contexts of coming from gauge theory, it is expected that Fueter sections with singularities are unavoidable. Singularities arise when a Fueter section intersects the orbifold locus of the target hyperk\"ahler orbifold. The data of a $\Z_2$-harmonic spinors as defined in (\refeq{Z2prelimdef}) is equivalent to that of a Fueter section valued in the hyperk\"ahler orbifold $X=\mathbb H/\Z_2$ (see \cite{PartI} Section 2 or \cite{DWDeformations} Section 4 for details), with $\mathcal Z$ being the pre-image of the single orbifold point. For more general hyperk\"ahler orbifolds $X$ there is a stratification by stabilizer subgroups into subsets of codimension 4k, and a singular set arises where a Fueter section hits these strata. The reader is cautioned that even though these strata are codimension at least 4 and the base manifold $Y$ has dimension 3, the singular set of codimension 2 cannot be perturbed away when $\ell$ is non-trivial on small disks normal to $\mathcal Z_0$; indeed, Theorem \ref{mainb} confirms the singular set is stable under perturbations in this setting. Much of the work involving Fueter sections (e.g. \cite{DWDeformations,WalpuskiSpin7Gluing, WalpuskiG2Gluing, HaydysCorrespondence, PlattG2InstantonResolutions}) has dealt only with the case that $\mathcal Z=\emptyset$. This article contributes a step toward understanding Fueter sections with singularities.

\subsection{Outline}
\label{section1.3}
Sections \ref{section2}--\ref{section4} study the semi-Fredholm theory of the Dirac operator with a fixed singular set. Section \ref{section2} begins by establishing analogues of several standard results from elliptic theory, and Section \ref{section3} introduces the local ``polyhomogeneous'' expansions that replace the standard notion of elliptic regularity for the singular Dirac operator. Although many results in these first two sections are particular instances of more general results from the microlocal analysis of elliptic edge operators proved in \cite{MazzeoEdgeOperators,MazzeoEdgeOperatorsII} and subsequent work, we endeavor to give a largely self-contained exposition here. Section \ref{section4} studies the infinite-dimensional cokernel of the singular Dirac operator, and proves the isomorphism asserted following (\refeq{mainamap}) with a space of sections of a bundle on $\mathcal Z_0$. Section \ref{section4} contains (in the author's view) many of the more technical points in the article, and  some readers may prefer to read only the statements in Section 4 on a first pass.

 With the semi-Fredholm theory for fixed singular set established, Sections \ref{section5}--\ref{section6} proceed to study deformations of the singular set. Because the Dirac operator behaves naturally with respect to diffeomorphisms, deforming the singular set $\mathcal Z$ is equivalent to deforming the metric $g_0$ among the family of metrics that arise as pullbacks $F^*g_0$ by diffeomorphisms $F$ moving the singular set. Schematically,   
$$ \begin{pmatrix}\text{varying }\mathcal Z\\ \text{fixed }g_0\end{pmatrix}\hspace{1cm}\frac{\del}{\del \mathcal Z}\slashed D_{\mathcal Z} \ \   \ \overset{\text{pullback}}\Rightarrow \  \ \ \frac{\del}{\del g} \slashed D_{\mathcal Z_0}^g \hspace{1cm}\begin{pmatrix}\text{varying } g\\ \text{fixed }\mathcal Z_0\end{pmatrix}.$$  
\noindent The first-variation of the Dirac operator with respect to metrics on the right hand side is given by a well-known formula of Bourguignon-Gauduchon \cite{Bourguignon} discussed in Section \ref{section5}. Calculating the family of pullbacks by diffeomorphisms leads to an explicit formula for the derivative $\text{d}_{(\mathcal Z_0, \Phi_0)}\slashed{\mathbb D}$ of the universal Dirac operator (Corollary \ref{formofDD}). Section \ref{section6} proves Theorem \ref{maina} by calculating the projection to the cokernel using the description from Section \ref{section4}, during which the loss of regularity becomes manifest.

It is worth emphasizing that while there is a pleasing geometric reason for Theorem \ref{maina}, the fact that the operator (\refeq{mainamap}) is elliptic emerges quite miraculously from the formulas during the proof. Since differentiating the symbol does not preserve ellipticity, Bourguignon-Gauduchon's formula leads to a highly non-elliptic operator on $Y$; the content of Theorem \ref{maina} is to assert that under the isomorphisms from Section \ref{section4} associating this with an operator on sections of $\mathcal Z_0$, ellipticity somewhat surprisingly emerges! Theorem \ref{mainaprecise} provides a more technical version of Theorem \ref{maina}, and an explicit formula for the elliptic operator (\refeq{mainamap}) is given during the proof.

Sections \ref{section7}--\ref{section8} use Theorem \ref{maina} and a version of the Nash-Moser Implicit Function Theorem to prove Theorem \ref{mainb}. Section \ref{section7} gives a brief and practical introduction to Nash-Moser theory, and Section \ref{section8} shows that the universal Dirac operator satisfies the necessary hypotheses. The most challenging of these is to show that Theorem \ref{maina} persists on an open neighborhood of $(p_0,\mathcal Z_0, \Phi_0)$. In this, the difficulty is ensuring that some of the more subtle aspects of Sections \ref{section4} and \ref{section6} are stable.

\subsection*{Acknowledgements}

This article constitutes a portion of the author's Ph.D. thesis. The author is grateful to his advisors Clifford Taubes and Tomasz Mrowka for their insights and suggestions. The author would also like to thank Rafe Mazzeo, and Thomas Walpuski for many helpful discussions. This work was supported by a National Science Foundation Graduate Research Fellowship and by National Science Foundation Grant No. 2105512. It was also partially completed while the
author was in residence at the Simons Laufer Mathematical Sciences Institute (previously
known as MSRI) in Berkeley, California, during the Fall 2022 semester, supported by NSF
Grant DMS-1928930.
\section{Semi-Fredholm Properties}
\label{section2} 
Let $(Y,g_0)$ be a closed, oriented Riemannian 3-manifold, and fix a spin structure $\frak s\to Y$. The associated spinor bundle is denoted by $S_\frak s\to Y$, and its Clifford multiplication by $\gamma_\frak s: T^*Y \to \text{End}(S_\frak s)$. The real inner product on $S_\frak s$ is denoted by $\br \cdot , \cdot \kt$, and the spin connection by $\nabla^\text{spin}$. More generally \footnote{ The $\Z_2$-harmonic spinors arising as limits in gauge theory as in Section \ref{section1.2} may have $B\neq 0$}, consider connections of the form $\nabla_B=\nabla^\text{spin}+B$ where $B\in \Omega^1(\frak{so}(S))$ is a real-linear endomorphism. Fix a choice $B_0$ of such a perturbation.  

  Next, let $\mathcal Z_0\subset Y$ be a smoothly embedded link, i.e. a union of disjoint embedded copies of $S^1$. Choose a real line bundle $\ell_0 \to Y\setminus \mathcal Z_0$, and let $A_0$ denote the unique flat connection on $\ell_0$ with holonomy in $\Z_2$. Let $(S_0, \gamma_0, \nabla_0)$ be the Clifford module defined using the fixed pair $(g_0,B_0)$ by \be S_0:= S_\frak s \otimes_\R \ell_0 \hspace{2cm}\gamma_0=\gamma_\frak s\otimes 1 \hspace{2cm}\nabla_0= \nabla_{B_0} \otimes \text{Id} + 1 \otimes \nabla_{A_0}.  \label{cliffordmod}\ee
  This Clifford module carries a real-inner product still denoted $\br \cdot , \cdot \kt$, and a singular Dirac operator:
  \medskip  
  
  \begin{defn} \label{Z2Dirac}The {\bf  $\Z_2$-Dirac operator} associated to the Clifford module $(S_0, \gamma_0, \nabla_0)$ is defined on sections $\psi \in \Gamma(S_0)$ by  $$\slashed D_{\mathcal Z_0}\psi:= \gamma_0 (\nabla \psi). $$
  \noindent In contexts where no ambiguity will arise, we omit the subscripts and write $S, \gamma, \nabla,$ and $\slashed D$ for the objects associated to the data $(\mathcal Z_0, g_0, B_0)$  (though the latter three always retain their subscripts).   
  \end{defn}
  
  \medskip 
  \noindent In the case that  $B_0=0$ and $\ell_0$ extends over $\mathcal Z_0$ (and {\it a fortiori} if $\mathcal Z_0=\emptyset$), this is the classical spin Dirac operator associated to the spin structure  $\frak s'\to Y$ obtained from twisting $\frak s$ by $\ell_0$. The case of interest to us is that in which $\ell_0$ does not extend over $\mathcal Z_0$ and instead restricts to the mobius line-bundle on the normal planes of $\mathcal Z_0$. The first condition in Definition  \ref{regulardef} restricts to this case.    
  
  When $\ell_0$ does not extend over $\mathcal Z_0$, the Dirac operator $\slashed D$ is singular along $\mathcal Z_0$, and its extension to Banach spaces of sections is only semi-Fredholm. This section introduces Sobolev spaces of sections and describes the semi-Fredholm mapping properties of $\slashed D$.  More general versions of these results for larger classes of singular operators can be found in \cite{FangyunThesis, MazzeoEdgeOperators, DonaldsonMultivalued, Melrosebcalculus, grieser2001basics}. Here, we give a self-contained exposition.

\subsection{Edge Sobolev Spaces}
The ``edge'' Sobolev spaces provide a natural domain on which the singular Dirac operator $\slashed D$ is bounded (see \cite{MazzeoEdgeOperators} for definitions in the context of more general singular operators). Let $r$ denote a smooth weight function equal to $\text{dist}(-,\mathcal Z_0)$ on a tubular neighborhood of $\mathcal Z_0$  and equal to $1$ away from a slightly larger tubular neighborhood. For smooth sections compactly supported in $Y\setminus\mathcal Z_0$, define the $rH^1_e$ and $L^2$-norms respectively by  
\bea
\|\ph\|_{rH^1_e} &:=&\left(\int_{Y\setminus\mathcal Z_0} |\nabla \ph|^2 + \frac{|\ph|^2}{r^2} \ dV\right)^{1/2}\hspace{1.0cm}\text{ and }\hspace{1cm}
\|\psi\|_{L^2}:=\left( \int_{Y\setminus\mathcal Z_0} |\psi|^2 \ dV\right)^{1/2},
\eea
where $\nabla $ is the connection (\refeq{cliffordmod}) on $S$, and $dV$ denotes the volume form of the Riemannian metric $g_0$. In addition, define the $r^{-1} H^{-1}_e$-norm as the dual norm of $rH^1_e$ with respect to the $L^2$-pairing: 

$$  \|\xi\|_{r^{-1}H^{-1}_\e} = \sup_{\|\ph\|_{rH^1_e}=1} \br  \xi , \ph \kt_{L^2}.$$ 

\begin{defn} \label{functiondef} The {\bf edge Sobolev spaces}  of regularity $m=1,0,-1$ are defined respectively by 
\bea  rH^1_e(Y\setminus \mathcal Z_0 ; S)&:=& \{\ \ph  \ | \ \  \|\ph\|_{rH^1_e}  <\infty\}\\ 
L^{2}(Y\setminus \mathcal Z_0 ; S)&:=& \{ \ \psi  \  | \ \ \|\psi\|_{L^2}  <\infty\} \\
r^{-1}H^{-1}_e(Y\setminus \mathcal Z_0 ; S)&:=& \{ \ \xi \hspace{.6mm} \  | \ \ \|\xi\|_{r^{-1}H^{-1}_e}  <\infty\} \eea
i.e. as the completions of compactly supported smooth sections with respect to the above norms. When it is clear from the context, the domain $Y\setminus \mathcal Z_0$ and bundle $S$ are omitted for brevity. By construction, $r^{-1}H^{-1}_e=(rH^1_e)^\star$ is the dual space with respect to the $L^2$-pairing.     \end{defn}

\medskip

\noindent These spaces are equivalent for different choices of the weight function $r$ and of the pair $(g_0,B_0)$. Additionally, $rH^1_e$ and $L^2$ are Hilbert spaces with the inner products arising from the polarization of the above norms.  

 Although $Y\setminus \mathcal Z_0$ is not compact, the weight ensures following version of Rellich's Lemma holds, proved by a standard diagonalization argument.

 \begin{lm} 
The inclusion $$rH^1_e(Y\setminus \mathcal Z_0;S) \ \hookrightarrow \  L^2(Y\setminus \mathcal Z_0; S)$$
is compact.
\label{compactembedding} \qed
\end{lm}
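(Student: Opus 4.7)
The plan is to adapt the classical Rellich--Kondrachov argument by exploiting the fact that the weight in the $rH^1_e$ norm provides extra control in two complementary regions: it matches the ordinary $H^1$ norm on any set bounded away from $\mathcal Z_0$, and it forces quadratic smallness of the $L^2$ mass on a tubular neighborhood of $\mathcal Z_0$ of radius $\e$. A standard exhaustion-plus-diagonalization scheme then assembles a convergent subsequence from a bounded family in $rH^1_e$.

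Concretely, let $\{\ph_n\}$ be bounded in $rH^1_e$ with $\|\ph_n\|_{rH^1_e}\leq M$. First I would observe the \emph{tail estimate}: for any $\e>0$, on $U_\e:=\{r<\e\}$ we have
\[
 \int_{U_\e}|\ph_n|^2\,dV \;=\;\int_{U_\e}\frac{|\ph_n|^2}{r^2}\cdot r^2\,dV \;\leq\; \e^2\,\|\ph_n\|_{rH^1_e}^2 \;\leq\; \e^2 M^2,
\]
so the $L^2$ mass of the sequence near $\mathcal Z_0$ is uniformly $O(\e)$. Second, on the complement $K_\e:=\{r\geq \e\}$, which is a compact submanifold-with-boundary of $Y\setminus \mathcal Z_0$, the weight $r$ is bounded below by $\e$, so the restriction map gives
\[
 \|\nabla \ph_n\|_{L^2(K_\e)}^2 + \|\ph_n\|_{L^2(K_\e)}^2 \;\leq\; (1+\e^{-2})\,\|\ph_n\|_{rH^1_e}^2,
\]
i.e.\ the $\ph_n|_{K_\e}$ are uniformly bounded in the ordinary Sobolev space $H^1(K_\e;S_0)$. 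The classical Rellich lemma on the compact manifold-with-boundary $K_\e$ then furnishes a subsequence converging in $L^2(K_\e;S_0)$.

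To finish, I would diagonalize over an exhaustion $\e_k\downarrow 0$: extract a subsequence convergent in $L^2(K_{\e_1})$, then a further subsequence convergent in $L^2(K_{\e_2})$, etc., and take the diagonal sequence $\{\ph_{n_k}\}$. Given $\delta>0$, choose $\e$ with $\e M<\delta/3$; for $k$ large enough, both $\ph_{n_k}$ and $\ph_{n_\ell}$ lie in the diagonal tail of the $K_\e$-subsequence, so $\|\ph_{n_k}-\ph_{n_\ell}\|_{L^2(K_\e)}<\delta/3$, while the tail estimate gives $\|\ph_{n_k}-\ph_{n_\ell}\|_{L^2(U_\e)}<2\e M<2\delta/3$. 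Hence $\{\ph_{n_k}\}$ is Cauchy in $L^2(Y\setminus \mathcal Z_0;S_0)$ and converges.

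I do not anticipate a genuine obstacle here; the only mild technical point is that $K_\e$ has a nonsmooth boundary for generic $\e$, but since $r$ is smooth and $|\nabla r|$ is bounded, a slight perturbation of $\e$ yields a smooth hypersurface boundary, which is all that is needed to invoke standard Rellich. The weight function $r$ thus plays exactly the role of a cutoff that uniformly controls the singular region, reducing the problem to its classical form in the regular region.
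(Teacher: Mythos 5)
Your proof is correct and is exactly the standard diagonalization argument the paper invokes without spelling out: the weight $1/r^2$ in the $rH^1_e$ norm gives the uniform $O(\e)$ tail estimate near $\mathcal Z_0$, ordinary Rellich on the compact exterior region $\{r\geq\e\}$ handles the rest, and a diagonal subsequence finishes. This matches the paper's intent.
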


\subsection{Mapping Properties}
The following proposition gives the fundamental mapping properties of the singular Dirac operator on the spaces defined in the previous subsection.  
\begin{prop}\label{injclosedrange}
The operator $$\slashed D: rH^1_e(Y\setminus \mathcal Z_0 ; S) \lre L^2(Y\setminus \mathcal Z_0 ; S).$$  is (left) semi-Fredholm, i.e. it satisfies:  
\begin{itemize}\item $\ker(\slashed D)$ is finite-dimensional, and
\item  $\text{range}(\slashed D)$ is closed. \end{itemize}

\label{semifredholm}
\end{prop}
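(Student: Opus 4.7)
The plan is to prove the semi-elliptic estimate
\begin{equation*}
\|\phi\|_{rH^1_e} \ \leq\ C\bigl(\|\slashed D\phi\|_{L^2}+\|\phi\|_{L^2}\bigr)\qquad (\ast)
\end{equation*}
for all $\phi\in rH^1_e$, after which both conclusions of the proposition follow from Lemma \ref{compactembedding} by the standard abstract argument. I would first reduce to smooth sections compactly supported in $Y-\mathcal Z_0$, which are dense in $rH^1_e$ by definition; on such sections the boundary terms along $\mathcal Z_0$ vanish and integration by parts is unproblematic.

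To derive $(\ast)$, I would combine two ingredients. The first is the Weitzenb\"ock identity for the twisted Dirac operator of Definition \ref{Z2Dirac}, which has the schematic form $\slashed D^{\star}\slashed D=\nabla^{\star}\nabla+\mathcal R$, where $\mathcal R\in\Gamma(\End(S_0))$ is a bounded endomorphism built from the scalar curvature of $g_0$, the perturbation $B_0$, and its first derivative; the flat connection $A_0$ contributes nothing. Pairing with $\phi$ and integrating by parts on $Y\setminus\mathcal Z_0$ yields
\begin{equation*}
\|\nabla\phi\|_{L^2}^2 \ \leq\ \|\slashed D\phi\|_{L^2}^2+C\|\phi\|_{L^2}^2.
\end{equation*}
The second ingredient is a Hardy inequality adapted to the M\"obius twist: by Assumption \ref{assumption1}, $\ell_0$ restricts on each normal disk to the M\"obius bundle, so sections lift to anti-invariant functions of the angle $\theta$ whose Fourier expansions involve only half-integer frequencies. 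Since the smallest eigenvalue of $-\partial_\theta^2$ on such modes is $\tfrac14>0$, on each normal disk $D_\e$ one obtains
\begin{equation*}
\int_{D_\e}\frac{|\phi|^2}{r^2}\,dA \ \leq\ 4\int_{D_\e}\bigl|\tfrac{1}{r}\partial_\theta\phi\bigr|^2\,dA \ \leq\ C\int_{D_\e}|\nabla_\perp\phi|^2\,dA+C\int_{\partial D_\e}|\phi|^2\,d\sigma.
\end{equation*}
Integrating along $\mathcal Z_0$ and combining with the trivial estimate on the complement of a tubular neighborhood (where $r\geq r_0>0$) gives $\|\phi/r\|_{L^2}\leq C(\|\nabla\phi\|_{L^2}+\|\phi\|_{L^2})$, and adding this to the Weitzenb\"ock estimate yields $(\ast)$.

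The step I expect to require the most care is the Hardy inequality: one must verify that the polar-coordinate angular derivative is correctly dominated by the three-dimensional covariant derivative $\nabla$ defined in \eqref{cliffordmod}, that cross-terms with the longitudinal derivative along $\mathcal Z_0$ can be absorbed after integration in $z$, and that the connection $1$-forms of the spin connection and of $B_0$ introduce only bounded perturbations compatible with the weight $1/r$.

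Given $(\ast)$, the two conclusions are routine. For $\ker(\slashed D)$: any sequence $\phi_n\in\ker(\slashed D)$ bounded in $rH^1_e$ has an $L^2$-convergent subsequence by Lemma \ref{compactembedding}, and $(\ast)$ applied to $\phi_n-\phi_m$ (with $\slashed D(\phi_n-\phi_m)=0$) upgrades this to a Cauchy sequence in $rH^1_e$; hence the unit ball of the kernel is precompact in $rH^1_e$, so $\ker(\slashed D)$ is finite-dimensional. For closed range: a standard contradiction argument using $(\ast)$ and Rellich promotes the estimate to
\begin{equation*}
\|\phi\|_{rH^1_e}\ \leq\ C'\,\|\slashed D\phi\|_{L^2}\qquad\text{for all } \phi\in(\ker\slashed D)^{\perp},
\end{equation*}
where the orthogonal complement is taken in $rH^1_e$. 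From this, if $\slashed D\phi_n\to\psi$ in $L^2$ with $\phi_n\perp\ker(\slashed D)$, then $\{\phi_n\}$ is Cauchy in $rH^1_e$ with limit $\phi$ satisfying $\slashed D\phi=\psi$, so $\mathrm{Range}(\slashed D)$ is closed.
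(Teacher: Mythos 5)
Your proposal is correct and follows the same route as the paper: establish the semi-elliptic estimate $(\ast)$ from the Weitzenb\"ock formula together with the weighted Hardy inequality coming from the half-integer angular Fourier modes forced by the M\"obius twist, then invoke Lemma \ref{compactembedding} and the standard abstract argument. The only cosmetic differences are that the paper handles the boundary term along $\mathcal Z_0$ via explicit cutoffs $\chi_n$ rather than a density reduction (these are equivalent since both sides of $(\ast)$ are continuous on $rH^1_e$), and your Hardy inequality carries a superfluous boundary term $\int_{\partial D_\e}|\phi|^2\,d\sigma$ that can be dropped, since $\tfrac{1}{r}\partial_\theta\phi$ is already a component of $\nabla_\perp\phi$.
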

\begin{proof} 
It is immediate from the definitions of $rH^1_e, L^2$ that $\slashed D$ is a bounded operator. Given $\ph\in rH^1_e$, it suffices to show that there is a constant $C$ such that the estimate 

\be \| \ph\|_{rH^1_e} \leq C \Big (\|\slashed D\ph\|_{L^2} + \|\ph\|_{L^2} \Big ) \label{semiellipticest} \ee
holds. Using the compactness of the embedding from Lemma \ref{compactembedding}, both conclusions of the lemma then follow from standard theory (see, e.g. \cite{LiviuLecturesOnGeometry} Section 10.4.1).

The estimate (\refeq{semiellipticest}) follows from the Weitzenb\"ock formula and integration by parts, as we now show, though some caution must be taken about the boundary term along $\mathcal Z_0$. Let $\ph\in rH^1_e$ be a spinor, and for each $n\in \N$ let $N_{1/n}(\mathcal Z_0)$ denote a tubular neighborhood of $\mathcal Z_0$ of radius $1/n$.  Additionally, let $\chi_n$ denote a cut-off function equal to 1 on $Y\setminus N_{1/n}(\mathcal Z_0)$ and compactly supported in $N_{2/n}(\mathcal Z_0)$ satisfying $$|d\chi_n|\leq \frac{C}{n}\leq \frac{C'}{r}.$$ 

\noindent Then, integrating by parts and using that $\slashed D$ is formally self-adjoint,  
\bea
\int_{Y\setminus \mathcal Z_0} |\slashed D\ph|^2  \ dV &=& \xlim{n}{\infty} \int_{Y\setminus \mathcal Z_0} \br \slashed D\ph, \slashed D\ph\kt \chi_n \ dV \\ 
&=&  \xlim{n}{\infty} \int_{Y\setminus \mathcal Z_0} \br  \ph, \slashed D \slashed D\ph\kt \chi_n \ + \br \ph, \gamma(d\chi_n)\slashed D\ph\kt \ dV.  
\eea 

\noindent The Weitzenb\"ock formula  shows that $$\slashed D \slashed D=\nabla^\star \nabla +F$$ wherein $F$ is a zeroth order term arising from the scalar curvature and the derivatives of the perturbation $B_0$. Substituting this and integrating by parts again yields 

$$\int_{Y\setminus \mathcal Z_0} |\slashed D\ph|^2  \ dV  = \int_{Y\setminus \mathcal Z_0} |\nabla \ph|^2  + \br\ph, F\ph \kt +  \xlim{n}{\infty} \int_{Y\setminus \mathcal Z_0}  \br \ph, d\chi_n \cdot \nabla \ph + \gamma(d\chi_n)\slashed D\ph\kt \ dV$$ 

\noindent where $\cdot$ denotes contraction of 1-form indices. Since $F$ is smooth on $Y$ hence uniformly bounded, rearranging and using Young's inequality yields
\begin{eqnarray} \int_{Y\setminus \mathcal Z_0} |\nabla \ph|^2 \ dV & \leq&  C\left(\|\slashed D\ph\|^2_{L^2} + \|\ph\|^2_{L^2} +  \xlim{n}{\infty} \int_{N_{2/n}(\mathcal Z_0)} |\nabla \ph|^2 + |d\chi_n|^2 |\ph|^2 \ dV\right ) \\
 &\leq &   C\left(\|\slashed D\ph\|^2_{L^2} + \|\ph\|^2_{L^2} +  \xlim{n}{\infty} \int_{N_{2/n}(\mathcal Z_0) } |\nabla \ph|^2 + \frac{|\ph|^2}{r^2} \ dV  \right).
 \label{bdtermlim}
 \end{eqnarray}
 
\noindent Since $\ph \in rH^1_e$, the latter limit vanishes, hence 

\be \|\nabla \ph\|_{L^2} \leq C \Big (\|\slashed D\ph\|_{L^2} + \|\ph\|_{L^2} \Big ). \label{nablaest} \ee

To conclude, we show the left-hand side of (\refeq{nablaest}) dominates the $rH^1_e$ norm. For $n$ sufficiently large, choose local coordinates on $N_{1/n} (\mathcal Z_0)\simeq S^1\times D_{1/n}$. Denote these by $(t,r,\theta)$ for $t$ the coordinate on the $S^1$ factor and $(r,\theta)$ polar coordinates on $D_{1/n}$. For each fixed $t_0, r_0$, the fact that the holonomy around the loop $(t_0, r_0, \theta)$ for $\theta\in [0,2\pi)$ is $-1$ implies that the operator $\nabla_\theta$ has lowest eigenvalue $1/2$ on this loop (see the local expressions in Section \ref{section3.1}). It follows that 

\be \int_{N_{1/n}(\mathcal Z_0)} \frac{|\ph|^2}{r^2} \ dV \leq \frac{1}{4}\int_{N_{1/n}(\mathcal Z_0)} \frac{1}{r^2}|\nabla_\theta\ph|^2  \ dV \leq \frac{1}{4} \|\nabla \ph\|^2_{L^{2}}, \label{Hardy2d}\ee
 
 \noindent on $N_{1/n}(\mathcal Z_0)$, and away from this neighborhood the weight $r$ is uniformly bounded. Combining this estimate with  (\refeq{nablaest})  (possibly increasing the constant) yields  (\refeq{semiellipticest}), completing the lemma. 
\end{proof}

The proof of Proposition \ref{injclosedrange} shows that the space $rH^1_e(Y\setminus \mathcal Z_0; S)$ is equivalent to the space whose norm only includes $|\nabla \ph|^2$, which dominates the second term in the $rH^1_e$-norm by (\refeq{Hardy2d}) It follows that the integrability condition in (\refeq{Z2prelimdef}) holds if and only if $\Phi \in rH^1_e$. We conclude:
\smallskip 

\begin{lm}
A non-zero spinor $\Phi$ is a $\Z_2$-harmonic spinor if and only if it is in the kernel of the operator \be\slashed D: rH^1_e(Y\setminus \mathcal Z_0; S) \lre L^2(Y\setminus \mathcal Z_0 ; S)\label{DL^12}.\ee

\label{Z2harmonicdef}\qed
 \end{lm}

\medskip

 Note that although the estimate (\refeq{semiellipticest}) resembles the standard bootstrapping inequality, it does not imply that an $L^2$ solution of $\slashed D\psi=0$ necessarily lies in $rH^1_e$. In order to establish (\refeq{semiellipticest}) it was necessary to assume {\it a priori} that $\ph \in rH^1_e$, else the boundary term along $\mathcal Z_0$ need not vanish and the proof fails. Since $\slashed D$ is uniformly elliptic on any compact subset  $K\subset Y\setminus \mathcal Z_0$, however, standard theory applies to show that $\ph \in rH^1_{loc}$ (in fact $C^\infty_{loc}$) but the pointwise norm need not be integrable as $r\to 0$. Indeed, as we will see in Section \ref{section4}, the $rH^1_e$-kernel and the $L^2$-kernel are genuinely different spaces, with the latter infinite-dimensional. The term $\Z_2$-harmonic spinor refers only to non-zero kernel elements in $rH^1_e$.

\subsubsection{The Adjoint Operator.} Although the cokernel of (\refeq{DL^12}) is not necessarily finite-dimensional as in standard elliptic theory, it can still be described as the solutions of the formal adjoint operator. As in the proof of Lemma \ref{semifredholm}, formal self-adjointness of $\slashed D$ and integration by parts shows that the relation  \be \br \slashed Dv, \ph \kt_{L^2}= \br v, \slashed D \ph \kt_{L^{2}}\label{adjointsdef}\ee
 
\noindent  holds for $v,\ph \in rH^1_e$. As a consequence of (\refeq{adjointsdef}), the Dirac operator extends to a bounded map 
$$\slashed D: L^2(Y\setminus \mathcal Z_0; S)\lre r^{-1}H^{-1}_e(Y\setminus \mathcal Z_0; S),$$

 \noindent where for $v\in L^2$, the spinor $\slashed D v \in r^{-1}H^{-1}_e$ is the linear functional defined by the relation (\ref{adjointsdef}). To emphasize the domain of definition of various manifestations of the Dirac operator, we write $\slashed D|_{rH^1_e}$ or $\slashed D |_{L^2}$. 
 
 We then have the following: 
\begin{lm}\label{L^2splitting}
The extension $\slashed D|_{L^2}$ defined by (\refeq{adjointsdef}) is the (true) adjoint of $\slashed D|_{rH^1_e}$, and there is a closed orthogonal decomposition   
$$L^2(Y\setminus \mathcal Z_0; S)= \ker (\slashed D|_{L^2}) \ \oplus  \ \text{range}(\slashed D |_{rH^1_e}).$$
\end{lm}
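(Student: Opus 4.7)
The plan is to derive the statement from Proposition \ref{semifredholm} together with the standard Hilbert space splitting $H = \overline{\text{Range}(T)} \oplus \ker(T^*)$ applied to the bounded operator $T := \slashed D|_{rH^1_e}: rH^1_e \to L^2$. The work breaks into three short steps.

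First, I would verify that $\slashed D|_{L^2}$, as defined by (\ref{adjointsdef}), coincides with the genuine Hilbert space adjoint $T^*$. By definition, $T^*$ is the bounded operator $L^2 \to (rH^1_e)^* \simeq r^{-1}H^{-1}_e$ satisfying $(T^* v)(\ph) = \br v, T\ph \kt_{L^2}$ for all $\ph \in rH^1_e$. For any $v \in L^2$, the functional $\ph \mapsto \br v, \slashed D \ph\kt_{L^2}$ is bounded on $rH^1_e$ because
$$|\br v, \slashed D \ph\kt_{L^2}| \le \|v\|_{L^2}\|\slashed D \ph\|_{L^2} \le \|T\|\,\|v\|_{L^2}\|\ph\|_{rH^1_e},$$
so (\ref{adjointsdef}) indeed defines an element of $r^{-1}H^{-1}_e$, and by inspection this element agrees with $T^* v$. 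Thus $\slashed D|_{L^2} = T^*$ and the first assertion of the lemma is immediate. This step is purely bookkeeping.

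Second, I would invoke Proposition \ref{semifredholm}, which guarantees that $\text{Range}(T)$ is a closed subspace of $L^2$. Since $L^2$ is a Hilbert space, the orthogonal decomposition
$$L^2(Y-\mathcal Z_0; S_0) = \text{Range}(T) \ \oplus \ \text{Range}(T)^\perp$$
holds with no closure needed on the first summand.

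Third, I would identify the orthogonal complement. By definition, $v \in \text{Range}(T)^\perp$ if and only if $\br v, \slashed D \ph\kt_{L^2} = 0$ for every $\ph \in rH^1_e$, which by (\ref{adjointsdef}) is exactly the statement that $\slashed D|_{L^2} v = 0$ as an element of $r^{-1}H^{-1}_e$. Hence $\text{Range}(T)^\perp = \ker(\slashed D|_{L^2})$, giving the claimed splitting. There is no real obstacle here: the closed range hypothesis supplied by Proposition \ref{semifredholm} is precisely what is needed to upgrade the standard $L^2 = \overline{\text{Range}(T)} \oplus \ker(T^*)$ decomposition to one without a closure. The only point requiring any care is not confusing the three distinct manifestations of $\slashed D$ (on $rH^1_e$, on $L^2$, and the composite), which is why the statement of the lemma carefully distinguishes them.
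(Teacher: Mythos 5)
Your proposal is correct and follows essentially the same route as the paper: identify $\slashed D|_{L^2}$ with the Hilbert space adjoint, use the closed range from Proposition~\ref{semifredholm}, and characterize the orthogonal complement of the range as $\ker(\slashed D|_{L^2})$. The paper's proof is just a one-sentence version of your third step, leaving the first two implicit.
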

\begin{proof}
Suppose that $\psi \in L^2$ is perpendicular to the range, i.e. $\br \psi,  \slashed D \ph \kt_{L^2}=0$ for all $\ph\in rH^1_e$. The definition of $\slashed D|_{L^2}$ via (\refeq{adjointsdef}) shows that as a linear functional on $rH^1_e$, one has $\slashed D\psi =0.$
\end{proof}

\subsubsection{The Second Order Operator.} The (left) semi-Fredholmness of $\slashed D$ implies that the second order operator $\slashed D \slashed D$ is Fredholm for purely formal reasons. More precisely, we have the following lemma. 
\medskip

\begin{lm}\label{secondorderoperator}The second order operator $\slashed D \slashed D: rH^1_e(Y\setminus \mathcal Z_0;S)\lre r^{-1}H^{-1}(Y\setminus \mathcal Z_0; S)$ 
is Fredholm and $\ker(\slashed D\slashed D)= \ker(\slashed D|_{rH^1_e})\simeq \text{coker}(\slashed D\slashed D)$. In particular, there is is a constant $C$ such that  the elliptic estimate \be\|\ph\|_{rH^1_e} \leq C (\|\slashed D\slashed D\ph\|_{r^{-1}H^{-1}_e} + \|\pi_1(\ph)\|_{L^2}).\label{secondorderest}\ee
holds, where $\pi_1(\ph)$ is the $L^2$-orthogonal projection onto $\ker(\slashed D|_{rH^1_e})$.  
\end{lm}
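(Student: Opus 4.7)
The key observation is that, through Lemma \ref{L^2splitting}, the operator $\slashed D \slashed D: rH^1_e \to r^{-1}H^{-1}_e$ factors as $T^{\ast}\circ T$, where $T := \slashed D|_{rH^1_e}$ and $T^{\ast} := \slashed D|_{L^2}$ is its genuine Hilbert-space adjoint with respect to the duality pairing between $rH^1_e$ and $r^{-1}H^{-1}_e$. The plan is to deduce all three conclusions (Fredholmness, the identification of kernel and cokernel with $\ker T$, and the elliptic estimate) from this abstract $T^{\ast}T$ structure together with the semi-Fredholmness of $T$ already provided by Proposition \ref{injclosedrange}.

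For the kernel, if $T^{\ast}T\ph = 0$ then pairing against $\ph$ in the duality yields $\|T\ph\|_{L^2}^2 = \langle T^{\ast}T\ph, \ph\rangle = 0$, so $\ph \in \ker T$; the reverse inclusion is automatic. For the range, the closed splitting $L^2 = \ker T^{\ast} \oplus \text{Range}(T)$ from Lemma \ref{L^2splitting} implies that every element of $\text{Range}(T^{\ast})$ has the form $T^{\ast}(T\ph)$ for a unique $\ph \in (\ker T)^{\perp}$, whence $\text{Range}(T^{\ast}T) = \text{Range}(T^{\ast})$. Since $T$ has closed range, the Banach closed-range theorem implies that $\text{Range}(T^{\ast})$ is closed in $r^{-1}H^{-1}_e$ and coincides with the annihilator of $\ker T \subset rH^1_e$, hence has codimension $\dim \ker T$. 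The natural duality pairing then provides the isomorphism $\text{coker}(\slashed D \slashed D) \simeq \ker T$, and Fredholmness follows.

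For the elliptic estimate, I would combine \eqref{semiellipticest} with the duality bound
\[\|T\ph\|_{L^2}^2 \;=\; \langle T^{\ast}T\ph, \ph\rangle \;\leq\; \|T^{\ast}T\ph\|_{r^{-1}H^{-1}_e}\, \|\ph\|_{rH^1_e},\]
then absorb an $\e\|\ph\|_{rH^1_e}$ term via Young's inequality to arrive at the intermediate estimate $\|\ph\|_{rH^1_e} \leq C(\|T^{\ast}T\ph\|_{r^{-1}H^{-1}_e} + \|\ph\|_{L^2})$. To sharpen the zeroth-order term to $\|\pi_0(\ph)\|_{L^2}$, the plan is to run the standard Rellich-contradiction argument: assuming the sharp estimate fails, extract a sequence $\ph_n$ of unit $rH^1_e$-norm with vanishing right-hand side, pass to an $L^2$-convergent subsequence via Lemma \ref{compactembedding}, and show the limit lies in $\ker(T^{\ast}T) \cap (\ker T)^{\perp} = \{0\}$ using weak continuity of the bounded operator $T^{\ast}T$ and the $L^2$-continuity of $\pi_0$. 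Then $\|\ph_n\|_{L^2} \to 0$, contradicting the intermediate estimate.

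The one point requiring any real care is the clean identification of $T^{\ast}$ with the $L^2$-extension of $\slashed D$ through integration by parts, but this has already been performed in Proposition \ref{injclosedrange} and packaged in Lemma \ref{L^2splitting}. Hence no further analytic work is needed; the entire lemma is a formal consequence of the semi-Fredholm theory already in place.
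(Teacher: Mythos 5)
Your proof is correct but takes a genuinely different route from the paper. The paper establishes surjectivity of $\slashed D\slashed D$ onto the annihilator of $\ker(\slashed D|_{rH^1_e})$ by a direct variational argument: it introduces the functional $E_f(\ph)=\int|\slashed D\ph|^2 + \langle\ph,f\rangle\,dV$, checks coercivity (via (\ref{semiellipticest}) and Young's inequality) and weak lower semicontinuity, and extracts a minimizer on $(\ker\slashed D)^\perp$ whose Euler--Lagrange equation is $\slashed D\slashed D\ph = f \ \mathrm{mod}\ \ker(\slashed D|_{rH^1_e})$. You instead invoke the Banach closed-range theorem: since $T=\slashed D|_{rH^1_e}$ has closed range in $L^2$, its Banach adjoint $T'=\slashed D|_{L^2}:L^2\to r^{-1}H^{-1}_e$ has closed range equal to the annihilator $(\ker T)^\circ$, and the $L^2$-splitting of Lemma \ref{L^2splitting} then gives $\text{Range}(T^*T)=\text{Range}(T^*)$ and the identification of the cokernel with $(\ker T)^*\simeq\ker T$. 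Both arguments are sound and rest on the same two inputs — the estimate (\ref{semiellipticest}) and the adjoint relation (\ref{adjointsdef}); the variational route has the slight advantage of constructing the solution operator $P$ concretely and fitting the pattern of later arguments (e.g. $P_\mu$ in Section 4.3), while yours is more economical and makes the formal $T^*T$ structure explicit. Your derivation of the estimate (\ref{secondorderest}) — the intermediate bound via duality and Young followed by the Rellich-contradiction argument — is also correct and is essentially the ``routine consequence'' the paper alludes to without spelling out.
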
 

\begin{proof}(Cf. \cite{RyosukeThesis} Proposition 4.4) 
By definition of $\slashed D|_{L^2}$ via (\refeq{adjointsdef}), if $\ph \in rH^1_e$ and $\ph \in \ker(\slashed D\slashed D)$,  then
$$ 0 = \br \slashed D\slashed D \ph , \ph \kt_{L^2} = \|\slashed D\ph \|_{L^2}$$  
\noindent hence $\ph \in \ker(\slashed D|_{rH^1_e})$, which is finite dimensional by Proposition~\ref{semifredholm}.  

To show that the range is closed and the cokernel finite-dimensional (and naturally isomorphic to $\ker(\slashed D|_{rH^1_e})$), let $f\in r^{-1}H^{-1}_e$ be such that  $\br f , \Phi\kt=0$ for all $\Phi \in \ker(\slashed D|_{rH^1_e})$. Consider the functional $E_f: rH^1_e\to \R$ given by $$E_f(\ph):= \int_{Y\setminus\mathcal Z_0} |\slashed D\ph |^2 - \br \ph , f\kt \ dV.$$

\noindent The Euler-Lagrange equation of $E_f$ is $$\slashed D\slashed D\ph= f$$
so it suffices to show that $E_f$ admits a minimizer. By standard theory (\cite{EvansGSM} Chapter 8) this holds if $E_f$ is (i) coercive, and 
(ii) weakly lower semi-continuous. The second of these is standard (see e.g.  \cite{EvansGSM} Section 8.2.2). (i) means that \be E_f(\ph)\geq c_1\|\ph\|_{rH^1_e}^2 - c_2\label{coercivitiy}\ee holds for some constants $c_i$, and $\ph$ in the $ L^2$-orthogonal complement of $\ker(\slashed D|_{rH^1_e})$, which follows from the elliptic estimate (\refeq{semiellipticest}) of Proposition \ref{semifredholm} and Young's inequality. Since we require $\br f,\Phi \kt=0$ for $\Phi\in \ker(\slashed D|_{rH^1_e})$, it follows that $\dim \text{coker}(\slashed D\slashed D)\leq \dim \ker(\slashed D|_{rH^1_e})$, and integration by parts establishes equality. This proves Fredholmness, and the estimate (\refeq{secondorderest}) is a routine consequence. 
\end{proof}

As a consequence of the preceding lemma, we may define $P_0: r^{-1}H^{-1}_e\to rH^1_e$ as the solution operator given by \begin{eqnarray} P_0(\xi)= \ph \hspace{1.5cm} \text{s.t.} & & \text{i)} \  \slashed D \slashed D\ph=\xi \mod \ker(\slashed D|_{rH^1_e})  \label{Pdef}\\ \text{and} & & \text{ii)} \  \br \ph, \Phi\kt_{L^2}=0 \ \forall \Phi\in \ker(\slashed D|_{rH^1_e}). \end{eqnarray} 

\noindent As with $\slashed D$, the subscript is omitted and we simply write $P$ when it is clear from context. 

To summarize, we have the following corollary: 

\begin{cor} \label{Pdiagram} The following hold using the splitting $L^2=\ker(\slashed D|_{L^2})\oplus \text{range}(\slashed D|_{rH^1_e})$ of Lemma \ref{L^2splitting}.   

\begin{enumerate}
\item[(A)] The second order operator $\slashed D \slashed D$ factors through the $\text{range}(\slashed D|_{rH^1_e})$ summand of 

\begin{center}
\tikzset{node distance=3.5cm, auto}
\begin{tikzpicture}
\node(C)[yshift=-1.7cm]{$rH^1_e$};
\node(B')[right of=C]{$\begin{matrix} \ker(\slashed D|_{L^2})\\ \oplus \\ \text{range}(\slashed D|_{rH^1_e})\end{matrix}$};
\node(D)[right of=B']{$r^{-1}H^{-1}_e.$};
\draw[->] (B') to node {$\slashed D$} (D);
\draw[->] (C) to node {$\slashed D$} (B');
\draw[<-,>=stealth] (C) edge[bend right=30]node[below]{$P$} (D);
\end{tikzpicture}
\end{center}
\noindent In addition, we can further split $\ker(\slashed D|_{L^2})=\ker(\slashed D|_{rH^1_e})\oplus \ker(\slashed D|_{L^2})^\perp$ wherein the first summand is finite-dimensional.  
\item[(B)] The projections $\Pi_0, 1-\Pi_0$ to $\ker(D|_{L^2})$ and its orthogonal complement may be written 
$$1-\Pi_0= \slashed D P \slashed D \hspace{2cm}  \Pi_0=1-\slashed D P \slashed D.$$\qed
\end{enumerate}
\end{cor}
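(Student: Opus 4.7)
The corollary is essentially a formal consequence of the decomposition from Lemma \ref{L^2splitting}, the Fredholmness of $\slashed D\slashed D$ from Lemma \ref{secondorderoperator}, and the definition of $P$ in (\ref{Pdef}). My plan is to prove each claim in turn, treating (A) as a collection of bookkeeping observations and deriving (B) by a short direct computation.

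For part (A), the factorization through $\text{Range}(\slashed D|_{rH^1_e})$ is tautological once one reads $\slashed D\slashed D$ as the composition $\slashed D|_{L^2}\circ \slashed D|_{rH^1_e}$: for any $\ph\in rH^1_e$, the middle term $\slashed D\ph$ literally belongs to $\text{Range}(\slashed D|_{rH^1_e})\subseteq L^2$, so only the second summand of the splitting is touched. The further decomposition of $\ker(\slashed D|_{L^2})$ requires verifying the inclusion $\ker(\slashed D|_{rH^1_e})\subseteq \ker(\slashed D|_{L^2})$, which follows because for $\ph \in rH^1_e$ with $\slashed D\ph = 0$ in $L^2$, the adjointness relation (\ref{adjointsdef}) gives $\br\ph,\slashed D\psi\kt_{L^2}=0$ for all $\psi \in rH^1_e$, so $\slashed D\ph$ vanishes as a functional in $r^{-1}H^{-1}_e$ too. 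Since $\ker(\slashed D|_{rH^1_e})$ is finite-dimensional by Proposition \ref{semifredholm}, taking its $L^2$-orthogonal complement inside $\ker(\slashed D|_{L^2})$ yields the stated splitting.

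For part (B), I would compute $\slashed D P \slashed D v$ for a general $v\in L^2$ decomposed as $v = v_k+v_R$ with $v_k\in \ker(\slashed D|_{L^2})$ and $v_R = \slashed D\ph \in \text{Range}(\slashed D|_{rH^1_e})$. By definition of $\slashed D|_{L^2}$, the $r^{-1}H^{-1}_e$-element $\slashed D v$ equals $\slashed D\slashed D\ph$, which lies in the range of $\slashed D\slashed D$, so $P$ is applicable. Set $\tilde\ph := P(\slashed D\slashed D\ph)$; by condition (i) of (\ref{Pdef}) the element $\eta := \slashed D\slashed D(\tilde\ph - \ph)$ lies in $\ker(\slashed D|_{rH^1_e})$. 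Pairing with $\eta$ itself and integrating by parts gives
\[
\|\eta\|_{L^2}^2 \ = \ \br \slashed D\slashed D(\tilde\ph - \ph),\eta\kt_{L^2} \ = \ \br \slashed D(\tilde\ph - \ph),\slashed D\eta\kt_{L^2} \ = \ 0,
\]
since $\slashed D\eta = 0$. Thus $\tilde\ph - \ph \in \ker(\slashed D\slashed D) = \ker(\slashed D|_{rH^1_e})$ by Lemma \ref{secondorderoperator}, and in particular $\slashed D\tilde\ph = \slashed D\ph = v_R$. This is exactly the statement $\slashed D P \slashed D v = \Pi^{\text{Range}}(v)$, and the formula for $\Pi^{\text{ker}}$ follows by subtracting from the identity.

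I expect no serious obstacle here: the main subtlety is the well-definedness of $P$ on $\slashed D v$, which is handled by noting that $\slashed D v$ automatically lies in the image of $\slashed D\slashed D$ because the kernel component $v_k$ is annihilated by $\slashed D|_{L^2}$. The argument that $\tilde\ph - \ph \in \ker(\slashed D|_{rH^1_e})$ is the only step of real content and proceeds via the integration-by-parts trick above, which is available thanks to the self-adjointness relation (\ref{adjointsdef}) verified in the proof of Lemma \ref{L^2splitting}.
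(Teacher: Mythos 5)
Your proof is correct and fills in exactly the details the paper treats as immediate (the corollary carries a bare $\qed$ in the source, deferring to Lemma~\ref{L^2splitting}, Lemma~\ref{secondorderoperator}, and the definition of $P$ in (\ref{Pdef})). Part~(A) is bookkeeping as you say, and your Part~(B) argument — decomposing $v = v_k + v_R$ with $v_R = \slashed D\ph$, setting $\tilde\ph = P(\slashed D\slashed D\ph)$, and showing the discrepancy $\eta = \slashed D\slashed D(\tilde\ph - \ph) \in \ker(\slashed D|_{rH^1_e})$ vanishes by pairing against itself and integrating by parts via (\ref{adjointsdef}) — is the natural way to make precise why $\slashed D P\slashed D = \Pi^\text{Range}$, given that (\ref{Pdef}) only determines $\slashed D\slashed D\tilde\ph$ modulo $\ker(\slashed D|_{rH^1_e})$. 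One small simplification worth noting: since $\slashed D\slashed D\ph$ is already orthogonal to $\ker(\slashed D|_{rH^1_e})$ (again by integration by parts, as both $\ph$ and any $\Phi\in\ker(\slashed D|_{rH^1_e})$ lie in $rH^1_e$), the ``$\mathrm{mod}$'' correction in (\ref{Pdef}) is already zero, so $\eta = 0$ without the self-pairing step; but your argument has the virtue of not presupposing that the range of $\slashed D\slashed D$ meets $\ker(\slashed D|_{rH^1_e})$ trivially.
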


\subsection{Higher Regularity}
  This subsection extends the results of the previous two subsections  to ``edge'' and ``boundary'' Sobolev spaces of higher regularity  (see \cite{MazzeoEdgeOperators} again for a more general exposition). Beginning with the ``boundary'' spaces, define the space of ``boundary'' vector fields $$\mathcal V_\text{b}:=\{ V \in C^\infty(Y;TY) \  \ | \ \ \  V|_{\mathcal Z_0} \in C^\infty(\mathcal Z_0; T\mathcal Z_0)\}$$ 
\noindent as those tangent to $\mathcal Z_0$ at the boundary. Let $\nabla^\text{b}$ denote the covariant derivative with respect to such vector fields, so that in local coordinates $(t, x,y)$ where $t$ is a coordinate along $\mathcal Z_0$ and $x,y$ are coordinates in the normal directions it is given by 
$$\nabla^\text{b}=dx\otimes  r \nabla_x \ +  \ dy\otimes r \nabla_y \  + \ dt \otimes  \nabla_t $$
\noindent  and is equal to the standard covariant derivative $\nabla$ away from $\mathcal Z_0$. 

For $m \in \N$, define the $H^m_\text{b}$-norm on compactly supported smooth sections by 
\begin{eqnarray}
\|\psi\|_{H^m_\text{b}} &:=&\left(\int_{Y\setminus\mathcal Z_0} |(\nabla^\text{b})^m \psi|^2   \ + \ldots +  |\nabla^\text{b}\psi|^2  \ + \ |\psi|^2 \ dV\right)^{1/2}.\label{bnorm}
\end{eqnarray}

\begin{defn}
The {\bf mixed boundary and edge Sobolev spaces} are defined by
\bea  rH^{m,1}_{\text{b},e}(Y\setminus \mathcal Z_0 ; S)&:=& \Big\{\ph \ | \ \|(\nabla^\text{b})^m\ph\|^2_{rH^1_e}  + \ldots + \|\nabla^\text{b}\ph\|^2_{rH^1_e}   \ +  \ \|\ph\|^2_{rH^1_e}  <\infty\Big\}\\ 
H^m_\text{b}(Y\setminus \mathcal Z_0 ; S)&:=& \Big\{\psi \ | \  \|(\nabla^\text{b})^m\psi\|^2_{L^2}  + \ldots + \|\nabla^\text{b}\psi\|^2_{L^2}   \ +  \ \|\psi\|^2_{L^2} =\|\psi\|_{H^m_\text{b}}^2 <\infty\Big\} \\
r^{-1}H^{m,-1}_{\text{b},e}(Y\setminus \mathcal Z_0 ; S)&:=&\Big\{\xi \ | \ \|(\nabla^\text{b})^m\xi\|^2_{r^{-1}H^{-1}_e}  + \ldots + \|\nabla^\text{b}\xi\|^2_{r^{-1}H^{-1}_e}   \ +  \ \|\xi\|^2_{r^{-1}H^{-1}_e}  <\infty\Big\} \eea
\end{defn}

\noindent equipped with the norms given by the positive square root of the quantities required to be finite. As for $m=0$, changing the weight $r$ or $(g_0,B_0)$ results in equivalent norms. More generally, one can define the spaces for $m \in \R^{\geq 0}$ by interpolation.  

\medskip 

We have the following version of the standard interpolation inequalities: 

 \begin{lm}\label{interpolationinequalities} The following interpolation inequalities hold for $m_1 < m< m_2$:  
$$ \|\psi\|_{H^m_\text{b}} \leq C \| \psi \|^\alpha _{H^{m_1}_\text{b}} \|\psi\|_{H^{m_2}_\text{b}}^{1-\alpha} \hspace{1cm}\text{\   \ }\hspace{1cm} \|\ph\|_{H^{m,1}_{\text{b},e}} \leq C \| \ph \|^\alpha _{H^{m_1,1}_{\text{b},e}}  \|\ph\|_{H^{m_2,1}_{\text{b},e}}^{1-\alpha}$$

\noindent where $\alpha= \frac{m_2-m}{m_2-m_1}$, and the constants may depend on the triple $m_1, m, m_2$. 
\end{lm}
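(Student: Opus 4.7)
The plan is to deduce both inequalities from Hilbert space interpolation theory. Since the paper defines the spaces $H^m_{\text{b}}$ and $H^{m,1}_{\text{b},e}$ for non-integer $m$ by interpolation between their integer-valued counterparts, the inequalities with $m$ between two integer endpoints are an immediate consequence of the defining property of the complex interpolation functor. The substantive step, therefore, is to prove the inequalities when $m_1, m, m_2$ are all non-negative integers.

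For the integer case, I would proceed via the spectral theorem. Let $L = 1 + (\nabla^{\text{b}})^*\nabla^{\text{b}}$, viewed as a positive self-adjoint operator on $L^2(Y-\mathcal Z_0; S_0)$ by Friedrichs extension of its quadratic form. Because the set $\mathcal V_{\text{b}}$ is closed under Lie brackets (vector fields tangent to $\mathcal Z_0$ form a Lie subalgebra), a standard commutator argument shows the norm equivalence
\[
\|\psi\|^2_{H^m_{\text{b}}} \;\simeq\; \langle L^m \psi, \psi\rangle_{L^2} \;=\; \int_1^\infty \lambda^m \, d\mu_\psi(\lambda),
\]
where $\mu_\psi$ is the spectral measure of $L$ at $\psi$. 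Writing $\lambda^m = \lambda^{\alpha m_1}\lambda^{(1-\alpha) m_2}$ and applying H\"older's inequality with conjugate exponents $1/\alpha$ and $1/(1-\alpha)$ yields
\[
\int \lambda^m \, d\mu_\psi \;\leq\; \Bigl(\int \lambda^{m_1} \, d\mu_\psi\Bigr)^{\alpha} \Bigl(\int \lambda^{m_2} \, d\mu_\psi\Bigr)^{1-\alpha},
\]
which translates back to the first interpolation inequality. For the edge variant, the identical argument applies after replacing $L$ by a self-adjoint operator $L_e$ whose quadratic form is equivalent to the $rH^1_e$-norm squared of $\nabla^{\text{b}}$-derivatives, for instance $L_e = 1 + (\nabla^{\text{b}})^*\nabla^{\text{b}} + r^{-2}$ with appropriate domain; the crucial feature is that the singular weight $r^{-1}$ commutes (up to bounded terms) with boundary vector fields, since $V(r) = O(r)$ for $V \in \mathcal V_{\text{b}}$.

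The main obstacle will be verifying the norm equivalence $\|\psi\|^2_{H^m_{\text{b}}} \simeq \langle L^m \psi, \psi\rangle_{L^2}$ for all integer $m$. The $\leq$ direction is straightforward from integration by parts, but the reverse requires controlling iterated boundary derivatives $\nabla^{\text{b}}_{V_1}\cdots \nabla^{\text{b}}_{V_k} \psi$ in $L^2$ by $L^{k/2}\psi$. This is where the bracket-closure of $\mathcal V_{\text{b}}$ is essential: it ensures that commutators $[\nabla^{\text{b}}_{V_i}, \nabla^{\text{b}}_{V_j}]$ are themselves boundary first-order operators (plus zeroth order), so the standard inductive commutator argument goes through without producing uncontrolled factors of $1/r$. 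Once this is in hand, both statements of the lemma follow by the spectral H\"older argument above, with the constant depending on the triple $(m_1, m, m_2)$ through the conjugate exponent $\alpha = (m_2-m)/(m_2-m_1)$ and the constants appearing in the norm equivalence.
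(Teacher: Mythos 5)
The paper proves this lemma by a different and more elementary route: the substitution $s=\log r$ identifies $Y-\mathcal Z_0$ with a cylindrical-end manifold $Y^\circ$ and transforms $H^m_\text{b}$ into the exponentially weighted space $e^{-s}H^m(Y^\circ)$, after which the inequality follows from the ordinary Gagliardo--Nirenberg interpolation on $Y^\circ$. The mixed case is then reduced to this via the equivalent norm $\|\ph\|^2_{H^{m,1}_{\text{b},e}}\sim \|\nabla\ph\|^2_{H^m_\text{b}}+\|\tfrac\ph r\|^2_{H^m_\text{b}}$, by applying the already-proved $H^m_\text{b}$-interpolation to $\nabla\ph$ and $\ph/r$ separately. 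Your spectral--H\"older scheme is a legitimate general alternative, but it front-loads all the work into the norm equivalence $\|\psi\|^2_{H^m_\text{b}}\simeq\langle L^m\psi,\psi\rangle$, and that equivalence is not merely a ``standard commutator argument.'' One direction is elementary integration by parts; the other amounts to identifying $\mathrm{dom}(L^{m/2})$ (Friedrichs extension plus functional calculus) with $H^m_\text{b}$, which requires a Bochner-type inductive bound $\|(\nabla^\text{b})^m\psi\|_{L^2}\lesssim \|\Delta_\text{b}^{m/2}\psi\|_{L^2}+\cdots$ where $\Delta_\text{b}=(\nabla^\text{b})^*\nabla^\text{b}$. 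In that induction one must also verify that the repeated integrations by parts against boundary vector fields produce no boundary contributions along $\mathcal Z_0$ (they do vanish, since $\mathcal V_\text{b}$ is tangent to level sets of $r$, but this has to be said). The bracket-closure of $\mathcal V_\text{b}$ only controls the commutator terms; it does not by itself close the estimate.

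There is, however, a concrete error in your treatment of the edge variant. Your proposed operator $L_e = 1+(\nabla^\text{b})^*\nabla^\text{b}+r^{-2}$ has quadratic form $\|\psi\|^2+\|\nabla^\text{b}\psi\|^2+\|\psi/r\|^2$, which is strictly weaker than $\|\psi\|^2_{rH^1_e}=\|\nabla\psi\|^2+\|\psi/r\|^2$: the normal components of $\nabla^\text{b}$ are $r\nabla_x$, $r\nabla_y$, which carry an extra vanishing factor of $r$ compared to $\nabla_x$, $\nabla_y$. So powers of your $L_e$ cannot control the $H^{m,1}_{\text{b},e}$-norms, and the analogue of your first step breaks down before the interpolation even begins. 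Replacing $(\nabla^\text{b})^*\nabla^\text{b}$ by $\nabla^*\nabla$ fixes the $m=0$ quadratic form but then $L_e$ is no longer a b-operator, so the powers of $L_e$ do not generate the b-regularity scale you need, and the commutator argument you invoke (which relies on $\mathcal V_\text{b}$ being closed under brackets) no longer applies. The cleanest fix is precisely the paper's: use the equivalent norm to reduce to two separate applications of the $H^m_\text{b}$-interpolation, which removes the need to construct any single self-adjoint generator for the mixed norm.
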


\begin{proof}
Choose local cylindrical coordinates $(t, r, \theta)$ on a tubular neighborhood of $\mathcal Z_0$, where $t$ a coordinate along $\mathcal Z_0$ and $(r,\theta)$ polar coordinates in the normal directions. The coordinate change $s=\log(r)$ is a diffeomorphism between $Y\setminus \mathcal Z_0$ and the manifold $Y^\circ$ given by attaching a cylindrical end $T^2\times (-\infty, r_0)$ near $\mathcal Z_0$.  Under this coordinate change, $H^m_\text{b}$ is taken to the standard Sobolev spaces $e^{-s}H^m$ with the an exponential weight. After multiplying by an exponential weight function, the inequalities for $H^m_\text{b}$ follow from the standard ones on $Y^\circ$ (see, e.g. \cite{GagliardoNirenbergInterpolationRef}). 

For the mixed boundary and edge spaces, note that $\|[\nabla, \nabla^b] \ph\|_{L^2}\leq \|\nabla \ph\|_{L^2}$, and  iterating these commutators shows that \be \|\ph\|^2_{H^{m,1}_{\text{b},e}}  \sim \|\nabla \ph\|^2_{H^m_\text{b}} + \|\tfrac{\ph}{r}\|^2_{H^m_\text{b}}\label{equivalentbenorm}\ee
is an equivalent expression for the norm, after which the interpolation inequalities for $H^{m,1}_{\text{b},e}$ follow from those for $H^m_\text{b}$ applied to $\nabla \ph$ and $\tfrac\ph r$.  
\end{proof}

Applying the elliptic estimate   (\refeq{semiellipticest}) to $(\nabla^\text{b})^m\ph$ and iterating commutators $[\nabla, \nabla^\text{b}]$ also establishes the following higher-regularity elliptic estimates: 

\begin{cor}\label{higherordersemielliptic}There are constants $C_m$ depending on up to $m+3$ derivatives of the pair $(g_0,B_0)$ such that the following elliptic estimates hold for $\ph \in rH^{m,1}_{\text{b},e}$: 
\bea
\|\ph\|_{rH^{m,1}_{\text{b},e}} &\leq& C_m(\|\slashed D \ph\|_{H^m_\text{b}} + \|\ph\|_{H^m_\text{b}} ) \\ 
\|\ph\|_{rH^{m,1}_{\text{b},e}} &\leq& C_m(\|\slashed D\slashed D \ph\|_{r^{-1}H^{m,-1}_{\text{b},e}} + \|\ph\|_{r^{-1}H^{m,-1}_{\text{b},e}} )\eea\qed
\end{cor}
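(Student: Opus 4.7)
The plan is to prove both estimates by induction on $m \in \N$, with the base case $m=0$ given by Proposition \ref{semifredholm} and Lemma \ref{secondorderoperator}. The inductive step applies these base estimates not to $\ph$ itself but to $(\nabla^{\text{b}})^k \ph$ for each $k \leq m$, and then commutes the boundary derivatives past $\slashed D$ (respectively $\slashed D \slashed D$) so that the commutator errors can be absorbed by the inductive hypothesis.

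The crucial technical input is that $[\slashed D, \nabla^{\text{b}}]$ is a first-order boundary operator $rH^1_e \to L^2$ whose coefficients depend smoothly on up to three derivatives of $(g_0, B_0)$. I would verify this in local cylindrical coordinates $(t, r, \theta)$ near $\mathcal Z_0$: the Dirac operator takes the schematic form $\gamma(dt)\nabla_t + \gamma(dr)\nabla_r + r^{-1}\gamma(d\theta)\nabla_\theta$, while $\nabla^{\text{b}}$ replaces the normal derivatives $\nabla_r, \nabla_\theta$ by their rescaled versions $r\nabla_r, r\nabla_\theta$. In the commutator, each factor of $r^{-1}$ in $\slashed D$ is either paired with an $r$-rescaled boundary vector field or hit by a derivative of $r$, both of which are bounded across $\mathcal Z_0$; the only remaining contributions come from curvature brackets $[\nabla_a, \nabla_b]$, which are zeroth-order. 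Iterating this shows that $[\slashed D, (\nabla^{\text{b}})^k]$ is an order-$k$ boundary operator whose coefficient regularity costs $k+2$ derivatives of $(g_0, B_0)$.

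Granted this, for the first estimate I apply (\refeq{semiellipticest}) to $(\nabla^{\text{b}})^k \ph$ for each $k\leq m$, expand
\[ \slashed D (\nabla^{\text{b}})^k \ph = (\nabla^{\text{b}})^k \slashed D\ph + \sum_{j=0}^{k-1} (\nabla^{\text{b}})^j \, [\slashed D, \nabla^{\text{b}}] \, (\nabla^{\text{b}})^{k-1-j}\ph, \]
and sum over $k$. The first terms contribute $\|\slashed D\ph\|_{H^m_\text{b}}$; each commutator term is controlled by $C_m \|\ph\|_{rH^{m-1,1}_{\text{b},e}}$ by the inductive hypothesis. Combining this with Young's inequality and the interpolation bounds of Lemma \ref{interpolationinequalities} absorbs the lower-order pieces into $\|\ph\|_{H^m_\text{b}}$. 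The second estimate is then proved identically, replacing (\refeq{semiellipticest}) by (\refeq{secondorderest}) and using that $[\slashed D\slashed D, \nabla^{\text{b}}]$ is a second-order boundary operator whose coefficient regularity costs one additional derivative of $(g_0, B_0)$, accounting for the stated $m+3$ derivative dependence of $C_m$.

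The main obstacle will be the bookkeeping at $\mathcal Z_0$: I must confirm that iterating the commutator $[\slashed D, \nabla^{\text{b}}]$ through $m$ steps does not produce accumulated inverse powers of $r$, and that the resulting coefficients extend smoothly across the singular set. The choice of $\nabla^{\text{b}}$ as exactly the boundary-rescaled covariant derivative is tailored so that the $r$-rescalings in the normal directions cancel precisely against the $r^{-1}$ factor coming from Clifford multiplication on $d\theta$; verifying this cancellation at each iteration and tracking its interaction with the Clifford module structure of $S_0$ is the essential check that the whole argument reduces to the standard interior bootstrap pattern.
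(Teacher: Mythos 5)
Your proof is correct and follows essentially the same route as the paper, which dispatches this corollary with the one-line remark "Applying the elliptic estimate (\refeq{semiellipticest}) to $(\nabla^\text{b})^m$ and iterating commutators $[\nabla, \nabla^\text{b}]$." Your expansion fills in exactly the bookkeeping the paper leaves implicit; note only that iterating $[\slashed D, \nabla^\text{b}]$ versus $[\nabla, \nabla^\text{b}]$ is the same thing since Clifford multiplication is parallel, and that the commutator coefficients (e.g.\ $x/r$) are bounded with bounded $\text{b}$-derivatives rather than smooth across $\mathcal Z_0$ in the ordinary sense, which is what the argument actually requires.
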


Using this, we immediately deduce the higher-regularity version of Corollary \ref{Pdiagram}.

\begin{cor} \label{higherregularitymappings}For all $m>0$, the following statements hold: 

\begin{enumerate}
\item[(A)] There is an $H^m_\text{b}$-closed decomposition $$H^m_\text{b}= \ker(\slashed D|_{H^m_\text{b}}) \oplus  \text{range}(\slashed D|_{rH^{m,1}_{\text{b},e}}) $$
\noindent orthogonal with respect to the $L^2$-inner product. Moreover, the latter two spaces coincide with $ \ker(\slashed D|_{H^m_\text{b}}) = \ker(\slashed D|_{L^2})\cap H^m_\text{b}$ and $\text{range}(\slashed D|_{rH^{m,1}_{\text{b},e}})= \text{range}(\slashed D|_{rH^1_e})\cap H^m_\text{b}$. 

\item[(B)] The second order operator $\slashed D \slashed D$ factors through the $\text{range}(\slashed D|_{rH^1_e})\cap H^m_\text{b}$ summand of 

\begin{center}
\tikzset{node distance=3.5cm, auto}
\begin{tikzpicture}
\node(C)[yshift=-1.7cm]{$rH^{m,1}_{\text{b},e}$};
\node(B')[right of=C]{$\begin{matrix} \ker(\slashed D|_{L^2})\cap H^m_\text{b}\\ \oplus \\ \text{range}(\slashed D|_{rH^1_e})\cap H^m_\text{b} \end{matrix}$};
\node(D)[right of=B']{$r^{-1}H^{m,-1}_{\text{b},e}.$};
\draw[->] (B') to node {$\slashed D$} (D);
\draw[->] (C) to node {$\slashed D$} (B');
\draw[<-,>=stealth] (C) edge[bend right=30]node[below]{$P$} (D);
\end{tikzpicture}
\end{center}
\item[(C)] The projections to the two summands in Item (B) of Corollary \ref{Pdiagram} respect regularity in the sense that  
$$(1-\Pi_0)= \slashed D P \slashed D: H^m_\text{b}\to H^m_\text{b} \hspace{2cm}  \Pi_0=1-\slashed D P \slashed D:H^m_\text{b}\to H^m_\text{b} $$
\noindent are bounded operators.  \qed

\end{enumerate}

\end{cor}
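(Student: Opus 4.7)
The plan is to reduce all three parts to a single key claim: for every $m\geq 0$, the Green's operator $P$ from~\ref{Pdef} extends to a bounded map
\[
 P: r^{-1}H^{m,-1}_{\text{b},e}(Y-\mathcal Z_0;S_0) \lre rH^{m,1}_{\text{b},e}(Y-\mathcal Z_0;S_0).
\]
This is an instance of elliptic regularity for the second-order operator $\slashed D\slashed D$ in the mixed boundary/edge scale. I would prove it by induction on $m$, the base case $m=0$ being Lemma~\ref{secondorderoperator}. For the inductive step, given $\xi \in r^{-1}H^{m,-1}_{\text{b},e}$ and $\ph = P\xi$, apply any boundary vector field $V \in \mathcal V_\text{b}$; since $V$ is tangent to $\mathcal Z_0$, the commutator $[\slashed D\slashed D, \nabla^\text{b}_V]$ is of $\nabla^\text{b}$-order one lower and preserves the edge structure, so $\nabla^\text{b}_V \ph$ satisfies
\[
\slashed D\slashed D (\nabla^\text{b}_V \ph) = \nabla^\text{b}_V \xi + [\slashed D\slashed D, \nabla^\text{b}_V]\ph \in r^{-1}H^{m-1,-1}_{\text{b},e}
\]
by the inductive hypothesis applied to $\ph$. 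Corollary~\ref{higherordersemielliptic} then bounds $\nabla^\text{b}_V \ph$ in $rH^{m-1,1}_{\text{b},e}$; iterating over a generating set of $\mathcal V_\text{b}$ yields the $rH^{m,1}_{\text{b},e}$ bound on $\ph$.

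With this higher regularity of $P$ established, all three parts follow formally from the $m=0$ analogues in Lemma~\ref{L^2splitting} and Corollary~\ref{Pdiagram}. For (C), the commutators $[\slashed D, \nabla^\text{b}]$ are of $\nabla^\text{b}$-order zero, so $\slashed D$ is bounded both $H^m_\text{b} \to r^{-1}H^{m,-1}_{\text{b},e}$ and $rH^{m,1}_{\text{b},e} \to H^m_\text{b}$; combining with the higher regularity of $P$, the projection formulas $\Pi^{\text{Range}} = \slashed D P \slashed D$ and $\Pi^{\text{ker}} = 1 - \slashed D P \slashed D$ from Corollary~\ref{Pdiagram}(B) define continuous maps $H^m_\text{b} \to H^m_\text{b}$. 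Part (A) then follows by applying these continuous projections to any $\psi \in H^m_\text{b}$, with $L^2$-orthogonality inherited from the $m=0$ case. The identification $\ker(\slashed D|_{H^m_\text{b}}) = \ker(\slashed D|_{L^2}) \cap H^m_\text{b}$ is tautological; for the range summand, if $\slashed D\ph \in H^m_\text{b}$ with $\ph \in rH^1_e$ taken orthogonal to $\ker(\slashed D|_{rH^1_e})$, then by definition $\ph = P(\slashed D\slashed D\ph)$, which the higher regularity of $P$ places in $rH^{m,1}_{\text{b},e}$. Part (B) is then immediate from (A): on $rH^{m,1}_{\text{b},e}$, the operator $\slashed D$ annihilates the kernel summand and lands in the range summand by definition.

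The main obstacle is the inductive step for $P$. The elliptic estimate in Corollary~\ref{higherordersemielliptic} is only an a priori bound, so one cannot directly plug in $\nabla^\text{b}_V \ph$ before independently verifying it lies in $rH^{m-1,1}_{\text{b},e}$. The standard remedy is to work with finite-difference quotients along the flow of $V$ rather than $\nabla^\text{b}_V$ itself: because each $V \in \mathcal V_\text{b}$ is tangent to $\mathcal Z_0$, its flow preserves the weight $r$ up to bounded multiplicative error, so difference quotients of $rH^1_e$-functions remain uniformly bounded in $rH^1_e$, the $m=0$ estimate applies to these quotients uniformly, and one passes to the limit to recover $\nabla^\text{b}_V \ph \in rH^{m-1,1}_{\text{b},e}$. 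Once this regularization step is in place, the commutator calculation above promotes the regularity one $\nabla^\text{b}$-order at a time, closing the induction.
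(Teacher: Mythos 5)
Your proposal is correct and takes the route the paper leaves implicit: bootstrap the $m=0$ theory through Corollary~\ref{higherordersemielliptic} by establishing higher regularity for $P$, then deduce (A)--(C) formally. You also correctly identify the genuine gap that the paper's one-line dismissal skips over — namely that Corollary~\ref{higherordersemielliptic} is only an \emph{a priori} bound, so one cannot directly differentiate the equation by $\nabla^{\mathrm{b}}_V$ without first knowing the derivative lands in the right space — and the difference-quotient fix is the standard and correct remedy.

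One sentence in your remedy is stated imprecisely and, read literally, is false: ``difference quotients of $rH^1_e$-functions remain uniformly bounded in $rH^1_e$.'' A generic element of $rH^1_e$ does \emph{not} have uniformly bounded difference quotients along the flow of $V$; if it did, it would already be differentiable in that direction, which is the very thing you are trying to establish. The correct statement is two-stage: because the flow of $V\in\mathcal V_{\mathrm b}$ preserves $\mathcal Z_0$ and hence the weight $r$ up to bounded multiplicative error, each individual pullback $\Phi_h^*\ph$ is bounded in $rH^{m-1,1}_{\mathrm b,e}$ uniformly in $h$, so the difference quotient $D_h\ph$ lies in $rH^{m-1,1}_{\mathrm b,e}$ for each fixed $h$ (with no uniform bound claimed at this point). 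One then applies the $m-1$ elliptic estimate to $D_h\ph$ and observes that the \emph{right-hand side} of that estimate — built from $D_h\xi$ and the difference-quotient commutator acting on $\Phi_h^*\ph$ — \emph{is} uniformly bounded in $h$; it is this conclusion, not an a priori fact about $D_h\ph$, that delivers the uniform $rH^{m-1,1}_{\mathrm b,e}$ bound and lets you pass to the limit. Your subsequent clause ``the $m=0$ estimate applies to these quotients uniformly'' shows you have the right mechanism in mind, so I read this as a slip of phrasing rather than a conceptual error, but as written the sentence asserts the conclusion as a premise. A second, more cosmetic point: the commutator $[\slashed D\slashed D,\nabla^{\mathrm b}_V]$ is still a second-order differential operator, not first-order; what matters for your argument is the mapping property $rH^{m-1,1}_{\mathrm b,e}\to r^{-1}H^{m-1,-1}_{\mathrm b,e}$, which holds because it is a second-order \emph{edge} operator that does not consume an extra $\nabla^{\mathrm b}$-derivative, and this is the sense in which it is ``one order lower.'' The deductions of (A), (B), (C) from the higher regularity of $P$ are all sound.
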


\section{Local Expressions}
\label{section3}
This section studies the expressions for the Dirac operator and its solutions in local coordinates on a tubular neighborhood of $\mathcal Z_0$. By Proposition \ref{semifredholm} and Lemma \ref{L^2splitting}, there is a dichotomy between two distinct types of solution: 
\be\Phi \in \ker(\slashed D|_{rH^1_e}) \hspace{2cm} \psi \in \ker(\slashed D|_{L^2}) \text { \ \  s.t. \ \  }\psi \notin  r H^1_e\label{cokdichotomy}\ee
\noindent with the former being the finite-dimensional space of $\Z_2$-harmonic spinors. 

It is instructive to first consider the model case of $Y_\circ=S^1\times \R^2$ equipped with the product metric, which is done in Section \ref{section3.1}. Sections \ref{section3.2} and \ref{section3.3} then deal with local expressions on a general 3-manifold.

\subsection{The Model Operator}
\label{section3.1}

Let $Y_\circ=S^1 \times \R^2$ with coordinates $(t,x,y)$ and equipped with the product metric $g_0=dt^2  + dx^2  + dy^2$. Take $\mathcal Z_0= S^1\times \{0\}$ and $\ell_0\to Y_0\setminus\mathcal Z_0$ the pullback of the mobius bundle on $\R^2\setminus\{0\}$ by the projection to the second factor.

The twisted spinor bundle of the product spin structure can be identified with $S=\underline \C^2 \otimes_\R \ell_0$. A section $\psi \in \Gamma(\underline \C^2 \otimes_\R \ell)$ may be written as \be \psi= e^{i\theta/2}\begin{pmatrix}\psi^+  \\   \psi^-  \end{pmatrix} \label{sectionforma}\ee

\noindent where $\psi^\pm$ are $\C$-valued functions and $(r,\theta)$ are polar coordinates on $\R^2$. Indeed, on each normal plane $\R^2\setminus\{0\}$, the bundle $\underline\C\otimes_\R \ell$ can be constructed as the bundle with fiber $\C$ glued along two (thickened) rays by the transition functions $+1$ and $-1$. Consequently, $e^{i\theta/2}$, gives rise to a global nowhere-vanishing section of this bundle. When a section is written in the form (\refeq{sectionforma}), the connection arising from the spin connection and $\nabla_{A_0}$ on $\ell_0$ (with perturbation $B_0=0$) is simply $\nabla=\text{d}$.  The Dirac operator then takes the form 
\begin{equation}\slashed D=\begin{pmatrix} i \del_t & -2\del_z \\ 2\delbar_z & -i\del_t \end{pmatrix} \label{dirac}\end{equation}  
\noindent where $z=x+iy$. That is to say, it is just the normal spin Dirac operator on $Y_0$, but the spinors have an additional $e^{i\theta/2}$ term which is differentiated as expected.

\begin{rem}
Although it is convenient for computation, the expression (\refeq{dirac}) hides the singular nature of the Dirac operator. It can alternatively be written in the following equivalent way which makes the singular nature manifest. 

Multiplication $e^{-i\theta/2}: \underline \C^2\otimes \ell_0 \simeq \underline \C^2$ provides an alternative trivialization, in which spinor are written $\psi=(\psi^+, \psi^-)$
\noindent where $\psi^\pm$ are still $\C$-valued functions. In this trivialization, Dirac operator is instead given by $$\slashed D=\begin{pmatrix} i \del_t & -2\del_z \\ 2\delbar_z & -i\del_t \end{pmatrix} +\frac{1}{2} \gamma(d\theta)=\begin{pmatrix} i \del_t & -2\del_z \\ 2\delbar_z & -i\del_t \end{pmatrix}+\frac{1}{4}\gamma\left(\frac{dz}{z}-\frac{d\overline z}{\overline z}\right)$$ where $\gamma$ denotes Clifford multiplication. Thus $\slashed D$ is a uniformly elliptic operator plus a singular zeroth order term (i.e. one unbounded on $L^2$). Equivalently, $r\slashed D$ is an elliptic operator with bounded zeroth order term, but {\it the symbol degenerates along $\mathcal Z_0$.} This type of operator is called an {elliptic operator with edge-type degeneracies} or simply an {\bf elliptic edge operator}. The theory of operators of this type has been studied extensively in microlocal analysis and many results in Section \ref{section2} hold in considerable generality (see \cite{FangyunThesis, MazzeoEdgeOperators, DonaldsonMultivalued, Melrosebcalculus, grieser2001basics} and the references therein). \label{singularoperator}
\end{rem}

\begin{eg}
\label{euclideanexample}

Let us now identify the $L^2$-kernel of $\slashed D$ on $Y_\circ$ (Cf. \cite{RyosukeThesis} Section 3). As in Lemma \ref{L^2splitting}, this also identifies the cokernel of the operator on $rH^1_e$ since $\text{coker}(\slashed D|_{rH^1_e})\simeq \text{ker}(\slashed D|_{L^2})$ continues to hold. Here, the weight function is given by $r$ globally on $Y_\circ$. Writing a general section in Fourier series as $$\psi=\sum_{k,\ell} e^{i\ell t} e^{i(k+\tfrac{1}{2})\theta} \begin{pmatrix} \psi^+_{k,\ell}e^{-i\theta} \\ \psi^-_{k,\ell} \ \ \ \ \  \end{pmatrix}$$
and using the polar expressions  $$\delbar_z = \frac{1}{2}e^{i\theta}(\del_r + \frac{i}{r}\del_\theta) \hspace{2cm}\delbar_z = \frac{1}{2}e^{-i\theta}(\del_r - \frac{i}{r}\del_\theta),$$
\noindent the Dirac equation (\ref{dirac}) becomes the following system of ODEs for $\psi_{k,\ell}^\pm(r)$ which decouple for distinct pairs $(k,\ell)$: 
\begin{equation}
\d{}{r}\begin{pmatrix}\psi_{k,\ell}^+\vspace{.25cm} \\ \psi_{k,\ell}^-\end{pmatrix}= \begin{pmatrix}\frac{(k-\tfrac{1}{2})}{r} & -\ell \\ -\ell & -\frac{(k+\tfrac{1}{2})}{r}\end{pmatrix}\begin{pmatrix}\psi_{k,\ell}^+ \vspace{.25cm}\\ \psi_{k,\ell}^-\end{pmatrix}.
\label{ODE}
\end{equation}

This system of equations can be solved by substituting the second equation into the first, after which the general solution is given in terms of modified Bessel functions (of the second kind). If $k\neq 0$, the pair $(k,\ell)$ admits no solutions in $L^2(S^1\times \R^2)$; for $(k,\ell)=(0,\ell)$ with $\ell\neq 0$,

\be \Psi_\ell^\circ=\sqrt{|\ell|}e^{i\ell t} e^{-|\ell|r}\begin{pmatrix} \frac{1}{\sqrt{z}}  \\  \frac{\text{sgn}(\ell)}{\sqrt{\overline z}}
\end{pmatrix}\label{euclideancokernel} \ee
 
\noindent  is an infinite-dimensional set of orthonormalized solutions in $L^2$, and $\ker(\slashed D|_{L^2})$ is their $L^2$-closure. Indeed, there can be no other solutions since (\refeq{euclideancokernel}) and equivalent expression with the modified Bessel function of the first kind (which is not $L^2$) exhaust the possible solutions in each Fourier mode by standard ODE theory.

Disregarding the issues of the integrability of the $\ell=0$ solutions as $r\to \infty$ (which is immaterial in the upcoming case of $Y$ compact) and formally including this element leads to an isomorphism
\be L^2(S^1;\C) \simeq \text{ker}(\slashed D|_{L^{2}})\label{cokeriso}\ee defined by the linear extension of $e^{i\ell t}\mapsto \Psi_\ell^\circ$. In this example there are no $\Z_2$-harmonic spinors. 
  
  There is a second choice of spin structure on $Y_\circ=S^1 \times \R^2$ which has monodromy $-1$ around the $S^1$ factor parallel to $\mathcal Z_0$ in addition to around the meridian. For this second spin structure, spinors may be written with half integer Fourier modes $e^{i \ell t} e^{it/2}$, and the calculation is identical but the solutions are indexed by $\ell' \in \Z+\tfrac12$.   \qed

\end{eg}
\bigskip 

Example \ref{euclideanexample} suggests that the $L^2$-kernel of $\slashed D$ on a closed 3-manifold is also infinite-dimensional, and thus the failure to prove Fredholmness of $\slashed D$ in Section \ref{section2}  was not simply a shortcoming of the techniques employed. Indeed, this will be shown to be the case. In fact, besides simply being infinite-dimensional, $\ker (\slashed D|_{L^2})$  displays the following salient properties in the model case, which generalize to the case of $Y$ closed: 
\medskip 
\begin{center}

\begin{minipage}{5.15in}
 \begin{enumerate}
\item[{\bf Expansion}:] Solutions $\Psi_\ell^\circ$ have asymptotic expansions with terms $r^{k-\tfrac12}$ for $k\in \Z$. 
\item[{\bf Isomorphism}:] \label{isoprop}There is an isomorphism $\ker(\slashed D|_{L^2})\simeq  L^2(\mathcal Z_0; \mathcal \C)$ given by associating a kernel element to each eigenfunction of the Dirac operator $i\del_t$ on $\mathcal Z_0$. 
\item[{\bf Rapid Decay}:]  \label{rapiddecayprop}For eigenvalues $|\ell|>>0$, solutions $\Psi_\ell^\circ$ decay exponentially away from $\mathcal Z_0$. 
\end{enumerate}
\end{minipage}
\end{center}
\medskip 
\noindent The first item follows from the power series expansion of $e^{-|\ell| r}$. The remainder of Section \ref{section3} defines and establishes the asymptotic expansions in the first item more precisely, while precise statements and proofs of the second and third items are the subject of Section \ref{section4}. 

\medskip 
\begin{rem} There are no $\Z_2$-harmonic spinors in Example \ref{euclideanexample} because there are no solutions with finite $rH^1_e$-norm. There are, however, still explicit solutions given in terms of modified Bessel functions for $(k,\ell)= (\pm 1, \ell)$ which have leading order $z^{1/2}$ and $\overline z^{1/2}$ which lie in $r H^1_{loc}$ near $\mathcal Z_0$ but grow exponentially as $r\to \infty$. Therefore, intuitively, the existence of a $\Z_2$-harmonic spinor on a closed manifold $Y$ is a rare phenomenon and occurs only when one of these exponentially growing solutions can be patched together with a bounded solution on the complement of a neighborhood of $\mathcal Z_0$ in $Y$. 
\end{rem}
\subsection{Local Expressions}
\label{section3.2}

From here on, we return to the case that $(Y,g_0)$ is a closed, oriented Riemannian 3-manifold and $\mathcal Z_0$ a smoothly embedded link. In order to write local expressions, we endow a tubular neighborhood $N_{r_0}(\mathcal Z_0)$ of (a component of) $\mathcal Z_0$ with a particular set of coordinates. 

Let $\gamma: S^1\to \mathcal Z_j$ be an arclength parameterization of a chosen component $\mathcal Z_j$ of $\mathcal Z_0$ whose length is denoted by $|\mathcal Z_j|$, and fix a global orthonormal frame $\{n_1, n_2\}$ of the pullback $\gamma^*N\mathcal Z_0$ of the normal bundle to $\mathcal Z_0$. We are free to arrange that $\{\dot \gamma, n_1, n_2\}$ is an oriented frame of $TY$ along $\mathcal Z_j$.

\begin{defn}
\label{fermi}

A system of {\bf Fermi coordinates} for $r_0 < r_{\text{inj}}$ where $r_{\text{inj}}$ is the injectivity radius of $Y$ is the diffeomorphism $S^1\times D_{r_0}\simeq N_{r_0}(\mathcal Z_i)$ for a chosen component of $\mathcal Z_0$ given by $$(t,x,y)\mapsto \text{Exp}_{\gamma(t)}(xn_1 + yn_2),$$

\noindent where $t\in (0, |\mathcal Z_j|]$ is the arclength coordinate on $\mathcal Z_j\simeq S^1$. In these coordinates the Riemannian metric $g_0$ can be written \be g_0=dt^2 + dx^2 + dy^2 \  +\  O(r)\label{metriccoords}\ee

\noindent Given such a coordinate system, $(t,r,\theta)$ are used to denote the corresponding cylindrical coordinates, and $(t,z,\overline z)$ the complex ones on the $D_{r_0}$ factor. \end{defn}

\medskip 

\begin{rem}
There are different conventions on the usage of ``Fermi coordinates'' in the literature, with some requiring that the curve is question is a geodesic. In that situation, $n_x$ and $n_y$ can be chosen to locally solve an ODE so that $g=dt^2 +dx^2+ dy^2 + O(r^2)$. Here, we make no such assumption and the difference from the product metric is $O(r)$. Explicitly, the correction to the product metric is 
$$ \big (2x\frak m_x(t)  +2y\frak m_y(t) \big) \  dt^2\  + \ \big( \mu(t) y \big)   \ dt dx \  +\  \big(-\mu(t)x \big) \  dt dy \ + \ O(r^2)$$
\noindent where $ \frak m_\alpha(t) = \br \nabla_{\dot \gamma}\dot \gamma , n_\alpha \kt $  for $\alpha=x,y$ and $
\mu(t) =  \br \nabla_{\dot \gamma} n_x, n_y\kt= -\br \nabla_{\dot \gamma} n_y, n_x\kt.
$

\end{rem}

\medskip 

A choice of Fermi coordinates induces a trivialization of the frame bundle of $Y$ on $N_{r_0}(\mathcal Z_j)$ as follows: it is given by the global orthonormal frame $\{e_t, e_1, e_2\}$ uniquely defined by the property that it restricts to $\{\del_t, \del_x, \del_y\}$ along $\mathcal Z_j$ and is defined by radial parallel transport for $0<r<r_0$. There are two possibilities for the isomorphism class of the restricted spin structure:

\medskip 

\begin{minipage}{6in}

\begin{enumerate}
\label{list2}
\item[{\bf Case 1:}] The spin structure restricts to the product $\frak s_0|_{ N_{r_0}(\mathcal Z_i)}\simeq  N_{r_0}(\mathcal Z_j) \times \text{Spin}(3)$, so that \be S|_{ N_{r_0}(\mathcal Z_i)}\simeq \underline{\C}^2\otimes \ell_0 \label{spiniso}\ee
\item[{\bf Case 2:}] The spin structure restricts to $N_{r_0}(\mathcal Z_j)$ as the double cover of $Fr(Y)|_{N_{r_0}(\mathcal Z_j)} \simeq N_{r_0}(\mathcal Z_j) \times SO(3)$ that is non-trivial in the $\mathcal Z_j$ factor, so that \be S|_{ N_{r_0}(\mathcal Z_j)}\simeq\underline{\C}^2 \otimes \ell_t \otimes  \ell_0 \label{spinisotwisted}\ee
where $\ell_t$ is the pullback of the mobius bundle on $\mathcal Z_j$. 
\end{enumerate}
\end{minipage} 

\bigskip 
\noindent It is worth noting that, in general, there are some rather subtle topological restrictions on which combinations of Case 1 and Case 2 can occur when $\mathcal Z_0$ has multiple components. For instance, if $Y=S^3$ and $\mathcal Z_0$ has a single component, then the unique spin structure on $S^3$ always restricts to Case 2 on a tubular neighborhood of $\mathcal Z_0$; if $\mathcal Z_0$ has multiple components then the number which fall in Case 1 must be even.

First consider Case 1. The trivialization (\refeq{spiniso}) may chosen so that the factors of $\C^2$ are given by the $\pm i$ eigenspaces of $\gamma(e^t)$, in which case Clifford multiplication is given by
$$\gamma(e^t)=\sigma_t=\begin{pmatrix} i & 0 \\ 0 &  -i \end{pmatrix}\hspace{1.5cm}\gamma(e^1)=\sigma_x=\begin{pmatrix} 0 & -1 \\ 1 &  0 \end{pmatrix}\hspace{1.5cm}\gamma(e^2)=\sigma_y=\begin{pmatrix} 0 & i \\ i &  0 \end{pmatrix}.$$ 

\noindent As in the model case  (Example \ref{euclideanexample}), spinors can be written in this trivialization in the form (\refeq{sectionforma}) where $\psi^\pm$ are $\C$-valued function on $N_{r_0}(\mathcal Z_j)$. In Case 2, the same holds after changing the trivialization by $e^{i\ell t/2}$ which alters the Dirac operator by $\tfrac{i}{2}\gamma(dt)$. This leads to the following: 

\begin{lm}\label{Dlocalexpression}
In both Case (1) and Case (2), the $\Z_2$-Dirac operator in local coordinates around a component $\mathcal Z_i\subseteq \mathcal Z_0$ and the above trivialization takes the form
  $$\slashed D= \slashed D_\circ +\frak d $$
where
\begin{itemize}
\item $\slashed D_\circ$ is the Dirac operator in the product metric on $N_{r_0}(\mathcal Z_i)$, given by (\refeq{dirac})
\item $\frak d $ is a first order perturbation arising from the $O(r)$ terms of $g_0$, the perturbation $B_0$ and $\tfrac{i}{2}\gamma(dt)$ in Case 2, so that $$|\frak d \psi|\leq C (r |\nabla \psi| + |\psi|) $$
\noindent holds pointwise.\qed

\end{itemize}

\end{lm}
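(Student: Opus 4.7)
The plan is to expand the full Dirac operator in Fermi coordinates and organize the result by comparing term-by-term with the product Dirac operator $\slashed D_0$ of (\refeq{dirac}). The claim is essentially a bookkeeping statement, so the task is to identify how each piece of the construction deviates from the product model, and to verify that no deviation contributes worse than $O(r)$ at first order or $O(1)$ at zeroth order.

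First I would build a local orthonormal frame $\{\tilde e_t, \tilde e_1, \tilde e_2\}$ for $g_0$ starting from the coordinate frame $\{\partial_t, \partial_x, \partial_y\}$ by Gram--Schmidt. Since the metric correction from the product is $O(r)$ (see (\refeq{metriccoords}) and the explicit formula in the preceding remark), the change of frame matrix has the form $\mathrm{Id} + O(r)$, and its inverse (the vielbein) satisfies $e^a(\partial_b) = \delta^a_b + O(r)$. Consequently, for a spinor $\psi$ written in the form (\refeq{sectionforma}), Clifford multiplication by $e^a$ differs from the constant Pauli matrix $\sigma^a$ by an $O(r)$-smooth endomorphism, which when paired with $\partial_a$ produces terms of the form $O(r)\cdot \nabla \psi$, contributing to the first-order part of $\frak d$ with coefficient vanishing along $\mathcal Z_0$.

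Next I would compute the spin connection $\nabla^{\mathrm{spin}}$ in this frame. The Christoffel symbols of $g_0$ are first derivatives of the metric and so are smooth bounded functions on $N_{r_0}(\mathcal Z_i)$; they vanish on $\mathcal Z_0$ in the geodesic Fermi case but are only $O(1)$ in general. Translating to spin connection coefficients $\tfrac{1}{4}\omega_{ab}\gamma^a\gamma^b$ yields a bounded smooth endomorphism-valued 1-form, which contributes an $O(1)$ zeroth-order operator to $\slashed D$. The perturbation $B_0 \in \Omega^1(\mathfrak{so}(S))$ is smooth on $Y$ and therefore contributes another bounded zeroth-order term. In Case 2, passing from the trivialization $\underline{\C}^2 \otimes \ell_t \otimes \ell_0$ to $\underline{\C}^2\otimes \ell_0$ via multiplication by $e^{i\ell t/2}$ as in the text conjugates the Dirac operator and introduces the additional bounded zeroth-order term $\tfrac{i}{2}\gamma(dt)$. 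In all cases the zeroth-order contributions combine to a uniformly bounded endomorphism, giving the $|\psi|$ term in the stated bound.

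Collecting these observations: writing $\slashed D = \gamma(\tilde e^a)(\nabla^{\mathrm{spin}}_{\tilde e_a} + B_0(\tilde e_a))$ and subtracting $\slashed D_0 = \sigma^a \partial_a$ on the $e^{i\theta/2}$-trivialized bundle, the difference $\frak d := \slashed D - \slashed D_0$ splits as $\frak d = \frak d_1 + \frak d_0$, where $\frak d_1$ is a first-order operator with $O(r)$-smooth coefficients coming from the vielbein correction and $\frak d_0$ is a zeroth-order bounded endomorphism collecting the spin connection coefficients, $B_0$, and (in Case 2) the $\tfrac{i}{2}\gamma(dt)$ term. This yields $|\frak d \psi| \leq C(r|\nabla \psi| + |\psi|)$ directly. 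The only real subtlety is to keep the sign and phase conventions straight between the two cases and to verify that the $e^{i\theta/2}$ trivializing section of $\ell_0$ is handled identically in $\slashed D$ and $\slashed D_0$, so that it drops out of the comparison; beyond this the argument is a routine expansion.
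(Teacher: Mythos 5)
Your proof is correct and fills in, by the natural route, the details the paper suppresses (the lemma is stated with a terminal \qed, i.e.\ treated as immediate from the Fermi-coordinate form $g_0 = dt^2+dx^2+dy^2+O(r)$ and the preceding discussion of trivializations). The one small presentational wobble is that in the paper's chosen orthonormal frame $\{e_t,e_1,e_2\}$ Clifford multiplication $\gamma(e^a)$ is \emph{exactly} the constant matrix $\sigma^a$, and the $O(r)$ first-order correction arises instead from $e_a = \partial_a + O(r)\,\partial$; your formulation (expanding against the coordinate coframe so that $\gamma(dx^a)=\sigma^a+O(r)$) is equivalent and equally valid, so this is a matter of bookkeeping rather than a gap.
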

\subsection{Asymptotic Expansions}
\label{section3.3}
This subsection establishes that $\Z_2$-harmonic spinors have local power series expansions by half integer powers of $r$. These result follow from the general regularity theory for elliptic edge operators in \cite{MazzeoEdgeOperators}.

Fix a choice of Fermi coordinates near each component of $\mathcal Z_0$.  

\begin{defn} \label{polyhomogenousdef}A spinor $\psi\in L^2(Y\setminus \mathcal Z_0; S)$ is said to admit a {\bf polyhomogenous expansion} with index set $\Z^++\tfrac{1}{2}$ if $$\psi\sim   \sum_{ n,p\geq 0  }\sum_{k \in \Z} \begin{pmatrix}  \ \ c_{n,k,p}(t)  e^{ik\theta}  \ \  \\  \ \ d_{n,k,p}(t) e^{ik\theta} \ \ \end{pmatrix} r^{n+1/2}\log(r)^pe^{-i\theta/2}$$

\noindent where $c_{n,k,p}(t), d_{n,k,p}(t)\in C^\infty(S^1;\mathbb \C)$, and where $\sim$ denotes convergence in the following sense: for every $N\in \N$, the partial sums $$\psi_N=   \sum_{ n\leq N  }\sum_{k= -2n }^{2n+1} \sum_{p\leq n-1} \begin{pmatrix}  \ \ c_{n,k,p}(t)  e^{ik\theta}  \ \  \\  \ \ d_{n,k,p}(t) e^{ik\theta} \ \ \end{pmatrix} r^{n+1/2}\log(r)^pe^{-i\theta/2} $$ satisfy the pointwise bounds 
\begin{eqnarray} |\psi-\psi_N| &\leq& C_{N} r^{N+1+\tfrac14}\label{polyhom1}  \hspace{2cm} |\nabla_t^{\alpha}\nabla^{\beta}(\psi-\psi_N) |\leq C_{N,\alpha,\beta} r^{N+1+\tfrac14-|\beta|}\label{polyhom2}\end{eqnarray}

\noindent for constants $C_{N,\alpha,\beta}$ determined by the background data and choice of local coordinates and trivialization. Here, $\beta$ is a multi-index of derivatives in the directions normal to $\mathcal Z_0$. 
\end{defn}

The work of Mazzeo \cite{MazzeoEdgeOperators} implies the following regularity result about $\Z_2$-harmonic spinors (see also Appendix A of \cite{SiqiSLag}). 

\begin{prop} \label{asymptoticexpansion}Suppose that $\Phi_0\in rH^1_e(Y\setminus \mathcal Z_0; S)$ is a $\Z_2$-harmonic spinor. Then $\Phi_0$ admits a polyhomogenous expansion with index set $\mathbb Z^++\tfrac12$. Moreover, $c_{n,k,p}$ and $d_{n,k,p}$ vanish unless $-2n\leq k \leq 2n+1$ and $p\leq n-1$. Thus $\Phi_0$ has a local expression 

\begin{eqnarray} \Phi_0&\sim &\begin{pmatrix} c(t) \sqrt{z} \\ d(t)\sqrt{\overline z} \end{pmatrix} + \sum_{ n\geq 1  }\sum_{k= -2n }^{2n+1} \sum_{p=0}^{n-1} \begin{pmatrix}  \ \ c_{n,k,p}(t)  e^{ik\theta}  \ \  \\  \ \ d_{n,k,p}(t) e^{ik\theta} \ \ \end{pmatrix} r^{n+1/2}\log(r)^pe^{-i\theta/2} \label{Phipolyexp} \end{eqnarray}   
 
\noindent where $c(t), d(t),c_{k,m,n}(t), d_{k,m,n}(t) \in C^\infty(S^1;\C)$. In this form, non-degeneracy in the sense of Definition \ref{regulardef} is equivalent to the requirement that $|c(t)|^2 + |d(t)|^2>0$ is nowhere-vanishing. The same result holds for an $rH^1_e$-solution of the operator $\slashed D-\lambda \text{Id}$.  
\end{prop}

\begin{proof}
The existence of such an expansion is a consequence of the regularity theory in \cite{MazzeoEdgeOperators} (Section 7, Proposition 7.17) and the fact that the indicial roots are $j +\tfrac12$ for $j\in \Z$ in this case.  See also \cite{MazzeoHaydysTakahashi, SiqiSLag}. The constraints on the expansion compared to Definition \ref{polyhomogenousdef} then follow from writing the equation $\slashed D \Phi_0 -\lambda \Phi_0=0$ in Fermi coordinates as $$\begin{pmatrix} 0 & -2\del \\  2\delbar & 0 \end{pmatrix} \Phi_0 =- \frak d \Phi_0 - \begin{pmatrix} -i\del_t & 0 \\ 0 & i\del_t \end{pmatrix}\Phi_0 +\lambda \Phi_0$$
with $\frak d$ as in Lemma \ref{Dlocalexpression}, and formally solving term by term.   \end{proof}

The expansion (\refeq{Phipolyexp}) depends on the choice of Fermi coordinates in the following way. Another choice of Fermi coordinates arises from an alternative choice of normal frame $n_x, n_y$. This change of frame may be taken to be the descent of a change of trivialization of the spin structure, thus may be written in complex coordinates on $N\mathcal Z_0$ as  $$n_1 + in_2 \mapsto e^{-2i\sigma(t)}(n_1+in_2)$$
where $\sigma(t): \mathcal Z_0 \to S^1$ (the minus sign in the exponent is due to the convention that Clifford multiplication is by cotangent vectors). The new complex coordinates $(t,z',z')$ resulting from such a transformation are likewise related to the original coordinates by

$$(t,z', \overline z')=  (t, e^{-2i\sigma(t)}z, e^{2i\sigma(t)}\overline z').$$

This shows the following: 

\begin{cor} \label{transformationlem} For a term of a polyhomogenous expansion  $$\psi(t,z,\overline z)= \begin{pmatrix}  \ a(t)  e^{ik\theta}  \  \\   \ b(t) e^{ik\theta} \ \end{pmatrix} r^{n+1/2}\log(r)^pe^{-i\theta/2} 
$$  

\noindent the coefficients are naturally sections $a(t) \in C^\infty (\mathcal Z_0; N\mathcal Z_0^{-k})$ and $b(t)\in C^\infty(\mathcal Z_0; N\mathcal Z_0^{-k +1})$. In particular, the leading coefficients $c(t), d(t)$ of  (\refeq{Phipolyexp}) are sections  of $N\mathcal Z_0^{-1}, N\mathcal Z_0$ respectively. \qed 
\end{cor}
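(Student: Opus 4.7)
The plan is a direct computation: I will track how each factor of the expansion term transforms under a change of Fermi coordinates, and then match the net transformation to the standard transformation law for sections of $N\mathcal Z_0^{-k}$ and $N\mathcal Z_0^{-k+1}$. From the frame rotation $n_1 + in_2 \mapsto e^{-2i\sigma(t)}(n_1+in_2)$, the new complex coordinate satisfies $z = e^{2i\sigma(t)}z'$, hence $r = r'$ and $\theta = \theta' + 2\sigma(t)$. Substituting into the angular factors gives
\[
e^{ik\theta}\cdot e^{-i\theta/2} = e^{(2k-1)i\sigma(t)}\cdot e^{ik\theta'}e^{-i\theta'/2},
\]
while $r^{n+1/2}\log(r)^p$ is invariant.

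Next, the trivialization of $S$ also changes. The frame rotation by $2\sigma$ in the $n_1 n_2$-plane lifts to $\mathrm{Spin}(3)$ as $\exp(\sigma\,\gamma(e_1)\gamma(e_2)) = \mathrm{diag}(e^{-i\sigma},e^{i\sigma})$ in the trivialization from Section \ref{section3.2}, where the two summands correspond to the $\pm i$-eigenspaces of $\gamma(e_t)$. Interpreting this as a passive change of trivialization, the components $(\psi^+,\psi^-)$ acquire the factors $(e^{i\sigma},e^{-i\sigma})$. Combining with the angular computation above yields the new coefficients
\[
\tilde a(t) = e^{2ik\sigma(t)}\,a(t), \qquad \tilde b(t) = e^{2i(k-1)\sigma(t)}\,b(t).
\]

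Finally I would compare these phases against the transformation law for sections of $N\mathcal Z_0$ and its tensor powers. A section of $N\mathcal Z_0$ (with its orientation-induced complex structure) transforms with coefficient $f \mapsto fe^{-2i\sigma}$ under the given frame change, so sections of $N\mathcal Z_0^{-k}$ transform as $f \mapsto fe^{2ik\sigma}$ and sections of $N\mathcal Z_0^{-k+1}$ as $f \mapsto fe^{2i(k-1)\sigma}$, matching $\tilde a$ and $\tilde b$ exactly. Applied to the leading term of $\Phi_0$, writing $\sqrt{z} = e^{i\theta}\cdot r^{1/2}e^{-i\theta/2}$ (so $k=1$) and $\sqrt{\overline z} = r^{1/2}e^{-i\theta/2}$ (so $k=0$) then gives $c(t) \in C^\infty(\mathcal Z_0;N\mathcal Z_0^{-1})$ and $d(t) \in C^\infty(\mathcal Z_0;N\mathcal Z_0)$ as claimed. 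The main subtlety is consistent bookkeeping of sign conventions: the direction of the $\mathrm{Spin}$ lift, the orientation-induced complex structure on $N\mathcal Z_0$, and active-versus-passive interpretation of the frame change must all be fixed compatibly with the conventions defining $\slashed D$ in Section \ref{section3.2}, since any sign error would replace one of the bundles by its dual.
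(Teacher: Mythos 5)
Your proof is correct and takes essentially the same approach as the paper: the paper records the coordinate change $z' = e^{-2i\sigma(t)}z$ and asserts the corollary directly, while you supply the details it leaves implicit — the angular bookkeeping $e^{ik\theta}e^{-i\theta/2} = e^{(2k-1)i\sigma}e^{ik\theta'}e^{-i\theta'/2}$, the $\mathrm{Spin}(3)$ lift $\exp(\sigma\gamma(e_1)\gamma(e_2)) = \mathrm{diag}(e^{-i\sigma},e^{i\sigma})$ acting passively on components, and the match against the transformation law for sections of $N\mathcal Z_0^{-k}$ and $N\mathcal Z_0^{-k+1}$. The sign conventions you flag are indeed the delicate point, but your choices are internally consistent with the Clifford algebra conventions of Section \ref{section3.2} and yield the correct conclusion for the leading coefficients $c(t)$, $d(t)$.
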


\begin{rem}\label{weakexpansion}
More generally, $L^2$ kernel elements elements have similar asymptotic expansions, but it is no longer necessarily the case that the coefficients are smooth. In general, the coefficients only make sense as distributions (see Section 7 of \cite{MazzeoEdgeOperators} for a more general discussion). If $\psi \in  \text{ker}(\slashed D) \cap L^2(Y\setminus \mathcal Z_0; S)$, then it admits a {\bf weak asymptotic expansion} of the form 

$$\psi\sim \begin{pmatrix} \frac{c_0(t)}{\sqrt{z}} \\ \frac{d_0(t)}{\sqrt{\overline z}} \end{pmatrix} \ + \ \sum_{ n\geq 1  }\sum_{k= -2n-1 }^{2n+2} \sum_{p=0}^{n-1} \begin{pmatrix}  \ \ c_{n,k,p}(t)  e^{ik\theta}  \ \  \\  \ \ d_{n,k,p}(t) e^{ik\theta} \ \ \end{pmatrix} r^{n-1/2}\log(r)^pe^{-i\theta/2} \label{Phipoly}    $$
\noindent where $c_{n,k,p}, d_{n,k,p} \in L^{-1/2-n}(S^1;\C)$ are understood in a distributional sense and are sections of an appropriate power of $N\mathcal Z_0$ as in Corollary \ref{transformationlem}. {\it There is  no nice sense in which these weak expansions converge}. In particular, if $\psi\in L^2$ has such an expansion, then the difference $|\psi-\psi_N|$ will not necessarily lie in $L^2$. Consequently, there is no meaningful sense in which the later terms are ``smaller'' than the earlier ones. If there were stronger notions of convergence for such weak asymptotic expansions, it is possible that Theorem \ref{mainb} could be proved without the use of Nash-Moser theory.

\end{rem}
\bigskip 

\section{The Obstruction Space}
\label{section4}

This section studies the infinite-dimensional cokernel of the operator \be \slashed D: rH^1_e(Y\setminus \mathcal Z_0; S)\lre L^2(Y\setminus \mathcal Z_0;S),\label{hascokernel}\ee

\noindent which coincides with $\ker(\slashed D|_{L^2})$ by Lemma \ref{L^2splitting}. The main results of this section, Propositions \ref{cokproperties} and \ref{cokproperties2} generalizes the three key properties noted below Example \ref{euclideanexample} to the case of a compact manifold.

\begin{defn}Define the {\bf Obstruction Space} associated to the data $(\mathcal Z_0, g_0, B_0)$ by 
$$ \text{\bf Ob}(\mathcal Z_0):= \{\psi \in L^2 \ | \ \psi \in \ker(\slashed D|_{L^2}) \}.$$

\noindent In addition, define $\text{\bf Ob}(\mathcal Z_0)^\perp = \{\psi \in \text{\bf Ob}(\mathcal Z_0) \ | \ \br \psi, \Phi\kt_{L^2}=0 \ \forall \Phi \in \ker(\slashed D_{rH^1_e})\}$. The $L^2$-orthogonal projections to $\text{\bf Ob}(\mathcal Z_0), \text{\bf Ob}(\mathcal Z_0)^\perp$ are denoted $\Pi_0, \Pi_0^\perp$ respectively. 
\label{Obdefinition}
\end{defn}

 \noindent Although this definition appears to be a redundant renaming of $\ker(\slashed D|_{L^2})$, it is made in preparation for Section \ref{section8}. There, the obstruction space will be extended to a vector bundle over the data $(\mathcal Z, g, B)$ whereas $\ker(\slashed D|_{L^2})$ may not be locally trivial (analogously to the finite-dimensional case where the dimension may jump).

The upcoming Proposition \ref{cokproperties} provides an isomorphism between $\text{\bf {Ob}}(\mathcal Z_0)^\perp$ and the space of sections of a vector bundle on $\mathcal Z_0$ (cf. the discussion following Theorem \ref{maina}). The fibers of this vector bundle are given as follows. Let $\underline \C^2\to \mathcal Z_0$ denote the trivial bundle with fiber $\C^2$. Sections of each summand may be decomposed in Fourier series (using the orientation given by the fixed choice of Fermi coordinates in Section \ref{section3.3}; denote by $H$ the {\bf modified Hilbert transform}, i.e. the pseudo-differential operator on $\mathcal Z_0$ whose symbol is given by $\text{sgn}(\ell)$ where $\ell$ is the Fourier index.

\begin{defn} \label{calderondef}Define the {\bf Calder\'on Subbundle} $\mathcal C_0\subseteq \underline \C^2\to \mathcal Z_0$ as the trivial complex line bundle given by the first summand, and the {\bf Calder\'on Subspace} as the subspace $$\Lambda_\circ = \left\{\begin{pmatrix} \xi(t) \\ H\xi(t)\end{pmatrix} \ \Big | \  \xi \in L^2(\mathcal Z_0; \mathcal C_0)\right\}\subseteq L^2(\mathcal Z_0;  \C^2).$$

\noindent Clearly, there is a canonical isomorphism $\Lambda_\circ \simeq L^2(\mathcal Z_0; \mathcal C_0)$. 
\end{defn}

 The first main result of Section \ref{section4} is the following. 
\begin{prop}\label{cokproperties}
There is an isomorphism \be \text{ob}: L^2(\mathcal Z_0; \mathcal C_0 ) \lre \text{\bf Ob}(\mathcal Z_0)^\perp \label{obisomorphism}.\ee 
\end{prop}
\noindent It follows that $\text{ob}\oplus \iota:  L^2(\mathcal Z_0; \mathcal C_0 )\oplus \ker(\slashed D|_{rH^1_e})\to \text{\bf {Ob}}(\mathcal Z_0)$ is an isomorphism, where $\iota$ is the inclusion of the second factor. 
\medskip

Proposition \ref{cokproperties} may be viewed as a Poisson extension result for a ``codimension 2'' boundary-value problem. Although the proof does not require making these notions precise, the heuristic guides the remainder of Section \ref{section4} and is worth describing in some detail (see \cite{MazzeoEdgeOperatorsII} for a formal theory of boundary-value problems for edge operators). There is a ``boundary trace'' operator 
$$\text{tr}:  \text{\bf {Ob}}(\mathcal Z_0)^\perp \lre H^{-1/2}(\mathcal Z_0 ; \C^2)$$

\noindent given by taking the leading coefficients $(c_0(t),d_0(t))$ in the polyhomogenous expansion in Remark \ref{weakexpansion}. The fact that these leading coefficients are sections of the trivial bundle is a consequence of Corollary \ref{transformationlem}. More generally, the leading coefficients are valued in a ``trace bundle'' defined precisely in \cite[Thm. 2.5]{MazzeoEdgeOperatorsII}, but by a coincidence of the transformation rule for spinors and the exponents in the expansions of Remark \ref{weakexpansion}, this bundle is trivial in our case. 

Example \ref{euclideanexample} shows that on $Y_\circ=S^1\times \R^2$,  $\text{tr}: \ker(\slashed D|_{L^2})\to \Lambda_\circ $ is an isomorphism (to the closure in $H^{-1/2}$), with inverse given by the Poisson extension operator \begin{eqnarray}\frak P_\circ : H^{-1/2}(\mathcal Z_0; \mathcal C_0) &\lre& \ker(\slashed D|_{L^2}) \\   \xi(t)&\mapsto&  \sum_{\ell \in \Z}  |\ell|^{-1/2}\br  \xi(t), e^{i\ell t}\kt \Psi_\ell^\circ \label{Poissondef}\end{eqnarray}
\noindent where $\Psi_\ell^\circ$ are as in Equation (\refeq{euclideancokernel}), and where we implicitly use the isomorphism in Definition \ref{calderondef}. Note the regularization factor $|\ell|^{-1/2}$ is needed because $\Psi_\ell^\circ$ is defined to be normalized in $L^2$. More generally, on a compact manifold, there is a Calder\'on subspace $\Lambda =\text{tr}(\text{\bf Ob}^\perp(\mathcal Z_0))\subseteq H^{-1/2}(\mathcal Z_0; \C^2)$ (Lagrangian with respect to the natural symplectic form on $H^{-1/2}(\mathcal Z_0;\C^2)$) and a Poisson operator $\frak P: \Lambda \to {\text{\bf Ob}(\mathcal Z_0)}^\perp$ giving its inverse. 

The Calder\'on subspace $\Lambda_\circ$ for the model operator has the pleasing property that it is canonically identified with the space of sections of the vector bundle $\mathcal C_0$. In general, the Calder\'on subspace $\Lambda$ has no reason to have such an identification, and in order to view the operator of Theorem \ref{maina} as a pseudodifferential operator, one must chose such an identification. The diagram below, {\it which does not commute}, depicts two natural (inequivalent) choices for identifying the model Calder\'on subspace $\Lambda_\circ$ with $\text{\bf Ob}(\mathcal Z_0)^\perp$.

\begin{center}
\tikzset{node distance=3.0cm, auto}
\begin{tikzpicture}
\node(A){$\Lambda_\circ$};
\node(X)[right of=A,xshift=-1.7cm,yshift=1.0cm]{$\cancel\circlearrowleft$};
\node(C)[above of=A, yshift=-1cm]{$\Lambda$};
\node(C')[right of=A]{$L^2(Y\setminus \mathcal Z_0;S)$};
\node(D')[above of=C',yshift=-1cm]{$\text{\bf Ob}(\mathcal Z_0)^\perp$};
\draw[->] (A) to node {$\pi_\Lambda$} (C);
\draw[->][swap] (A) to node {$\chi \frak P_\circ$} (C');
\draw[->] (C') to node {$ \Pi_0^\perp$} (D');
\draw[<-][swap] (D') to node {$\frak P$} (C);
\end{tikzpicture}\end{center}

\noindent In the upper pathway, $\pi_\Lambda$ is the $L^2$-orthogonal projection to $\Lambda$ in $H^{-1/2}(\mathcal Z_0; \C^2)$, and $\frak P$ is the Poisson extension operator on $Y$. In the lower pathway, $\frak P_\circ$ is the model Poisson extension operator on $Y_\circ$ pasted onto the compact $Y$ with a cut-off function $\chi$ supported near $\mathcal Z_0$, and $\Pi_0^\perp$ is the $L^2$-orthogonal projection to $\text{\bf Ob}(\mathcal Z_0)^\perp$ from Definition \ref{Obdefinition}. The above is to say that the two natural choices are to project to $\Lambda$ in the boundary space then extend, or first extend using the model Poisson operator and then project on the 3-manifold $Y\setminus \mathcal Z_0$. While the first option (top path) is arguably more natural from the perspective of \cite{MazzeoEdgeOperatorsII}, it leads to difficulties showing certain error terms arising from the metric are bounded (let alone compact). To prove Proposition \ref{cokproperties}, the second (bottom path) was found to be more robust, and the definition of the map $\text{ob}$ is a minor modification of the composition $\text{ob}=\Pi_0^\perp\circ \chi \frak P_\circ$. 

In this approach, we also use the $L^2$ (rather than $H^{-1/2}$) normalization to eliminate the normalization factor in (\refeq{Poissondef}). (See also Remark \ref{obconvention}).

The map $\text{ob}$ provides, in a very loose sense, some sort of ``coordinates'' on the obstruction by identifying with the easily-described space $\Lambda_\circ$. To calculate the image of a spinor $\text{ob}^{-1}\circ \Pi_0^\perp \Psi$ in $\Lambda_\circ$, one may choose a basis. Associated to our choice of the bottom pathway in the above diagram, there is a natural basis for $\text{\bf {Ob}}(\mathcal Z_0)^\perp$, given by the image of Fourier modes. Let  $e^{i\ell t_j} \in \Gamma(\mathcal Z_0;\mathcal C_0)$ denote the $\ell^{th}$ Fourier mode on the $j^{th}$ component of $\mathcal Z_0$, where $\ell \in {2\pi \Z}/{|\mathcal Z_{j}|}$. Given Proposition \ref{cokproperties}, set 
$$\Psi_{j\ell}=\text{ob}(e^{i\ell t_j}).$$

\noindent This basis satisfies the following. In the upcoming proposition statement, we tacitly assume that $\mathcal Z_0$ consists of a single component and omit the subscript $j$

\begin{prop}\label{cokproperties2} 
(A) When $\slashed D$ is complex linear, the $\text{\bf Ob}(\mathcal Z_0)$-component of a spinor $\psi\in L^2$ under (\refeq{obisomorphism}) is given by

\be
 \text{ob}^{-1}(\Pi_0^\perp\psi)= \sum_{\ell} \br\psi,  \Psi_{\ell}  \kt_{{\C}} \phi_\ell.\label{innerprods} \hspace{3cm} \iota^{-1}(\psi)=\sum_k \br  \psi, \Phi_k\kt \Phi_k.\ee

\noindent where $\br -,-\kt_\C$ is the hermitian inner product, and $\Phi_k$ a (real) basis of $\ker(\slashed D|_{rH^1_e})$. Moreover,  $$\Psi_{\ell} =\chi\Psi_\ell^\circ + \zeta_{j\ell}+ \xi_{\ell}$$ where 

\begin{itemize}
\item $\Psi_\ell^{\circ}$ 
are the $L^2$-orthonormalized Euclidean obstruction elements from Example \ref{euclideanexample} (in the trivialization \refeq{Dlocalexpression}) and $\chi$ is a cutoff function supported on a tubular neighborhood of $\mathcal Z_0$.
 \item $\zeta_{\ell}$  is a perturbation with $L^2$-norm $O(|\ell|^{-1})$ which decays exponentially away from $\mathcal Z_0$ in the following sense:  
 \be \|\zeta_{\ell}\|_{L^2(A_{n\ell}^{})} \leq \frac{C}{|\ell|}\text{Exp}\left(-\frac{n}{c_1}\right) 
\label{expdecayzeta}. \ee
where $A_{n\ell}$ denotes the collection of annuli \be A_{n\ell}= \left\{ \tfrac{n}{|\ell|}R_0 \leq r^{} \leq \tfrac{n+1}{|\ell|}R_0 \right\}\label{annuli}\ee for some constant $R_0$, and $r$ denotes the geodesic distance to $\mathcal Z_{0}$. Additionally,  in Fermi coordinates on $N_{r_0}(\mathcal Z_0)$ and in the trivialization of Lemma \ref{Dlocalexpression},  $\zeta_{\ell}$ is a linear combination of only Fourier modes $e^{ipt}$ in the range $\ell - {|\ell|}/{2} \leq p \leq \ell + {|\ell|}/{2}$. 
\item $\xi_{\ell}$ is a perturbation of $L^2$-norm $O(|\ell|^{-2})$ i.e. satisfying
$$\|\xi_{\ell}\|_{L^2} \leq \frac{C}{|\ell|^2}$$
for a universal constant $C$.  

\end{itemize}

\noindent (B) In the case that $\slashed D$ is only $\R$-linear, $\Psi_{l}^\text{Re}=\text{ob}(\text{Re}(\ph_{\ell}))$ and likewise for the imaginary part form a real basis and $$\Psi_{\ell}^{\text{Re}}=\chi\Psi_\ell^\circ + \zeta^{\text{Re}}_{\ell}+ \xi^{\text{Re}}_{\ell}\hspace{2cm} \Psi_{\ell}^{\text{Im}}=i(\chi\Psi_\ell^\circ) + \zeta^{\text{Im}}_{\ell}+ \xi^{\text{Im}}_{\ell}$$ satisfying identical bounds where the  inner product in (\refeq{innerprods}) is replaced by $$\br \psi, \Psi_\ell^{}\kt_\C= \br \psi, \Psi_{\ell}^{\text{Re}}\kt \ + \ i \br \psi, \Psi_{\ell}^{\text{Im}}\kt.$$

\noindent Moreover, in the case that $\mathcal Z_0$ has multiple components, either (A) or (B) holds mutatis mutandis with an additional index $j$ ranging over the components of $\mathcal Z_0$. 
\end{prop}

\medskip

\noindent The reader is cautioned that the basis $\Psi_{\ell}$ is not necessarily orthogonal, and cannot be orthogonalized without disrupting the decay properties in the second bullet point. 

The remainder of Section \ref{section4} proves Propositions \ref{cokproperties} and \ref{cokproperties2} concurrently. Section \ref{section4.1} studies the Poisson extension on a normal neighborhood of $\mathcal Z_0$ and proves a preliminary version of Proposition \ref{cokproperties2}. Sections \ref{section4.2} and \ref{subsection4.3} construct the map $\text{ob}$ and show, respectively, that it is Fredholm and has index zero. Section \ref{section4.4} makes a compact correction so that $\text{ob}$ is an isomorphism, thereby completing the proofs. Section \ref{section4.5} discusses the higher-regularity analogues of both propositions.

\begin{notation} Throughout the remainder of the section we tacitly assume (i) $\slashed D$ is complex linear, and (ii) $\mathcal Z_0$ consists of a single component. The proof in the real-linear and multi-component situation is a trivial extension. In the remainder of the section we make the following conventions to avoid cluttering notation: 

\begin{itemize}
\item[(i)] The subscript $j$ is omitted, and $\br - , - \kt$ denotes the Hermitian inner product.    
\item[(ii)] The data $(\mathcal Z_0, g_0, B_0, S_0, \ell_0, \nabla_0, \gamma_0)$ is fixed and the subscript $0$ is omitted. 
\item[(iii)] The subscript $\circ$ denotes the structures in (ii) in the model case of Example \ref{euclideanexample}. 
\item[(iv)] The subscript $N$ denotes the structures in (ii) on a tubular nbhd. of $\mathcal Z_0$ (cf. Section \ref{section4.1}).
\item[(v)] The choice of Fermi coordinates from Section \ref{section3.3} is fixed throughout. 
\item[(vi)] The `pullback' normalization on $Y$ is used so that the domain of $\text{ob}$ is $L^2(\mathcal Z_0; \mathcal C_0)$ ( not $H^{-1/2}$). 
\end{itemize}
\end{notation}

\subsection{The Model Obstruction}
\label{section4.1}

This section proves a preliminary version of Propositions \ref{cokproperties}-\ref{cokproperties2} on the normal bundle $N\mathcal Z_0$. Choose $r_N>0$ small, and let $\chi_N$ be a cut-off function vanishing for $r> r_N$ and equal to 1 for $r<r_N/2$. Set 
\begin{eqnarray}
(N,g_N)&:=&(N\mathcal Z_0 \ , \  \chi_N g_0 + (1-\chi_N)g_\circ)\\
B_N&:=&\chi_N B_0 \\
\Pi_N&:=& (1- \slashed D_N P_N \slashed D_N) \label{PiN}\\
\frak P_N&:=&  \Pi_N \frak P_\circ \label{PoissonN} \\ 
\Psi_\ell^N&:=& \frak P_N(e^{i\ell t}) \label{PsiN}
\end{eqnarray}

\noindent where $g_\circ=dt^2 + dx^2 + dy^2$ is the product metric, $\slashed D_N, P_N$ are formed using  $(g_N,B_N)$, $\Pi_N$ is the projection to the obstruction as in Corollary \ref{Pdiagram}, and and $\frak P_\circ(e^{i\ell t})= \Psi_\ell^\circ$ is the $L^2$-normalized version of  (\refeq{Poissondef}) so that $\frak P_\circ: L^2(\mathcal Z_0;\mathcal C_0)\to \ker(\slashed D_\circ |_{L^2})$ is an isomorphism. 

The Dirac operator may be written $$\slashed D_N:= \slashed D_\circ +   \frak d$$

\noindent with $\slashed D_\circ, \frak d$ as in Lemma \ref{Dlocalexpression}. If the spin structure falls in Case 2 as in (\ref{list2}), then we truncate $\gamma(idt/2)$ to $\chi_N \gamma(i dt/2)$ for convenience. The preliminary version of Propositions \ref{cokproperties}--\ref{cokproperties2} is the following.

\begin{lm}\label{PsiNbasis}
For $r_N$ sufficiently small,

$$ \frak P_N: L^2(\mathcal Z_0; \mathcal C_0)_0\to \ker(\slashed D_N|_{L^2})$$
\noindent   is an isomorphism, where $L^2(\mathcal Z_0; \mathcal C_0)_0$ is the $L^2$-orthogonal complement of the constant Fourier mode.  

\end{lm}

\begin{rem}Lemma \ref{PsiNbasis} implicitly includes the assertion that $\ker(\slashed D_N|_{rH^1})=0$. The $\ell=0$ mode is omitted simply because the $r^{-1/2}$ asymptotics fail to be $L^2$ on the non-compact space $N$. These modes are treated separately in the index calculation in Section \ref{subsection4.3}. 
\end{rem}

\begin{proof} By Lemma \ref{Dlocalexpression}, $\frak d$ satisfies \be |\frak d \ph| \leq C (r |\nabla \ph| + |\ph| )\label{frakdbound}\ee
pointwise on $\text{supp}(\chi_N)$, and $\frak d=0$ elsewhere, hence $\|\frak d\ph\|_{L^2}\leq C r_N \|\ph\|_{rH^1_e}$.  Thus for $r_N$ sufficiently small, 
$$\|\ph\|_{rH^1_e}\leq C \|\slashed D_\circ \ph\|_{L^2} \leq C \|(\slashed D_\circ +\frak d)\ph\|_{L^2} + C\|\frak d\|_{L^2}\hspace{1cm}  \Rightarrow  \hspace{1cm}\|\ph\|_{rH^1} \leq C' \|\slashed D_N \ph\|_{L^2}.$$
Consequently, $\ker(\slashed D_N|{rH^1_e})=0$ and $$\slashed D_N \slashed D_N: rH^1_e\to rH^{-1}_e$$
is an isomorphism by Lemma \ref{secondorderoperator} with inverse $P_N$. 

Since $\frak P_\circ: L^2(\mathcal Z_0;\mathcal C_0)_0\to \ker(\slashed D_\circ |_{L^2})$ is an isomorphism by Example \ref{euclideanexample}, in order to show that $\frak P_N$ is an isomorphism it suffices to show that $\Pi_N: \ker(\slashed D_\circ|_{L^2})\to \ker(\slashed D_N|_{L^2})$ is an isomorphism, where $\Pi_N$ and $\frak P_N$ are as in (\refeq{PiN}--\refeq{PoissonN}). For injectivity, observe that for $\Psi\in \ker(\slashed D_\circ|_{L^2})$, 

\bea
\|\Psi\|_{L^2} &=& \|\text{Id}-\slashed D_\circ P_\circ \slashed D_\circ \Psi\|_{L^2}\\
&\leq &  \|\text{Id}-\slashed D_N P_N \slashed D_N \Psi\|_{L^2}  +  \|\slashed D_N P_N\slashed D_N-\slashed D_\circ P_\circ \slashed D_\circ \Psi\|_{L^2}\\
&\leq & \|\Pi_N \Psi\|_{L^2} + \|\slashed D_N P_N \frak d\Psi\|_{L^2}\\
&\leq & \|\Pi_N \Psi\|_{L^2} + Cr_N \|\Psi\|_{L^2}, 
\eea
\noindent where the last inequality follows from integrating $\|\frak d \Psi\|_{r^{-1}H^{-1}}=\sup \br \frak d\Psi, \ph \kt$ by parts and applying (\refeq{frakdbound}) for $\ph \in rH^1_e$, then using the fact that $\slashed D_N, P_N$ are bounded. For $r_N$ sufficiently small, it follows that $\Pi_N$ is injective with closed range on $\ker(\slashed D_\circ|_{L^2})$.

For surjectivity, we argue by contraction: suppose that there were $\eta \in \ker(\slashed D_N|_{L^2})$ such that $\br \Pi_N \Psi, \eta\kt_{L^2}=0$ held for all $\Psi \in \ker(\slashed D_\circ|_{L^2})$. Assume that $\|\eta\|_{L^2}=1$ is normalized. Since $\eta \in \ker(\slashed D_N|_{L^2})$, it is orthogonal to $\text{ran}(\slashed D_N|_{r H^1})$, thus writing $\slashed D_N=\slashed D_\circ +\frak d$ we have 
$$0=\br \slashed D_N \ph , \eta \kt  \ \ \Rightarrow \ \ |\br \slashed D_\circ \ph, \eta \kt |=|\br \frak d \ph, \eta\kt| \leq Cr_N \|\ph\|_{rH^1_e},$$

\noindent i.e. the component of $\eta$ in the range of $\slashed D_\circ|_{rH^1}$ is small. Consequently, there is a $\Psi_\circ \in \ker(\slashed D_\circ |_{L^2})$ such that $\eta= \Psi_\circ +w $ with $\|w\|_{L^2}\leq Cr_N$, (hence $1-Cr_N\leq \|\Psi_\circ\|_{L^2}$). But this would imply that  
$$0=\br \Pi_N \Psi_\circ, \eta \kt=\br \Psi_\circ + \slashed D_N P_N\frak d\Psi_\circ \ , \  \Psi_\circ +w  \kt \geq 1 - C'r_N,$$ a contradiction once $r_N$ is sufficiently small. 
\end{proof}

\bigskip 
The next lemma is analogue of Proposition \ref{cokproperties2} on $(N,g_N)$: 

\begin{lm} \label{expdecayN}For $r_N$ sufficiently small, the set  $\{\Psi_\ell^N\}$ for $\ell \in \Z\setminus\{0\}$ form a basis of $\ker(\slashed D_N|_{L^2})$,  and may be written $$\Psi_\ell^N=\Psi_\ell^\circ + \zeta_\ell^N + \xi_\ell^N$$
\noindent  where $\zeta_\ell^N, \xi_\ell^N$ satisfy the conclusions of Proposition \ref{cokproperties2}.
\end{lm}

The proof of Lemma \ref{expdecayN} is an iteration argument bootstrapping the decay of certain error terms. Since $\frak d=O(r)$, and the initial error term has size $\|O(r)\Psi_\ell^\circ\|_{L^2}=O(|\ell|^{-1})$, which follows from direct integration of $r^ke^{-|\ell|r}$ for the correct power of $k$. The iteration process corrects the error term by first solving for the error using $\slashed D_\circ$. Provided the corrected solution retains the exponential decay properties of $\Psi_\ell^\circ$, the new error terms picks up a factor of $r$, thus a factor of $|\ell|^{-1}$ after integration. The process may then be iterated to obtain arbitrarily large powers. Lemma \ref{expdecayN} (cf Proposition \ref{cokproperties2}) employ the first stage of the iteration to obtain an exponentially decaying correction $\zeta_\ell$ and a $O(|\ell|^{-2})$ correction $\xi_\ell$. Corollary \ref{higherordercokernel} continues the iteration to obtain higher regularity estimates. 

The iteration process relies on the following lemma, which is applied repeatedly to conclude that corrected solutions in the proof of Lemma \ref{expdecayN} retain the desired exponential decay properties. Morally, it should be viewed as a statement about the exponential decay of the Green's function of $\slashed D_\circ \slashed D_\circ$ in certain Fourier modes and is proved in Appendix \ref{appendixI} using a discrete maximum principle argument due to Taubes  \cite[App. A.2.1.]{SivekTaubesECH} (which serves as a proxy for explicit computations with the Green's function). For the statement of the lemma, $A_{n\ell}$ denotes the sequence of annuli (\refeq{annuli}) from Part (B) of Proposition \ref{cokproperties}, and we set $B_{n\ell}=A_{(n-1)\ell}\cup A_{n\ell} \cup A_{(n+1)\ell}$.

 \begin{lm}\label{discretedecay} Let $m$ be a non-negative integer, and assume that $|\ell|\geq 2m$. Suppose that $u_\ell \in  rH^1_e(N)$ is the unique solution of \be \slashed D_\circ\slashed D_\circ u_\ell = f_\ell\label{secondordereq}\ee 
 
 \noindent where $f_\ell\in r^{-1} H^{-1}_e$ satisfies the following two properties:  
 \begin{enumerate}
 \item $f_\ell$ has only Fourier modes in $e^{ipt}$ for $p$ in the range \be \ell -L_0 \leq p \leq \ell + L_0\label{409}\ee
 \noindent where $|L_0|\leq |\ell|/2$. 
 \item For $m$ as above, there are constants $C_m,c_m$ independent of $\ell$ such that $f_\ell$ satisfies the bounds \be\|f_\ell\|^2_{r^{-1}H^{-1}_e(B_{n\ell})}\leq  \frac{C_m}{|\ell|^{2+2m}}\text{Exp}\left(-\frac{2n}{c_m}\right) \label{expBnl}\ee
 \noindent on the sequence of annuli $B_{n\ell}$. 
 \end{enumerate} 
 Then there are constants $C'_m, c_m'$ independent of $\ell$ such that $u_\ell$ similarly satisfies  
\be \|u_\ell\|^2_{r H^1_e(A_{n\ell})} \leq \frac{C'_m}{|\ell|^{2+2m}}\text{Exp}\left(-\frac{2n}{c_m'}\right) \label{expdecayzetalemma}. \ee
\noindent Moreover, $u_\ell$ has only Fourier modes in the same range as $f_\ell$.  \qed
 \end{lm}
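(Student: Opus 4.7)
The strategy is to exploit the fact that the restriction on the Fourier modes of $f_\ell$ gives the second-order equation an effective spectral gap of size comparable to $|\ell|$, which in turn yields exponential decay in the normal directions at rate $|\ell|$. The scales chosen in the annuli $A_{n\ell}$ are tuned so that this decay rate translates to exponential decay at rate $n$ on $A_{n\ell}$.

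First, I would decompose both $u_\ell$ and $f_\ell$ into Fourier series in the $S^1$-coordinate $t$. Since $\slashed D_0$ has constant coefficients in $t$, each Fourier mode decouples, and by uniqueness of solutions in $rH^1_e$ (using that $\slashed D_0\slashed D_0$ is injective on $rH^1_e(N)$ by a minor variant of Lemma \ref{PsiNbasis}), the solution $u_\ell$ has Fourier modes supported in the same range $\ell - L_0 \le p \le \ell + L_0$ as $f_\ell$; this immediately gives the final assertion of the lemma. On each mode $e^{ipt}$, the Weitzenb\"ock identity $\slashed D_0\slashed D_0 = \nabla^*\nabla + \tfrac{1}{4}R$ reduces the equation to a 2-dimensional Schr\"odinger-type equation $(-\Delta_{\mathbb R^2} + p^2 + O(1))u^{(p)} = f^{(p)}$ on the twisted bundle $\underline{\mathbb C}^2 \otimes \ell_0$, with mass $p^2 \ge |\ell|^2/4$.

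Next I would run a weighted energy / Caccioppoli-type argument. Fix a radial cutoff $\eta_n(r)$ interpolating between the annuli $A_{k\ell}$, chosen so that $\eta_n|_{A_{k\ell}} = \alpha (n-k)_+$ for a small constant $\alpha$ to be determined, with $|\nabla \eta_n| \le C\alpha|\ell|/R_0$. Testing the equation against $u_\ell \, e^{2\eta_n}$ and integrating by parts gives, schematically,
\[
\int_N \bigl(|\nabla u_\ell|^2 + c|\ell|^2 |u_\ell|^2 + |u_\ell|^2/r^2\bigr) e^{2\eta_n}\,dV \le C \int_N |\langle f_\ell, u_\ell\rangle| e^{2\eta_n}\,dV + C\int_N |\nabla\eta_n|\,|u_\ell|\,|\nabla u_\ell|\, e^{2\eta_n}\,dV,
\]
where the coercivity on the left combines the Weitzenb\"ock mass term $p^2 \ge |\ell|^2/4$ with the Hardy-type bound on $|u_\ell|^2/r^2$ from the $\mathbb Z_2$-twist of $\ell_0$ around $\mathcal Z_0$, exactly as in the proof of Proposition \ref{semifredholm}. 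The commutator term on the right is bounded by $C\alpha|\ell|/R_0$ times the coercive term on the left, so for $\alpha/R_0$ sufficiently small (independently of $\ell$) it is absorbable. The $f_\ell$-term is handled by duality: estimating $|\langle f_\ell, u_\ell\rangle e^{2\eta_n}|$ in the $rH^1_e$--$r^{-1}H^{-1}_e$ pairing, using that $e^{2\eta_n}$ has gradient controlled by $\alpha|\ell|/R_0$, and invoking the hypothesis (\refeq{expBnl}) localized on the supports $B_{n\ell}$. Summing the resulting geometric series in $n$ over the annuli yields (\refeq{expdecayzetalemma}) in the case $m=0$.

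For higher $m$, I would iterate: apply the same weighted estimate to $\nabla_t^m u_\ell$, using that $[\nabla, \nabla_t] = 0$ on the product metric and that the hypothesis (\refeq{expBnl}) provides the required control on $\nabla_t^m f_\ell$ since differentiation in $t$ on a mode $e^{ipt}$ with $|p|\sim |\ell|$ produces extra factors of $|\ell|^m$ matching the improved decay rate. The main obstacle is Step 3: balancing the exponential weight gradient $|\nabla \eta_n|\sim \alpha|\ell|/R_0$ against the Weitzenb\"ock mass term $\sim |\ell|^2$, which is only possible because both coercive contributions scale with $|\ell|$ in the correct way and because the Fourier restriction $|p|\ge |\ell|/2$ is used \emph{uniformly} to extract the $|\ell|^2$ factor on every mode. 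A secondary technical point is that $rH^1_e$-estimates (rather than merely $L^2$) are required, which forces the inclusion of the Hardy term $|u_\ell|^2/r^2$ in the weighted energy and hence a careful local application of the Hardy inequality on each annulus.
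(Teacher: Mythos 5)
Your overall strategy --- an Agmon-type weighted energy estimate that trades the spectral gap $p^2 \gtrsim |\ell|^2$ coming from the Fourier restriction against the weight gradient --- is a genuine alternative to the paper's argument, which instead localizes the equation to each annulus $A_{n\ell}$ with cutoffs $\chi_n$ and derives a \emph{discrete} differential inequality $\mathfrak a_n - \tfrac{1}{100}(\mathfrak a_{n-1}+\mathfrak a_{n+1}) \le \mathfrak s_n$ for $\mathfrak a_n := \|u_\ell\|^2_{rH^1_e(A_{n\ell})}$, concluding via a discrete maximum principle. Both hinge on the same mechanism: the Fourier restriction lets $\int_{B_{n\ell}}|u_\ell|^2$ be dominated by $|\ell|^{-2}\int_{B_{n\ell}}|\nabla u_\ell|^2$, turning the cutoff- (resp.\ weight-) gradient term, which is $O(|\ell|/R_0)$, into an absorbable small-coefficient perturbation once $R_0$ is large. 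The paper's version avoids carrying the exponential weight through the $rH^1_e$--$r^{-1}H^{-1}_e$ duality pairing, which is a technical convenience; your localization of the pairing to $B_{n\ell}$ handles the same point.

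There is, however, a concrete error in the specification of your weight. You set $\eta_n|_{A_{k\ell}} = \alpha(n-k)_+$, which is \emph{maximal} on $A_{0\ell}$ (near $\mathcal Z_0$) and equal to $0$ for $k \ge n$. With this orientation, the weighted estimate $|\ell|^2\int |u_\ell|^2 e^{2\eta_n} \lesssim \int\langle f_\ell, u_\ell\rangle e^{2\eta_n} + \text{(abs.)}$ only controls $\int_{A_{n\ell}}|u_\ell|^2$ with the trivial weight $e^{0}=1$, so no exponential decay in $n$ is extracted at $A_{n\ell}$; at the same time the dominant contribution on both sides sits on $A_{0\ell}$, carrying the factor $e^{2\alpha n}$, and the inequality becomes degenerate rather than informative. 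For an Agmon estimate the weight must \emph{increase} away from the region where the source is concentrated, e.g.\ $\eta_n|_{A_{k\ell}} = \alpha\min(k,n)$ (a ramp plateauing at radius $\sim nR_0/|\ell|$), so that the weighted LHS gives $e^{2\alpha n}\int_{A_{n\ell}}|u_\ell|^2$ while the $f_\ell$-term on the RHS, localized to $B_{k\ell}$ where $\|f_\ell\|^2 \lesssim |\ell|^{-2-2m}e^{-2k/c_m}$, remains summable in $k$ provided $\alpha < 1/c_m$. With this correction the rest of your argument (coercivity from the Weitzenb\"ock mass $p^2 \ge |\ell|^2/4$, absorption of $|\nabla\eta_n|\sim\alpha|\ell|/R_0$, and the $\nabla_t^m$-iteration for higher $m$) goes through and delivers (\ref{expdecayzetalemma}).
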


We now prove Lemma \ref{expdecayN} using Lemma \ref{discretedecay}: 

\begin{proof} Let $\Psi_N$ be as defined by (\refeq{PsiN}). With $\slashed D_N=\slashed D_0 + \frak d$ as in Lemma \ref{Dlocalexpression}, $\frak d$ can be explicitly written in the form $$\frak d=\sum_{ij=1}^3 a_{i,j}(t,x,y)\sigma_i \del_j + \sum_{k=0}^3 \Gamma_k(t,x,y)\sigma_k$$ 
where $|a_{ij}|\leq Cr$ and $|\Gamma|\leq C$ and $\sigma_i=\gamma(e^i)$ with $\sigma_0=I$ in the second sum. Decomposing  $a_{ij}(t,x,y), \Gamma_k(t,x,y)$ into the Fourier modes in the $t$-direction on $N\simeq S^1\times \R^2$, this operator can be written as 
$$\frak d= \frak d^\text{low}+ \frak d^\text{high}$$
where $\frak d^\text{low}$ consists of the Fourier modes of $a_{ij}, \Gamma_k$ with Fourier index $|p|\leq |\ell|/4$. 

Since $\Psi^N_\ell=\Pi_N\Psi_\ell^\circ=(\text{Id}-\slashed D_N P_N\slashed D_N)\Psi_\ell^\circ$ by definition, 
\bea
\Psi^N_\ell - \Psi_\ell^\circ&=& -\slashed D_N P_N( \frak d \Psi_\ell^\circ)=-\slashed D_N P_N (f_\ell^\text{low}+f_\ell^\text{high})
\eea 

\noindent where $f_\ell^\text{low}:=\frak d^\text{low} \Psi_\ell^N$ and $f_\ell^\text{high}:=\frak d^\text{high} \Psi_\ell^N$. Set 
\bea
\zeta_\ell^N&:= &\slashed D_Nu_\ell \hspace{1cm}\text{where }\hspace{1cm}u_\ell:= -P_\circ(f_\ell^\text{low}) \\
\xi_\ell^N&:=& \slashed D_N v_\ell \hspace{1cm}\text{where }\hspace{1cm} v_\ell:=-P_N(f_\ell^\text{high}- ( \slashed D_N\slashed D_N-\slashed D_\circ\slashed D_\circ)u_\ell)
\eea

\noindent so that $(\zeta_\ell^N+\xi_\ell^N)=\Psi_\ell^N-\Psi_\ell^\circ$ as intended, since $u_\ell+v_\ell$ satisfies $\slashed D_N \slashed D_N(u_\ell+v_\ell)=f_\ell^\text{low}+f_\ell^\text{high}$.    

The desired decay properties for $\zeta_\ell^N$ follow from applying Lemma \ref{discretedecay} in the case that $m=0$. To elaborate, the first hypothesis of that lemma is satisfied by construction, because $f_\ell^\text{low}=\frak d^\text{low}\Psi_\ell^N$ was defined to be the Fourier modes in the necessary range. To verify the second hypothesis, observe that

$$\|f_\ell^\text{low}\|_{r^{-1}H^{-1}_e(B_{n,\ell})}\leq \sup_{\|u\|=1} \br u, f_{\ell}^\text{low}\kt_{L^2}\leq \sup_{\|u\|=1}\|u\|_{rH^1_e} \|r f_\ell^\text{low}\|_{L^2(B_{n\ell})}\leq \|r f_\ell^\text{low}\|_{L^2(B_{n\ell})} $$

\noindent hence using the bounds $|a_{ij}|\leq Cr$ and $|\Gamma_k|\leq C$ for $\frak d^\text{low}$, 

 \begin{eqnarray}   \int_{B_{n\ell}}r^2|f_\ell^\text{low}|^2 \ dV &\leq &C \frac{n^2}{|\ell|^2}R_0^2\int_{B_{n\ell}} |r \nabla_j \Psi_\ell^\circ|^2 + |r \nabla_t \Psi_\ell^\circ|^2 + |\Psi_\ell^\circ|^2 \ rdrd\theta dt  \label{expdecayetaell1} \\ &\leq &C \frac{n^2}{|\ell|^2}R_0^2\int_{B_{n\ell}} (1+ r^2|\ell|^2+1) \frac{e^{-2|\ell| r}}{r}|\ell|  \ rdrd\theta dt \\ 
&  \leq & C \frac{n^5}{|\ell|^2}R_0^5  e^{-nR_0}  \leq   \frac{C'}{|\ell|^2}  e^{-2n/c_1}.  \label{expdecayetaell}  \end{eqnarray}

\noindent where $R_0$ is as in the second bullet of Proposition \ref{cokproperties2}. Thus we conclude from Lemma \ref{discretedecay} and the fact that $\slashed D_N$ is bounded that 

\bea
\|u_\ell\|_{rH^1_e(A_{n\ell})} \leq \frac{C_0}{|\ell|}\text{Exp}\left(-\frac{n}{c_0}\right)  \hspace{1.5cm}\Rightarrow \hspace{1.5cm}\|\zeta^N_\ell\|_{L^2(A_{n\ell})} \leq \frac{C_0}{|\ell|}\text{Exp}\left(-\frac{n}{c_0}\right)\label{expdecayetaell2}  
\eea
\noindent as desired. 

It remains to show the asserted bound on $\xi_\ell^N$ holds. Since $\slashed D_N: rH^1_e\to L^2$ and $P_N: rH^{-1}_e\to rH^1_e$ are bounded, it suffices to show that \be \|f_\ell^\text{high}- ( \slashed D_N\slashed D_N-\slashed D_\circ\slashed D_\circ)u_\ell\|_{r^{-1}H^{-1}_e}\leq \frac{C}{|\ell|^2}. \label{xil2decay} \ee

\noindent Addressing the two terms on the left separately, one has $\slashed D_N\slashed D_N-\slashed D_0\slashed D_0=\frak d\slashed D_0 + \slashed D_0\frak d + \frak d^2$ which shows 

\be \|( \slashed D_N\slashed D_N-\slashed D_0\slashed D_0)u_\ell\|^2_{r^{-1}H^{-1}_e}  \leq C\sum_{n}\sup_{A_{n\ell}} (r^2 \|u_\ell\|^2_{rH^1_e(A_{n\ell})})\leq \frac{C}{|\ell|^4}.\label{xibound1}\ee

\noindent For $f_\ell^\text{high}$, note that the coefficients $a_{ij}, \Gamma_k$ are smooth and $\frak d^\text{high}$ and have only Fourier modes $p$ with $|p|\geq |\ell|/4$. Applying the Sobolev embedding for each fixed $(x,y)$ therefore shows that 
 \be
 \|a^\text{high}\|_{C^0(Y)}\leq \sup_{x,y} \|a^\text{high}(t)\|_{C^0(S^1)} \leq C\sup_{x,y} \|a^\text{high}(t)\|_{H^{1}(S^1)} \leq \frac{C}{|\ell|^2}\sup_{x,y} \|a^\text{high}(t)\|_{H^{3}(S^1)} \leq \frac{C}{|\ell|^2}\label{highmodes}
 \ee
and likewise for $\Gamma^\text{high}$. Combining the bounds (\refeq{xibound1}) and (\refeq{highmodes}) shows (\refeq{xil2decay}), completing the proof.  
\end{proof}

The above procedure may be iterated to bootstrap the bounds on $\xi^N_\ell$ without disrupting the bounds on $\zeta_\ell^N$. In the following statement, $\nabla_z$ is tacitly used to denote a covariant derivative in a direction normal to $\mathcal Z_0$, and $\nabla_t$ a tangential one.  

\begin{cor}\label{higherordercokernel}
For every $m$ there is an alternative decomposition 

$$\zeta_\ell^N + \xi_\ell^N= \zeta_\ell^{(m)}+ \xi_\ell^{(m)}$$
where 
\begin{itemize}
\item There are constants $C_m$ and $C_m'$ such that \be\|\zeta_\ell^{(m)}\|_{L^2(A_{n\ell})}\leq \frac{C_m}{|\ell|}\text{Exp}\left(-\frac{n}{c_m}\right) 
\hspace{1cm}\|(r\nabla_z)^\alpha (\nabla_t)^\beta\zeta_\ell^{(m)}\|_{L^2(A_{n\ell})}\leq \frac{C'_m|\ell|^\beta}{|\ell|}\text{Exp}\left(-\frac{n}{c_m'}\right)  \label{expdecayzetak}. \ee

\noindent for $A_{n\ell}$ is as in Proposition \ref{cokproperties} and multi-indices $\alpha,\beta$.  \item The latter perturbation satisfies

\be \|\xi_\ell^{(m)}\|_{L^2}\leq \frac{C_m}{|\ell|^{2+m}}. \hspace{2.5cm} \| (r\nabla_z)^\alpha (\nabla_t)^\beta\xi_\ell^{(m)}\|_{L^2}\leq \frac{C'_m|\ell|^\beta}{|\ell|^{2+m}} \label{squaredboundk}\ee

\end{itemize} Moreover, $\zeta_\ell$ contains only Fourier modes $e^{ipt}$ with $\ell - \tfrac{|\ell|}{2} \leq p \leq \ell + \tfrac{|\ell|}{2}.$ The constants $C_m, c_m$ are independent of $\ell$, and depend on up to the $H^{m+3}$-norm of the metric, and $C'_m, c_m' $ on up to the $H^{m+|\alpha|+|\beta|+3}$-norm. 
\end{cor}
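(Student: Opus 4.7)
The plan is to argue by induction on $m$, with the base case $m=0$ furnished by Lemma \ref{PsiNbasis}.  The inductive step redistributes the correction: given $(\zeta_\ell^{(m)},\xi_\ell^{(m)})$, the residual $\slashed D_N \xi_\ell^{(m)}$ is split into its low and high $t$-Fourier modes relative to a window $|p-\ell|\leq |\ell|/2$; the low-mode piece is solved away by $-P_0$ to produce a further exponentially-decaying correction (absorbed into $\zeta_\ell^{(m+1)}$), while the high-mode piece plus the commutator error $(\slashed D_N\slashed D_N-\slashed D_0\slashed D_0)$-terms are solved via $P_N$ to yield $\xi_\ell^{(m+1)}$.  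Two mechanisms drive the improvement at each step: Lemma \ref{discretedecay} applied with parameter $m+1$ controls the new $\zeta$-contribution (valid for $|\ell|\geq 2(m+1)$; the finitely many small eigenvalues are absorbed into the constants), and the Sobolev embedding argument of (\ref{highmodes}) on $S^1$ delivers an additional factor $1/|\ell|$ for the high-mode residual, so that the inductive bound $\|\xi_\ell^{(m)}\|_{L^2}\leq C_m|\ell|^{-2-m}$ propagates to $\|\xi_\ell^{(m+1)}\|_{L^2}\leq C_{m+1}|\ell|^{-3-m}$.

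First I would verify that the $t$-Fourier support of $\zeta_\ell^{(m)}$ stays inside the window $|p-\ell|\leq 3|\ell|/2$ throughout the iteration.  Since $P_0$ is defined with respect to the product metric on $N=S^1\times\R^2$, it commutes with $\nabla_t$ and acts diagonally in the $t$-Fourier decomposition, so the Fourier localization of $\Psi_\ell^{\text{Euc}}$ is preserved at each step up to the mild spreading coming from multiplication by the smooth coefficients $a_{ij},\Gamma_k$ of $\frak d$; the latter is absorbed into the slightly enlarged window.  This is the step I expect to require the most care, as it is precisely what makes the $|\ell|^\beta$ dependence of the $(\nabla_t)^\beta$-derivative bounds uniform in $m$ rather than producing an unbounded growth in $\beta$ at each iteration.

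Once Fourier localization is in hand, the bound $\|(\nabla_t)^\beta \zeta_\ell^{(m)}\|_{L^2(A_{n\ell})}\leq C'_m|\ell|^\beta\|\zeta_\ell^{(m)}\|_{L^2(A_{n\ell})}$ is tautological since mode $p$ contributes a factor $(ip)^\beta$ with $|p|\leq 3|\ell|/2$, and analogously for $\xi_\ell^{(m)}$.  The $(r\nabla_z)^\alpha$ derivatives are handled by rescaling each annulus $A_{n\ell}$ to unit size (so that $r\nabla_z$ becomes an order-one derivative in the rescaled coordinates) and applying the semi-elliptic estimates of Corollary \ref{higherordersemielliptic}; the commutators $[\nabla^{\text{b}},\slashed D_N]$ are of lower order and are absorbed into bounded error terms.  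The dependence of $C_m$ on the $L^{m+3,2}$-norm and of $C'_m$ on the $L^{m+|\alpha|+|\beta|+3,2}$-norm of $(g_0,B_0)$ is tracked by counting the derivatives consumed by the Sobolev embedding bounding the high-mode coefficients (contributing the base $m+3$) and by the commutator calculations for the extra $(r\nabla_z)^\alpha(\nabla_t)^\beta$.

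The main obstacle is the Fourier-window bookkeeping of the second paragraph; once it is established, the improved $L^2$ bound on $\xi_\ell^{(m)}$ follows from iterating (\ref{highmodes}) and (\ref{xibound1}), and the derivative bounds reduce to semi-elliptic estimates combined with the Fourier-localization gain.  A secondary nuisance is that the threshold $|\ell|\geq 2(m+1)$ in Lemma \ref{discretedecay} excludes finitely many eigenvalues at each level $m$; these are absorbed into $\xi_\ell^{(m)}$ with enlarged but still finite constants $C_m$, which is harmless since the basis $\{\Psi_\ell^N\}$ is already complete after the finite correction of Lemma \ref{PsiNbasis}.
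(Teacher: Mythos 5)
Your overall structure is the right one—iterate the low/high Fourier decomposition, using Lemma~\ref{discretedecay} for the $\zeta$-part and the Sobolev embedding estimate (\ref{highmodes}) for the $\xi$-part—and this matches the paper's argument. However, there is a genuine gap in exactly the step you flag as the most delicate: the Fourier-window bookkeeping. You propose to split the residual at each stage with the window $|p-\ell|\leq |\ell|/2$ and concede that multiplication by the coefficients of $\frak d$ spreads the support so that it is only contained in $|p-\ell|\leq 3|\ell|/2$ after iteration. But the corollary claims the window $|p-\ell|\leq |\ell|/2$ for $\zeta_\ell^{(m)}$ itself, not a factor-of-three enlargement, so what you prove is strictly weaker than what the statement asserts. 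Worse, the spreading does not stop at a single enlargement: if the low-mode cutoff is kept at $|\ell|/4$ at every stage, the cumulative spread after $m$ stages is of order $m|\ell|/4$, which eventually violates the hypothesis of Lemma~\ref{discretedecay} (which requires the Fourier modes to be confined to $|p-\ell|\leq |\ell|/2$) and spoils the whole iteration.

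The paper resolves this by \emph{shrinking} the low-mode truncation parameter: it truncates at $L_0=|\ell|/(4m)$ rather than $|\ell|/4$, so that $m$ rounds of multiplication by $\frak d^{\text{low}}$ add up to a spread of at most $|\ell|/4$, keeping the Fourier support of every $\zeta_\ell^k$ (and hence of $\zeta_\ell^{(m)}$) inside the window $|p-\ell|\leq |\ell|/2$ throughout. This requires using higher Sobolev norms on $S^1$ in (\ref{highmodes}) to compensate for the smaller cutoff, which is exactly where the dependence of $C_m$ on the $L^{m+3,2}$-norm of the background data enters. Your proposal is missing this shrinking cutoff, and without it the inductive loop fails to close; the derivative claims are secondary (the paper's commutator approach and your annulus-rescaling approach both work in principle), but the Fourier-window control is essential and is not handled by simply enlarging the window.
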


\begin{proof}
For $\alpha=\beta=0$, this follows from applying Lemma \ref{discretedecay} inductively. Instead of solving for $\xi_{\ell}$ with $f_\ell^\text{high}-(\slashed D_N\slashed D_N-\slashed D_0 \slashed D_0)u_\ell$ on the right hand side as in the proof of Lemma \ref{expdecayN}, instead set $(f_\ell^{\text{low}})^{1}= -(\slashed D_N\slashed D_N-\slashed D_0 \slashed D_0)u_\ell$ and apply Lemma \ref{discretedecay} again to the low Fourier modes to obtain a second correction $\zeta_\ell'$ and set $\zeta_\ell^{(1)}=\zeta_\ell+ \zeta_\ell'$. Proceeding in this fashion, each iteration yields an additional power of
$r$ from the difference $\slashed D_N-\slashed D_\circ$. Integrating against the exponential, this becomes an additional power of $|\ell|^{-1}$ in the new remainder.

To control the range of Fourier modes, define the low modes instead by truncating at $L_0=|\ell|/4m$, so that each iteration expands the range of modes appearing in $\zeta_\ell^N$ by $L_0/2m$. The bounds on $\xi_\ell^N$ then follow as before, using higher Sobolev norms in (\refeq{highmodes}) to bound the remainder after $m$ iterations. The higher derivative estimates follow from repeating the argument applying estimates for nested sequences of commutators $[r\nabla_z,\slashed D_{N}]$ and $[\nabla_t, \slashed D_{N}]$. Each application of $\nabla_t$ requires increasing the bound by a factor of $|\ell|$, but each application of $\nabla^\text{b}$-derivatives only by a universal constant. 
\end{proof}

\subsection{Fredholm Properties}
\label{section4.2}
This subsection defines the map $\text{ob}$ from Proposition \ref{cokproperties} and proves that it is Fredholm (more precisely, this is a preliminary version of $\text{ob}$, which is later corrected by a compact operator). Let $\chi_1$ be a smooth cut-off function supported in the region $r<r_N/2$, where $r_N$ is sufficiently small that Lemmas \ref{PsiNbasis} and \ref{expdecayN} hold, and equal to $1$ in the region $r\leq r_N/4$. Define $\text{ob}: L^2(\mathcal Z_0; \mathcal C_0)\to \text{\bf{Ob}}(\mathcal Z_0)^\perp$ by 
\be
\text{ob}(\xi):=  \Pi^\perp_0 (\chi_1 \Pi_N \frak P_\circ(\eta)) \label{obprelimdef}
\ee

\noindent where $\Pi_0^\perp$ is the $L^2$-orthogonal projection to $\text{\bf Ob}(\mathcal Z_0)^\perp$ as in Definition \ref{Obdefinition}, so that $e^{i\ell t} \mapsto  \Pi^\perp_0 (\chi_1 \Psi_\ell^N)$, where $\Psi_\ell^N$ are as in Section \ref{section4.1}. The definition is extended to include the $\ell=0$ Fourier modes by setting $\Psi_0^N:= \Psi_0^\circ$ (this $\ell=0$ mode is not $L^2$ on $Y_\circ$, but $\chi_1 \Psi_0^N\in L^2(Y\setminus \mathcal Z_0)$).

\begin{lm}
$\text{ob}:L^2(\mathcal Z_0; \mathcal C_0)\to \text{\bf{Ob}}(\mathcal Z_0)^\perp$ as defined by (\ref{obprelimdef}) is Fredholm. \label{MFredholm}
\end{lm}

\begin{proof}
The $\ell=0$ mode may be ignored as it spans a finite-dimensional space; likewise the distinction between $\Pi_0,\Pi_0^\perp$ may be ignored since it has finite rank. Precomposing with the isomorphism from Lemma \ref{PsiNbasis} shows Fredholmness of $\text{ob}$ is equivalent to Fredholmness of the map $M$ defined by \bea M:\ker(\slashed D_N|_{L^2}) &\to&  \text{\bf Ob}(\mathcal Z_0)^\perp \\ \Psi & \ \mapsto \ & \chi_1 \Psi -\slashed D v_\Psi \hspace{1.5cm}\text{where }\hspace{1.5cm}v_\Psi:=P \slashed D(\chi_1 \Psi). \eea 

\noindent Since $\Pi_0=\text{Id}-\slashed DP\slashed D$ by Corollary \ref{Pdiagram}. 

Define a pseudo-inverse \bea M^\dag: \text{\bf Ob}(\mathcal Z_0)^\perp &\to &  \ker(\slashed D_N|_{L^2}) \\ \Phi &\  \mapsto \ & \chi_1 \Phi -\slashed D_N u_\Psi \hspace{1.5cm}\text{where }\hspace{1.5cm}u_\Phi:=P_N \slashed D_N(\chi_1 \Phi).  \eea

\noindent To prove the lemma, it suffices to verify that $M^\dag M=\text{Id} + A_1$ and $MM^\dag=\text{Id}+A_2$ for compact operators $A_1,A_2$. First, note that standard elliptic theory implies the following: if $K\Subset Y\setminus \mathcal Z_0$ is compactly contained in the complement of $\mathcal Z_0$, then the restriction \be R:\text{\bf Ob}(\mathcal Z_0)^\perp \to rH^1_e(K)\label{restriction}\ee
 
\noindent  is compact.  Indeed, since $\slashed D$ is uniformly elliptic away from $\mathcal Z_0$, this follows from standard elliptic bootstrapping and Rellich's Lemma. The equivalent statement holds on $K_N\Subset N$, but compactness then also {\it a priori} requires that $K_N$ be bounded in the non-compact $N$.

A straightforward computation shows 
 \begin{eqnarray} (M M^\dag-\text{Id})\Phi&=& (\chi_1^2-1) \Phi -  \chi_1 \slashed D_N u_{\Phi} - \slashed Dv_{M^\dag \Phi}.\label{QQ^*2}\\
 (M^\dag M-\text{Id})\Psi&=& (\chi_1^2-1) \Psi -  \chi_1  \slashed Dv_{\Psi} - \slashed D_N u_{M\Psi}.\label{Q^*Q}
 \end{eqnarray}

\noindent and we claim the right hand sides of both expressions are compact. For the first expression, $\text{supp}(\chi_1^2-1)\Subset Y\setminus \mathcal Z_0$ hence compactness follows from what was said about the restriction map (\refeq{restriction}). Likewise, (\refeq{restriction}) implies that the map $\Phi \mapsto u_{\Phi}$ is compact since it may be written as the composition $$u=P_N\circ d\chi_1. \circ R|_{\text{supp}(d\chi_1)}.$$
Similarly, $\Psi\mapsto v_\Psi$ is compact. Since the remaining terms on the right hand side of \label{QQ^*} factor through these, we conclude that $M M^\dag-\text{Id}$ is compact. The only difference for $M^\dag M-\text{Id}$ is that $(\chi_1^2-1)$ is not compactly supported on $Y_N$. Nevertheless, a standard diagonalization using the decay properties of $\Psi^N_\ell = \Psi_\ell^\circ + \zeta^N_\ell + \xi_\ell^N$ shows that it is compact on elements of $\ker(\slashed D_N|_{L^2})$ (choose subsequences on that simultaneously converge on $r\leq n$ and on the span of $|\ell|\leq n$).
\end{proof}

\subsection{The Index via Concentration}
\label{subsection4.3}
This subsection proves $\text{ob}: L^2(\mathcal Z_0; \mathcal C_0) \to \text{\bf Ob}(\mathcal Z_0)^\perp$, which is Fredholm by Lemma \ref{MFredholm}, has index 0. This is done by introducing a family of perturbations depending on $\mu \in \R$ \footnote{This approach was suggested to the author by Clifford Taubes.} $$\slashed D_\mu:= \slashed D+\mu J$$ 
where $J$ is a complex anti-linear map with $J^2=-\text{Id}$. As $\mu \to \infty$, elements of $\text{\bf {Ob}}(\mathcal Z_0)^\perp$ become increasingly concentrated near $\mathcal Z_0$, and for $\mu$ sufficiently large we may conclude that the $\mu$-version of $M_\mu$ is an isomorphism. There are two subtleties in this. First, one must be careful to ensure the family $\text{ob}_\mu$ can be viewed on a fixed Banach space (as $ \ker(\slashed D_\mu |_{rH^1_e})$ may jump in dimension as $\mu$ varies). Second, the role of the $\ell=0$ modes for the index must be clarified. 

To elaborate on the second point: recall that on $Y_\circ$ from Example \ref{euclideanexample} there are two linearly independent solutions in the $\ell=0$ Fourier mode, these being $(1/\sqrt{z},0)$ and $(0,1/\sqrt{\overline z})$. It is not at first clear which subset of these should contribute to the index; it will be shown that as $\mu\to 0$ this four (real) dimensional space splits into two subspaces of exponentially growing and decay modes, and only the decaying modes contribute.

\begin{lm}\label{Mindex}
The Fredholm map $$\text{ob}: L^2(\mathcal Z_0; \mathcal C_0) \to \text{\bf Ob}(\mathcal Z_0)^\perp $$
\noindent has index zero. 
\end{lm}

\begin{proof}
Let $\overline {\text{ob}}:L^2(\mathcal Z_0; \mathcal C_0)\oplus \ker(\slashed D|_{rH^1_e}) \to \text{\bf Ob}(\mathcal Z_0)$ be defined by $\overline {\text{ob}}=\text{ob}\oplus \iota$ where $\iota$ is the inclusion.  Similarly, let $\overline{\slashed D}= (\slashed D, \pi_1)$ where $\pi_1: rH^1\to \ker(\slashed D|_{rH^1_e})$ is the $L^2$-orthogonal projection. The problem may be recast as a problem on fixed Banach spaces by considering the operator 

$$\overline{\mathcal Q}_0:= \begin{pmatrix} \overline{\text{ob}} & 0 \\ 0 & \overline {\slashed D} \end{pmatrix}: \begin{matrix}L^2(\mathcal Z_0; \mathcal C_0)\oplus \mathcal K_0\\ \oplus  \\ r H^1_e \end{matrix} \ \lre \  \begin{matrix} \text{\bf Ob}(\mathcal Z_0)  \\ \oplus \\ \text{range}(\slashed D)\oplus \mathcal K_0 \end{matrix}=L^2(Y;S_0)\oplus \mathcal K_0$$ 

\noindent where $\mathcal K_0$ is shorthand for $\ker(\slashed D|_{rH^1_e})$. $\overline{ \slashed D}$ is an isomorphism (hence Fredholm with index 0) by fiat, so $\overline{\mathcal Q}_0$ is Fredholm by Lemma \ref{MFredholm}. It therefore suffices to show that $\overline{\mathcal Q}_0$ has Index 0.

Recall that the definition (\refeq{obprelimdef}) depends implicitly on the choice of parametrix $P$ employed in the projections $\Pi=\text{Id}-\slashed D P \slashed D$. If this parametrix $P$ is replaced by another parametrix $P'$ for $\slashed D\slashed D: rH^{1}_e\to rH^{-1}_e$ then the resulting 
\be \overline {\mathcal Q}'_0:=\overline {\text{ob}}'  \oplus \overline{\slashed D}\label{unbarredQ}\ee
\noindent differs by compact operators, hence is Fredholm of the same index as $\overline{\mathcal Q}_0$.   

Now set $\slashed D_\mu:= \slashed D+ \mu J$ for $\mu \geq 0$. Since the Weitzenb\"ock formula becomes \be \slashed D_\mu^\star \slashed D_\mu= (\slashed D-\mu J)(\slashed D+\mu J)=\slashed D^\star \slashed D+\mu^2\label{muweitzenbock},\ee the proofs of Proposition \ref{injclosedrange} and Lemma \ref{secondorderoperator} apply to show that $\slashed D_\mu: rH^1_e\to L^2$ has finite-dimensional kernel and  closed range, and $\slashed D_\mu^\star \slashed D_\mu: rH^1_e\to rH^{-1}_e $ is Fredholm. Let $P_\mu$ be the corresponding parametrix defined by (\refeq{Pdef}). The proofs of Lemmas  \ref{PsiNbasis} and \ref{MFredholm} apply equally well to define a map $\text{ob}_\mu$ and show that $$\overline{\mathcal Q}_\mu=L^2(\mathcal Z_0; \mathcal C_0)\oplus \mathcal K_0 \oplus r H^1_e \ \lre \ L^2(Y;S)\oplus \mathcal K_0  $$ 

\noindent is a Fredholm operator for each $\mu$. Note that inclusion $\iota$ and projection $\pi_1$ are still those for the $\mu=0$ operator and its kernel $\mathcal K_0$. $\overline Q_\mu$ is not {\it a priori} a continuous family, since jumps in the dimension of $\mathcal K_\mu$ result in discontinuities of $\overline{P}_\mu$ as defined by (\refeq{Pdef}). Instead, let $P_\mu$ be a continuous family of parametrices for $\slashed D_\mu^\star \slashed D_\mu$. As in (\refeq{unbarredQ}), the resulting family of operators differs by compact operators, resulting in a now continuous family of Fredholm operators $\overline{\mathcal Q}'_\mu$ with the same index as ${\overline {\mathcal Q}}_0$. After this alteration, it suffices to show that the index is zero for $\mu>>0$.

For $\mu$ sufficiently large, the Weitzenb\"ock formula (\refeq{muweitzenbock}) implies that $\ker(\slashed D_\mu|_{rH^1_e})=0$, so it may be arranged by a further homotopy of parametrices that $\mathcal Q_\mu$ is formed using $\overline P_\mu= (\slashed D^\star_\mu \slashed D_\mu)^{-1}$ once $\mu$ is large. For fixed large $\mu$, removing the $\mathcal K_0$ summands form both the domain and range does not disrupt Fredholmness nor alter the index, so these may be safely ignored. Furthermore, there is new splitting $L^2=\ker(\slashed D^\star_\mu|_{L^2})\oplus \text{range}(\slashed D_\mu|_{rH^1_e})$ in which one may now write $$ \mathcal Q^\mu =\begin{pmatrix}\text{ob}_\mu & 0 \\ 0 & \slashed D_\mu \end{pmatrix} : \begin{matrix}L^2(\mathcal Z_0; \mathcal C_0) \\ \oplus  \\ r H^1_e \end{matrix} \ \lre \  \begin{matrix}\ker(\slashed D^\star_\mu|_{L^2})  \\ \oplus \\ \text{range}(\slashed D_\mu) \end{matrix}$$
\noindent where $\text{ob}_\mu$ is the $\mu$-version of (\refeq{obprelimdef}). Since $\slashed D_\mu$ is injective, hence an isomorphism onto its range, it suffices now to show that $\text{ob}_\mu$ is an isomorphism for $\mu>>0$. Finally, since $\slashed D_\mu$ is injective once $\mu$ is sufficiently large independent of small variations in the metric, it may be arranged by a further homotopy through Fredholm operators that the metric is a product for $r\leq r_0$. The proof is then completed by the subsequent two lemmas. 
\end{proof}

The next lemma shows that the perturbation $\mu J$ means the $L^2$-kernel enjoys an additional decay factor of $e^{-\mu r}$ compared to the $\mu=0$ case, thus it is concentrated more strongly near $\mathcal Z_0$. The proof is an elementary exercise in solving ODEs by diagonalizing matrices since the Fourier modes decouple. Let $(N,g_\circ)$ be the tubular neighborhood from Section \ref{section4.1} equipped with the product metric, and $\slashed D_{N,\mu}$ the perturbed Dirac operator on it.

\begin{lm}

\label{Dmu} The perturbed Dirac operator $$\slashed D_{N,\mu}:rH^1_e\lre L^2$$
  is injective, and its extension to $L^2$ has $\ker(\slashed D^\star_{N,\mu}|_{L^2})$ characterized by the following. 
  
  \begin{itemize}
  \item There is a real 2-dimensional subspace of $\ker(\slashed D^\star_{N,\mu}|_{L^2})$ in the $\ell=0$ modes. It is given by the span over $\R$ of $$\Psi^+_0=\begin{pmatrix} \frac{e^{-\mu r}}{\sqrt{z}} \\ 0  \end{pmatrix}\hspace{2cm}\Psi^-_0=\begin{pmatrix} 0 \\ \frac{e^{-\mu r}}{\sqrt{\overline z}}   \end{pmatrix} $$
  \item There is a real 4-dimensional subspace of $\ker(\slashed D_{N,\mu}^\star|_{L^2})$ in the $\pm\ell$ modes spanned over $\R$ by spinors $$\Psi_{|\ell|,k}=\frac{e^{\pm i \theta/2}}{r^{1/2}} e^{ -\sqrt{ \ell^2 + \mu^2}}  e^{\pm i \ell t}v_k$$
  \noindent where $v_k \in \R^4$ for $k=1,\ldots, 4$.  \qed
  
  \end{itemize}
  \label{muperturbation}
\end{lm}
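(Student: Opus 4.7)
The strategy has two independent parts. Injectivity on $rH^1_e$ follows directly from the Weitzenb\"ock identity \eqref{muweitzenbock}, while the description of $\ker(\slashed D^\star_{N,\mu}|_{L^2})$ reduces to Fourier decomposition followed by an ODE analysis in the radial variable.

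For injectivity, suppose $\phi \in rH^1_e$ satisfies $\slashed D_{N,\mu}\phi = 0$. The cutoff argument from the proof of Proposition \ref{injclosedrange}---multiplying by cutoffs $\chi_n$ with $|d\chi_n| \leq C'/r$ and passing to $n \to \infty$ exactly as in the derivation of \eqref{bdtermlim}---justifies the integration-by-parts identity $\|\slashed D_{N,\mu}\phi\|_{L^2}^2 = \br \slashed D^\star_{N,\mu}\slashed D_{N,\mu}\phi,\phi\kt_{L^2}$ for $\phi \in rH^1_e$. Substituting \eqref{muweitzenbock} then yields
$$0 \ =\ \|\slashed D_{N,\mu}\phi\|_{L^2}^2 \ =\ \|\slashed D_N\phi\|_{L^2}^2 \ +\ \mu^2\|\phi\|_{L^2}^2,$$
forcing $\phi = 0$ once $\mu > 0$.

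For the adjoint kernel, write $\slashed D^\star_{N,\mu}\psi = 0$ on $N = S^1 \times \R^2$ with the product metric in the trivialization \eqref{sectionforma} and Fourier expand $\psi$ simultaneously in the modes $e^{i\ell t}$ along $\mathcal Z_0$ and $e^{i(k + 1/2)\theta}$ in the angular direction. The unperturbed $\slashed D^\star_N$ decouples into the matrix ODE \eqref{ODE} for each pair $(k,\ell)$; the perturbation $\mu J$ adds a constant zeroth-order coefficient. Because $J$ is complex-antilinear, it couples the Fourier modes $+\ell$ and $-\ell$, so one treats each such pair simultaneously as an $\R$-linear ODE system in $r$. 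For $\ell \neq 0$, the constant-coefficient limit at $r = \infty$ has characteristic exponents $\pm\sqrt{\ell^2 + \mu^2}$, so $L^2$-admissibility at infinity selects the decaying subspace; at $r = 0$ the indicial analysis of \eqref{ODE} forces leading behavior of order $r^{\pm 1/2}$, singling out the $1/\sqrt{z}$ and $1/\sqrt{\bar z}$ local solutions in the $k = 0$ angular mode. A standard shooting argument matching these regimes then produces a real $4$-dimensional solution space spanned by sections of the stated form $\frac{e^{\pm i\theta/2}}{r^{1/2}}\,e^{-\sqrt{\ell^2 + \mu^2}\,r}\,e^{\pm i\ell t}\,v^{(j)}$.

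The $\ell = 0$ modes carry the main subtlety and constitute the principal obstacle. When $\mu = 0$, the ODE \eqref{ODE} at $\ell = 0$ is a pure Euler equation with only power-law solutions $r^{k - 1/2},\, r^{-(k+1/2)}$; the $L^2_{\mathrm{loc}}$ pair $(1/\sqrt{z},0),\,(0,1/\sqrt{\bar z})$ is not globally $L^2$ on $N$ since no exponential decay is present at infinity. The $\mu J$ term breaks this degeneracy: the constant-coefficient system at $\infty$ now acquires a genuine negative real eigenvalue of size $-\mu$, producing an $e^{-\mu r}$ decaying tail, and the explicit ansatz $\psi_0^+ = (e^{-\mu r}/\sqrt{z},0)$ and $\psi_0^- = (0, e^{-\mu r}/\sqrt{\bar z})$ can then be verified to lie in $\ker(\slashed D^\star_{N,\mu}|_{L^2})$ by direct substitution once a concrete normal form for $J$ is fixed. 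The principal obstacle in executing the argument carefully is bookkeeping dimensions through the $\R$-linear (but not $\C$-linear) coupling: the resulting $\ell = 0$ space is real $2$-dimensional---rather than complex $1$-dimensional---precisely because $J$ is only $\R$-linear, and the antilinearity obstructs the clean Fourier decoupling available for $\ell \neq 0$.
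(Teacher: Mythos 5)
The paper gives no proof of this lemma (it ends with \qed and is dismissed as ``an elementary exercise in solving ODEs''), so your task was to supply the argument. Your overall structure is correct and matches the paper's hint: Weitzenb\"ock for injectivity, Fourier decomposition plus radial ODE analysis for the kernel. Your key observation---that the complex antilinearity of $J$ forces the $+\ell$ and $-\ell$ Fourier modes to be treated together as a coupled $\R$-linear system, and that this is why the $\ell = 0$ kernel is real $2$-dimensional rather than complex $1$-dimensional---is exactly the structural point the argument turns on.

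Two places where your reasoning is imprecise and a third where a step is missing. First, the injectivity argument glosses over the non-compactness of $N = S^1 \times \R^2$: the cutoff argument in Proposition \ref{injclosedrange} handles only the boundary term along $\mathcal Z_0$; on $N$ one also needs the boundary term on shells $r = R_n$ to vanish as $R_n \to \infty$, which requires choosing the sequence $R_n$ using the finiteness of $\int |\nabla\phi|^2 + |\phi|^2/r^2$ (the paper handles this in Appendix \ref{appendixI} for Lemma \ref{discretedecay} with the remark that ``the boundary term at infinity vanishes when integrating by parts''). Second, you attribute the selection of the $k = 0$ angular mode to ``indicial analysis at $r = 0$ forcing leading behavior $r^{\pm 1/2}$,'' but both $r^{1/2}$ and $r^{-1/2}$ are $L^2_{\mathrm{loc}}$ at the origin, so the indicial condition at $0$ does not single out a half-dimensional subspace for $k = 0$; what actually eliminates $k \neq 0$ is the global matching with decay at infinity (the relevant Bessel solutions $K_\nu$ with $\nu = |k + \tfrac12|$ still fail to produce $L^2$ pairs unless $k = 0$). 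Third, the assertion that the shooting argument ``produces a real $4$-dimensional solution space'' is stated without justification; you should either compute the dimension explicitly from the modified Bessel system (for $\mu = 0$ one has two complex Bessel solutions, one per sign of $\ell$, giving real dimension $4$) and argue by continuity in $\mu$, or directly analyze the coupled $8$-dimensional real ODE for the $(+\ell,-\ell)$ pair and verify that the decaying-at-infinity subspace and the $L^2_{\mathrm{loc}}$-at-$0$ subspace intersect in dimension $4$.

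Finally, a small clarification on your $\ell = 0$ discussion: the $e^{-\mu r}$ decay does not arise because $-\mu J$ by itself has a real eigenvalue (it doesn't; $J$ has eigenvalues $\pm i$). It arises because the radial ODE couples $J$ with the radial Clifford multiplication, and after splitting $f^\pm$ into real and imaginary parts the resulting scalar equations read $a' = -(\tfrac{1}{2r} + \mu)a$ and $b' = -(\tfrac{1}{2r} - \mu)b$; the $a$-component decays like $r^{-1/2}e^{-\mu r}$ and is $L^2$, while the $b$-component grows and is discarded, giving one real dimension per component and hence the $2$-dimensional $\ell = 0$ space. This direct computation also confirms that the explicit ansatz $\psi_0^\pm$ you propose to verify does indeed lie in the kernel for the appropriate sign convention on $J$.
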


It may be assumed that $\text{ob}_\mu$ sends the real and imaginary parts of the constant mode to $\Psi_0^\pm$ respectively. 

\begin{lm}
For $\mu>>0$, $$\text{ob}_\mu: L^2(\mathcal Z_0; \mathcal C_0)\to \ker(\slashed D^\star_\mu|_{L^2})$$
is an isomorphism. 
\end{lm}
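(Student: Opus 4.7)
The plan is to show that $M_\mu$ is an isomorphism for $\mu$ sufficiently large by proving that both compositions $M_\mu^\dag M_\mu$ and $M_\mu M_\mu^\dag$ tend to the identity in operator norm as $\mu\to\infty$, so that Neumann series invert them. Here $M_\mu^\dag(\Phi)\ce \chi_1 \Phi - \slashed D_{N,\mu}P_{N,\mu}\slashed D_{N,\mu}(\chi_1 \Phi)$ is the $\mu$-analogue of the pseudo-inverse from Lemma \ref{MFredholm}, and the formulas for $M_\mu M_\mu^\dag - I$ and $M_\mu^\dag M_\mu - I$ derived there carry over verbatim. Crucially, every term in those formulas is supported on the annular region $\{r_0/4 \leq r \leq r_0\}$, either by an explicit cutoff factor $(\chi_1^2-1)$ or through differentiation against $d\chi_1$ (note that $\slashed D_\mu(\chi_1 \Phi) = \gamma(d\chi_1)\Phi$ whenever $\slashed D_\mu\Phi = 0$).

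The key analytic input is an Agmon-type decay estimate: every $\Phi\in \ker(\slashed D^\star_\mu|_{L^2})$ satisfies
\[
\|\Phi\|_{L^2(\{r\geq s\})} \leq C e^{-c\mu s}\|\Phi\|_{L^2}, \qquad s\geq r_0/4,
\]
with constants $c,C$ independent of $\mu$. This follows from the Weitzenb\"ock identity $\slashed D_\mu^\star \slashed D_\mu = \slashed D^\star\slashed D + \mu^2$ recorded in (\ref{muweitzenbock}) by Agmon's standard trick: test against $e^{2c\mu\rho}\Phi$ for a smooth function $\rho$ equal to $r$ outside a small neighborhood of $\mathcal Z_0$, and observe that the mass term $\mu^2\int e^{2c\mu\rho}|\Phi|^2$ dominates both the Weitzenb\"ock scalar curvature contribution and the commutator error $|d(e^{c\mu\rho})|^2 \lesssim (c\mu)^2 e^{2c\mu\rho}$ provided $c$ is chosen sufficiently small. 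The analogous exponential decay on $(N, g_\text{prod})$ is already explicit from Lemma \ref{Dmu}, whose generators decay like $e^{-\sqrt{\ell^2+\mu^2}\,r}\leq e^{-\mu r}$.

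Combining the decay estimate with the uniform-in-$\mu$ bound $\|P_\mu\|_{r^{-1}H^{-1}_e \to rH^1_e}\leq C$ (the mass term supplies coercivity in the proof of Lemma \ref{secondorderoperator}) and the trivial bound $\|\slashed D_\mu\|_{rH^1_e\to L^2} \leq C\mu$, each term in $M_\mu M_\mu^\dag - I$ and $M_\mu^\dag M_\mu - I$ contributes at most $C\mu^N e^{-c\mu r_0/4}$ to the operator norm, for some fixed $N$. Taking $\mu$ large makes this bound arbitrarily small; the Neumann series then inverts both compositions, giving both injectivity and surjectivity of $M_\mu$.

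The main technical obstacle is the Agmon estimate itself: one must verify that the weight $\rho$ can be chosen compatibly with the edge geometry near $\mathcal Z_0$, so that neither the singular nature of $\slashed D_\mu$ along the link nor the boundary contributions at $\mathcal Z_0$ (of the sort encountered in the proof of Proposition \ref{semifredholm}) spoil the absorption of errors by the mass term. Once this is arranged, the exponential factor $e^{-c\mu r_0/4}$ easily dominates any polynomial loss in $\mu$ and the remaining estimates are routine.
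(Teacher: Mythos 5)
Your argument is correct in spirit but diverges from the paper's on the surjectivity half. For injectivity ($M_\mu^\dag M_\mu - I$ small), you and the paper both read off explicit exponential decay of $\ker(\slashed D^\star_{N,\mu}|_{L^2})$ from Lemma \ref{Dmu}. For surjectivity ($M_\mu M_\mu^\dag - I$ small), the paper does \emph{not} prove an Agmon estimate on $Y$. It establishes only the polynomial bound $\|\Phi\|_{L^2(\text{supp}(d\chi_1))} \leq (C/\mu)\|\Phi\|_{L^2}$, obtained by pairing $\slashed D\Phi = \mu J\Phi$ against $\rho J\Phi$ for a fixed cutoff $\rho$ vanishing near $\mathcal Z_0$ and exploiting $\slashed DJ = J\slashed D$, $J^\dag = -J$ to produce the factor of $\mu$ after one integration by parts. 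That is substantially lower tech: there is no exponential weight, no choice of $c$, and no risk of boundary contamination at the link since $\rho$ is a fixed cutoff. The $1/\mu$ decay suffices because the relevant composites $\slashed D_\mu P_\mu$ and $\slashed D_\mu P_\mu \slashed D^\star_\mu$ are \emph{uniformly} bounded in $\mu$ (the Weitzenb\"ock term $+\mu^2$ only improves the coercivity in Lemma \ref{secondorderoperator}), so the contraction factor really is $O(1/\mu)$ with no polynomial loss.

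Your Agmon route does work, and the obstacle you flagged is resolved by the standard cutoff device rather than anything delicate to edge geometry: testing $(\slashed D\slashed D + \mu^2)\Phi = 0$ against $\zeta^2 e^{2c\mu\rho}\Phi$, with $\zeta$ vanishing on a neighborhood of $\mathcal Z_0$ and $\rho$ chosen so that $\rho\equiv 0$ on $\text{supp}(d\zeta)$, the spinor $\zeta^2 e^{2c\mu\rho}\Phi$ lies in $rH^1_e$, so the weak equation $\langle\Phi,\slashed D\varphi\rangle = \mu\langle J\Phi,\varphi\rangle$ applies directly and no boundary term at $\mathcal Z_0$ appears. Substituting $\slashed D\Phi = \mu J\Phi$ produces
\begin{equation*}
(1-c)\,\mu\int \zeta^2 e^{2c\mu\rho}|\Phi|^2\,dV \ \leq\ \int \zeta|d\zeta|\,e^{2c\mu\rho}|\Phi|^2\,dV \ \leq\ C\|\Phi\|^2_{L^2},
\end{equation*}
which gives the exponential decay you want. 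Two smaller points: the bound $\|\slashed D_\mu\|_{rH^1_e\to L^2}\leq C\mu$ you invoke is an overestimate for the terms actually occurring---as noted above, $\slashed D_\mu P_\mu$ is uniformly bounded and $\slashed D_\mu P_\mu\slashed D^\star_\mu$ is an orthogonal projection, so there is no genuine $\mu^N$ loss to overcome, though your exponential would absorb it anyway. And the Leibniz identity should read $\slashed D^\star_\mu(\chi_1\Phi) = \gamma(d\chi_1)\Phi$ for $\Phi\in\ker(\slashed D^\star_\mu|_{L^2})$; you wrote $\slashed D_\mu$. In sum: a correct and somewhat overpowered alternative to the paper's lighter $1/\mu$ argument.
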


\begin{proof}By Lemma \ref{Dmu}, $\Pi_{N,\mu}\frak P_\circ : L^2(\mathcal Z_0; \mathcal C_0)=\ker(\slashed D^\star_{N,\mu} |_{L^2})$ is an isomorphism (where the extension to the $\ell=0$ modes is as stated preceding the lemma). As in the proof of Lemma \ref{MFredholm}, it therefore suffices to show the following maps are isomorphisms: define $M_\mu :\ker(\slashed D^\star _{N,\mu}|_{L^2})\to \ker(\slashed D^\star _\mu |_{L^2}) $ and $M_\mu^\dag: \ker(\slashed D^\star _\mu |_{L^2}) \to \ker(\slashed D^\star _{N,\mu}|_{L^2})$ by
\bea  M_\mu(\Psi) & \ = \ & \chi_1 \Psi -\slashed D_\mu v_\Psi \hspace{1.8cm}\text{where }\hspace{1.5cm}v_\Psi:=P_\mu \slashed D^\star_\mu(\chi_1 \Psi). \\ M_\mu ^\dag (\Phi) &\ = \ & \chi_1 \Phi -\slashed D_{N,\mu} u_\Psi \hspace{1.5cm}\text{where }\hspace{1.5cm}u_\Phi:=P_{N,\mu} \slashed D^\star_{N,\mu}(\chi_1 \Phi).  \eea 

\noindent Here, $P_\mu, P_{N,\mu}$ are the true inverses. Note also that $(1-\pi_1)=\text{Id}$ once $\slashed D_\mu$ is injective, so the different between the $L^2$-orthogonal projections $\Pi_\mu, \Pi_\mu^\perp$ is again immaterial. 

By the explicit forms in Lemma \ref{muperturbation}, every $\Psi\in \ker(\slashed D^\star _\mu|_{L^2})$ on $N$ satisfies \be \|\Psi\|_{L^2(\text{supp}(d\chi_1))} \leq C e^{-\mu r_0/c_1}\|\Psi\|_{L^2}\label{exp-mu}\ee on $\text{supp}(d\chi_1)$. It then follows from the expression (\refeq{Q^*Q}) that 

$$\|(M_\mu^\dag M_\mu-\text{Id})\Psi\|_{L^2}\leq C e^{-\mu r_0/c_1}\|\Psi\|_{L^2},$$
hence for $\mu$ sufficiently large, $M_\mu^\dag M_\mu$ is an isomorphism thus $M_\mu$ is injective. 

Surjectivity follows by the same argument with $M_\mu M_\mu^\dagger$ where (\refeq{exp-mu}) is replaced by the bound  \be \|\Phi\|_{L^2(\text{supp}(d\chi_1))} \leq \frac{C}{\mu} \|\Phi\|_{L^2(Y)}\label{1/mu}\ee
for $\Phi \in \ker(\slashed D^\star_\mu)$ on $Y$. To prove (\refeq{1/mu}), let $\rho$ denote a cut-off function supported equal to 1 on $Y-N_{r_0/8}(\mathcal Z_0)$ so that $\rho=1$ on $\text{supp}(\chi_1)$. Integrating by parts shows 
\bea \int_{Y\setminus \mathcal Z_0}  \rho \br  J \Phi, \slashed D \Phi\kt&=&\int_{Y\setminus \mathcal Z_0} \rho \br J\slashed D\Phi, \Phi\kt + \br d\rho.J\Phi, \Phi\kt  \ dV\\
&=& -\int_{Y\setminus \mathcal Z_0}  \rho \br  \slashed D \Phi,  J\Phi\kt+ \int_{Y\setminus \mathcal Z_0} \br d\rho. J\Phi, \Phi\kt  \ dV\eea
since $\slashed DJ=J\slashed D$ and $ J^\dag =-J$. Consequently, since $d\rho$ is bounded by a universal constant, 
\be 2 \text{Re}\br \rho  J\Phi, \slashed D\Phi \kt_{L^2}\leq C \|\Phi\|_{L^2}. \label{thisone}\ee
Then, if $\Phi\in \ker(\slashed D^\star_\mu)$,  
\bea
0= \br \rho  J \Phi, (\slashed D-\mu J )\Phi\kt_{L^2}&= & -\mu \br \rho \Phi, \Phi\kt_{L^2} + \br \rho  J \Phi, \slashed D \Phi\kt_{L^2} \ \ \ \ \overset{(\refeq{thisone})}\Rightarrow \ \ \ \  \mu\|\Phi\|_{L^2(\rho=1)} \leq C \|\Phi\|_{L^2(Y)}.
\eea

 \noindent The latter gives (\refeq{1/mu}) which implies $M_\mu$ is surjective for $\mu$ sufficiently large. This completes the lemma and thus the proof of Lemma \ref{Mindex}.
\end{proof}
 
 \subsection{The Obstruction Map}
 \label{section4.4}
This subsection completes the proof of Propositions \ref{cokproperties} and \ref{cokproperties2}. This is done by altering the preliminary version of $\text{ob}$ defined by (\refeq{obprelimdef}), which is Fredholm of index 0 by  Lemma \ref{Mindex}, by a compact operator.

Let $L^2(\mathcal Z_0; \mathcal C_{0})_{L_0}$ denote the subspace spanned by $e^{i\ell t}$ for $|\ell| \geq L_0$. 

\begin{lm}\label{injbigN}
For $L_0$ sufficiently large, the restricted map $$\text{ob}|_{L_0}:L^2(\mathcal Z_0; \mathcal C_{0})_{L_0}\to \text{\bf Ob}(\mathcal Z_0)^\perp$$
is injective. Moreover, $\Psi_\ell=\text{ob}|_{L_0}(e^{i\ell t})$ admits a decomposition satisfying the conclusions of Proposition \ref{cokproperties2}. 
\end{lm}

\begin{proof}  Since $\frak P_N=\Pi_N\frak P_\circ: L^2(\mathcal Z_0; \mathcal C_{0})\to \ker(\slashed D_N|_{L^2})$ is a bounded linear isomorphism with bounded inverse by Lemma \ref{PsiNbasis}, it suffices to show that $M=(1-\pi_1)\circ \Pi_0: \ker(\slashed D_N|_{L^2})\to \text{\bf Ob}(\mathcal Z_0)^\perp$ is injective. Thus let $\Psi^N\in \text{im}(\text{ob}|_{L_0})$ be such that $\|\Psi^N\|_{L^2}=1$. We may write $\Psi^N=\Psi^\circ + \zeta+\xi$ as in Lemma \ref{expdecayN}, where each term is the sum over $\zeta_\ell$ for $|\ell|\geq L_0$ of the corresponding terms in Lemma \ref{expdecayN}.  

Each $\Phi\in \ker(\slashed D|_{rH^1_e})$ is polyhomogeneous by Proposition \ref{asymptoticexpansion}, thus for every $m\in \N$ there is a bound $|\Phi^\text{high}|\leq C_mL_0^{-m}$, where $\Phi^\text{high}$ denotes the restriction to the Fourier modes $|\ell|\geq L_0/2$ in the $t$-direction in Fermi coordinates. For $\Phi_\alpha$ a basis of $\ker(\slashed D|_{rH^1_e})$, it follows that 
\bea
\pi_1(\chi_1\Psi^N)=\sum_{\alpha=1}^K \br \chi_1( \Psi^\circ + \zeta + \xi) , \Phi_\alpha \kt_{L^2}\leq C_m L_0^{-m}
\eea
\noindent where the Fourier mode restrictions from Lemma \ref{expdecayN} is used to bound the $(\Psi^\circ +\zeta)$ terms, and the bounds from Corollary \ref{higherordercokernel} are used to bound the $\xi$ term. 

The same bounds of Lemma \ref{expdecayN} and Corollary \ref{higherordercokernel} imply that $\Psi^\circ+\zeta$ is exponentially small on $\text{supp}(\chi_1)$, thus since $\slashed D, P$ are bounded,  
\bea
\|\slashed D P \slashed D(\chi_1 \Psi^N)\|_{L^2} = \|\slashed D P (d\chi_1 \Psi^N)\|_{L^2} \leq C \text{Exp}(-\tfrac{L_0}{c}) + C L_0^{-m}.
\eea

\noindent Combining these, we find that 

\bea
\|(1-\pi_1)\Pi_0(\chi_1 \Psi^N)\|_{L^2}&=& \|(1-\pi) (\text{Id}-\slashed D P \slashed D )(\chi_1 \Psi^N)\|_{L^2}\\&\geq& \|\Psi^N\|_{L^2}- \|\pi_1(\chi_1\Psi^N)\|_{L^2} - \|(1-\pi_1)\slashed DP\slashed D(\chi_1 \Psi^N)\|_{L^2} \\ &\geq& 1-C_mL_0^{-m}
\eea

\noindent and injectivity follows once $L_0$ is sufficiently large for $m=4$. 

The final statement that $\Psi_\ell$ admits a decomposition satisfying the conclusion of Proposition \ref{cokproperties2} is immediate since $\Psi^N_\ell$ satisfies the conclusions of Proposition \ref{cokproperties2} by Lemma \ref{expdecayN}. Indeed, repeating the argument above for each index $\ell$ individually shows that the difference $\Psi_\ell-\Psi_\ell^N=O(L_0^{-m})$ may can be absorbed into $\xi_\ell$ without disrupting the bound for each $|\ell|\geq L_0$. 
\end{proof}

Given Lemma \ref{injbigN}, $\text{ind}(M)=0$ means that the (complex) codimension of $\text{Im}(M|_{L_0})\subseteq \text{\bf Ob}$ is $2L_0+1$, and we can make the following definition: 

\begin{defn}\label{Psibasisdef}
The {\bf Obstruction Basis} is defined as 

$$\Psi_\ell:= \begin{cases} \Pi_0^\perp(\chi_1 \Psi_\ell^N)\hspace{2cm}  |\ell|> L_0 \\ \Psi_\ell \hspace{3.3cm} |\ell|\leq  L_0\end{cases}$$
where $\Psi_\ell$ for $|\ell|\leq L_0$ is chosen to be an orthonormal basis of the orthogonal complement of $\text{Im}(M|_{L_0})\subseteq \text{\bf Ob}(\mathcal Z_0)$. It then follows that the map amended in these low modes \bea \text{ob}^\text{pre}: L^2(\mathcal Z_0; \mathcal C_0)\oplus \ker(\slashed D|_{rH^1_e}) &\to& \text{\bf Ob}(\mathcal Z_0) \\
(e^{i\ell t}, \Phi)& \mapsto &\Psi_\ell + \Phi \eea
is an isomorphism. Additionally, by the proof of Lemma \ref{injbigN}, each $\Psi_\ell$ admits a decomposition \be \Psi_{\ell} =\chi_1\Psi_\ell^\circ + \zeta_{\ell}+ \xi_{\ell}\label{finaldecomposition}\ee
satisfying the desired conclusion of Proposition \ref{cokproperties2} (the statement of which are vacuous on the finite range $|\ell|\leq L_0$). 
\end{defn}

The above map $\text{ob}^\text{pre}$ is a preliminary version of the map $\text{ob}$. Thus far, we have shown that $\text{ob}^\text{pre}$ obeys the necessary bounds for the decomposition in Proposition \ref{cokproperties2}. What remains to be shown is that the projection can be calculated by the sequence of inner product (\refeq{innerprods}). Arranging this requires altering the definition of $\text{ob}^\text{pre}$ to obtain the final map $\text{ob}$. 

Indeed, {\it a priori}
since the basis $\Psi_\ell$ is not necessarily orthonormal, the coefficients of $\Psi=c_\ell \Psi_\ell$ are not calculated by the $L^2$-inner product, i.e. in general 
  
  $$(\text{ob}^\text{pre})^{-1}(\Pi_0\psi)\neq \left(\sum_{\ell\in \Z} \br \psi , \Psi_\ell\kt_{\C} e^{i\ell t} \ , \  \sum_\alpha \br  \psi, \Phi_\alpha\kt\Phi_\alpha\right),$$ 
  
 \noindent where $\alpha$ indexes a basis of $\ker(\slashed D|_{rH^1_e})$. Rather frustratingly, one cannot orthonormalize and retain the decay properties of Proposition \ref{cokproperties2} (disrupting these would lead to certain error terms being unbounded later, so the decay properties are essential). To amend this without orthonormalizing, we precompose $\text{ob}^\text{pre}$ with a change of basis\footnote{Equivalently, this may be viewed as endowing $L^2(\mathcal Z_0;\mathcal C_0)$ with an alternative inner product with comparable norm.} $U: L^2(\mathcal Z_0;\mathcal C_0)\to L^2(\mathcal Z_0;\mathcal C_0)$. Specifically, let $U$ be defined by the linear extension of \be U(c_k e^{ikt}):=\sum_{\ell \in \Z} \br  \text{ob}^\text{pre}(c_k e^{ikt}), \Psi_\ell\kt \  e^{ikt}=\sum_{\ell \in \Z} \br  c_k \Psi_k, \Psi_\ell\kt  \ e^{ikt}.\label{Udef}\ee

 \begin{lm}\label{Ucorrection}
 For $L_0$ sufficiently large, $U: L^2(\mathcal Z_0;\mathcal C_0)\to L^2(\mathcal Z_0;\mathcal C_0)$ is an isomorphism, and $$\text{ob}:= \text{ob}^\text{pre} \circ U^{-1}$$ satisfies the conclusions of Proposition \ref{cokproperties} and Proposition \ref{cokproperties2}]. 
 \end{lm}  
 
 \begin{proof} Provided that $U$ is an isomorphism, the conclusion of the propositions follow from directly from the definition (\refeq{Udef}). Indeed, $\text{ob}$ is clearly an isomorphism if $U$ is since it has already been established that $\text{ob}^\text{pre}$ is an isomorphism (in Definition \ref{Psibasisdef}), which is the assertion of Proposition \ref{cokproperties}. Additionally, using (\refeq{Udef}), one has that for a spinor $\psi \in L^2$ 
 \bea
  \text{ob}^{-1}(\Pi_0\psi)=   U U^{-1}(\text{ob}^{-1}\Pi_0\psi)&= &  \sum_{\ell \in \Z} \br \text{ob}^\text{pre} U^{-1}(\text{ob}^{-1}\Pi_0(\psi)), \Psi_\ell  \kt \  e^{i\ell t}\\
  &=& \sum_{\ell \in \Z} \br \text{ob}(\text{ob}^{-1}\Pi_0(\psi)), \Psi_\ell  \kt_{}   e^{i\ell t}= \sum_{\ell \in \Z} \br \psi , \Psi_\ell  \kt_{} \ e^{i\ell t} \eea
  \noindent which is (\refeq{innerprods}). Since $\Psi_\ell$ is unaltered from the case of $\text{ob}^\text{pre}$ in Definition \ref{Psibasisdef}, the conclusions of Proposition \ref{cokproperties2}  follows. It therefore suffices to show $U$ is an isomorphism, for which we show that $$U=\text{Id} + K$$ where $\|K\|_{L^2\to L^2} \leq CL_0^{-1/8}$. 
  
  To prove this bound on $K$, write $\Psi_\ell= \chi_1 \Psi_\ell^\circ + \Xi_\ell$ where $\Xi_\ell=\zeta_\ell + \xi_\ell$. We claim the following four bounds hold where all inner products are the hermitian inner product on $L^2$: 
  
  \begin{enumerate}
\item[(i)] $\br  \Psi_k, \Psi_\ell\kt=\delta_{k\ell}$ unless both $|k|>L_0$ and $|\ell|>L_0$.
\item[(ii)] $\br \Xi_k, \Xi_\ell\kt\leq \tfrac{C}{|k||\ell|}$. 
\item[(iii)] $\br \Xi_k, \chi_1\Psi_\ell^\circ \kt\leq \tfrac{C}{|k||\ell|}$. 
\item[(iv)] $\br \chi_1\Psi^\circ_k, \chi_1\Psi_\ell^\circ \kt=\delta_{k\ell} + a_{k\ell}$ where $|a_{k\ell}|\leq \tfrac{C}{|k|^{1/2} |\ell|^{1/2}} $ and if  $|k-\ell|\geq |k\ell|^{1/4}$ then $| a_{k\ell}|\leq \tfrac{C}{|k|^2|\ell|^2}$. 
\end{enumerate} 

\noindent (i) holds by Definition \ref{Psibasisdef}. (ii) is immediate from the bounds on $\zeta_\ell + \xi_\ell$ and Cauchy-Schwartz. For (iii), recall from Definition \ref{Psibasisdef} that $\Xi_\ell=\Pi_0^\perp(\chi_1\Psi_\ell^N)$, hence 
\bea \br \Xi_k, \chi_1 \Psi_\ell^\circ\kt=\br \Xi_k, \Pi_0^\perp(\chi_1 \Psi_\ell^\circ)\kt&=&\br\Xi_k , \Pi_0^\perp(\chi_1\Psi_\ell^N) - \Pi_0^\perp (\chi_1\zeta_\ell^N + \chi_1\xi_\ell^N) \kt \\ &=&\br \Xi_k, \Xi_\ell \kt + \br \Xi_k, \Pi_0^\perp(\chi_1\zeta_\ell^N+\chi_1 \xi_\ell^N)\kt  \eea
after which the bound follows from (ii) and the bounds on $\zeta_\ell^N, \xi_\ell^N$ from Lemma \ref{expdecayN}. Finally, for (iv) the integral may be written explicitly as 
$$(1+\text{sgn}(k)\text{sgn}(\ell))\int_{N_{r_0}(\mathcal Z_0)} \chi_1^2 |k|^{\tfrac12}|\ell|^{\tfrac12}\frac{e^{-(|\ell| + |k|)r}}{r} e^{i(k-\ell) t}|g|^{1/2} d\text{vol}$$

\noindent were $|g|^{1/2}=|g_\circ|^{1/2}+O(r)$ is the volume form in Fermi coordinates with $g_\circ$ the product metric. For the term coming from $|g_\circ|^{1/2}$ the integral is (exponentially close to) $\delta_{k\ell}$ by orthogonality in the product case. The $a_{k\ell}$ term arises from integrating the $O(r)$ term, for which direct integration shows that $|a_{k\ell}|\leq C|k|^{-1/2} |\ell|^{-1/2}$. Additionally, since the metric is smooth, the $e^{i(k-\ell)t}$ Fourier mode of the volume form is bounded by $|k-\ell|^{m}$ for $m$ large; the stronger bound in the case that $|k-\ell|\geq |k\ell|^{1/4}$ follows. 

With (i)-(iv) established,  we calculate the $L^2$-norm of $Kc(t)$ for $c(t)=\sum_{k}c_k e^{ikt}$,

\begin{eqnarray}
\big\|(Kc(t)\big\|^2_{L^2}&=& \sum_{|\ell|\geq L_0} \Big |\sum_{|k|\geq L_0} \br c_k \Psi_k, \Psi_\ell\kt -\delta_{k\ell} \Big |^2 \label{4885}\\
&\leq &\sum_{|\ell|\geq L_0} \Big |\sum_{|k|\geq L_0} c_k a_{k\ell} + c_k\br \Xi_k ,\chi_1 \Psi_\ell^\circ\kt + c_k\br \chi_1 \Psi_k^\circ , \Xi_\ell\kt + c_k \br \Xi_k, \Xi_\ell\kt \Big |^2\\
&\leq & C\|c(t)\|_{L^2}\sum_{|\ell|, |k|\geq L_0} |a_{k\ell}|^2 + \frac{1}{|k|^2|\ell|^2}
\label{489}
\end{eqnarray}
where we have used Cauchy-Schwartz and (i)-(iv) from above. 

The $|k|^{-2}|\ell|^{-2}$ term is easily summable, with sum bounded by $1/L_0$. For the $a_{k\ell}$ term, we split the sum over $k$ into two parts, and apply the two cases of item (iv): 

\be
\leq \frac{C}{L_0} + \sum_{|\ell|\geq L_0}\Big(\sum_{|k-\ell| \leq |k\ell|^{1/4}} \frac{1}{|k||\ell|} + \sum_{|k-\ell|\geq  |k\ell|^{1/4}}\frac{1}{|k|^4|\ell|^4}\Big)\label{490}
\ee

\noindent The $|\ell|^4 |k|^4$ term is once again summable and bounded by a constant multiple of $1/L_0$. For the remaining term, observe that $|k-\ell|\leq |k\ell|^{1/4}$ implies that $|\ell|/2\leq |k|\leq 2|\ell|$ provided $L_0$ is large enough. This in turn implies that $|k-\ell|\leq 4|\ell|^{1/2}$, from which it follows that 

\be \sum_{|\ell|\geq L_0}\sum_{|k-\ell|\leq |k\ell|^{1/4}}\frac{1}{|k||\ell|}\leq \sum_{|\ell|\geq L_0}\frac{1}{|\ell|^2} \sum_{|k-\ell|\geq |k\ell|^{1/4}} 1 \leq \sum_{|\ell|\geq L_0}\frac{1}{|\ell|^{3/2}}\leq \frac{C}{L_0^{1/4}}.\label{491}\ee

\noindent It follows that $\|K\|_{L^2\to L^2}\leq CL_0^{-1/8}$ hence $U=\text{Id}+K$ is an isomorphism after possibly increasing $L_0$. This completes the proof of Lemma \ref{Ucorrection}, thus the proofs of Propositions \ref{cokproperties} and \ref{cokproperties2}. 
 \end{proof}

 To conclude this subsection, we briefly note the following higher-regularity extension of the previous lemma: 
 
  \begin{lm} \label{Utame}The map $U$ defined by \refeq{Udef} restricts to an isomorphism $$U: H^m(\mathcal Z_0; \mathcal C_0)\to H^m(\mathcal Z_0; \mathcal C_0)$$ 
 for every $m>0$. 
 \end{lm}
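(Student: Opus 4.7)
The plan is to show that $K := U - \text{Id}$ is a compact operator on $L^{m,2}(\mathcal Z_0; \mathcal S_{\mathcal Z_0})$. Combined with the injectivity of $U$ on $L^2$ established in Lemma \ref{Ucorrection}---which \emph{a fortiori} gives injectivity on $L^{m,2} \subset L^2$---the Fredholm alternative applied to the index-zero operator $\text{Id} + K$ then yields that $U$ is an isomorphism on $L^{m,2}$.

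Identifying $L^{m,2}(\mathcal Z_0; \mathcal S_{\mathcal Z_0})$ with a weighted $\ell^2$-space via the Fourier basis $\{\phi_\ell\}$ of eigenfunctions of $\slashed \del_{\mathcal Z_0}$, compactness of $K$ reduces to showing the weighted matrix $\tilde M_{k\ell} := (1+|\ell|)^m(1+|k|)^{-m} M_{k\ell}$ is Hilbert--Schmidt on $\ell^2$, where $M_{k\ell} := \langle \Psi_k, \Psi_\ell\rangle - \delta_{k\ell}$ is supported on $|k|, |\ell| > L_0$ by the block structure of Definition \ref{Psibasisdef}. Using the decomposition $\Psi_\ell = \chi_1 \Psi_\ell^\text{Euc} + \Xi_\ell$ together with the higher-order splitting $\Xi_\ell = \zeta_\ell^{(m')} + \xi_\ell^{(m')}$ from Corollary \ref{higherordercokernel} with $m'$ chosen $\geq m$, the matrix element $M_{k\ell}$ decomposes into four types of inner products. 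For the Euclidean term $\langle \chi_1\Psi_k^\text{Euc}, \chi_1\Psi_\ell^\text{Euc}\rangle - \delta_{k\ell}$, repeated integration by parts in $t$---using smoothness of the metric and the fact that each $\chi_1\Psi_k^\text{Euc}$ has a single Fourier mode $e^{ikt}$---yields rapid decay of order $C_N(1+|k-\ell|)^{-N}$ together with a factor $(|k|+|\ell|)^{-1}$ from the $r$-integral. Combined with the standard estimate $(1+|\ell|)^m(1+|k|)^{-m} \leq (1+|k-\ell|)^m$ and a change of variable $n = k-\ell$, this produces a summable Hilbert--Schmidt contribution. Terms involving $\xi_\ell^{(m')}$ are controlled using $\|\xi_\ell^{(m')}\|_{L^2} = O(|\ell|^{-(2+m')})$ and summability of $\sum_k(1+|k|)^{-2m}$, while pairings $\langle \zeta_k^{(m')}, \zeta_\ell^{(m')}\rangle$ are restricted by Fourier supports to $|k| \sim |\ell|$, where the weight is bounded and a direct Cauchy--Schwarz estimate suffices.

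The essential terms are $\langle \chi_1\Psi_k^\text{Euc}, \zeta_\ell^{(m')}\rangle$ and its symmetric counterpart. The Fourier support of $\zeta_\ell^{(m')}$ in the band $[\ell-|\ell|/2, \ell+|\ell|/2]$ forces $|k| \geq |\ell|/2$ whenever the pairing is nonzero, making the weight $(1+|\ell|)^{2m}(1+|k|)^{-2m}$ uniformly bounded within the support. A Bessel-type inequality $\sum_k |\langle \chi_1\Psi_k^\text{Euc}, v\rangle|^2 \leq C\|v\|_{L^2}^2$ applied to $v = \zeta_\ell^{(m')}$ then gives $\sum_k |\langle \chi_1\Psi_k^\text{Euc}, \zeta_\ell^{(m')}\rangle|^2 \leq C/|\ell|^2$, which is summable in $\ell$. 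This Bessel inequality is the main technical point: it follows from bounding the Gram matrix $G_{k\ell} = \langle \chi_1\Psi_k^\text{Euc}, \chi_1\Psi_\ell^\text{Euc}\rangle$ as a bounded operator on $\ell^2$ via Schur's test---using the rapid-decay bounds on the Euclidean pairing established above---so that the analysis operator $v \mapsto (\langle \chi_1\Psi_k^\text{Euc}, v\rangle)_k$ has norm at most $\sqrt{\|G\|_{\ell^2}}$. With all four contributions summable, $\tilde M$ is Hilbert--Schmidt, $K$ is compact on $L^{m,2}$, and the Fredholm alternative completes the proof.
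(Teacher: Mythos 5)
Your overall strategy is sound and your conclusion matches the paper's, but you take a genuinely different route. The paper writes $U = \mathrm{Id} + K$ and shows the stronger, quantitative statement that $K: L^{m,2}\to L^{m+1/8,2}$ is bounded---so $K$ is a smoothing operator of order $1/8$---then concludes via the elliptic-type estimate $\|\phi\|_{m}\leq C_m(\|U\phi\|_m + \|\phi\|_{m-1/8})$ bootstrapped up from the $L^2$ isomorphism. You instead prove that the Sobolev-weighted matrix of $K$ is Hilbert--Schmidt (hence $K$ compact on $L^{m,2}$) and invoke the Fredholm alternative together with injectivity inherited from $L^2$. These are logically parallel (smoothing by a positive order also gives compactness of $K$ on $L^{m,2}$ via the compact embedding), but the paper's gain of $1/8$ is a quantitative refinement that fits the tame-Fr\'echet estimates used downstream in Section~8. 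Your route to the Euclidean estimate is actually sharper: repeated integration by parts in $t$ gives $|a_{k\ell}|\lesssim C_N(1+|k-\ell|)^{-N}(|k|+|\ell|)^{-1}$ for any $N$, whereas the paper only records the two-regime dichotomy in item~(iv) of Lemma~4.10's proof. Your use of a Bessel/frame inequality for the cross terms $\langle\chi_1\Psi_k^{\text{Euc}},\zeta_\ell\rangle$ is a clean device that the paper does not make explicit; the paper instead relies on the bounds (4.43) derived from the Fourier-mode restriction on $\zeta_\ell$ and Cauchy--Schwarz with the grouping $(a_{k\ell}/|k|^m)(c_k|k|^m)$.

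One genuine (though fixable) gap: your handling of the $\xi_\ell^{(m')}$ terms invokes ``summability of $\sum_k(1+|k|)^{-2m}$,'' which fails for $0<m\leq 1/2$; the lemma is stated for all $m>0$. The fix is to not discard the $(1+|k|)^{-2m}$ factor against a crude pointwise bound, but instead sum over $k$ first via Parseval (or the same Bessel inequality you use for $\zeta_\ell$): $\sum_k|\langle\chi_1\Psi_k^{\text{Euc}},\xi_\ell\rangle|^2\leq C\|\xi_\ell\|^2_{L^2}$, which after multiplying by the crude upper bound $(1+|\ell|)^{2m}$ on the weight and summing over $\ell$ gives $\sum_\ell(1+|\ell|)^{2m}|\ell|^{-2(2+m')}<\infty$ for $m'\geq m$. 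Alternatively, the case $m\leq 1/2$ can be recovered by interpolation once the $L^2$ and large-$m$ isomorphisms are in hand. A second, more minor point: the Fourier-support argument forcing $|k|\sim|\ell|$ is only approximate since the inner product uses the volume form $|g|^{1/2}$, whose $t$-dependence smears the Fourier support; the spillover decays rapidly (though not exponentially, unless $g_0$ is real-analytic) in $|k-\ell|$ and is easily absorbed, but this should be stated rather than implicitly treated as exact.
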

 \begin{proof}As in the proof of the previous Lemma \ref{Ucorrection}, write $U=\text{Id} + K$. It suffices to show that $K: H^{m}\to H^{m+1/8}$ is bounded, i.e. that $K$ is a smoothing operator of order $\tfrac18$. Knowing this, the lemma follows from the ``elliptic estimate''
   \be \|\phi\|_{m}\leq C_m \ ( \|U \phi\|_m + \|\phi\|_{m-1/8} )\label{Kellipticest}\ee
   derived by writing $Id= U-K$ and using the triangle inequality and the fact that $U: L^2\to L^2$ is an isomorphism.

Saying that $K:H^{m}\to H^{m+1/8}$ is bounded is to say that the sum

$$\sum_{|\ell|\geq L_0} \Big |\sum_{|k|\geq L_0} \br c_k \Psi_k, \Psi_\ell\kt -\delta_{k\ell} \Big |^2 |\ell|^{2m+1/4}$$

\noindent is bounded by a constant multiple of $\|c(t)\|_{H^m}$. For $m=0$, this is immediate from (\refeq{491}), where a factor of $|\ell|^{1/4}$ can be spared without disrupting the summability. For $m>0$ the assertion follows from repeating the bounds of (\refeq{4885}--\refeq{489}) in the proof of Lemma \ref{Ucorrection} using the additional bounds that
   
     \be
\frac{|\ell|^{2m+1/4} |a_{k\ell}|^2}{|k|^{2m}} \leq  {C_m}{|\ell|^{1/4}} |a_{k\ell}|^2. \label{492}
  \ee
and applying Cauchy-Schwartz with the grouping $(\tfrac{a_{k\ell}}{|k|^m})(c_k |k|^m)$. The equivalent bound to (\refeq{492}) likewise holds with $b_{k\ell}=\br \Xi_k ,\chi_1 \Psi_\ell^\circ\kt + \br \chi_1 \Psi_k^\circ , \Xi_\ell\kt +  \br \Xi_k, \Xi_\ell\kt$ in place of $a_{k\ell}$; both of these follow from similar considerations as the proofs of (i)--(iv) in Lemma \ref{Ucorrection}, using the Fourier mode restriction on $\zeta_\ell$ the higher-order bounds of Corollary  \ref{higherordercokernel}. 
 \end{proof}
 
  \subsection{The Higher Regularity Obstruction}
  \label{section4.5}
 This subsection refines Propositions \refeq{cokproperties} and \refeq{cokproperties2} to cover the cases of higher regularity. The Dirac operator $$\slashed D: H^{m,1}_{\text{b},e}(Y\setminus \mathcal Z_0;S)\to H^m_\text{b}(Y\setminus \mathcal Z_0; S)$$
 has infinite-dimensional cokernel equal to $\text{\bf Ob} \cap H^m_\text{b}$ by Corollary \ref{higherregularitymappings}. It is not {\it a priori} clear that this cokernel coincides with the natural restriction $\text{\bf Ob}^m:= \text{Im}(\text{ob}|_{H^m(\mathcal Z_0;\mathcal C_0)})$. The next lemma asserts that this is indeed the case. 
 
 \begin{lm}\label{regularitycoincides}
 There is equality $$\text{\bf Ob}^m = \text{\bf Ob} \cap H^m_\text{b}$$ as subspaces of $H^m_{\text b}(Y\setminus \mathcal Z_0;S_0)$. In particular, $\text{ob}|_{H^m}$ restricts to an isomorphism making the following diagram commute. 
  \begin{center}
\tikzset{node distance=3.0cm, auto}
\begin{tikzpicture}
\node(A){$H^{m}(\mathcal Z_0;\mathcal C_0)$};
\node(C)[above of=A,yshift=-1.5cm]{$ L^2(\mathcal Z_0;\mathcal C_0)$};
\node(C')[right of=A]{$\text{\bf Ob} $};
\node(D')[above of=C',yshift=-1.5cm]{$\text{\bf Ob} $};
\node(C'')[right of=C', xshift=-1.6cm, yshift=-.025cm]{$\cap H^m_\text{b}(Y\setminus \mathcal Z_0)$};
\node(D'')[above of=C'',xshift=-.1cm, yshift=-1.5cm]{$\cap L^2(Y\setminus \mathcal Z_0)$};
\draw[->] (A) to node {$\iota$} (C);
\draw[->] (A) to node {$\text{ob}|_{H^m}$} (C');
\draw[->] (C') to node {$ \iota$} (D');
\draw[<-][swap] (D') to node {$\text{ob}$} (C);
\end{tikzpicture}\end{center}
 \end{lm}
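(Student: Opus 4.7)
The plan is to show that the composition $M' \circ U^{-1}$ maps $L^{m,2}(\mathcal Z_0;\mathcal S_{\mathcal Z_0})$ isomorphically onto $\text{\bf Ob}\cap H^m_{\text{b}}(Y-\mathcal Z_0;S_0)$, which yields simultaneously the set equality $\text{\bf Ob}^m = \text{\bf Ob}\cap H^m_\text{b}$ and the commutativity of the diagram. Since Lemma \ref{Utame} already establishes that $U^{\pm 1}$ are isomorphisms of $L^{m,2}$, the argument reduces to analyzing the basis map $\phi_\ell \mapsto \Psi_\ell$ on the infinite-dimensional summand; the finite-dimensional $\Z_2$-harmonic spinor piece is smooth by Proposition \ref{asymptoticexpansion} and handled separately.

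For the inclusion $\text{\bf Ob}^m \subseteq \text{\bf Ob}\cap H^m_\text{b}$, the goal is to prove the boundedness $\|M'(\phi)\|_{H^m_\text{b}} \leq C\|\phi\|_{L^{m,2}}$. Given $\phi = \sum c_\ell \phi_\ell$ with $\sum |\ell|^{2m}|c_\ell|^2 < \infty$, I would expand $\Psi_\ell = \chi_1 \Psi_\ell^\text{Euc} + \zeta_\ell + \xi_\ell$ and apply the higher-derivative bounds of Corollary \ref{higherordercokernel}. Each $\nabla_t$-derivative produces a factor of $|\ell|$ via the $e^{i\ell t}$ oscillation, while each boundary derivative $r\nabla_z$ remains uniformly bounded: for $\Psi_\ell^\text{Euc}$ this is because $r|\ell|e^{-|\ell|r}$ is $O(1)$, and for $\zeta_\ell, \xi_\ell$ it is built directly into the estimates of Corollary \ref{higherordercokernel}. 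Cross-terms between distinct $k,\ell$ are controlled by derivative analogs of estimates (i)--(iv) in the proof of Lemma \ref{Ucorrection}: the Fourier-mode restriction on $\zeta_\ell$ keeps the differentiated basis elements nearly orthogonal, while the bound $\|(r\nabla_z)^\alpha \nabla_t^\beta \xi_\ell^{(m)}\|_{L^2} \leq C|\ell|^{\beta-2-m}$ makes the $\xi$-contribution absolutely summable. A Bessel-type inequality then yields $\|M'(\phi)\|^2_{H^m_\text{b}} \leq C\sum |\ell|^{2m}|c_\ell|^2$.

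For the reverse inclusion $\text{\bf Ob}\cap H^m_\text{b} \subseteq \text{\bf Ob}^m$, given $\psi \in \text{\bf Ob}\cap H^m_\text{b}$ the coefficients produced by the $L^2$ formula of Proposition \ref{cokproperties} are $c_\ell = \langle \psi, \Psi_\ell\rangle$, and I need to show $\sum |\ell|^{2m}|c_\ell|^2 < \infty$. The key step is to integrate by parts $m$ times in the $t$-direction to transfer $\nabla_t^m$ from $\Psi_\ell$ to $\psi$: since $\chi_1 \Psi_\ell^\text{Euc}$ is a pure $e^{i\ell t}$ mode and $\zeta_\ell$ has Fourier support concentrated near $\ell$, one obtains
\[
(i\ell)^m \langle \psi, \Psi_\ell\rangle = (-1)^m \langle \nabla_t^m \psi, \Psi_\ell\rangle + E_\ell,
\]
where the error $E_\ell$ comes from $\nabla_t$ hitting connection coefficients and from the $\xi_\ell$ correction, both at lower $|\ell|$-order. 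Since $\nabla_t^m \psi \in L^2$ by hypothesis, the Bessel-type inequality for the near-orthonormal system $\{\Psi_\ell\}$ --- essentially the $L^2$ case already established in Section \ref{section4.4} --- gives $\sum |\ell|^{2m}|c_\ell|^2 \leq C \|\psi\|^2_{H^m_\text{b}}$. Applying $U^{-1}$, which preserves $L^{m,2}$ by Lemma \ref{Utame}, produces the desired preimage in $L^{m,2}$.

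The main technical obstacle in both directions is the careful bookkeeping of cross-terms between different basis elements after differentiation. This requires derivative analogs of the orthogonality-type estimates (i)--(iv) in the proof of Lemma \ref{Ucorrection}, and rests essentially on the Fourier-mode restriction and the derivative bounds furnished by Corollary \ref{higherordercokernel} --- precisely the structural information from Section \ref{section4.1}. Once these estimates are in hand, the argument is formally parallel to the $L^2$ case, with all powers of $|\ell|$ tracked.
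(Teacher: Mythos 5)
Your argument is correct in outline but takes a considerably more laborious route than the paper. Both approaches begin the same way: reduce, via Lemma~\ref{Utame}, to understanding the basis map $\phi_\ell \mapsto \Psi_\ell$, so that the lemma comes down to the equivalence
$$\sum_\ell c_\ell \Psi_\ell \in H^m_\text{b} \quad\Longleftrightarrow\quad \sum_\ell |c_\ell|^2 |\ell|^{2m} < \infty.$$
You then propose to verify this by hand: expand $\Psi_\ell = \chi_1\Psi_\ell^\text{Euc} + \zeta_\ell + \xi_\ell$, apply the derivative bounds of Corollary~\ref{higherordercokernel} term by term, and control cross-terms by what you call ``derivative analogs of estimates (i)--(iv)'' from Lemma~\ref{Ucorrection}, together with a Bessel-type inequality for the near-orthonormal system and integration by parts $m$ times in $t$ for the reverse direction. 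This can be pushed through, but the cross-term estimates at order $m$ are a real technical commitment --- you state them as deliverables rather than deriving them, and they are essentially the $H^m$-analogue of the entire $L^2$ orthogonality analysis in Section~\ref{section4.4}. The paper avoids all of this with a single observation already in hand: the right-hand side of the equivalence is (by the model computation on $S^1\times\R^2$) precisely the condition $\sum c_\ell\,\chi_1\Psi_\ell^\text{Euc} \in H^m_\text{b}$, and since $\Psi_\ell$ is obtained from $\chi_1\Psi_\ell^N$ by applying the projection $\text{Id} - \slashed D P \slashed D - \pi_1$, the equivalence follows from the boundedness of $\slashed D P \slashed D : H^m_\text{b}\to H^m_\text{b}$ established in Corollary~\ref{higherregularitymappings}(C), together with the already-orthogonal decomposition $H^m_\text{b} = (\text{\bf Ob}\cap H^m_\text{b})\oplus(\text{Range}(\slashed D)\cap H^m_\text{b})$ from part (A). No direct estimation of $\zeta_\ell,\xi_\ell$ at order $m$ is needed. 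Your approach would buy you explicit quantitative constants, but the paper's is shorter and recycles work already done. One small point of confusion in your write-up: in the reverse inclusion you ``apply $U^{-1}$'' at the end, but once you have shown $\sum_\ell |\ell|^{2m}|\langle\psi,\Psi_\ell\rangle|^2 < \infty$ you have already exhibited $\text{ob}^{-1}(\psi) \in L^{m,2}$ directly by the formula in Proposition~\ref{cokproperties2}, since $\text{ob}^{-1}(\psi) = \sum_\ell\langle\psi,\Psi_\ell\rangle_\C\,\phi_\ell$; the $U^{-1}$ is already baked into $\text{ob}^{-1}$ and should not be applied a second time.
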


 \begin{proof}
 Lemma \ref{Utame} shows that there are equivalences of norms
 \bea H^{m}(\mathcal Z_0; \mathcal C_0) &\overset{U}\sim & H^{m}(\mathcal Z_0; \mathcal C_0)
 \eea
is a bounded linear isomorphism with bounded inverse. It is therefore enough to show that $$\sum_{\ell} c_\ell \Psi_\ell \in H^m_\text{b} \hspace{1cm} \Leftrightarrow \hspace{1cm} \sum_{\ell }|c_\ell | |\ell|^{2m} < \infty.$$ 

\noindent The right hand side equivalent to the $H^m_\text{b}$-norm of $\sum c_\ell \Psi_\ell^\circ$, and the statement then follows from the fact that the projection operator $\slashed D P \slashed D: H^m_\text{b}\to H^m_\text{b}$ is bounded by Corollary \ref{higherregularitymappings}. 
 \end{proof}

 \bigskip

\section{The Universal Dirac Operator}
\label{section5}

This section begins the analysis of the Dirac operator allowing the singular set $\mathcal Z_0$ to vary. This is done by introducing a ``universal'' Dirac operator which is the infinite-dimensional family of Dirac operators parameterized by embedded singular sets near $\mathcal Z_0$. The main result of this section, Proposition \ref{abstractfirstvariation} calculates the derivative of this universal Dirac operator with respect to variations in the singular set. 

For the remainder of the article we assume $(\mathcal Z_0, \ell_0, \Phi_0)$ is regular in the sense of Definition \ref{regulardef}. 

\subsection{Trivializations}
\label{section5.1}

Before calculating the derivative with respect to embeddings, we define the universal Dirac operator more precisely as a map on Banach vector bundles. In this, care must be taken to construct explicit trivializations of these vector bundles; indeed, the present situation is more subtle than the case of scalar-valued functions appearing in \cite{DonaldsonMultivalued}, and imprecision about certain isomorphisms can lead to incorrect formulas for the derivative with respect to deformations of the singular set. 

Consider deformations of the singular set $\mathcal Z_0$ as follows.  Let $$\mathcal E_0 \subseteq \text{Emb}^{2,2}(\mathcal Z_0; Y)$$

\noindent denote an open neighborhood of $\mathcal Z_0$ in the space of embedded links of Sobolev regularity $(2,2)$. For each $\mathcal Z\in \mathcal E_0$, let $(S_{\mathcal Z}, \gamma, \nabla)$ denote the Clifford module defined analogously to $S_0$ in  (\refeq{cliffordmod}) so that $S_{\mathcal Z}:=S_{\frak s_0}\otimes \ell_{\mathcal Z}$. Here $\ell_\mathcal Z\to Y\setminus \mathcal Z$ is the real line bundle whose holonomy representation agrees with that of $\ell_0$ (up to homotopy) equipped with its unique flat connection with holonomy in $\Z_2$. The Dirac operator $\slashed D_\mathcal Z$ is defined as in Definition  \ref{Z2Dirac}, and the Hilbert spaces $rH^1_e(Y\setminus \mathcal Z, S_\mathcal Z), L^2(Y\setminus \mathcal Z, S_\mathcal Z)$ are defined for $\mathcal Z\in \mathcal E_0$ analogously to  \ref{functiondef} but using a weight $r_\mathcal Z\approx \text{dist}(-,\mathcal Z)$. 

Define families of Hilbert spaces 
\bea
\mathbb H^1_e(\mathcal E_0)&:= &\{(\mathcal Z,\ph) \ | \ \mathcal Z \in \mathcal E_0 \ , \ \ph \in rH^1_e(Y\setminus \mathcal Z; S_\mathcal Z) \}\\
\mathbb L^2(\mathcal E_0)&:= &\{(\mathcal Z,\psi) \ | \ \mathcal Z \in \mathcal E_0 \ , \ \psi \in L^2(Y\setminus \mathcal Z; S_\mathcal Z) \}
\eea 

\noindent  which come equipped with projections $p_1: \mathbb H^1_e(\mathcal E_0)\to \mathcal E_0$ and $p_0: \mathbb L^2(\mathcal E_0)\to \mathcal E_0$ respectively.

\begin{lm}\label{trivializationlem}
There are trivializations 
\bea
\Upsilon: \mathbb H^1_e(\mathcal E_0)&\simeq &\mathcal E_0 \times rH^1_e(Y\setminus \mathcal Z_0; S_0) \\ 
\Upsilon:\mathbb L^2(\mathcal E_0)&\simeq& \mathcal E_0 \times \  L^2(Y\setminus \mathcal Z_0; S_0) 
\eea
\noindent which endow the spaces on the left with the structure of locally trivial Hilbert vector bundles. 
\end{lm}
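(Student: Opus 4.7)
The plan is to construct, for each $\mathcal Z \in \mathcal E_0$, an ambient diffeomorphism $f_{\mathcal Z}: Y \to Y$ that sends $\mathcal Z_0$ to $\mathcal Z$ and depends continuously on $\mathcal Z$, and then to let $\Upsilon$ be the map induced by pullback along $f_{\mathcal Z}$, composed with a canonical identification of the twisted bundles $f_{\mathcal Z}^* S_{\mathcal Z} \simeq S_0$.

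First I would build the family $\{f_{\mathcal Z}\}$. Since $\mathcal Z_0$ is a smooth embedded link and $\mathcal E_0$ is a small neighborhood of $\mathcal Z_0$ in $\text{Emb}^{2,2}(\mathcal Z_0; Y)$, after shrinking $\mathcal E_0$ each $\mathcal Z \in \mathcal E_0$ has the form $\mathcal Z = \exp_{\mathcal Z_0}(n_{\mathcal Z})$ for a unique small section $n_{\mathcal Z} \in L^{2,2}(\mathcal Z_0; N\mathcal Z_0)$ via the normal exponential map.  Using Fermi coordinates on a tubular neighborhood of $\mathcal Z_0$, I would extend $n_{\mathcal Z}$ to a compactly supported ambient vector field $\widetilde n_{\mathcal Z}$ on $Y$ by parallel transport in the normal directions cut off at radius $r_0$, and set $f_{\mathcal Z}$ to be the time-$1$ flow of $\widetilde n_{\mathcal Z}$.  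The Sobolev embedding $L^{2,2}(S^1) \hookrightarrow C^{1,1/2}(S^1)$ guarantees that $f_{\mathcal Z}$ is a $C^{1,1/2}$-diffeomorphism of $Y$ close to the identity, depending continuously on $\mathcal Z$, and with $f_{\mathcal Z}(\mathcal Z_0) = \mathcal Z$.

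Next I would identify $f_{\mathcal Z}^* S_{\mathcal Z}$ with $S_0$ over $Y - \mathcal Z_0$.  The pullback $f_{\mathcal Z}^* \ell_{\mathcal Z}$ is a flat real line bundle whose monodromy about each component of $\mathcal Z_0$ agrees with that of $\ell_0$, so there is a flat bundle isomorphism $\Theta_{\mathcal Z}: f_{\mathcal Z}^* \ell_{\mathcal Z} \to \ell_0$, unique up to an overall sign on each component and normalized by $\Theta_{\mathcal Z_0} = \mathrm{Id}$ and continuity in $\mathcal Z$.  For the spinor factor, I would use the Bourguignon--Gauduchon-type construction: although $f_{\mathcal Z}$ is not an isometry, the pair $(f_{\mathcal Z}, g_{\mathcal Z} := f_{\mathcal Z}^* g_0)$ yields a tautological identification $f_{\mathcal Z}^* S_{\frak s_0} \simeq S_{\frak s_0}^{g_{\mathcal Z}}$, and the Bourguignon--Gauduchon unitary isomorphism (discussed in detail in Section \ref{section6}) then identifies $S_{\frak s_0}^{g_{\mathcal Z}}$ with $S_{\frak s_0}$ over $Y$.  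Combining these gives a fiberwise real-linear isomorphism $\mathcal F_{\mathcal Z}: f_{\mathcal Z}^* S_{\mathcal Z} \to S_0$ depending continuously on $\mathcal Z$, equal to the identity when $\mathcal Z = \mathcal Z_0$.

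Define
\[
\Upsilon(\mathcal Z, \varphi) \ :=\ \bigl(\mathcal Z, \ \mathcal F_{\mathcal Z}(f_{\mathcal Z}^* \varphi)\bigr),
\]
and analogously on $\mathbb L^2(\mathcal E_0)$.  The hardest step will be verifying that these maps preserve the Hilbert space structures uniformly in $\mathcal Z$; this is where the norm equivalences must be checked.  Since $f_{\mathcal Z}$ is $C^1$-close to the identity uniformly in $\mathcal Z \in \mathcal E_0$, the pullback metric $g_{\mathcal Z} = f_{\mathcal Z}^* g_0$ and its volume form are comparable to $g_0$ and $dV$ with uniform constants; the pulled-back weight $r_{\mathcal Z} \circ f_{\mathcal Z}$ is comparable to $r_0$; and the Christoffel symbols of $g_{\mathcal Z}$ with respect to the $g_0$-frame are bounded, so the pullback of the covariant derivative differs from $\nabla$ by a bounded $C^0$ term.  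These three observations yield the uniform equivalences
\[
\|\mathcal F_{\mathcal Z} f_{\mathcal Z}^* \varphi\|_{rH^1_e(Y-\mathcal Z_0; S_0)} \ \sim\ \|\varphi\|_{rH^1_e(Y-\mathcal Z; S_{\mathcal Z})}, \qquad \|\mathcal F_{\mathcal Z} f_{\mathcal Z}^* \psi\|_{L^2} \ \sim\ \|\psi\|_{L^2},
\]
with constants continuous in $\mathcal Z$.  Continuity of $\mathcal Z \mapsto (f_{\mathcal Z}, \mathcal F_{\mathcal Z})$ into the appropriate space of maps then implies that $\Upsilon$ and its evident inverse are continuous, endowing $\mathbb H^1_e(\mathcal E_0)$ and $\mathbb L^2(\mathcal E_0)$ with locally trivial Hilbert bundle structures.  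The main obstacle is the construction and continuity of the spinor bundle identification $\mathcal F_{\mathcal Z}$, which requires some care since the spin structure must be pulled back through a non-isometric diffeomorphism; once $\mathcal F_{\mathcal Z}$ is in hand, the remaining verifications are routine consequences of $f_{\mathcal Z}$ being $C^1$-close to the identity.
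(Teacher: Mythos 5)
Your overall strategy --- conjugate by a family of ambient diffeomorphisms moving $\mathcal Z_0$ to $\mathcal Z$, then identify spinor bundles via the Bourguignon--Gauduchon parallel transport --- is the same as the paper's, and the $L^2$ norm equivalence is indeed straightforward. However, there is a genuine gap in your verification of the $rH^1_e$ norm equivalence, which is in fact the delicate step.

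You assert that because $f_{\mathcal Z}$ is $C^1$-close to the identity, ``the Christoffel symbols of $g_{\mathcal Z}$ with respect to the $g_0$-frame are bounded, so the pullback of the covariant derivative differs from $\nabla$ by a bounded $C^0$ term.'' This is not correct. The deformation $\eta$ has only $L^{2,2}(S^1)$ regularity along $\mathcal Z_0$; by Sobolev embedding this gives $\eta \in C^{1,1/2}$, so $f_{\mathcal Z}$ is $C^{1,1/2}$ and the pullback metric $g_{\mathcal Z}$ has entries involving $\eta'$ and is bounded. But the Christoffel symbols involve $\eta''(t)$, which lies only in $L^2(S^1)$ and need not be bounded at all. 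Thus the difference between the pulled-back covariant derivative and $\nabla$ is a \emph{multiplication operator by a function that is only $L^2$ in $t$} (times something smooth in $(t,x,y)$), and you cannot conclude the norm equivalence by crude $C^0$ boundedness. The paper bridges this gap with a ``mixed dimension'' Sobolev multiplication estimate: for $f \in L^2(S^1)$ and $\varphi \in rH^1_e$, one has $\|f(t)\varphi\|_{L^2(S^1\times D^2)} \leq C\|f\|_{L^2(S^1)}\|\varphi\|_{rH^1_e}$, proved by integrating over normal slices $\{t\}\times D^2$ and invoking the Sobolev restriction theorem. Without this (or an equivalent argument), the claim that $\Upsilon$ is bounded on $rH^1_e$ is unjustified. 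You should replace the $C^0$-boundedness claim with this multiplication estimate, noting that the offending Christoffel terms always take the product form $f(t)\,g_2(t,x,y)$ with $f \in L^2(S^1)$ and $g_2$ smooth.

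Two smaller remarks. First, the paper deliberately uses the explicit map $F_\eta(t,z) = (t, z+\chi(r)\eta(t))$ rather than the time-$1$ flow of an ambient vector field as you do; the paper notes this avoids technical headaches with the diffeomorphism group and simplifies later formulas (your version is not wrong, just somewhat heavier). Second, for the spin-structure identification, the paper's $\tau_{g_0}^{g_\eta}$ is constructed by parallel transport on the generalized cylinder $([0,1]\times Y,\, ds^2 + g_{s\eta})$ using the interpolating family $g_{s\eta} = F_{s\eta}^* g_0$; your pointer to the Bourguignon--Gauduchon identification is the right idea, but you should spell out which one-parameter family of metrics is being used, since this determines the connection on the trivialization and hence the form of $\mathrm{d}\slashed{\mathbb D}$ computed in Proposition \ref{abstractfirstvariation}.
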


Assuming this lemma momentarily, we define 

\begin{defn}
The {\bf Universal Dirac Operator} is the section $\slashed{\mathbb D}$ defined by

\begin{center}
\tikzset{node distance=2.0cm, auto}
\begin{tikzpicture}
\node(C)[yshift=-1.7cm]{$\mathbb H^1_e(\mathcal E_0)$};
\node(D)[above of=C]{$p_{1}^*\mathbb L^2(\mathcal E_0)$};
\node(D')[xshift=2.0cm, yshift=-.6cm]{$\slashed{\mathbb D}(\mathcal Z, \ph):=\slashed D_{\mathcal Z}\ph$};
\draw[<-] (C) to node {$$} (D);
\draw[->] (C) edge[bend right=45]node[below] {$$} (D);
\end{tikzpicture}
\end{center}
\end{defn}

\bigskip

Before proving Lemma \ref{trivializationlem},  we first construct a chart around $\mathcal Z_0\in \text{Emb}^{2,2}(\mathcal Z_0; Y)$. A choice of Fermi coordinates $(t,x,y)$ on $N_{r_0}(\mathcal Z_0)$ induces an isomorphism $T_{\mathcal Z_0}\text{Emb}^{2,2}(\mathcal Z; Y)\simeq H^2(\mathcal Z_0; N\mathcal Z_0)$.  For a fixed cut-off function $\chi(r): N_{r_0}\to \R$ equal to $1$ for $r\leq r_0/2$ and vanishing for $r\geq r_0$, define an exponential map as follows: given $\eta \in H^2(\mathcal Z_0;N\mathcal Z_0)$ with $\|\eta\|_{2}<\rho_0$ set \be F_\eta(t,z)= (t, z+\chi(r)\eta(t)).\label{diffeos}\ee

\noindent Then define 
\bea
\text{Exp}: H^2(\mathcal Z_0;N\mathcal Z_0)&\to& \text{Emb}^{2,2}(\mathcal Z_0; Y)\\
\eta &\mapsto& \mathcal Z_\eta:= F_\eta[\mathcal Z_0],
\eea

\noindent where $F_\eta[\mathcal Z_0]$ denotes the image under $F_\eta$. Let $\mathcal E_0:= B_{\rho_0}(\mathcal Z_0)\subset H^2(\mathcal Z_0; N\mathcal Z_0)$ be the open ball of radius $\rho_0$.

\begin{lm}
For $\rho_0$ sufficiently small, $F_\eta:Y\to Y$ is a diffeomorphism for each $\eta \in \mathcal E_0$, and the map $\text{Exp}: \mathcal E_0 \to \text{Emb}^{2,2}(\mathcal Z_0; Y)$ is a homeomorphism onto its image. 
\end{lm}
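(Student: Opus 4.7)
The plan is to exploit that $F_\eta$ is a compactly supported perturbation of the identity whose size is controlled by the one-dimensional Sobolev embedding $L^{2,2}(S^1)\hookrightarrow C^{1,1/2}(S^1)$, giving $\|\eta\|_{C^1}\leq C\|\eta\|_{2,2}$.

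For the first claim, I would verify that $F_\eta$ is a $C^1$-diffeomorphism of $Y$ for $\rho_0$ small. Computing in Fermi coordinates $(t,x,y)$, one has $DF_\eta=I+A_\eta$ where the entries of $A_\eta$ involve $\chi(r)\eta'(t)$ and $d\chi(r)\cdot\eta(t)$, hence are pointwise bounded by $C\|\eta\|_{2,2}$. For $\rho_0$ sufficiently small, $\|A_\eta\|_{C^0}<\tfrac12$ uniformly, so $DF_\eta$ is invertible everywhere and $F_\eta$ is a local $C^1$-diffeomorphism. Since $F_\eta$ agrees with the identity outside $N_{r_0}(\mathcal Z_0)$ it is proper, and the path $s\mapsto F_{s\eta}$ through local diffeomorphisms shows it has mapping degree one; a proper local diffeomorphism of degree one between connected oriented manifolds is a global diffeomorphism.

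For the second claim I would establish continuity, injectivity, and continuity of the inverse separately. Continuity of $\text{Exp}$ is immediate since $\eta\mapsto F_\eta|_{\mathcal Z_0}$ is the affine map $t\mapsto(t,\eta(t))$, hence bounded from $L^{2,2}(\mathcal Z_0;N\mathcal Z_0)$ into $L^{2,2}(\mathcal Z_0;Y)$. For injectivity, $F_\eta(\mathcal Z_0)$ is the graph $\{(t,\eta(t))\}$ in Fermi coordinates and $\eta(t)$ is uniquely recovered as the normal-coordinate component. For continuity of the inverse, if $\mathcal Z_{\eta_n}\to\mathcal Z_\eta$ in $\text{Emb}^{2,2}$, the closest-point projection to the $t$-axis in Fermi coordinates furnishes, for large $n$, a canonical $L^{2,2}$-close reparameterization of each $\mathcal Z_{\eta_n}$ as a graph, from which $\eta_n\to\eta$ in $L^{2,2}$ follows by extracting the normal-component entry, a bounded linear operation on embeddings of this form.

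The main obstacle I anticipate is the reparameterization issue underlying continuity of $\text{Exp}^{-1}$: the target space $\text{Emb}^{2,2}(\mathcal Z_0;Y)$ carries a natural right action of the reparameterization group $\text{Diff}^{2,2}(\mathcal Z_0)$, and the map $\text{Exp}$ implicitly selects a slice transverse to this action by requiring that the first Fermi coordinate be preserved. Showing that the induced topology on the image of $\text{Exp}$ coincides with the subspace topology from $\text{Emb}^{2,2}$ amounts to a Banach-manifold tubular neighborhood theorem applied to the $\text{Diff}^{2,2}$-orbit of $\mathcal Z_0$; this is standard but is where the genuine analytic content lies, and relies on the smoothness of $\mathcal Z_0$ asserted in Assumption \ref{assumption1}.
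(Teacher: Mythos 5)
Your proof is correct, but for the first claim it takes a genuinely different and heavier route than the paper. You conclude global injectivity of $F_\eta$ via degree theory: local diffeomorphism plus properness plus degree~1 (from the homotopy $s\mapsto F_{s\eta}$ to the identity) implies covering map of degree~1, hence diffeomorphism. The paper instead exploits the explicit structure of $F_\eta(t,z)=(t,\,z+\chi(r)\eta(t))$: it preserves the first Fermi coordinate $t$, so it sends each normal disk $\{t_0\}\times D_{r_0}$ to itself, and on each such disk the map is a compactly supported translation-type perturbation of the identity that is strictly monotone in the $\eta(t_0)$-direction (because $|\eta(t_0)\chi'(r)|<1$ once $\rho_0$ is small). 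This gives injectivity fiberwise, hence globally, with no appeal to degree theory. Both arguments are valid; the paper's is more elementary and avoids the covering-space machinery, but your approach is robust to modifications of $F_\eta$ that no longer preserve the $t$-fibration.

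For the homeomorphism claim your treatment is actually more candid than the paper's. The paper shows injectivity and that every nearby $L^{2,2}$-embedding has image a graph over $\mathcal Z_0$ in Fermi coordinates, and then declares continuity of $\text{Exp}$ and its inverse ``obvious.'' You correctly point out that since $\text{Emb}^{2,2}(\mathcal Z_0;Y)$ here denotes the space of embedded links (note the tangent space is taken to be $L^{2,2}(\mathcal Z_0;N\mathcal Z_0)$, which only makes sense after quotienting out reparameterizations), $\text{Exp}$ implicitly picks a slice transverse to the $\text{Diff}^{2,2}(\mathcal Z_0)$-action, and continuity of the inverse amounts to showing this slice is a homeomorphism onto a neighborhood in the quotient. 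Your resolution---recover $\eta_n$ via the Fermi normal projection of the graph representing $\mathcal Z_{\eta_n}$---is exactly what the paper intends but does not spell out. You do defer the final step to ``standard,'' which puts you roughly at parity with the paper's ``obvious,'' but you have identified precisely where the content lies.

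Neither approach contains a gap that would invalidate the lemma; the salient distinction is elementary monotonicity vs.\ degree theory in part one, and explicit vs.\ implicit handling of the slice issue in part two.
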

\begin{proof}
Since $\|\eta\|_{C^1}\leq C \|\eta\|_{H^2}\leq C\rho_0$ by the Sobolev embedding theorem, it follows that $$\text{d}F_\eta=\begin{pmatrix} 1 & 0 & 0 \\ \chi \eta'_x  & 1+ \del_x \chi \eta_x  &   \del_y \chi \eta_x  \\ \chi \eta'_y  & \del_x \chi \eta_y  & 1+  \del_y\chi \eta_y   \end{pmatrix}$$ is close to the identity, hence invertible for $ \rho_0$ sufficiently small. $F_\eta$ is therefore a local diffeomorphism by the Inverse Function Theorem. To show it is a diffeomorphism, it then suffices to show it is injective. Note that $F_\eta$ preserves the normal disks $\{t_0\}\times D_{ r_0}$ to $\mathcal Z_0$,  and for each $t_0$, $F_\eta$ increases the coordinate in the direction parallel to $\eta(t_0)$, hence it is injective on each normal disk.  

For the second statement, observe that $F_\eta(t,0,0)=(t, \eta(t))$ is distinct for distinct $\eta \in C^1$, hence $\text{Exp}$ is injective. For surjectivity, since any embedding $\mathcal Z$ close to $\mathcal Z_0$ in $H^2$ is also close in $C^1$, such an embedding must be a graph over $\mathcal Z_0$ in Fermi coordinates. Thus $\mathcal Z=\text{Exp}(\eta)$ for $\eta$ the function defining this graph. Continuity of $\text{Exp}$ and its inverse are verified by standard methods.   
\end{proof}   

\begin{rem}
For each $\eta \in \mathcal E_0$, $F_{s\eta}$ for $s\in (-1,1)$ is a family of diffeomorphisms whose derivative along $\mathcal Z_0$ is equal to $\eta$, but it is not the flow of a time-independent vector field on $Y$ extending $\eta$. This choice simplifies several formulas. 
\end{rem}

We now prove Lemma \ref{trivializationlem} by constructing the trivializations $\Upsilon$. The only slight subtlety here is the association of spinor bundles for different metrics. To highlight the metric dependence, we denote by $S_{h}$ the spinor bundle (without tensoring with $\ell_0$) formed with the spin structure $\frak s_0$ using the metric $h$. 

The spinor bundles for two distinct metrics $h_1,h_2$ are isomorphic, though not canonically. A convenient choice of isomorphism is given via parallel transport on cylinders, following \cite[Sec. 5]{Bourguignon}. Let $h_s$ be a 1-parameter family of metrics interpolating between $h_0$ and $h_1$, for $s\in [0,1]$, consider the (generalized) 4-dimensional cylinder $$X=([0,1]\times Y, ds^2 + h_{s}).$$

\noindent $X$ is spin since $w_2(X)=w_2(Y)=0$, and Spin structures on $X$ are in 1-1 correspondence with those on $Y$. Let $S^\pm_X\to X$ denote the positive and negative spinor bundles on $X$ arising from the spin structure corresponding to the fixed spin structure $\frak s_0$ on $Y$. There is a natural isomorphism $S_X^+|_{Y\times\{s\}}\simeq S_{h_{s}}$ (see \cite[Sec. 4.3]{KM} or \cite[Pg. 4]{HitchinHarmonicSpinors}). Let $\nabla_X$ denote the spin connection on $S_X^+$. Parallel transport along the curve $\gamma_y(s)=(s,y)$ in the $-s$ direction defines a linear isometry $$ \tau_{h_0}^{h_s}(y,s): (S_{h_{s}})_y\to (S_{h_{0}})_y\label{taudef},$$
\noindent where the subscript denotes the fiber over a point $y\in Y$. Together, parallel transport for $s=1$ along all such curves define an isomorphism  \be\tau_{h_0}^{h_1}: S_{h_{1}}\to S_{h_{0}}\label{taudef}\ee 
denoted by the same symbol which is a fiberwise isometry, and likewise for any $s\in [0,1]$. 

We now prove Lemma \ref{trivializationlem} by constructing the trivialization $\Upsilon$. This trivialization is the composition of three isomorphism specified during the proof. 
\begin{proof} [Proof of Lemma \ref{trivializationlem}] For each $\eta\in \mathcal E_0$, let $g_\eta:= F_\eta^*g_0$ denote the pullback metric. In addition, we continue to denote $\mathcal Z_\eta=F_\eta[\mathcal Z_0]$. The proof now has four steps. 

\medskip 

\noindent {\it Step 1:} The pullback $F_\eta^*$ induces a canonical isomorphism 
\be S_{g_0}\otimes \ell_{\mathcal Z_\eta}\simeq  F_\eta^*S_{g_0}\otimes F_\eta^* \ell_{\mathcal Z_\eta}.\label{pullbackvectorbundles}\ee

\noindent There are furthermore canonical isomorphisms 
\be F_\eta^* S_{g_0}\simeq S_{g_\eta} \hspace{2cm} F_\eta^*\ell_{\mathcal Z_\eta}\simeq \ell_{\mathcal Z_0},\label{canonicalbundleisos}\ee

\noindent between the pullback of the spinor bundle and the spinor bundle of the pullback metric, and the real line bundles (the latter up to a global choice of sign). In fact, it is straightforward to check that these isomorphisms naturally intertwine the connections in the sense that they send $F_\eta^\star \nabla^\text{spin}_{g_0}\mapsto \nabla^\text{spin}_{g_\eta}$ and $F_\eta^\star \nabla_{A_\eta}\mapsto \nabla_{A_0}$, where $A_\eta$ denotes the flat connection with holonomy in $\Z_2$ on $\ell_{\mathcal Z_\eta}$. The tensor product of these isomorphisms is denoted 

\be\iota: F^*_\eta S_{g_0}\otimes F_\eta^* \ell_{\mathcal Z_\eta}\to S_{g_\eta}\otimes \ell_{\mathcal Z_0}.\ee

\medskip 
\noindent {\it Step 2:} For $s\in [0,1]$, consider the family of metrics $g_{s\eta}=F_{s\eta}^*g_0$ interpolating between $g_0$ and $g_\eta$. Let 
$$\tau_{g_0}^{g_\eta}: S_{g_\eta}\to S_{g_0}$$

\noindent denote the fiberwise isometry defined in (\refeq{taudef}) setting $h_0=g_0$ and $h_1=g_\eta$. In a slight abuse of notation, we use the same symbol to denote the induced fiberwise isometry $S_{g_\eta}
\otimes \ell_{\mathcal Z_0}\to S_{g_0}\otimes \ell_{\mathcal Z_0}$ which would be more correctly written as $\tau_{g_0}^{g_\eta}\otimes \text{Id}$. 

\medskip 
\noindent {\it Step 3:} For each $\eta\in \mathcal E_0$, define $\Upsilon_\eta:=(\tau_{g_0}^{g_\eta})^{}\circ\iota\circ F_\eta^*$ as the composition

\begin{center}
\tikzset{node distance=3.0cm, auto}
\begin{tikzpicture}
\node(C){$S_{g_0}\otimes \ell_{\mathcal Z_0}$};
\node(D)[left of=C]{$S_{g_\eta}\otimes \ell_{\mathcal Z_0}$};
\node(E)[left of=D]{$ F_\eta^*S_{g_0}\otimes  F_\eta^*\ell_{\mathcal Z_\eta}$};
\node(F)[left of=E]{$S_{g_0}\otimes \ell_{\mathcal Z_\eta}$};
\draw[<-][swap] (C) to node {$\tau_{g_0}^{g_\eta}$} (D);
\draw[<-][swap] (D) to node {$\iota$} (E);
\draw[<-][swap] (E) to node {$ F_\eta^*$} (F);
\end{tikzpicture}

\end{center}

\noindent where $F_\eta^*$ denotes the pullback as before, and $\iota,\tau_{g_0}^{g_\eta}$ are as defined in Step 1 and Step 2 respectively. 

Together, the maps $\Upsilon_\eta$ for $\eta\in \mathcal E_0$ yield a universal trivialization: let $\mathcal Y\to \mathcal E_0$ be the bundle whose fiber over $\eta$ is the Riemannian manifold $(Y\setminus \mathcal Z_\eta,g_0)$, and $\mathcal S\to \mathcal Y\to \mathcal E_0$ be the vector bundle whose restriction to the fiber $Y\setminus \mathcal Z_\eta$ over $\eta$ is $S_{g_0}\otimes \ell_{\mathcal Z_\eta}\to Y\setminus \mathcal Z_\eta$. Together, the maps $\Upsilon_\eta$ yield a map 

$$\Upsilon: \mathcal S\to \mathcal E_0\times (S_{g_0}\otimes \ell_{\mathcal Z_0})$$

\noindent given by $\Upsilon_\eta$ on the fiber over $\eta\in \mathcal E_0$, which is diffeomorphism on each such fiber (these fibers being themselves the total space of a vector bundle). Moreover, for each fixed $\eta$, this diffeomorphism is a linear isometry on the fibers of $S_{g_0}\otimes \ell_{\mathcal Z_\eta}\to (Y\setminus \mathcal Z_\eta,g_0)$. 

\medskip 
\noindent {\it Step 4:} The fiberwise isomorphism $\Upsilon_\eta$ induces a map
$$\Upsilon_\eta: r H^1_e(Y\setminus \mathcal Z_\eta, S_{g_0}\otimes \ell_{\mathcal Z_\eta})\lre r H^1_e(Y\setminus \mathcal Z_0, S_{g_0}\otimes \ell_{\mathcal Z_0})$$
\noindent on sections via pullback, which is denoted by the same symbol. This map is an isomorphism by the naturality of the pullback, and it remains to show that it is bounded. This is obvious provided $\eta$ and thus $g_\eta$ has sufficient regularity \footnote{The case that $\eta\in C^\infty$ is sufficient for the proof of Theorem \ref{mainb}, but the low regularity case is included for completeness.}. 

The maps on sections induced by $F_\eta^*$ and $\iota$ are isometries by construction, thus it suffices to show that the map on sections induced by $\tau_{g_0}^{g_\eta}$ defined in (\refeq{taudef}) is bounded between the versions of $rH^1_e(Y\setminus \mathcal Z_0)$ formed with the metric and spin connections of $g_0, g_\eta$ respectively. To see this, note that because $\eta \in H^2(\mathcal Z_0)$, in Fermi coordinates the pullback metric $g_\eta$ has entries of the form $h(t) g_1(t,x,y)$ where $h(t)\in H^1(\mathcal Z_0)$ and $g_1$ is smooth (cf. Lemma \ref{pullbackmetric} below). Since $H^1(\mathcal Z_0)\hookrightarrow C^0(\mathcal Z_0)$ by the Sobolev embedding, the two volume forms induce equivalent norms. The Christoffel symbols of the connection $\nabla_{B_0}$ formed from the spin connection $g_\eta$ and $B_0$, have one lower regularity, thus include terms of the form $f(t)g_2(t,x,y)$ for $f(t)\in L^2(\mathcal Z_0)$ where $g_2$ is continuous. The equivalence of norms is then a consequence of the ``mixed dimension'' Sobolev multiplication on the solid torus $\mathcal Z_0\times D^2$ $$\|f(t) \ph\|_{L^2(\mathcal Z_0 \times D^2)}\leq C \|f\|_{L^2(\mathcal Z_0)} \|\ph\|_{rH^1_e},$$
\noindent for $f\in L^2(S^1)$ and $\ph \in rH^1_e$. To prove the latter, simply observe that $I(t)^2=\int_{t\times D^2}|\ph|^2 dx dy$ is $L^\infty$ by the Sobolev restriction theorem and then apply Fubini's theorem.

\end{proof}

 \subsection{Universal Linearization}
\label{section5.2}

Using the trivialization constructed in Lemma \ref{trivializationlem}, we may now calculate the (vertical component of the) derivative of the universal Dirac operator considered as a map 
\be \text{d}_{(\mathcal Z_0,\Phi_0)}\slashed{\mathbb D}: H^2(\mathcal Z_0;N\mathcal Z_0)\times rH^1_e(Y\setminus \mathcal Z_0; S_0) \lre L^2(Y\setminus \mathcal Z_0; S_0),\label{universalDeriv}\ee

\noindent where $S_0=S_{g_0}\otimes \ell_{\mathcal Z_0}$ as in Section \ref{section2}. After trivializing, differentiating with respect to a deformation $\eta$ of the singular set becomes differentiation of the Dirac operator with respect to the family of metrics $g_{s\eta}$ for $s\in [0,1]$.
\begin{prop}
\label{abstractfirstvariation}
In the local trivialization provided by $\Upsilon$, the linearization of the universal Dirac operator  
on the spaces (\refeq{universalDeriv}) is given by 
\be \text{d}_{(\mathcal Z_0,\Phi_0)}\slashed {\mathbb D} (\eta, \psi)= \mathcal B_{\Phi_0}(\eta) + \slashed D_{\mathcal Z_0} \psi\label{abstractvariation1}\ee
where 
$$\mathcal B_{\Phi_0}(\eta)= \left( \d{}{s}\Big |_{s=0} \tau_{g_0}^{g_{s\eta}}\circ \slashed D^{g_{s\eta}}_{\mathcal Z_0} \circ (\tau_{g_0}^{g_{s\eta}})^{-1}\right) \Phi_0$$

\noindent is the first variation of the Dirac operator with respect to the family of metrics $g_{s\eta}$ acting on $\Phi_0$.

\end{prop}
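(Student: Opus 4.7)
The plan is to work through the three pieces of the trivialization $\Upsilon=\tau\circ\iota\circ F^*$ from Lemma \ref{trivializationlem} and show that the operator in the trivialization agrees at parameter $\eta$ with $\tau_{g_0}^{g_\eta}\circ \slashed D^{g_\eta}_{\mathcal Z_0}\circ (\tau_{g_0}^{g_\eta})^{-1}$, after which differentiation at $\eta=0$ produces the asserted formula. The key geometric point is that pushing forward by the diffeomorphism $F_\eta$ trades a moving singular set against a moving metric (with fixed singular set), and parallel transport along the cylinder $X_\eta$ identifies the spinor bundles for different metrics on a single bundle on which one may legitimately differentiate in $\eta$.

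First I would unpack the definition. For $\eta\in\mathcal E_0$ and $\psi\in rH^1_e(Y-\mathcal Z_0;S_0)$, the trivialized universal Dirac operator is
$$\widetilde{\slashed{\mathbb D}}(\eta,\psi)\;=\;\Upsilon_\eta\circ\slashed D_{\mathcal Z_\eta}\circ \Upsilon_\eta^{-1}(\psi),$$
and writing $\Upsilon_\eta^{-1}=(F_\eta^*)^{-1}\circ\iota^{-1}\circ\tau^{-1}$ lets one chase the Dirac operator through each factor.

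Second, I would invoke naturality. Conjugation by the diffeomorphism, $F_\eta^*\circ\slashed D_{\mathcal Z_\eta}^{g_0}\circ (F_\eta^*)^{-1}$, is by definition the Dirac operator on $F_\eta^*(S_{g_0}\otimes\ell_{\mathcal Z_\eta})$ for the pulled-back metric $g_\eta=F_\eta^* g_0$ and for the pulled-back singular set $F_\eta^{-1}(\mathcal Z_\eta)=\mathcal Z_0$. The canonical identifications $F_\eta^* S_{g_0}\simeq S_{g_\eta}$ and $F_\eta^*\ell_{\mathcal Z_\eta}\simeq\ell_{\mathcal Z_0}$ (as flat $\Z_2$-line bundles) established in Step~1 of Lemma \ref{trivializationlem} intertwine this operator with $\slashed D_{\mathcal Z_0}^{g_\eta}$ acting on $S_{g_\eta}\otimes\ell_{\mathcal Z_0}$. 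Finally, conjugation by the cylinder parallel transport $\tau_{g_0}^{g_\eta}$ from Step~2 carries $\slashed D_{\mathcal Z_0}^{g_\eta}$ to an operator on the fixed bundle $S_{g_0}\otimes\ell_{\mathcal Z_0}$. Composing,
$$\widetilde{\slashed{\mathbb D}}(\eta,\psi)\;=\;\tau_{g_0}^{g_\eta}\circ \slashed D_{\mathcal Z_0}^{g_\eta}\circ(\tau_{g_0}^{g_\eta})^{-1}(\psi).$$
Third, I would differentiate this identity at $(\eta,\psi)=(0,\Phi_0)$. Since $\Upsilon_0=\mathrm{Id}$ and $\tau_{g_0}^{g_0}=\mathrm{Id}$, differentiating in $\psi$ at fixed $\eta=0$ gives $\slashed D_{\mathcal Z_0}\psi$, while differentiating in $\eta$ at fixed $\psi=\Phi_0$ produces, by definition, $\mathcal B_{\Phi_0}(\eta)$. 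Summing these yields \eqref{abstractvariation1}.

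The main obstacle is the second step: one must verify carefully that the three identifications comprising $\Upsilon$ really do intertwine $\slashed D_{\mathcal Z_\eta}^{g_0}$ with $\slashed D_{\mathcal Z_0}^{g_\eta}$ up to parallel transport, which requires that the line bundle identification $F_\eta^*\ell_{\mathcal Z_\eta}\simeq\ell_{\mathcal Z_0}$ respects the flat connections, that the isomorphism $\iota$ intertwines $\nabla^{\mathrm{spin}}_{g_\eta}$ with $F_\eta^*\nabla^{\mathrm{spin}}_{g_0}$, and that $\tau_{g_0}^{g_\eta}$ is a fiberwise isometry compatible with Clifford multiplication for the respective metrics. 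These are standard but must be assembled correctly; mis-identifying any of them produces an incorrect derivative (as warned at the start of the section). A secondary technical point is justifying the $\eta$-differentiation at the regularity $\eta\in L^{2,2}(\mathcal Z_0;N\mathcal Z_0)$: since the family $s\mapsto g_{s\eta}$ is smooth in $s$ for each fixed $\eta$ and $\Phi_0$ is smooth away from $\mathcal Z_0$ and polyhomogeneous near it by Proposition \ref{asymptoticexpansion}, differentiation under the parallel transport is unproblematic pointwise; the sharper mapping properties of $\mathcal B_{\Phi_0}$ are deferred to the explicit Bourguignon--Gauduchon analysis and the loss-of-regularity phenomenon examined in Section \ref{section6}.
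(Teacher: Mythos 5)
Your proposal is correct and follows essentially the same route as the paper's proof: it identifies the trivialized operator $\Upsilon_\eta\circ\slashed D_{\mathcal Z_\eta}^{g_0}\circ\Upsilon_\eta^{-1}$ with the conjugated fixed-$\mathcal Z_0$ family $\tau_{g_0}^{g_\eta}\circ\slashed D_{\mathcal Z_0}^{g_\eta}\circ(\tau_{g_0}^{g_\eta})^{-1}$ via the same commutative diagram of identifications ($F^*$, $\iota$, $\tau$), and then applies the product rule at $s=0$ using $\tau_{g_0}^{g_0}=\mathrm{Id}$. The paper phrases the final step via a path $s\mapsto(\mathcal Z_{s\eta},\Upsilon_{s\eta}^{-1}(\Phi_0+\psi(s)))$ with $\psi(0)=0$ rather than your two partial derivatives, but these are equivalent.
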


\begin{rem}\label{trivializationsremark}
(Cf. Section 4.1 of \cite{DonaldsonMultivalued}) Since the configuration $(\mathcal Z_0, \Phi_0)$ does not lie along the zero-section in $\mathbb H^1_e(\mathcal E_0)$, there is no canonical splitting $$T_{(\mathcal Z_0,\Phi_0)} \mathbb H^1_e(\mathcal E_0) \simeq T_{\mathcal Z_0}\mathcal E_0 \oplus  r H^1_e(Y\setminus \mathcal Z_0).$$

\noindent Thus expression of the derivative (\refeq{universalDeriv}) relies on a choice of connection on the Banach vector bundle $\mathbb H^1_e$\ --- here we have implicitly chosen the pullback of the product connection by $\Upsilon$. Different choices of trivialization will result in different connections and different expressions for the derivative $\text{d}\slashed{\mathbb D}$. Concretely, this choice manifests as the dependence of the family of metrics $g_\eta$ on our choice of diffeomorphisms $F_\eta$. A different choice of family of diffeomorphisms differs from our choice of $F_\eta$ by composing with (a family of) diffeomorphisms fixing $\mathcal Z_0$. Although there are many possible choices (see \cite{PartIII} and \cite{TaubesWuS2}) this choice simplifies many expressions. Of course, the salient properties of the linearization are independent of these choices.   
\end{rem}

\begin{proof}[Proof of Proposition \ref{abstractfirstvariation}] Take a path \bea \gamma: (-\epsilon, \epsilon) &\to& \mathbb H^1_e(\mathcal E_0)
 \\  s &\mapsto &(\mathcal Z_{\eta(s)}, \Phi(s))\eea
\noindent  such that $\gamma(0)=(\mathcal Z_0, \Phi_0)$. Using the chart $\text{Exp}: H^2(\mathcal Z_0;N\mathcal Z_0)\to \mathcal E_0$, we may assume that $\eta(s)=s\eta$.  Let $\mathcal H$ be the section of $\mathbb H^1_e(\mathcal E_0)$ obtained from radial parallel transport of $\Phi_0$ in the connection induced by the trivialization $\Upsilon$. That is, set $$\mathcal H= \Upsilon^{-1}(\mathcal E_0 \times \{\Phi_0\}). $$
 
 \noindent We may write each $\Phi(s) \in rH^1_e(Y\setminus \mathcal Z_{s\eta})$ as the point in $\mathcal H$ plus a vertical vector $\phi(s)= \Upsilon^{-1}(\psi(s))$, i.e. $$\gamma(s)=(\mathcal Z_{s\eta}, \Upsilon_{s\eta}^{-1}(\Phi_0)+ \phi(s) \ )= (\mathcal Z_{s\eta}, \Upsilon_{s\eta}^{-1}(\Phi_0+ \psi(s)) \ ).$$   

\noindent  The derivative in the trivialization given by $\Upsilon$ is \be \d{}{s}\Big |_{s=0} \Upsilon_{s\eta} \circ \slashed {\mathbb D}(\mathcal Z_{s\eta}, \Upsilon_{s\eta}^{-1}(\Phi_0+\psi))= \d{}{s}\Big |_{s=0}  \Upsilon_{s\eta}\circ  \slashed D_{\mathcal Z_{s\eta}}^{g_0} \circ  \Upsilon_{s\eta}^{-1}(\Phi_0+\psi)\label{tocalculatetriv}\ee
\noindent where $\Upsilon$ denotes the trivialization for both $\mathbb H^{1}_e(\mathcal E_0)$ and $\mathbb L^2(\mathcal E_0)$.

Recalling the definition of $\Upsilon_{s\eta}:=(\tau_{g_0}^{g_{s\eta}})^{}\circ\iota\circ F_{s\eta}^*$ from Step 3 in the proof of Lemma \ref{trivializationlem}, the following diagram commutes, where the rightmost vertical arrow is the expression (\refeq{tocalculatetriv}) which we wish to calculate.

\begin{center}
\tikzset{node distance=5.5cm, auto}
\begin{tikzpicture}
\node(A){$rH^1_e(S_{g_{s\eta}}\otimes \ell_{\mathcal Z_0} )$};
\node(B)[above of=A,yshift=-3.3cm]{$L^{2}(S_{g_{s\eta}}\otimes \ell_{\mathcal Z_0})$};
\node(C)[right of=A]{$r H^1_e(S_{g_0}\otimes  \ell_{\mathcal Z_0})$};
\node(D)[right of=B]{$L^2(S_{g_0}\otimes \ell_{\mathcal Z_0})$};
\node(C')[left of=A]{$rH^1_e(S_{g_0}\otimes \ell_{\mathcal Z_{s\eta}})$};
\node(D')[above of=C',yshift=-3.3cm]{$L^2(S_{g_0}\otimes \ell_{\mathcal Z_{s\eta}})$};
\node(E')[below of=A,yshift=4.3cm]{$\begin{pmatrix}\text{varying } g_{s\eta}\\ \text{fixed }\mathcal Z_0\end{pmatrix}$};
\node(F')[left of=E']{$\begin{pmatrix}\text{varying }\mathcal Z_{s\eta}\\ \text{fixed }g_0\end{pmatrix}$};
\node(F')[right of=E']{$\begin{pmatrix}\text{fixed } g_0\\ \text{fixed }\mathcal Z_0\end{pmatrix}$};
\draw[->] (A) to node {$\slashed D_{\mathcal Z_0}^{g_{s\eta}}$} (B);
\draw[->] (A) to node {$\tau_{g_0}^{g_{s\eta}}$} (C);
\draw[<-] (A) to node {$\iota_{s\eta} \circ F_{s\eta}^*$} (C');
\draw[->] (C') to node {$\slashed D_{\mathcal  Z_{s\eta}}^{g_0} $} (D');
\draw[->] (D') to node {$\iota_{s\eta} \circ F_{s\eta}^*$} (B);
\draw[->] (B) to node {$\tau_{g_0}^{g_{s\eta}}$} (D);
\draw[->] [swap](C) to node {$\Upsilon_{s\eta}  \slashed D_{\mathcal Z_{s\eta}}^{g_0}\Upsilon_{s\eta}^{-1} $} (D);
\end{tikzpicture}\end{center}
\noindent  The middle vertical arrow denotes Dirac operator on the bundle $S_{g_{s\eta}}\otimes \ell_{\mathcal Z_0}$ formed using the pullback metric $g_\eta$ and the unique flat connection on $\ell_{\mathcal Z_0}$.

By commutativity, the rightmost vertical arrow is equivalent to the conjugation of the middle arrow by $\tau_{g_0}^{g_{s\eta}}$ and its inverse. Consequently, using the product rule (noting as well that $\psi(0)=0$ and $\tau_{g_0}^{g_{\eta(0)}}=\text{Id}$), 
\bea
\d{}{s}\Big |_{s=0} \Upsilon_{s\eta}\circ  \slashed D_{ \mathcal Z_{s\eta}}^{g_0} \circ  \Upsilon_{s\eta}^{-1}(\Phi_0+ \psi(s))&=& \d{}{s}\Big |_{s=0} \left( \tau_{g_0}^{g_{s\eta}}\circ \slashed D_{\mathcal Z_0}^{g_{s\eta}} \circ (\tau_{g_0}^{g_{s\eta}})^{-1}\right) (\Phi_0 + \psi(s))\\
&=&  \left( \d{}{s}\Big |_{s=0} \tau_{g_0}^{g_{s\eta}}\circ \slashed D_{\mathcal Z_0}^{g_{s\eta}} \circ (\tau_{g_0}^{g_{s\eta}})^{-1}\right)\Phi_0 +  \slashed D^{g_0}_{ \mathcal Z_0} \dot \psi(0).
\eea
as claimed. 
\end{proof} 

\subsection{First Variation Formula}
\label{section5.3}

In order to analyze the derivative of the universal Dirac operator calculated in Proposition \ref{abstractfirstvariation}, a more explicit formula is needed for the variation of the Dirac operator with respect to metrics ($\mathcal B_{\Phi_0}(\eta)$ in \refeq{abstractvariation1}). The formula for this variation is originally due to Bourguignon and Gauduchon \cite{Bourguignon}. A concise proof (in English) was later given in \cite{DiracVariationBar}. See also \cite{DiracVariationNonlinear}. 

Suppose, forgetting any reference to the above situation momentarily, that $g_s$ is a path of metrics on a Riemannian spin manifold $W$. Let $\dot g_s$ denote the derivative of this path at $s=0$, and let $$\tau_{g_0}^{g_s}: S_{g_s}\to S_{g_0}$$ be the isomorphism of spinor bundles defined in (\refeq{taudef}). We obtain a 1-parameter family of operators $$\tau_{g_0}^{g_s} \circ \slashed D_{g_s} \circ (\tau_{g_0}^{g_s})^{-1}: \Gamma(S_{g_0})\to \Gamma(S_{g_0})$$ as the right arrow in the commutative diagram 
\begin{center}
\tikzset{node distance=2.5cm, auto}
\begin{tikzpicture}
\node(A){$\Gamma(S_{g_s} )$};
\node(C)[above of=A, yshift=-1cm]{$\Gamma(S_{g_s})$};
\node(C')[right of=A]{$\Gamma(S_{g_0})$};
\node(D')[above of=C',yshift=-1cm]{$\Gamma(S_{g_0})$};
\draw[->] (A) to node {$\slashed D_{g_s}$} (C);
\draw[->] (A) to node {$\tau_{g_0}^{g_s}$} (C');
\draw[->] (C') to node {$ $} (D');
\draw[<-][swap] (D') to node {$\tau_{g_0}^{g_s}$} (C);
\end{tikzpicture}\end{center}
for every $s$. Letting $\{e_i\}$ be an orthonormal frame for the metric $g_0$ and $\{e^i\}$ its dual frame, Bourguignon and Gauduchon  calculate: 

\begin{thm} {\bf (Bourguignon--Gauduchon \cite{Bourguignon})}
The first variation of the Dirac operator with respect to the family of metrics $g_s$ is given by 

\begin{equation}\left(\d{}{s}\Big |_{s=0} \tau_{g_0}^{g_s}\circ \slashed D_{g_s} \circ (\tau_{g_0}^{g_s})^{-1}\right)\Psi= -\frac{1}{2}\sum_{ij} \dot g_s(e_i,e_j) e^i . \nabla^{g_0}_j \Psi + \frac{1}{2} d \text{Tr}_{g_0}(\dot g_s). \Psi +\frac{1}{2} \text{div}_{g_0}(\dot g_s).\Psi\label{bourguignon}\end{equation}
where $.$ denotes Clifford multiplication in the $g_0$ metric. 
\label{bourguignon}
\end{thm}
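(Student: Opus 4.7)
The plan is to reduce the cylinder-based identification $\tau_{g_0}^{g_s}$ to the pointwise Bourguignon--Gauduchon identification $b_s$, and then carry out a frame computation. Introduce the bundle isomorphism $A_s\colon TX\to TX$ characterized by $g_s(A_s u,A_s v)=g_0(u,v)$, i.e.\ $A_s=g_s^{-1/2}g_0^{1/2}$ as a $g_0$-symmetric endomorphism. One has $A_0=\mathrm{Id}$ and, differentiating at $s=0$, $\dot A=-\tfrac12\dot g^{\sharp}$, with the sharp taken in $g_0$. Since $A_s$ is a fiberwise linear isometry $(TX,g_0)\to(TX,g_s)$, it lifts to an isomorphism of oriented orthonormal frame bundles and, after choosing compatible lifts of the spin structures, to a bundle isomorphism $b_s\colon S_{g_0}\to S_{g_s}$ intertwining Clifford multiplication: $b_s(v\cdot_0\psi)=(A_s v)\cdot_s b_s(\psi)$. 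The first task is to verify that $\tau_{g_0}^{g_s}=b_s+O(s^2)$; equivalently, that the $\partial_s$-component of the 4d spin connection of $(ds^2+g_s,\,[0,1]\times X)$ agrees, along $(s,x_0)$, with the $\mathfrak{so}$-valued curve $s\mapsto A_s$ to first order at $s=0$.

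Granted this reduction, I would pick a $g_0$-orthonormal synchronous frame $\{e_i\}$ at a point $x_0$ (so $\nabla^{g_0}_{e_j}e_i=0$ at $x_0$) and expand
\[
b_s^{-1}\circ\slashed D_{g_s}\circ b_s
\;=\;\sum_i b_s^{-1}\!\bigl((A_s e_i)\cdot_s \nabla^{g_s}_{A_s e_i}\cdot\bigr)b_s
\;=\;\sum_i e_i\cdot_0\bigl(b_s^{-1}\nabla^{g_s}_{A_s e_i}b_s\bigr),
\]
using the intertwining property. Differentiating at $s=0$ produces two terms. The ``frame'' term comes from $\dot A=-\tfrac12\dot g^{\sharp}$ applied inside $\nabla^{g_0}_{A_s e_i}$ and yields exactly
\[
-\tfrac12\sum_{i,j}\dot g(e_i,e_j)\,e^i\cdot\nabla^{g_0}_{e_j}\Psi,
\]
which is the first summand on the RHS. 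The ``connection'' term is $\sum_i e_i\cdot\bigl(\tfrac{d}{ds}\big|_0 b_s^{-1}\nabla^{g_s}_{e_i}b_s\bigr)\Psi$ at $x_0$, and is computed from the variation of the Christoffel symbols $\dot\Gamma^{k}_{ij}=\tfrac12(\nabla_i\dot g_{jk}+\nabla_j\dot g_{ik}-\nabla_k\dot g_{ij})$, lifted to $\mathfrak{spin}$ as $\tfrac14\sum_{j,k}\dot\omega_{ijk}\,e^j\cdot e^k$.

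The last step is the Clifford algebra bookkeeping. Contracting the spin-connection variation with $e^i$ and reducing by the relations $e^j e^k+e^k e^j=-2\delta^{jk}$, the symmetric part of $\dot\Gamma$ in $(i,j)$ collapses to $\tfrac12\,\mathrm{div}_{g_0}(\dot g)\cdot\Psi$, while the trace contribution $\sum_k e^k\cdot\nabla^{g_0}_k(\operatorname{tr}_{g_0}\dot g)$ gives $\tfrac12\,d\,\mathrm{Tr}_{g_0}(\dot g)\cdot\Psi$; the cross terms cancel by Bianchi-type symmetry. Summing the frame and connection contributions reproduces the stated formula.

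The main obstacle I foresee is the first step: showing $\tau_{g_0}^{g_s}=b_s$ to first order. This requires an explicit analysis of the 4d spin connection of the interpolating cylinder metric $ds^2+g_s$ in a frame obtained by extending a $g_0$-orthonormal frame along the curves $s\mapsto(s,x)$, and verifying that the only first-order contribution in the $\partial_s$-direction is the one corresponding to $\dot A=-\tfrac12\dot g^{\sharp}$. The computation in the second and third paragraphs, while involving careful bookkeeping, is essentially mechanical once this identification is in hand.
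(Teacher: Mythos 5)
The paper does not prove this theorem; it cites \cite{Bourguignon} and points to \cite{DiracVariationBar} for a proof, so the comparison is with the standard generalized-cylinder argument, and your plan is indeed the same one. The reduction step you flag as the main obstacle can be completed cleanly. With $G = ds^2 + g_s$ on $[0,1]\times X$ and a $g_0$-orthonormal frame $\{f_j\}$ extended to be $\partial_s$-invariant, the Koszul formula gives $g_0(\nabla^G_{\partial_s}f_j,f_k)\big|_{s=0} = \tfrac12\dot g(f_j,f_k)$. Passing to the $g_s$-orthonormal moving frame $e_j = A_sf_j$, the $\partial_s$-component of the cylinder spin connection at $s=0$ becomes $g_0(\dot Af_j,f_k) + \tfrac12\dot g(f_j,f_k)$, and this vanishes precisely because the $g_0$-symmetric choice gives $\dot A = -\tfrac12\dot g^\sharp$. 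Since $b_s$ is by definition the identification with constant components in this moving frame, this cancellation is the statement $\tau_{g_0}^{g_s} = b_s + O(s^2)$.

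The genuine gap is in your description of the "connection term." The quantity $\tfrac{d}{ds}\big|_0\,b_s^{-1}\nabla^{g_s}_{e_i}b_s$ is \emph{not} computed from $\dot\Gamma_{ijk}$ alone: the $b_s$-conjugated connection coefficients are $\omega^s_{jk}(e_i)=g_s\big(\nabla^{g_s}_{e_i}(A_sf_j),\,A_sf_k\big)$, and differentiating the moving frame $A_sf_j$ inside this expression contributes an extra term $g_0\big((\nabla_{e_i}\dot A)f_j,f_k\big) = -\tfrac12\nabla_i\dot g_{jk}$. Only after adding this to the Christoffel variation do you obtain the antisymmetric $\dot\omega_{jk}(e_i)=\tfrac12(\nabla_j\dot g_{ik}-\nabla_k\dot g_{ij})$; the raw $\dot\Gamma_{ijk}$ is \emph{symmetric} in $(j,k)$, so its ``spin lift'' $\tfrac14\dot\Gamma_{ijk}e^je^k$ would produce a spurious trace contribution and does not correspond to an $\mathfrak{so}$-valued variation. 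Finally, you should verify your claimed zeroth-order coefficients against the conformal case $\dot g = 2\phi g_0$, which by the conformal covariance of $\slashed D$ must reproduce $-\phi\slashed D\Psi + \tfrac{n-1}{2}\,d\phi\cdot\Psi$. Carrying out the Clifford algebra with the correct $\dot\omega_{jk}(e_i)$ I find the zeroth-order part is $\tfrac14\,d\text{Tr}_{g_0}(\dot g)\cdot\Psi + \tfrac14\,\text{div}_{g_0}(\dot g)\cdot\Psi$ (in the paper's sign convention for $\text{div}$), not $\tfrac12$ each; the conformal check agrees with $\tfrac14$, so the $\tfrac12$ in the theorem as stated appears to be a normalization slip that you should not try to reproduce.
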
  
\noindent Note that the first term is independent of the choice of frame for the same reason as the standard Dirac operator. Here, in an orthonormal frame, the $\text{div}_{g_0}(k)$ is the 1-form $-( e_i \intprod\nabla_i  k_{ij}e^i)e^j.$ To give some quick intuition for this slightly unappetizing formula, the first term comes from differentiating the symbol of the Dirac operator (Clifford multiplication), and the second two terms arise from differentiating the Christoffel symbols.

We will apply Bourguignon-Gauduchon's formula (\refeq{bourguignon}) in the case that the family of metrics is the one given by the pullbacks 
\be \dot g_{\eta}:= \d{}{s}\Big |_{s=0} g_{s\eta}= \d{}{s}\Big |_{s=0} F^*_{s\eta} g_0. \label{getadot}\ee

\noindent As in Definition \ref{fermi}, the metric in  Fermi coordinates $(t,x,y)$ on the tubular neighborhood $N_{r_0}(\mathcal Z_0)$ has the form $$g_0= dt^2 + dx^2 + dy^2 + h \hspace{1cm}\text{ where  \ \ \ } |h_{ij}|\leq Cr.$$
\begin{lm}
The derivative of the family of pullback metrics (\refeq{getadot}) is given by 
\begin{equation}
\dot g_\eta = \begin{pmatrix}  0 &  \eta_x' \chi & \eta'_y\chi \\  \eta'_x \chi& 2\eta_x \del_x \chi & \eta_x \del_y \chi + \eta_y \del_x \chi \\  \eta'_y\chi &   \eta_x \del_y \chi + \eta_y \del_x \chi  & 2\eta_y \del_y \chi \end{pmatrix} + h_1 + h_2 
\end{equation}
where 

\begin{itemize}
\item $h_1$is a $O(1)$ term whose entries are formed from products of derivatives of $h_{ij}$ and $\eta$.
 \item $h_2$  is a $O(r)$ term whose entries are formed from products of $h_{ij}$ and products of $\eta, \eta'$. 

\end{itemize}
Here, $\eta=\eta_x + i \eta_y$ and $\eta'=\tfrac{d}{dt} \eta$ and $\dot g_{\eta}$ is as in (\refeq{getadot}). 
\label{pullbackmetric}
\end{lm}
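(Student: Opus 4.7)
The plan is a direct coordinate computation using the explicit formula for $F_\eta$ in Fermi coordinates. Write $g_0 = g_{\mathrm{prod}} + h$ where $g_{\mathrm{prod}} = dt^2 + dx^2 + dy^2$ and $h = g_0 - g_{\mathrm{prod}}$ has entries $O(r)$ by Definition \ref{fermi}. By linearity of pullback,
\[
\dot g_\eta \;=\; \left.\tfrac{d}{ds}\right|_{s=0} F_{s\eta}^*\, g_{\mathrm{prod}} \;+\; \left.\tfrac{d}{ds}\right|_{s=0} F_{s\eta}^*\, h,
\]
so it suffices to compute each piece. From $F_{s\eta}(t,x,y) = (t, x+s\chi\eta_x, y+s\chi\eta_y)$, the infinitesimal generator at $s=0$ is the vector field $\xi = \chi\eta_x\partial_x + \chi\eta_y\partial_y$, so $\xi^t = 0$, $\xi^x = \chi\eta_x$, $\xi^y = \chi\eta_y$.

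For the flat piece, compute the pullback forms directly: $F_{s\eta}^* dt = dt$ and
\[
F_{s\eta}^* dx \;=\; dx + s\, d(\chi\eta_x) \;=\; dx + s\bigl(\eta_x'\chi\, dt + \eta_x\, \partial_x\chi\, dx + \eta_x\, \partial_y\chi\, dy\bigr),
\]
with the analogous expression for $F_{s\eta}^* dy$. Expanding $(F_{s\eta}^*dt)^2 + (F_{s\eta}^*dx)^2 + (F_{s\eta}^*dy)^2$ and collecting the coefficient of $s^1$ as a symmetric $(0,2)$-tensor in the basis $(dt,dx,dy)$ produces exactly the explicit matrix displayed in the statement (the off-diagonals double as one symmetrizes, giving for instance the $xy$-entry $\eta_x\partial_y\chi + \eta_y\partial_x\chi$).

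For the remainder, the product rule applied to $F_{s\eta}^* h = (h_{ij}\circ F_{s\eta})\, (F_{s\eta}^*dx^i)\otimes(F_{s\eta}^*dx^j)$ gives
\[
\left.\tfrac{d}{ds}\right|_{s=0} F_{s\eta}^* h \;=\; \underbrace{(\xi^k\partial_k h_{ij})\, dx^i\otimes dx^j}_{h_1} \;+\; \underbrace{h_{ij}\bigl(d\xi^i\otimes dx^j + dx^i\otimes d\xi^j\bigr)}_{h_2}.
\]
The term $h_1$ is a linear combination of $\chi\eta_x\,\partial_x h_{ij}$ and $\chi\eta_y\,\partial_y h_{ij}$, hence a product of first derivatives of $h_{ij}$ with $\eta$ (and bounded factors of $\chi$), and is $O(1)$ since $\partial h$ is bounded. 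The term $h_2$ pairs $h_{ij} = O(r)$ against components of $d\xi$, which are the bounded quantities $\chi\eta_x'$, $\chi\eta_y'$, $\eta_x\partial_x\chi$, $\eta_x\partial_y\chi$, $\eta_y\partial_x\chi$, $\eta_y\partial_y\chi$; thus $h_2$ is $O(r)$ and is a sum of products of $h_{ij}$ with $\eta$ and $\eta'$, matching the claim.

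There is no substantive analytic obstacle — the entire lemma is a bookkeeping exercise once the decomposition $g_0 = g_{\mathrm{prod}} + h$ is fixed. The only care required is the symmetrization of the off-diagonal contributions and keeping the $g_{\mathrm{prod}}$ and $h$ pieces separate so that the explicit ``leading'' matrix is not contaminated by $O(r)$ corrections from the metric deviation.
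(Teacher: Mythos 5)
Your computation is correct and follows the same route as the paper: decompose $g_0 = g_{\mathrm{prod}} + h$, compute the derivative of the pullback of the flat part to get the explicit matrix, and apply the product rule to $F_{s\eta}^*h$ to identify $h_1$ (transport of coefficients) and $h_2$ (action on the coframe). You phrase the flat-part computation via pullback $1$-forms rather than the paper's matrix product $\text{d}F_{s\eta}^T g_0\, \text{d}F_{s\eta}$, and you package the $h$-part in the Lie-derivative form $\xi^k\partial_k h_{ij}\,dx^i\otimes dx^j + h_{ij}(d\xi^i\otimes dx^j + dx^i\otimes d\xi^j)$ instead of the paper's $A_{ij}+A_{ij}^T$ bookkeeping, but these are the same computations in slightly cleaner notation.
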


\begin{proof}
Since the diffeomorphism $F_{s\eta}$ is supported in the tubular neighborhood, it suffices to do the calculation in Fermi coordinates.

First, consider the case that $h=0$. Recall $$F_{s\eta}(t,x,y)=(t, x + s\chi(r)\eta_x(t), y+ s \chi(r)\eta_y(t)),$$ hence $$\text{d} F_{s\eta}= \begin{pmatrix} 1 & 0 & 0 \\ s\chi \eta'_x  & 1+ s\del_x \chi \eta_x  &  s \del_y \chi \eta_x  \\ s\chi \eta'_y  & s\del_x \chi \eta_y  & 1+  s \del_y\chi \eta_y   \end{pmatrix}.$$
A quick calculation shows in this case the pullback metric is 

\begin{eqnarray} \dot g_{\eta}&=&\d{}{s}\Big |_{s=0} (\text{d} F_{s\eta})^T  g_0 (  \text{d} F_{s\eta})\\  \\&= &   \begin{pmatrix}  0 &  \eta_x' \chi & \eta'_y\chi \\  \eta'_x \chi& 2\eta_x \del_x \chi & \eta_x \del_y \chi + \eta_y \del_x \chi \\  \eta'_y\chi &   \eta_x \del_y \chi + \eta_y \del_x \chi  & 2\eta_y \del_y \chi \end{pmatrix}.
\label{pullbacks2} 
\end{eqnarray}

 Now in the case that $h\neq 0$, let $\widetilde h_{ij}=h_{ij}(t, z+  F_{s\eta})$. Then the term added to the above is

\begin{eqnarray}
&=& \d{}{s}\Big |_{s=0}( \text{d}F_{s\eta})^T \cdot h(t, z+ F_{s\eta})\cdot ( \text{d}F_{s\eta}) \\
&=& \d{}{s}\Big |_{s=0} (\text{d} F_{s\eta})^T \begin{pmatrix} \widetilde h_{11}  +s\chi ( \widetilde h_{12} \eta'_x + \widetilde h_{13}\eta'_y)& \widetilde h_{12}  + s\del_x \chi (\widetilde h_{12}\eta_x+ \widetilde h_{13}\eta_y)& \widetilde h_{13}  + s\del_y \chi (\widetilde h_{12}\eta_x + \widetilde h_{13}\eta_y)\label{idplussa}\\ 
\widetilde h_{21}+ s\chi (\widetilde h_{22} \eta'_x + \widetilde h_{23}\eta'_y )&\widetilde h_{22}  + s\del_x \chi (\widetilde h_{22}\eta_x+ \widetilde h_{23}\eta_y)& \widetilde h_{23}  + s\del_y \chi (\widetilde h_{22}\eta_x + \widetilde h_{23}\eta_y) \\ 
\widetilde h_{31}+s\chi( \widetilde h_{32} \eta'_x + \widetilde h_{33}\eta'_y) & \widetilde h_{32}  + s\del_x \chi (\widetilde h_{32}\eta_x+ \widetilde h_{33}\eta_y)& \widetilde h_{33}  + s\del_y \chi (\widetilde h_{32}\eta_x + \widetilde h_{33}\eta_y)\end{pmatrix} \nonumber.
\end{eqnarray}

\noindent Write the matrix above as $\widetilde h_{ij} + sA_{ij}$, so that e.g. $A_{11}=\chi \widetilde h_{12}\eta'_x + \widetilde h_{13}\eta'_y$. Then since 
$$\text{d}F_{s\eta}^T= \text{Id} + s\begin{pmatrix} 0 &  \chi \eta'_x & \chi \eta'_y    \\ 0   &  \del_x \chi \eta_x  &   \del_x \chi \eta_y  \\ 0   & \del_y \chi \eta_x  &    \del_y\chi \eta_y   \end{pmatrix} $$
and $\begin{pmatrix}\widetilde h_{ij}\end{pmatrix}$ is symmetric, (\refeq{idplussa}) becomes

\noindent \begin{eqnarray}  & =& \d{}{s}\Big|_{s=0} \left[\begin{pmatrix} \widetilde h_{ij}\end{pmatrix} +s \begin{pmatrix}A_{ij}+ A^T_{ij}\end{pmatrix} + O(s^2)\right] \label{pullbacks22}=\underbrace{ \d{}{s} \Big |_{s=0}\begin{pmatrix}\widetilde h_{ij}\end{pmatrix}}_{:=h_1} + \underbrace{\begin{pmatrix}A_{ij}+ A_{ij}^T\end{pmatrix}}_{:=h_2}. \label{derivativeh}
\end{eqnarray} 

\noindent Call these terms $h_1$ and $h_2$ as indicated. Since  
\bea \d{}{s}\Big |_{s=0}\widetilde h_{ij}&=& \d{}{s}\Big|_{s=0} h_{ij}(t, x+ s\chi \eta_x , y+ s\chi \eta_y)= (\del_x  {h}_{ij})\chi \eta_x + (\del_y {h}_{ij})\chi \eta_y\\
A_{ij} \Big|_{s=0}&= & h_{k\ell} \chi \eta_\alpha'  \ \ \text{  \ or  \ }  \ \ h_{k\ell} \del_\alpha\chi \eta_\beta
\eea

\noindent where $\alpha,\beta$ range over $x,y$ and (summation is implicit in the expression for $A$), these are respectively of the forms claimed for $h_1$ and $h_2$.
\end{proof}

Combining the formula for the linearization of the universal Dirac operator of Proposition \ref{abstractfirstvariation} with the formula of Bourguignon-Gauduchon (Theorem \ref{bourguignon}) and the calculation of the pullback metric in Lemma \ref{pullbackmetric} allows us to immediately deduce the following more concrete expression for the linearization.   

\begin{cor}
The linearization of the universal Dirac operator at $(\mathcal Z_0,\Phi_0)$ is given by 

\begin{eqnarray} \text{d}_{(\mathcal Z_0,\Phi_0)}\slashed {\mathbb D}(\eta,\psi)&=&\left(-\frac{1}{2}\sum_{ij} \dot g_{\eta}(e_i,e_j) e^i . \nabla^{g_0}_j  + \frac{1}{2} d \text{Tr}_{g_0}(\dot g_{\eta}). +\frac{1}{2} \text{div}_{g_0}(\dot g_{\eta}).  + \mathcal R(B_0,\chi\eta).\right)\Phi_0 \ \ \label{firstlinedD}\\ & & + \slashed D\psi\end{eqnarray}
where $\mathcal R(B_0,\eta)$ is a smooth term involving up to first derivatives of $B_0$ and linear in $\chi \eta$, and $.$ denotes Clifford multiplication using the metric $g_0$. Explicitly,  $\dot g_{\eta}$ is given in Fermi coordinates by $$\begin{pmatrix}  0 &  \eta_x' \chi & \eta'_y\chi \\  \eta'_x \chi& 2\eta_x \del_x \chi & \eta_x \del_y \chi + \eta_y \del_x \chi \\  \eta'_y\chi &   \eta_x \del_y \chi + \eta_y \del_x \chi  & 2\eta_y \del_y \chi \end{pmatrix} + h_1 + h_2 $$
with $h_1,h_2$ as in the above Lemma \ref{pullbackmetric}. 
\label{formofDD}
\end{cor}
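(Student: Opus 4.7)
The plan is to assemble the corollary from three ingredients already established in Section \ref{section5}: the abstract linearization formula of Proposition \ref{abstractfirstvariation}, Bourguignon--Gauduchon's variation formula (Theorem \ref{bourguignon}), and the explicit expression for the derivative of the pullback metric from Lemma \ref{pullbackmetric}. No fundamentally new computation is required; the content is to concatenate these three results and carefully account for the perturbation $B_0$, which is not covered directly by Bourguignon--Gauduchon.

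First, I would invoke Proposition \ref{abstractfirstvariation} to reduce the problem to computing $\mathcal B_{\Phi_0}(\eta)$, the first variation of the Dirac operator along the one-parameter family of pullback metrics $g_{s\eta} = F_{s\eta}^* g_0$. Next, I would split the operator as $\slashed D^{g_{s\eta}}_{\mathcal Z_0} = \slashed D^{g_{s\eta},\mathrm{spin}}_{\mathcal Z_0} + \gamma^{g_{s\eta}}(B_0)$, where the first summand is built from the spin connection together with the (metric-independent) flat connection on $\ell_0$, and the second is the zeroth-order perturbation. Applying Bourguignon--Gauduchon's formula to the first summand with $\dot g_s = \dot g_\eta$ immediately produces the three $\dot g_\eta$-dependent terms in \eqref{firstlinedD} acting on $\Phi_0$; the flat connection on $\ell_0$ is independent of the metric, commutes with Clifford multiplication, and contributes nothing.

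The remaining piece is the variation of $\tau_{g_0}^{g_{s\eta}} \circ \gamma^{g_{s\eta}}(B_0) \circ (\tau_{g_0}^{g_{s\eta}})^{-1}$ at $s=0$. Since $\tau_{g_0}^{g_{s\eta}}$ is an isometric parallel transport by construction in Section \ref{section5.1}, and since $F_{s\eta}$ is generated at $s=0$ by the smooth vector field $V = \chi(r)(\eta_x \partial_x + \eta_y \partial_y)$, this variation reduces to a zeroth-order operator computable in terms of $\mathcal L_V B_0$ together with the standard formula for the first variation of Clifford multiplication with respect to the metric. In particular, the resulting operator is linear in $\chi\eta$, involves at most one derivative of $B_0$ (and of $\chi\eta$, but paired against the smooth tensor $B_0$), and is therefore bounded as a multiplication-type operator on $rH^1_e$. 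This supplies precisely the $\mathcal R(B_0, \chi\eta)$ term in the statement. Substituting the explicit matrix expression for $\dot g_\eta$ from Lemma \ref{pullbackmetric} then produces the displayed Fermi-coordinate formula.

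The only genuine obstacle is bookkeeping: one must verify that the parallel-transport convention used to define $\tau_{g_0}^{g_{s\eta}}$ (parallel transport along the cylinder $X_\eta$) is the same one implicit in Bourguignon--Gauduchon's derivation, so that no spurious connection one-form appears when the two contributions are combined. This compatibility is automatic because both trivializations are built from parallel transport on the identical generalized cylinder, so the three contributions add cleanly without further correction and the formula \eqref{firstlinedD} follows.
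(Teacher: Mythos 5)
Your proposal is correct and follows essentially the same route as the paper: invoke Proposition \ref{abstractfirstvariation} to reduce to the metric variation along $g_{s\eta}$, concatenate Bourguignon--Gauduchon's formula (Theorem \ref{bourguignon}) with the pullback-metric computation of Lemma \ref{pullbackmetric}, note that the flat connection on $\ell_0$ is metric-independent and drops out, and handle the $B_0$-perturbation separately to produce the zeroth-order term $\mathcal R(B_0,\chi\eta)$.

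One small wrinkle worth fixing: in your displayed decomposition $\slashed D^{g_{s\eta}}_{\mathcal Z_0}=\slashed D^{g_{s\eta},\mathrm{spin}}_{\mathcal Z_0}+\gamma^{g_{s\eta}}(B_0)$, the middle column of the commutative diagram in the proof of Proposition \ref{abstractfirstvariation} actually carries the pulled-back perturbation $F_{s\eta}^*B_0$, not $B_0$ itself. The operator you should be differentiating is therefore $\tau_{g_0}^{g_{s\eta}}\circ\gamma^{g_{s\eta}}(F_{s\eta}^*B_0)\circ(\tau_{g_0}^{g_{s\eta}})^{-1}$; only the $s$-dependence of $F_{s\eta}^*B_0$ produces the $\mathcal L_V B_0$ contribution you correctly invoke in the following sentence. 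As literally written, with $B_0$ held fixed inside $\gamma^{g_{s\eta}}(\cdot)$, that Lie-derivative term would not appear and the formula would be incomplete.
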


\begin{proof}
In the case that $B_0=0$, this follows immediately from Theorem \ref{bourguignon} and the above calculation of the pullback metric in Lemma \ref{pullbackmetric}. The line bundle is fixed after pulling back by $F_\eta$ and plays no role. The perturbation $B_0$ pulls back to $F_{s\eta}^*B_0$, and differentiating this yields the term $\mathcal R(B_0,\chi \eta)$. 
\end{proof}

A word of caution to the reader:  the formula for this linearization is slightly deceptive in the following sense. The expression for $\mathcal B_{\Phi_0}(\eta)$ , which is the first line in (\refeq{firstlinedD}) (cf. (\refeq{abstractvariation1})), appears to be a first order term plus a zeroeth order term. But these are the orders in the {\it spinor} $\Phi_0$, and we are viewing it as an equation in the {\it deformation} $\eta$. The variation of the pullback metrics $\dot g_{\eta}$, as above, contains first derivatives of $\eta(t)$, and so the trace and divergence, which contain derivatives of $\dot g_{\eta}$ contain second derivatives of $\eta(t)$. Thus this equation is actually {\it second order} in $\eta$, with the second and third terms being leading order. This is the reason $\eta$ must be taken to be at least $H^2$ in order for this partial derivative to be bounded into $L^2$. 

\begin{rem}\label{nonlinearpullbackmetric}
For later use, we note that the proof of Lemma \ref{pullbackmetric} shows that the complete formula for the pullback metric can be written 
$$g_{s\eta}= g_0 + s \dot g_{\eta} + \frak q(s\eta,s\eta)$$ 

\noindent where $\frak q(s\eta,s\eta)$ is a matrix whose entries are $O(s^2)$ and are formed from finite sums of terms of the following form

\begin{itemize} \item Products of at least two terms of the form \   $\chi \eta'_\alpha$ \ , or \  $\del_\beta \chi \eta_\alpha$ \ , or \   $(\widetilde h-h)\leq C |\chi \eta|$. 
\item Higher order terms of the form $(\widetilde h - h- h_1)\leq C|\chi \eta|^2.$

\end{itemize}
 
\noindent where the bounds on the terms involving $\widetilde h$ follow from Taylor's theorem. \qed
\end{rem}

\bigskip 

\section{Fredholmness of Deformations}
\label{section6}

This section proves Theorem \ref{maina} by calculating the obstruction component of the linearized universal Dirac operator. For the duration of this section, we continue to assume that $(\mathcal Z_0, \ell_0, \Phi_0)$ is a regular (Definition \ref{regulardef}) $\Z_2$-harmonic spinor.  

 Working in the trivialization of Lemma  \ref{trivializationlem} and splitting the domain and codomain into their summands, the linearization has the following block lower-triangular form, where $\Pi_0:L^2\to \text{\bf Ob}(\mathcal Z_0)$ denotes the orthogonal projection as in Definition \ref{Obdefinition}: 

\be \text{d}_{(\mathcal Z_0,\Phi_0)}\slashed {\mathbb D} = \begin{pmatrix}  \Pi_0 \mathcal B_{\Phi_0} & 0 \\  \\  (1-\Pi_0)\mathcal B_{ \Phi_0}& \slashed D\end{pmatrix} \ : \  \begin{matrix} H^2(\mathcal Z_0; N\mathcal Z_0)\\ \oplus \\ r H^1_e(Y\setminus \mathcal Z_0; S_0) \end{matrix} \ \  \lre \ \   \begin{matrix}  \text{\bf Ob}(\mathcal Z_0)\\ \oplus \\ \text{range}(\slashed D|_{rH^1_e}).\end{matrix}\label{blockdecomp}\ee

\noindent Composing with the inverse of the isomorphism $(\text{ob},\iota): L^2(\mathcal Z_0;\mathcal C_0) \oplus \R \to \text{\bf Ob}(\mathcal Z_0)$ from Proposition \ref{cokproperties}, the upper left entry of  (\refeq{blockdecomp}) can be written as $(T_{\Phi_0}, \pi_1)$ where $\pi_1$ is the $L^2$-orthogonal projection onto $\R \Phi_0$, and $T_{\Phi_0}$ is the composition:

    \begin{center}
\tikzset{node distance=3.2cm, auto}
\begin{tikzpicture}
\node(A){$H^2(\mathcal Z_0;N\mathcal Z_0)$};
\node(C)[right of=A]{$\text{\bf Ob}(\mathcal Z_0)^\perp$};
\node(C')[right of=C, yshift=0cm]{$L^{2}(\mathcal Z_0;\mathcal C_0 ), $};
\draw[->] (A) to node {$ \Pi_0^\perp\mathcal B_{\Phi_0}$} (C);
\draw[->] (C) to node {$ \text{ob}^{-1}$} (C');
\draw[->,>=stealth] (A) edge[bend right=20]node[below]{$T_{\Phi_0}$} (C');
\end{tikzpicture}\end{center}

 \noindent with $\Pi_0^\perp$ as in Definition \ref{Obdefinition}. In particular, $T_{\Phi_0}$ is a map of Hilbert spaces of sections of vector bundles on $\mathcal Z_0$.

 The main result of the current section is the following theorem, which is a more precise statement of Theorem \ref{maina} in the introduction.
 
 \begin{thm} 
\label{mainaprecise} The composition $T_{\Phi_0}$ is an elliptic pseudo-differential operator of order 1/2. In particular, as a map 
\smallskip 
\be T_{\Phi_0}: H^2(\mathcal Z_0;N\mathcal Z_0)\lre H^{3/2}(\mathcal Z_0;\mathcal C_{0})\label{TPhifredholmness}\ee it is Fredholm, and has index 0.
\smallskip   
\end{thm}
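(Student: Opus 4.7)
The strategy is to reduce the computation of $T_{\Phi_0}$ to an explicit Fourier-multiplier computation on $\mathcal Z_0$, using three pieces of data already established: (i) the formula for $\mathcal B_{\Phi_0}(\eta)$ from Corollary \ref{formofDD}, (ii) the representation of the cokernel projection from Proposition \ref{cokproperties}, namely $\text{ob}^{-1}(\Pi_0\psi) = \sum_{\ell} \langle \psi, \Psi_\ell\rangle_{\C}\,\phi_\ell$, together with the detailed form $\Psi_\ell = \chi_j\Psi_\ell^{\text{Euc}} + \zeta_\ell + \xi_\ell$ and the exponential concentration/decay bounds of Proposition \ref{cokproperties2}, and (iii) the polyhomogeneous expansion of $\Phi_0$ from Proposition \ref{asymptoticexpansion}. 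Concretely, the plan is to evaluate the coefficient function
\[
T_{\Phi_0}(\eta)(t) \;=\; \sum_{\ell\in \operatorname{Spec}(\slashed\partial_{\mathcal Z_0})} \bigl\langle \mathcal B_{\Phi_0}(\eta), \Psi_\ell\bigr\rangle_{\C}\,\phi_\ell(t)
\]
by substituting the Bourguignon--Gauduchon expression for $\mathcal B_{\Phi_0}(\eta)$ and the asymptotic expansion of $\Phi_0$, and then reading off the resulting operator as a pseudodifferential operator on $\mathcal Z_0$.

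\textbf{Key steps, in order.} First, I would show that the replacement $\Psi_\ell \rightsquigarrow \chi_j\Psi_\ell^{\text{Euc}}$ in the inner product above alters the operator only by a compact (in fact smoothing) correction: the $\xi_\ell$ piece contributes $O(|\ell|^{-2})$ which is trivially smoothing, and by iterating Corollary \ref{higherordercokernel} the $\zeta_\ell$ piece can be made arbitrarily smoothing at the cost of constants; moreover these terms are localized near $\mathcal Z_0$, so the integrals against $\mathcal B_{\Phi_0}(\eta)$ collapse onto $\mathcal Z_0$ via the Fermi coordinates of Section \ref{section3.2}. Second, using the explicit form of $\dot g_\eta$ from Lemma \ref{pullbackmetric}, I would write $\mathcal B_{\Phi_0}(\eta)$ inside a tubular neighborhood as a combination of terms of the form (coefficient involving $\eta, \eta', \eta''$ and derivatives of the cutoff $\chi$) times $\Phi_0$ or $\nabla \Phi_0$. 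Pairing with $\chi_j\Psi_\ell^{\text{Euc}} = \sqrt{|\ell|}\,e^{i\ell t}e^{-|\ell|r}(1/\sqrt{z},\operatorname{sgn}(\ell)/\sqrt{\overline z})$ and expanding $\Phi_0 \sim (c(t)\sqrt{z}, d(t)\sqrt{\overline z})$ modulo $O(r^{3/2})$ via Proposition \ref{asymptoticexpansion}, the terms $1/\sqrt{z}\cdot\sqrt{z} = 1$ and $1/\sqrt{\overline z}\cdot \sqrt{\overline z}=1$ neutralize the singularity, and the normal integral reduces to Gaussian-type integrals
\[
\int_0^\infty r^{k}\, e^{-|\ell| r}\, dr \;\sim\; |\ell|^{-(k+1)}.
\]
Matched against the $\sqrt{|\ell|}$ normalization of $\Psi_\ell^{\text{Euc}}$ and the $|\ell|^{2}$ coming from the second-derivative terms in $\dot g_\eta$ (through $d\operatorname{Tr}\dot g_\eta$ and $\operatorname{div}\dot g_\eta$), this produces a Fourier multiplier of order $\tfrac12$ on $\mathcal Z_0$ applied to the Fourier coefficients of $\eta$, rather than the naive order $2$. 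Third, the higher terms $r^{n+1/2}\log(r)^p$ of $\Phi_0$ plug into the same integrals but with extra powers of $r$, giving Fourier multipliers of order $\tfrac12-n$, i.e.\ smoothing perturbations. Fourth, I would compute the principal symbol $\sigma_{1/2}(T_{\Phi_0})(t,\tau)$ as an explicit $2\times 2$-block acting on $N\mathcal Z_0 \otimes \C \to \mathcal S_{\mathcal Z_0}$, depending on the leading coefficients $c(t), d(t)$ of $\Phi_0$ (viewed as sections of $N\mathcal Z_0^{-1}, N\mathcal Z_0$ per Corollary \ref{transformationlem}). Ellipticity would then follow from Assumption \ref{assumption2}, which forces $|c(t)|^2+|d(t)|^2>0$, so the symbol is everywhere invertible; and the index computation would be a homotopy-theoretic consequence of the fact that the symbol lands in a component of invertible symbols matching the identity up to an invertible scalar factor, yielding $\operatorname{ind}(T_{\Phi_0})=0$ on the circle $\mathcal Z_0$.

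\textbf{Main obstacle.} The delicate step is not the symbol scaling but the uniform control of the remainder terms as $|\ell|\to\infty$: one must verify that commutators of the cutoff $\chi$ with the Euclidean cokernel elements, the $O(r)$ corrections to Fermi coordinates in the metric, and the finitely many subleading terms of the polyhomogeneous expansion of $\Phi_0$ all produce only smoothing contributions, not new principal terms, so that the full asymptotic symbol of $T_{\Phi_0}$ is a genuine classical $\Psi$DO symbol in $S^{1/2}$. This is where the notion of \emph{conormal regularity} alluded to in the outline (Section \ref{section6.1}) becomes essential: it says that, because of the $r^{1/2}$ vanishing of $\Phi_0$ built into Assumption \ref{assumption2}, one gains regularity in the projection over and above what one would get from differentiating naively, and conversely one loses exactly $\tfrac32$ derivatives compared to the ``order $2$'' naive expectation--- neither more nor less. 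It is the coincidence of the naive order $2$ of $\mathcal B_{\Phi_0}$ with the $\tfrac32$-smoothing from conormal regularity that produces the order-$\tfrac12$ operator and, somewhat surprisingly, ellipticity; the proof has to make this bookkeeping precise by decomposing $\mathcal B_{\Phi_0}$ into pieces whose orders and regularity classes can be tracked independently through the projection.
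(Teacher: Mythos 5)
Your plan coincides with the paper's proof in every essential step: the inner-product computation against $\chi_j\Psi_\ell^{\mathrm{Euc}}$, the Gaussian integrals producing the $|\ell|^{-3/2}$ factor that turns a naively second-order operator into one of order $\tfrac12$, the relegation of $\zeta_\ell+\xi_\ell$ and the $O(r^{3/2})$ tail of $\Phi_0$ to smoothing corrections, ellipticity from $|c|^2+|d|^2>0$, and index $0$ by a homotopy of the symbol. The only cosmetic difference is that the paper isolates the zeroth-order real-linear operator $\mathcal L_{\Phi_0}(\xi)=H(c\xi)-\overline{\xi}\,d$ so that $T_{\Phi_0}=-\tfrac{3|\mathcal Z_0|}{2}(\Delta+1)^{-3/4}\mathcal L_{\Phi_0}\,\partial_t^2 + K$, proves $\mathcal L_{\Phi_0}$ Fredholm by constructing an explicit pseudo-inverse using that $[H,a]$ is smoothing of order $1$, and homotopes $(c,d)$ to $(1,0)$ using $\pi_1(\C^2\setminus\{0\})=0$ --- which is precisely how your informal ``$2\times 2$ symbol'' argument is made rigorous, since the presence of the conjugate $\overline\xi$ means the symbol is only real-linear.
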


 \medskip

Using the block-diagonal decomposition (\refeq{blockdecomp}), Theorem \refeq{mainaprecise} and standard bootstrapping imply the following. Here, recall that $\text{\bf Ob}^{m}=\text{\bf Ob}(\mathcal Z_0)\cap H^m_\text{b}$. 

\begin{cor}\label{linearizedDinvertible} 
The linearized universal Dirac operator extends to a Fredholm of index 0 
 \be  \text{d}_{(\mathcal Z_0,\Phi_0)}\slashed{\mathbb D}: H^{m+2}(\mathcal Z_0;N\mathcal Z_0)\oplus  rH^{m,1}_{\text{b},e} \lre \text{\bf Ob}^{m+3/2} \oplus \Big(\text{range}(\slashed D) \cap  H^m_\text{b}\Big).
\ee

\noindent for every $m\geq 0$.\qed
\end{cor} \medskip 

 \begin{rem}\label{obconvention} The order of $T_{\Phi_0}$ depends on the choice of isomorphism $\text{ob}: L^2(\mathcal Z_0;\mathcal C_0)\to {\bf Ob}(\mathcal Z_0)$. For instance, one could just as easily have defined $\text{ob}$ as the composition of the current version with $(\Delta+1)^s$ for any $s\in \R$. Writing $\text{Id}=\text{ob}\circ \text{ob}^{-1}$, however, the obstruction component $\mathcal B_{\Phi_0}=\Pi^\perp_0\text{d}\slashed{\mathbb D}$ {\it which is independent of the choice of} $\text{ob}$ may be factored as

    \begin{center}
\tikzset{node distance=3.0cm, auto}
\begin{tikzpicture}
\node(A){$H^2(\mathcal Z_0;N\mathcal Z_0)$};
\node(C)[right of=A]{};
\node(C')[right of=C, yshift=0cm]{$ \text{\bf Ob}(\mathcal Z_0)^\perp \cap H^{3/2}_\text{b}(Y\setminus \mathcal Z_0) $};
\node(D)[below of=C, yshift=1.4cm]{$H^{3/2}(\mathcal Z_0;\mathcal C_0 ). $};
\draw[->] (A) to node {$ \Pi^\perp_0\mathcal B_{\Phi_0}$} (C');
\draw[->][swap] (A) to node {$T_{\Phi_0}$} (D);
\draw[->][swap] (D) to node {$ \text{ob}$} (C');
\end{tikzpicture}\end{center}
 \end{rem}

 \noindent In particular, the operator $\Pi_0^\perp \mathcal B_0$ has ``order'' 1/2, insofar as its image on $H^2(\mathcal Z_0)$ is $\text{\bf {Ob}}\cap H^{3/2}_\text{b}$ independent of the choice of $\text{ob}$, possibly up to a finite-dimensional subspace. Here order is used only loosely, as $\Pi_0\mathcal B_{\Phi_0}$ is not itself a pseudodifferential operator. Most importantly, the loss of regularity in Theorems \ref{maina} and \ref{mainb} is intrinsic to the geometric problem and cannot be avoided by simply revising conventions. 
 
  The conventions here are chosen so that $\text{ob}$ has order zero, i.e. so that the manifestation of $\Pi^\perp_0\mathcal B_{\Phi_0}$ as a true pseudodifferential operator --- this being $T_{\Phi_0}$ --- acts on spaces of the same regularity. Other authors may adopt the convention that $\text{ob}$ has order $-1/2$, which natural from the viewpoint of the Poisson operator as in (\refeq{Poissondef}).

\subsection{Conormal Regularity}
\label{section6.1}
The remainder of Section \ref{section6} proves Theorem \ref{mainaprecise}. Before beginning the proof in earnest, the current section studies the regularity of the projection operator $\Pi_0$.

The loss of regularity in Theorem \ref{mainaprecise} is a consequence of the fact that $\text{\bf Ob}$ does not simply inherit the obvious notion of regularity from $Y\setminus \mathcal Z_0$. Instead, one has
\smallskip

\begin{center}
{\it Key Observation:} The regularity of $\Pi_0(\psi) \in \text{\bf Ob}(\mathcal Z_0)$ depends on both the regularity of $\psi$ and $\text{  \ \  \hspace{1cm}\ \ \ its order of growth}$ along $\mathcal Z_0$.\hspace{5.8cm}.  
\end{center}  

\smallskip 
\noindent To elaborate, Proposition \ref{cokproperties2} shows that the regularity of $\Pi_0(\psi)$ is a question about the rate of decay in $|\ell|$ of the sequence of inner products 
\be \big \{ \ \br \psi, \Psi_\ell \kt_{\C} \  \big\}_{\ell \in \Z}.\label{sequenceofinnerproducts}\ee
Because the basis elements $\Psi_\ell$ concentrate exponentially around $\mathcal Z_0$ as $|\ell|\to \infty$, this rate of decay is intertwined with the growth of $\psi$ along $\mathcal Z_0$. If, for example, $\psi$ is compactly supported away from $\mathcal Z_0$, then Proposition \ref{cokproperties2} implies the sequence  (\refeq{sequenceofinnerproducts}) decays faster than polynomially and the projection $\text{ob}^{-1}\Pi_0(\psi)\in C^\infty(\mathcal Z_0;\mathcal C_0)$ is smooth regardless of the regularity of $\psi$ on $Y$. The rest of this subsection characterizes this phenomenon more precisely. Although the regularity of $\Pi_0\psi \in \text{\bf Ob}(\mathcal Z_0)$ is different from the ambient regularity on $Y\setminus \mathcal Z_0$, our convention (see Remark \ref{obconvention}) means that the regularity on either side of the isomorphism $\text{ob}$, i.e. of $\Pi_0\psi \in \text{\bf Ob}(\mathcal Z_0)$ and $\text{ob}^{-1}\Pi_0\psi \in L^2(\mathcal Z_0;\mathcal C_0)$ coincide by Lemma \ref{regularitycoincides}.

\begin{defn}Suppose that a spinor $\psi$ can be written locally in Fermi coordinates and an accompanying trivialization as \be \psi=\chi \begin{pmatrix} f^+(t) h^+(\theta) \\ f^-(t) h^-(\theta)\end{pmatrix}  r^p  \label{conormalform}\ee
where $f^\pm \in H^k(S^1;\C)$, $h^\pm$ are smooth, and $\chi$ is a cutoff function supported in a neighborhood $N_{r_0}(\mathcal Z_0)$. 
Then the quantity $$s=\boxed{ 1+ k +p} $$
is called the {\bf conormal regularity} of $\psi$. 
\end{defn}   

The following simple lemma gives the fundamental relationship between the conormal regularity and the regularity of the projection. In it, we denote by H the Hilbert transform as defined preceding Definition \ref{calderondef}.  

\begin{lm}
\label{conormalCaseA}
Suppose that $\psi \in L^2$ has conormal regularity $s$. Then $\text{ob}^{-1}\Pi_0(\psi)\in H^s(\mathcal Z_0;\mathcal C_{0})$
and $$\|\text{ob}^{-1}\Pi_0(\psi)\|_{s}\leq C_s( \|f^+\|_{H^{k}}+ \|f^-\|_{H^{k}}).$$
\end{lm}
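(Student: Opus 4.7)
The plan is to reduce the $L^{s,2}(\mathcal Z_0;\mathcal S_{\mathcal Z_0})$ estimate to a decay rate on the inner products appearing in formula $(\ref{innerprods})$ of Proposition $\ref{cokproperties2}$. Since $\{\phi_\ell\}$ are the $L^2$-orthonormal eigenfunctions of $\slashed\partial_{\mathcal Z_0}$ with eigenvalues $\sim |\ell|$, an equivalent expression for the $L^{s,2}$-norm is
$$\|\text{ob}^{-1}\Pi_0\psi\|_{L^{s,2}}^2 \ \sim \ \sum_\ell (1+|\ell|^2)^s \, |\langle \psi, \Psi_\ell\rangle_{\mathbb C}|^2,$$
so it suffices to show $|\langle \psi, \Psi_\ell\rangle| \leq \widetilde c_\ell |\ell|^{-(p+1)}$ with $\{\widetilde c_\ell\}\in\ell^{2,k}(\mathbb Z)$ of norm controlled by $\|f^+\|_{L^{k,2}} + \|f^-\|_{L^{k,2}}$, since $s = 1+k+p$. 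Decomposing $\Psi_\ell = \chi_1 \Psi_\ell^\text{Euc} + \zeta_\ell^{(m)} + \xi_\ell^{(m)}$ via Corollary $\ref{higherordercokernel}$ with $m$ chosen large, the three contributions can be treated in turn.

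For the Euclidean piece I would compute $\langle\psi,\chi_1\Psi_\ell^\text{Euc}\rangle$ directly in Fermi coordinates. The $t$-integral picks out the Fourier coefficients $\hat f^\pm_\ell$; the $\theta$-integral produces an $\ell$-independent constant depending on $h^\pm$; and the radial integral is evaluated by the substitution $u = |\ell|r$, giving
$$\int_0^{r_0}\chi\chi_1 e^{-|\ell|r}r^{p+1/2}\,dr \ = \ \frac{\Gamma(p+\tfrac 32)}{|\ell|^{p+3/2}} \ + \ O(e^{-c|\ell|}).$$
Combined with the $\sqrt{|\ell|}$ prefactor of $\Psi_\ell^\text{Euc}$, the leading term is of order $(\hat f^+_\ell + \text{sgn}(\ell)\hat f^-_\ell)/|\ell|^{p+1}$, and summing $|\ell|^{2s}$ times its square reproduces $\|f^\pm\|_{L^{k,2}}^2$ exactly, which is precisely what the choice of exponent $s = 1+k+p$ is engineered to deliver. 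The $\xi_\ell^{(m)}$ correction is bounded by plain Cauchy--Schwarz using $\|\xi_\ell^{(m)}\|_{L^2}\leq C_m|\ell|^{-(m+2)}$, and choosing $m > s-\tfrac 32$ makes $\sum_\ell |\ell|^{2s-2m-4}$ convergent, giving a contribution bounded by $\|f^\pm\|_{L^2}$.

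The main obstacle is the $\zeta_\ell^{(m)}$ correction, since the naive Cauchy--Schwarz bound $|\langle \psi, \zeta_\ell^{(m)}\rangle| \lesssim \|f^\pm\|_{L^2}|\ell|^{-(p+2)}$, obtained from the annular estimates $\|\zeta_\ell^{(m)}\|_{L^2(A_{n\ell})}\leq C|\ell|^{-1}e^{-n/c_m}$ and $\|\psi\|_{L^2(A_{n\ell})}\lesssim n^{p+1/2}\|f^\pm\|_{L^2}|\ell|^{-(p+1)}$, fails to sum against $|\ell|^{2s}$ once $k \geq \tfrac 12$. The key observation is that $\zeta_\ell^{(m)}$ contains only Fourier modes in $t$ in the window $|q - \ell|\leq|\ell|/2$, all of which satisfy $|q|\geq |\ell|/2$; hence $\partial_t$ is invertible on the span of these modes with $\|\partial_t^{-1}\|\leq 2/|\ell|$. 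Iterating gives $\|\partial_t^{-k}\zeta_\ell^{(m)}\|_{L^2(A_{n\ell})}\leq C|\ell|^{-(k+1)}e^{-n/c_m}$, and integrating by parts $k$ times in $t$ (interpolating for fractional $k$) trades powers of $|\ell|$ for derivatives of $f^\pm$:
$$|\langle \psi, \zeta_\ell^{(m)}\rangle| \ = \ |\langle \partial_t^k \psi, \partial_t^{-k}\zeta_\ell^{(m)}\rangle| \ \leq \ \sum_n \|\partial_t^k \psi\|_{L^2(A_{n\ell})}\|\partial_t^{-k}\zeta_\ell^{(m)}\|_{L^2(A_{n\ell})} \ \leq \ C\|f^\pm\|_{L^{k,2}}|\ell|^{-(k+p+2)},$$
the $n$-sum converging because of the exponential weight. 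Multiplying by $|\ell|^{2s} = |\ell|^{2(1+k+p)}$ and summing produces $\|f^\pm\|_{L^{k,2}}^2 \sum_\ell |\ell|^{-2}$, which is finite. Assembling the three pieces yields the stated bound; the finite set of low-mode terms $|\ell|\leq L_0$ contributes a uniformly bounded finite-dimensional correction, and the same argument applied separately to $\Psi_\ell^{\text{Re},j}$ and $\Psi_\ell^{\text{Im},j}$ handles the $\R$-linear case (B).
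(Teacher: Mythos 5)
Your argument is correct and follows essentially the same route as the paper: reduce to the decay of $\langle\psi,\Psi_\ell\rangle$, compute the Euclidean piece explicitly (the radial $\Gamma$-integral giving $|\ell|^{-(p+1)}$), and handle the corrections using Corollary \ref{higherordercokernel} together with the Fourier-mode restriction on $\zeta_\ell$ to integrate by parts in $t$. The only difference is that the paper states the $\zeta_\ell+\xi_\ell$ part in one line and defers the details to Step 5 of the proof of Theorem \ref{mainaprecise}, whereas you spell them out in place — a reasonable expansion, not a different proof.
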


\begin{proof} Using Proposition \ref{cokproperties2}, $\text{ob}^{-1}\Pi_0(\psi)$ is calculated by the sequence of inner products 
$$ \br \psi, \Psi_\ell\kt=\br \psi, \Psi_\ell^\circ + \zeta_\ell + \xi_\ell\kt \hspace{1cm}\text{ where }\hspace{1cm}\Psi_\ell^\circ =\chi \sqrt{|\ell|}e^{i\ell t} e^{-|\ell|r } \begin{pmatrix} \frac{1}{\sqrt{z}} \\  \frac{\text{sgn}(\ell)}{\sqrt{\overline z}} 
\end{pmatrix}.$$

\noindent Assume first that $g_0=dt^2 + dx^2 + dy^2$ on $N_{r_0}(\mathcal Z_0)$. Using the expression (\refeq{conormalform}) for $\psi$, the inner product with $\Psi_\ell^\circ$ yields 
\bea
\br \psi , \Psi_\ell^\circ \kt &=&  \Big  \br \chi \begin{pmatrix} f^+ h^+ \\ f^- h^- \end{pmatrix}r^{p}  \ , \ \sqrt{|\ell|} e^{i\ell t}\begin{pmatrix}\tfrac{e^{-|\ell| r}}{ \sqrt{z}}   \\ \text{sgn}(\ell)\tfrac{e^{-|\ell| r}}{\sqrt{\overline z}}\end{pmatrix} \Big \kt \\ &\leq &  \Big |\int_{S^1} \br  f^+ + Hf^-, e^{i\ell t}\kt \int_{\R^2}  \sqrt{|\ell|}  e^{- |\ell| r} r^{p-1/2} \chi(r) \|h^\pm\|_{C^0} r dr d\theta dt  \Big |  \\
&\leq& C \Big |\int_{S^1} \br  f^+ + Hf^-, e^{i\ell t}\kt dt \Big |  \  \int_{0}^\infty  \sqrt{|\ell|}  e^{- |\ell| r} r^{p+1/2} dr    \\
&\leq &C\big \br  \tfrac{1}{|\ell|^{p+1}}\left( f^+(t) + Hf^-(t))\right), e^{i\ell t}\big  \kt_{L^2(S^1;\C)} 
\eea
\noindent Since $f^\pm \in H^k(S^1;\C)$, then $(f^+(t) + H f^-(t))\in H^k(S^1;\C)$ as well, thus after applying the Fourier multiplier $1/{|\ell|^{p+1}}$ it lies in $H^{1+k+p}(S^1;\C)$ as desired. For the case of a general metric, the integrands differ by a factor of $1 + O(r)$ from the volume form and the latter contributes only a term of higher regularity.  

It is easy to show  that the contributions to the inner product arising from $\zeta_\ell + \xi_\ell$ satisfy the same bounds by invoking Corollary \ref{higherordercokernel} and integrating by parts. Since these terms are dealt with explicitly in the proof of Theorem \ref{mainaprecise}, the details are omitted here. 
\end{proof}

The following additional cases are a straightforward extension of the above.
 
 \begin{cor}\label{CaseBC} Let $\psi\in L^2(Y\setminus \mathcal Z_0; S_0)$ 
 
 \begin{itemize}
 \item[(B)] Suppose that $\text{supp}(\psi) \Subset Y\setminus \mathcal Z_0$. Then $\text{ob}^{-1}\Pi_0(\psi) \in H^s(\mathcal Z_0;\mathcal C_{0})$
for all $s>0$, and its $H^s$-norm is bounded by $C_s \|\psi\|_{L^2}$. 
 \item[(C)] Suppose $\psi$ has the form 

\be \psi=\begin{pmatrix} f^{+}(t) \ph^+(t,r,\theta) \\ f^-(t) \ph^-(t,r,\theta)\end{pmatrix}\label{conormalform2}\ee
\noindent where $f^\pm \in H^k(S^1;\C)$ and $\ph^{\pm}$ satisfy $|\ph^\pm| + |\nabla_t \ph^\pm| + \ldots + |\nabla_t^{k}\ph^\pm|< C(\ph)r^p$ pointwise. Then $\text{ob}^{-1}\Pi_0(\psi)\in H^{s}(\mathcal Z_0;\mathcal C_{0})$ for $s=1+k+p$, and its $H^{s}$-norm is bounded by $C_s  C(\ph)\|f^\pm\|_{H^k(S^1)}.$
 \end{itemize}

 \end{cor}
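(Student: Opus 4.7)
The plan is to adapt the calculation of Lemma \ref{conormalCaseA} to each case, invoking the refined decomposition $\Psi_\ell = \chi_1\Psi_\ell^{\text{Euc}} + \zeta_\ell^{(m)} + \xi_\ell^{(m)}$ from Corollary \ref{higherordercokernel} with $m$ chosen sufficiently large. By Proposition \ref{cokproperties} and Lemma \ref{Ucorrection}, it suffices to prove the weighted sequence bound $\sum_\ell (1+|\ell|^2)^{s}|c_\ell|^2 < \infty$ for $c_\ell := \br \psi, \Psi_\ell\kt$ (and similarly for the two real pairs $\Psi_\ell^{\text{Re},j}, \Psi_\ell^{\text{Im},j}$ in the $\R$-linear case of Proposition \ref{cokproperties2}).

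For Part (B), fix $\rho > 0$ with $\text{supp}(\psi)\cap N_\rho(\mathcal Z_0) = \emptyset$ and choose $m$ arbitrary. The Euclidean contribution is bounded using the pointwise estimate $|\chi_1\Psi_\ell^{\text{Euc}}|\leq C\sqrt{|\ell|}\,e^{-|\ell|r}r^{-1/2}$ restricted to $r\geq \rho$, giving $|\br\psi, \chi_1\Psi_\ell^{\text{Euc}}\kt|\leq C\sqrt{|\ell|}\,e^{-|\ell|\rho/2}\|\psi\|_{L^2}$. The $\zeta_\ell^{(m)}$ contribution vanishes on annuli $A_{n\ell}$ with $n< |\ell|\rho/R_0$ by the support hypothesis, so the annular bound \eqref{expdecayzetak} followed by geometric summation in $n$ yields super-polynomial decay. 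Finally $|\br \psi, \xi_\ell^{(m)}\kt|\leq C_m |\ell|^{-(m+2)}\|\psi\|_{L^2}$ directly from \eqref{squaredboundk}. Since $m$ is arbitrary, $|c_\ell|$ decays faster than any polynomial, which gives $\text{ob}^{-1}\Pi_0(\psi)\in L^{s,2}$ for every $s > 0$.

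For Part (C), the calculation mirrors that of Lemma \ref{conormalCaseA}. Writing out the leading term in Fermi coordinates using $1/\sqrt{z} = e^{-i\theta/2}/\sqrt{r}$ and $1/\sqrt{\bar z} = e^{i\theta/2}/\sqrt{r}$ gives
\[
\br \psi, \chi_1\Psi_\ell^{\text{Euc}}\kt = \sqrt{|\ell|}\int_0^\infty \chi_1\, e^{-|\ell|r}\sqrt{r}\,\bigl[F_\ell^+(r) + \sgn(\ell)F_\ell^-(r)\bigr]\,dr,
\]
where $F_\ell^\pm(r) := \int_{S^1\times S^1} f^\pm(t)\varphi^\pm(t,r,\theta)\,e^{\pm i\theta/2}\,e^{-i\ell t}\,d\theta\,dt$. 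The gain in $|\ell|$ comes from $k$ integrations by parts in $t$: the Leibniz rule distributes derivatives between $f^\pm$ (bounded in $L^2_t$ by $\|f^\pm\|_{L^{k,2}}$) and $\partial_t^j\varphi^\pm$ (pointwise bounded by $C(\varphi)r^p$ for $j\leq k$), yielding $|F_\ell^\pm(r)|\leq C r^p |\ell|^{-k}C(\varphi)\|f^\pm\|_{L^{k,2}}$. Since $\int_0^\infty e^{-|\ell|r}r^{p+1/2}\,dr = O(|\ell|^{-(p+3/2)})$, combining with the $\sqrt{|\ell|}$ prefactor produces $|\br \psi, \chi_1\Psi_\ell^{\text{Euc}}\kt|\leq C|\ell|^{-(k+p+1)}C(\varphi)\|f^\pm\|_{L^{k,2}}$. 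The corrections $\zeta_\ell^{(m)}, \xi_\ell^{(m)}$ for $m\geq k+p+2$ are handled analogously: integration by parts in $t$ against $\zeta_\ell^{(m)}$ is valid because of its Fourier-mode restriction to $|p - \ell|\leq |\ell|/2$ and the higher-derivative bounds of Corollary \ref{higherordercokernel}, while the $\xi_\ell^{(m)}$ contribution is controlled directly by Cauchy--Schwarz using \eqref{squaredboundk}. Squaring $|c_\ell|$ and summing with weights $(1+|\ell|^2)^s$ for $s=1+k+p$ then yields the desired $L^{s,2}$ estimate.

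The main technical obstacle is ensuring that the corrections $\zeta_\ell^{(m)}$ and $\xi_\ell^{(m)}$ do not degrade the $|\ell|^{-k}$ gain from integration by parts on the leading Euclidean term. This is resolved by choosing $m \geq k + p + 2$ so that $\xi_\ell^{(m)}$ is negligible, and by exploiting the Fourier-restriction property of $\zeta_\ell^{(m)}$ (which preserves the differentiability of the integrand in $t$ modulo an $|\ell|^k$ factor absorbed into $|\ell|^{-(m+1)}$). The $\R$-linear case in Proposition \ref{cokproperties2}(B) is treated identically on each real basis, with the real and imaginary inner-product pairs bounded separately.
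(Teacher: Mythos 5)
Your argument for Part (B) is sound: the exponential decay of $\chi_1\Psi_\ell^{\text{Euc}}$ away from $\mathcal Z_0$, the annular estimate on $\zeta_\ell^{(m)}$ restricted to $n\gtrsim |\ell|\rho/R_0$, and the $|\ell|^{-(m+2)}$ bound on $\xi_\ell^{(m)}$ with $m$ arbitrary together give super-polynomial pointwise decay of $|c_\ell|$, which is more than enough for $L^{s,2}$-summability for every $s$.

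Part (C), however, has a genuine gap, and it is precisely at the step where you pass from a pointwise bound on $|c_\ell|$ to the $L^{s,2}$ estimate. You derive $|c_\ell| \leq C|\ell|^{-(k+p+1)}C(\ph)\|f^\pm\|_{L^{k,2}} = C|\ell|^{-s}C(\ph)\|f^\pm\|_{L^{k,2}}$ uniformly in $\ell$, and then say ``squaring and summing with weights $(1+|\ell|^2)^s$ yields the $L^{s,2}$ estimate.'' But with $s = 1+k+p$ this gives $(1+|\ell|^2)^s|c_\ell|^2 \sim A^2$ independently of $\ell$, so the weighted sum $\sum_\ell (1+|\ell|^2)^s|c_\ell|^2$ diverges. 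The pointwise decay $O(|\ell|^{-s})$ is exactly at the threshold where summation fails; one needs an $\ell^2$-summability statement, not a pointwise decay rate.

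The loss happens when you estimate $|F_\ell^\pm(r)| \leq Cr^p|\ell|^{-k}\|f^\pm\|_{L^{k,2}}$ by applying Cauchy--Schwarz in $t$ after integrating by parts: that step replaces the $\ell$-th Fourier coefficient of $\partial_t^k(f^\pm\ph^\pm)$ by the full $L^2_t$-norm, thereby discarding the oscillatory cancellation in $\ell$ that makes the weighted sum converge. Compare Lemma \ref{conormalCaseA}: there, because $\ph^\pm = h^\pm(\theta)r^p$ is $t$-independent, the $t$- and $(r,\theta)$-integrals separate cleanly, and one keeps $c_\ell \lesssim |\ell|^{-(p+1)}|\hat g(\ell)|$ for $g = f^+ + Hf^-$; the $L^{s,2}$-norm is then $\sum(1+|\ell|^2)^k|\hat g(\ell)|^2 = \|g\|^2_{L^{k,2}}$, which converges. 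To repair Part (C) in the non-separable case, after $k$ integrations by parts you should retain the full dependence: write $F_\ell^\pm(r) = (i\ell)^{-k}\widehat{G^\pm}(\ell,r)$ with $G^\pm(t,r) := \int_{S^1}\partial_t^k(f^\pm\ph^\pm)e^{\pm i\theta/2}\,d\theta$, note that $\sum_\ell|\widehat{G^\pm}(\ell,r)|^2 = \|G^\pm(\cdot,r)\|^2_{L^2_t} \leq C(\ph)^2 r^{2p}\|f^\pm\|^2_{L^{k,2}}$ by the pointwise bounds on $\nabla_t^j\ph^\pm$, and then combine with a Cauchy--Schwarz in $r$ (or Minkowski's integral inequality) so that the factor $\int_0^\infty |\ell| e^{-2|\ell|r}(1+|\ell|^2)^{p+1}\,(\cdots)\,dr$ absorbs the remaining $r$-weight and $\ell$-weight simultaneously. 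This gives a genuine $\ell^2$-summation bound rather than a termwise one. The same fix is needed for the $\zeta_\ell^{(m)}$ contribution in Part (C), where again only the annular $L^2$-bound (not a pointwise $|\ell|^{-s}$ bound on the inner product) closes the estimate.
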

 
  \begin{rem} Before calculating the operator $T_{\Phi_0}$ explicitly, Corollary \ref{CaseBC} already implies that a loss of regularity is an inevitable consequence of the $\sqrt{r}$ asymptotics of $\Z_2$-harmonic spinors. Indeed, Corollary \ref{formofDD} shows that $\mathcal B_{\Phi_0}(\eta)$ schematically has the form $\eta'. \nabla \Phi_0 + \eta''. \Phi$. Since $\eta \in H^{2}$, and  $\Phi_0=O( r^{1/2})$ with  $\nabla \Phi_0=O(r^{-1/2})$, these terms have conormal regularity $s=1+1-1/2$ and $s=1+0+1/2$ respectively. It follows that $\Pi_0\mathcal B_0 \in H^{3/2}$ but $(1-\Pi_0)\mathcal B_0$ is in general no better than $L^2$. 
 \label{conormalrem}
 \end{rem}

\subsection{Obstruction Component of Deformations}
\label{section6.2}

This subsection proves Theorem \ref{mainaprecise}, except for the index statement, by calculating $T_{\Phi_0}$ explicitly.

The formula for $T_{\Phi_0}$ is expressed in terms of standard operators and the following zeroth order operator, for which we recall from Proposition \ref{transformationlem} that $c(t) \in N\mathcal Z_0^{-1}$ and $ d(t)\in N\mathcal Z_0$ denote the leading order (i.e. $r^{1/2}$) coefficients of $\Phi_0$. Define an operator \begin{eqnarray} \mathcal L_{\Phi_0}: L^2(\mathcal Z_0; N\mathcal Z_0)&\lre& L^2(\mathcal Z_0; \mathcal C_{0}) \label{mathcalTdef}\\ \mathcal \xi(t)&\mapsto& H(c(t) \xi(t)) - \overline \xi(t)d(t). \label{mathcalTdef}  \end{eqnarray}
\noindent where  $H$ is the Hilbert Transform as preceding Definition \ref{calderondef}. Recall here that $\mathcal C_0\simeq \C$ is canonically trivial, hence multiplication in the definition of $\mathcal L_0$ is the dual pairing $N\mathcal Z_0\otimes N\mathcal Z_0^{-1}\to \C$.

\begin{lm}
\label{mainapreciseformula}
For $\eta(t) \in H^2(\mathcal Z_0;N\mathcal Z_0)$, $T_{\Phi_0}$ as in Theorem \ref{mainaprecise} is given by 

\smallskip

\be
\label{formofTPhi0}
T_{\Phi_0}(\eta(t))= -\tfrac{3|\mathcal Z_0|}{2} (\Delta+1)^{-\tfrac34} \hspace{.07cm}\mathcal L_{\Phi_0}(\eta''(t)) \ + \ K(\eta)
\ee 

 \medskip
 
\noindent where $|\mathcal Z_0|$ denotes the length,  $\Delta$ denotes the positive-definite Laplacian on $\mathcal C_{0}$, $\mathcal L_{\Phi_0}$ is as in  (\refeq{mathcalTdef}) above, and $\eta''(t)$ denotes the (covariant) second derivative on $N\mathcal Z_0$. $K$ is a lower-order term.
 \end{lm}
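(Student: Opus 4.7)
The plan is to expand $\mathcal B_{\Phi_0}(\eta)$ explicitly in Fermi coordinates and pair it term-by-term against the obstruction basis $\Psi_\ell = \chi\Psi_\ell^{\mathrm{Euc}} + \zeta_\ell + \xi_\ell$ from Proposition \ref{cokproperties2}(B). By Proposition \ref{cokproperties} and Lemma \ref{Ucorrection}, computing $\mathrm{ob}^{-1}\Pi_0$ reduces to evaluating the sequence of inner products $\br\cdot,\Psi_\ell\kt$, and by Lemma \ref{Utame} the $U$-correction implicit in $\mathrm{ob}$ is a bounded isomorphism on every $L^{s,2}(\mathcal Z_0;\mathcal S_{\mathcal Z_0})$ whose non-identity part is smoothing of positive order, hence does not affect the principal symbol. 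The decay estimates on $\zeta_\ell$ and $\xi_\ell$ from Proposition \ref{cokproperties2}(B) and Corollary \ref{higherordercokernel} ensure that their contributions to $\br\mathcal B_{\Phi_0}(\eta),\Psi_\ell\kt$ decay strictly faster in $|\ell|$ than the contribution of $\chi\Psi_\ell^{\mathrm{Euc}}$, so they define operators of order strictly less than $1/2$ on $\mathcal Z_0$ and are absorbed into $K(\eta)$.

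The main calculation is therefore $\br\mathcal B_{\Phi_0}(\eta),\chi\Psi_\ell^{\mathrm{Euc}}\kt_{\mathbb{C}}$. Using Corollary \ref{formofDD} and Lemma \ref{pullbackmetric}, on $\{r<r_0/2\}$ where $\chi=1$ the only leading-order nonzero entries of $\dot g_\eta$ are $(\dot g_\eta)_{tx}=\eta_x'$ and $(\dot g_\eta)_{ty}=\eta_y'$; this forces $\mathrm{Tr}_{g_0}(\dot g_\eta)=0$ to leading order so the $d\,\mathrm{Tr}$ term of Bourguignon--Gauduchon is subleading, while $\mathrm{div}_{g_0}(\dot g_\eta)=\eta_x''\,dx+\eta_y''\,dy$ to leading order. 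Substituting the asymptotic $\Phi_0^{(1/2)}=(c(t)\sqrt z,d(t)\sqrt{\bar z})^T$ from Proposition \ref{asymptoticexpansion} and applying the Clifford multiplication rules of Section \ref{section3.2}, the divergence piece $\tfrac12\,\mathrm{div}_{g_0}(\dot g_\eta).\Phi_0^{(1/2)}$ is a closed-form expression linear in $\eta''$, while the symbol piece $-\tfrac12\sum_{ij}(\dot g_\eta)_{ij}\,e^i.\nabla_j\Phi_0^{(1/2)}$ splits into an $r^{-1/2}$ leading part from $\nabla_{x,y}\Phi_0^{(1/2)}$ (depending on $\eta'$) and an $r^{1/2}$ subleading part from $\nabla_t\Phi_0^{(1/2)}$. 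The $\theta$- and $r$-integrations reduce to elementary Laplace integrals and orthogonality of angular modes, so the divergence contribution produces a Fourier multiplier proportional to $|\ell|^{-3/2}$ acting on $\widehat{\mathcal L_{\Phi_0}(\eta'')}(\ell)$, while the symbol $r^{-1/2}$-part produces a $|\ell|^{-1/2}$ multiplier on an $\eta'$-expression. Via the Fourier identity $|\ell|^{-1/2}\widehat{\eta'}(\ell) = i\,\mathrm{sgn}(\ell)|\ell|^{-3/2}\widehat{\eta''}(\ell)$ and commutation with the smooth factors $c,d$, this rewrites as a $|\ell|^{-3/2}$ multiplier on $\mathcal L_{\Phi_0}(\eta'')$ modulo a commutator that is smoothing on $\mathcal Z_0$.

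The main obstacle is verifying that the divergence contribution and the rewritten symbol contribution combine --- up to smoothing error --- into a single copy of the elliptic operator $(\Delta+1)^{-3/4}\mathcal L_{\Phi_0}(\eta'')$ with the stated leading constant $-\tfrac{3|\mathcal Z_0|}{2}$, rather than into some non-elliptic combination. This is the ``miracle'' of ellipticity foreshadowed in the introduction, and requires careful tracking of signs through the Clifford multiplication table and Hermitian inner product, together with the Fourier normalization on $\mathcal Z_0$ of length $|\mathcal Z_0|$ which accounts for the $|\mathcal Z_0|$ factor. A subsidiary, more routine task is confirming that every remaining term is lower order and thus lies in $K(\eta)$: by Lemma \ref{conormalCaseA} and Corollary \ref{CaseBC}, the $\mathcal R(B_0,\chi\eta)$ correction in Corollary \ref{formofDD} is zeroth-order in $\eta$ and has conormal regularity at least $5/2$; the $h_1,h_2$ corrections from Lemma \ref{pullbackmetric} carry an extra factor of $r$ and so raise the conormal regularity by one; the higher-order terms in the polyhomogenous expansion of Proposition \ref{asymptoticexpansion} have $p\geq 3/2$; the transition region $\chi\neq 1$ is compactly supported away from $\mathcal Z_0$ and contributes a smoothing operator by Corollary \ref{CaseBC}(B); and the commutators generated by rewriting the symbol contribution in terms of $\eta''$ are smoothing on $\mathcal Z_0$.
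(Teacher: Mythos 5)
Your proposal follows essentially the same route as the paper's proof: pair $\mathcal B_{\Phi_0}(\eta)$ against the explicit obstruction basis $\Psi_\ell=\chi\Psi_\ell^{\mathrm{Euc}}+\zeta_\ell+\xi_\ell$, compute the $r,\theta$-integrals for the leading $r^{1/2}$-term of $\Phi_0$ against $\Psi_\ell^{\mathrm{Euc}}$ in the flat model, and relegate all other contributions to the remainder $K$ via conormal-regularity counting (Lemma \ref{conormalCaseA}, Corollary \ref{CaseBC}) and the $\zeta_\ell,\xi_\ell$ decay bounds of Corollary \ref{higherordercokernel}. The one notational difference is that the paper converts the symbol contribution (an $\eta'$-term with weight $|\ell|^{-1/2}$) into $\eta''$-form by inserting $1=\partial_t/(i\ell)$ and integrating by parts in $t$, whereas you invoke the equivalent Fourier-multiplier identity and then commute past $c,d$; both produce the same $|\ell|^{-3/2}$ multiplier on $\mathcal L_{\Phi_0}(\eta'')$ modulo a $[H,\cdot]$-type smoothing commutator (Claim \ref{commutator}). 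A few small imprecisions that do not affect the structure: your Fourier identity has the wrong sign of $i$ (one has $\widehat{\eta'}(\ell)=-i\,\mathrm{sgn}(\ell)|\ell|^{-1}\widehat{\eta''}(\ell)$); $\mathrm{div}_{g_0}(\dot g_\eta)=-\eta_x''\,dx-\eta_y''\,dy$ on $\{\chi=1\}$, not $+\eta_x''\,dx+\eta_y''\,dy$; and the $h_1$ correction of Lemma \ref{pullbackmetric} raises the conormal regularity by one because it carries one fewer derivative of $\eta$ (it is $O(1)\cdot\eta$), not because it has an extra factor of $r$ --- that mechanism applies to $h_2$. You correctly flag the one genuinely delicate point, namely that the divergence and rewritten symbol contributions reinforce each other into a single $\mathcal L_{\Phi_0}(\eta'')$ rather than cancelling or producing a non-elliptic symbol, but stop short of carrying out the Clifford/Hermitian bookkeeping that verifies this; the paper's Steps 1--2 do exactly that computation. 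This is the only place where the argument is outlined rather than executed, so to the extent a proposal is allowed to defer that verification, your plan is sound.
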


\bigskip

\begin{rem}\label{frequencyfunction}
Lemma \ref{mainapreciseformula} shows that, up to composing with the appropriate power of $(\Delta+1)$, the symbol of $T_{\Phi_0}$ is given by $\mathcal L_{\Phi_0}$ as in (\refeq{mathcalTdef}, which is determined entirely by the leading coefficients of $\Phi_0$. Thus strict ellipticity of $T_{\Phi_0}$ is equivalent to non-degeneracy (Definition \ref{regulardef}), i.e. to the condition that the frequency function of $\Phi_0$ (in the sense of \cite{ZhangRectifiability,TaubesZeroLoci}) has order $1/2$ everywhere along $\mathcal Z_0$. Since Nash-Moser is already required, it seems likely that the non-degeneracy assumption could be weakened to consider e.g. the hypoelliptic case. 
\end{rem}

Lemma \ref{mainapreciseformula} is proved by calculating the sequence of inner products 

 \be \text{ob}^{-1}(\Pi_0^\perp \mathcal B_{\Phi_0}(\eta)) =\sum_{\ell} \br \mathcal B_{\Phi_0}(\eta) , \Psi_\ell  \kt_\C \  e^{i\ell t} \label{tocalculate}\ee
\noindent 
quite explicitly, where $\mathcal B_{\Phi_0}(\eta)$ is as in Corollary \ref {formofDD}. The proof consists of five steps: Steps 1--2 calculate (\refeq{tocalculate}) in the case that $g_0$ is locally the product metric and $\Phi_0$ is given  by its leading order term, and Steps 3--5 show that the small parade of error terms arising from higher order contributions result in a lower-order operator $K$.

\begin{proof}[Proof of Lemma \ref{mainapreciseformula}] 
Suppose, to begin, that all the structure are given locally by the Euclidean ones. That is, assume $$g_0 = dt^2 + dx^2 + dy^2 \hspace{1cm} \Phi_0 = \begin{pmatrix} c(t)\sqrt{z} \\ d(t)\sqrt{\overline z} 
\end{pmatrix}  \hspace{1cm} \dot g_\eta= \begin{pmatrix}  0 &  \eta_x' \chi & \eta'_y\chi \\  \eta'_x \chi& 2\eta_x \del_x \chi & \eta_x \del_y \chi + \eta_y \del_x \chi \\  \eta'_y\chi &   \eta_x \del_y \chi + \eta_y \del_x \chi  & 2\eta_y \del_y \chi \end{pmatrix},$$

\noindent and $B_0=0$; also assume that the obstruction elements of Proposition \ref{cokproperties} have $\zeta_\ell+\xi_\ell =0$ so that $$\Psi_\ell =  \chi \sqrt{|\ell|}e^{i\ell t}e^{-|\ell| r}\begin{pmatrix} \frac{1}{\sqrt{z}}  \\  \frac{\text{sgn}(\ell)}{\sqrt{\overline z}} 
\end{pmatrix}$$

\noindent {\it Step 1: product case, divergence term.} Let $e_i$ for $i=1,2,3$ denote an orthonormal frame for $g_0$ with $e^i$ the dual frame. Recall that for a symmetric 2-tensor $k$, $\text{div}_{g_0}k=(-\nabla_i k_{ij})e^j.$
\bea
\tfrac{1}{2}\text{div}_{g_0}(\dot g_\eta).\Phi_0& =& -\frac{1}{2}\left[ \sigma_2 \chi \eta_x '' + \sigma_3 \chi \eta_y''\right] \begin{pmatrix}c(t)\sqrt{z} \\ d(t)\sqrt{\overline z}\end{pmatrix} + {\bf (I)}\\
            &=& -\frac{1}{2} \left[ \chi \eta''_x\begin{pmatrix}-d(t)\sqrt{\overline z} \\ c(t)\sqrt{z}\end{pmatrix}+ \chi \eta''_y\begin{pmatrix}id(t)/\sqrt{\overline  z} \\ ic(t)/\sqrt{ z}\end{pmatrix} \right] + {\bf (I)}\\
            & =& -\frac{1}{2} \left[ \begin{pmatrix} -\overline \eta'' d(t) \chi \sqrt{\overline z } \\  \ \eta'' c(t) \chi \sqrt{z}\end{pmatrix}\right]+ {\bf (I)}
\eea
\noindent where we have written $\eta(t)= \eta_x(t) + i \eta_y(t)$, and 
$${\bf (I)}=  -\frac{1}{2} \Big[(\del_x \chi \eta'_x + \del_y \chi \eta'_y)\sigma_t + ( 2 \del_{xx} \chi \eta_x + \del_{xy}\chi \eta_y + \del_{yy}\chi \eta_x) \sigma_x \newline  +(\del_{xy}\chi \eta_y + \del_{yy}\chi \eta_x + 2\del_{yy}\chi \eta_y) \sigma_y\Big]  .\Phi_0.$$

Taking the inner product of the first term with $\Psi_\ell$ yields

\bea
\br\tfrac{1}{2}\text{div}_{g_0}(\dot g_\eta).\Phi_0 , \Psi_\ell \kt&=& -\frac{1}{2}  \Big \langle \chi \begin{pmatrix} -\overline \eta'' d(t) \sqrt{\overline z}  \\  \eta'' c(t) \sqrt{z}  \end{pmatrix}  \ , \ \sqrt{|\ell|}e^{i\ell t} \chi \begin{pmatrix} {e^{-|\ell|r}}/{\sqrt{z}}\\ \text{sgn}(\ell) {e^{-|\ell|r}}/{\sqrt{\overline z}}  \end{pmatrix} \Big \rangle_\C \ + \ \br ({\bf I}), \Psi_\ell\kt_\C \\
&=&-\frac{1}{2}\int_{S^1}\Big \langle  \begin{matrix}-\overline \eta'' d(t) \\ \eta'' c(t) \end{matrix} \ , \ \begin{matrix} e^{i\ell t}\\ \text{sgn}(\ell)e^{i\ell t}  \end{matrix} \Big \rangle_\C  \ dt  \ \ \int_{\R^2}\sqrt{|\ell|} \chi^2 e^{-|\ell| r} r dr d\theta \ + \ \br ({\bf I}), \Psi_\ell\kt_\C   \\
&=& -\frac{1}{2} \br \text{sgn}(\ell) \eta'' c  - \overline \eta'' d  \ , \ e^{i\ell t} \kt_{L^{2}(\mathcal Z_0)} \ \int_{\R^2}\sqrt{|\ell|} \chi^2(r) e^{-|\ell| r} r dr d\theta \ + \ \br ({\bf I}), \Psi_\ell\kt_\C \\
&=& \br  -\frac{1}{2} \frac{|\mathcal Z_0|}{|\ell|^{3/2}}  \mathcal L_{\Phi_0}(\eta''), e^{i \ell t} \kt_\C \ + \ \br K , e^{i\ell t}\kt_\C
\eea

\noindent where $K$ is as follows. First, note $$ \int_{0}^\infty\sqrt{|\ell|} e^{-|\ell| r} r dr d\theta=\frac{1}{|\ell|^{3/2}} $$
and the presence of $\chi^2(r)$ results in a difference from this of size $O(e^{-|\ell | r_0})$, let this remainder be the first part of $K$. 

Then, since $$\frac{1}{|\ell|^{3/2}}  = \frac{1}{(|\ell|^2+1)^{3/4}}+ O \left(\frac{1}{|\ell|^{3}}\right),$$ we can write $$\text{ob}^{-1}(\tfrac{1}{2}\text{div}_{g_0}(\dot g_\eta).\Phi_0) = -\tfrac{|\mathcal Z_0|}{2}(\Delta+1)^{-\tfrac{3}{4}} \mathcal L_{\Phi_0}(\eta'') + K$$ 
\noindent where the lower order psuedo-differential operator from $O(|\ell|^{-3})$ is absorbed into $K$. Finally, the term $\text{\bf (I)}$ is a sum of terms compactly supported away from $\mathcal Z_0$, hence by Case (B) of Corollary \ref{CaseBC}, it contributes a smoothing operator which we may likewise absorb into $K$.

\medskip 

\noindent {\it Step 2: product case, symbol term.} The ``symbol'' term from $\mathcal B_{\Phi_0}(\eta)$ is given by 
\bea
-\frac{1}{2}\dot g_\eta(e_i, e_j)e^i. \nabla_j\Phi_0& =& -\frac{1}{2}\left[\chi \eta_x' \sigma_t  \nabla_x \Phi_0  + \chi \eta_y' \sigma_t \nabla_y \Phi_0\right]   + {\bf (II)}\\ 
  &=&-\frac{1}{4} \left[ \chi \eta'_x\begin{pmatrix}ic(t)/\sqrt{z} \\ -id(t)/\sqrt{\overline z}\end{pmatrix}+ \chi \eta'_y\begin{pmatrix}-c(t)\sqrt{z} \\ d(t)\sqrt{\overline z}\end{pmatrix} \right] + {\bf (II)}\\
&=&  -\frac{1}{4} \left[ \begin{pmatrix} i\eta' c(t)\chi /\sqrt{z} \\ -i\overline \eta' d(t)\chi /\sqrt{\overline z}\end{pmatrix}\right] + {\bf (II)}
  \eea

\noindent where 
\bea
{\bf (II)}&=& -\frac{1}{2}\Big[ (\chi \eta'_x \sigma_x + \chi \eta'_y \sigma_y)\nabla_t \Phi_0
  +(2\del_x \chi \eta_x \sigma_x + \del_x\chi \eta_y \sigma_y + \del_y \chi \eta_x \sigma_y )\nabla_x \Phi_0 \\ & & \ \ \  + \ (2\del_y \chi \eta_y \sigma_y + \del_x\chi \eta_y \sigma_x + \del_y \chi \eta_x \sigma_x)\nabla_y \Phi_0  \Big].
\eea

Taking the inner product of the first term with $\Psi_\ell$ yields the following. This calculation is almost identical to the previous one, but with an additional integration by parts. 
\bea
\br\tfrac{1}{2}\dot g_\eta(e_i, e_j)e^i. \nabla_j\Phi_0 , \Psi_\ell \kt&=& -\frac{1}{4}  \Big \langle \chi \begin{pmatrix} i \eta' c(t)/\sqrt{z}  \\   -i \overline \eta' d(t)/\sqrt{\overline z} \end{pmatrix}  \ , \ \sqrt{|\ell|}e^{i\ell t} \chi \begin{pmatrix} {e^{-|\ell|r}}/{\sqrt{z}}\\ \text{sgn}(\ell) {e^{-|\ell|r}}/{\sqrt{\overline z}}  \end{pmatrix} \Big \rangle_\C \\
&=&-\frac{1}{4}  \Big \langle \chi \begin{pmatrix} i \eta' c(t)/\sqrt{z}  \\   -i \overline \eta' d(t)/\sqrt{\overline z} \end{pmatrix}  \ , \ \tfrac{\sqrt{|\ell|}}{i |\ell| \text{sgn}\ell} \del_t e^{i\ell t} \chi \begin{pmatrix} {e^{-|\ell|r}}/{\sqrt{z}}\\ \text{sgn}(\ell) {e^{-|\ell|r}}/{\sqrt{\overline z}}  \end{pmatrix} \Big \rangle_\C \\ 
&= & -\frac{1}{4}  \Big \langle \chi \del_t \begin{pmatrix}  \eta' c(t)/\sqrt{z}  \\    -\overline \eta' d(t)/\sqrt{\overline z} \end{pmatrix}  \ , \ \tfrac{1}{\sqrt{|\ell|}} e^{i\ell t} \chi \begin{pmatrix} \text{sgn}(\ell){e^{-|\ell|r}}/{\sqrt{z}}\\  {e^{-|\ell|r}}/{\sqrt{\overline z}}  \end{pmatrix} \Big \rangle_\C \\ 
&=&-\frac{1}{4}\int_{S^1}\Big \langle  \begin{matrix}\del_t(\eta' c(t)) \\ -\del_t (\overline \eta' d(t)) \\  \end{matrix} \ , \ \begin{matrix} \text{sgn}(\ell) e^{i\ell t}\\ e^{i\ell t}  \end{matrix} \Big \rangle_\C  \ dt  \ \ \int_{\R^2}\frac{1}{\sqrt{|\ell|} }\chi^2 e^{-|\ell| r}  dr d\theta  
\eea

\noindent In the second line we have multiplied the second argument by $1$  in the form $1=\tfrac{i\ell}{i |\ell| \text{sgn}\ell}$ and noted $i\ell \Psi_\ell = \del_t \Psi_\ell$, and then integrated by parts. Then,

\bea 
\phantom{\br\tfrac{1}{2}\dot g_\eta(e_i, e_j)e^i. \nabla_j\Phi_0 , \psi_\ell \kt}&=& -\frac{1}{4} \br \text{sgn}(\ell) \eta'' c  - \overline \eta'' d  \ , \ e^{i\ell t} \kt_{L^{2}(S^1)} \ \int_{\R^2}\frac{1}{\sqrt{|\ell|}} \chi^2(r) e^{-|\ell| r} r dr d\theta\\
& &   \ \ \  -\frac{1}{4} \br \text{sgn}(\ell) \eta'c'  - \overline \eta' d'  \ , \ e^{i\ell t} \kt_{L^{2}(S^1)} \ \int_{\R^2}\frac{1}{\sqrt{|\ell|}} \chi^2(r) e^{-|\ell| r} r dr d\theta\\ 
&=& \br  -\frac{1}{4} \frac{|\mathcal Z_0| }{|\ell|^{3/2}}  \mathcal L_{\Phi_0}(\eta''), e^{i \ell t} \kt_\C + \br  -\frac{1}{4}\frac{|\mathcal Z_0|}{|\ell|^{3/2}} \mathcal L_{\nabla_t \Phi_0} (\eta'),e^{i\ell t}\kt_\C+  \br  K , e^{i \ell t}\kt 
\eea

\noindent Where $K$ is again an error of size $O(e^{-|\ell| r_0})$ and $\mathcal L_{\nabla_t \Phi_0}$ is defined exactly as $\mathcal L_{\Phi_0}$ but with $c'(t), d'(t)$ in place of $c(t), d(t)$. Both $\mathcal L_{\nabla_t \Phi_0}$ and the term {\bf (II)} are lower order by  Lemma \ref{conormalCaseA} and Case (B) of Corollary \ref{CaseBC}, so they may be absorbed into $K$. To see this, note both of these are comprised of terms of the form form $\eta' \nabla_t \Phi_0= \eta' r^{1/2}$, hence of conormal regularity $s=5/2$ or have a factor of $d\chi$ so are compactly supported away from $\mathcal Z_0$. The term same applies to the term $\tfrac{1}{2} d \text{Tr}_{g_0}(\dot g_\eta).\Phi_0$, which we likewise absorb into $K$.

\begin{rem}
It appears that a coincidence has occurred in Steps 1--2: Lemma \ref{conormalCaseA} implies that the two leading order terms from {\it Step 1} and {\it Step 2} are both order $1/2$ as they have the same conormal regularity. The calculation shows they are actually {\it the same} up to a constant multiple and lower order terms. Steps 1--2 can be calculated in other ways, where this coincidence is related to the fact that $\Phi_0, \Psi_\ell$ being harmonic implies the stress-energy tensor is divergence-free. 
\end{rem}

 We now return to the general case. 

\medskip 

\noindent {\it Step 3:} By Proposition  \ref{asymptoticexpansion}, $\Phi_0$ can in general be written as $$\Phi_0 =\begin{pmatrix}c(t)\sqrt{z} \\ d(t)\sqrt{\overline z}\end{pmatrix} + \Phi_1$$
where the higher order terms satisfy
 \be |\Phi_1| + |\nabla_t^k\Phi_1| \leq C_k r^{3/2} \hspace{2cm}|\nabla_z\Phi_1| + |\nabla_t^k(\nabla_z\Phi_1)|\leq C_k r^{1/2}\label{Or^3/2}\ee
 
 \noindent for any $k\in \N$ and identically for $\nabla_{\overline z}$. The resulting contribution to $\mathcal B_{\Phi_0}(\eta)$ is 
 
 \begin{equation}-\frac{1}{2}\dot g_\eta(e_i, e_j)e^i.\nabla_j \Phi_1 + \frac{1}{2} d \text{Tr}_{g_0}(\dot g_\eta). \Phi_1+\frac{1}{2} \text{div}_{g_0}(\dot g_\eta).\Phi_1 \label{higherordervariation}\end{equation} 
 \noindent and using (\refeq{Or^3/2}) and Part (C) of Corollary \ref{CaseBC} shows that each term has conormal regularity one higher than the corresponding term for the leading order of $\Phi_0$. (\refeq{higherordervariation}) therefore contributes an operator of order $-1/2$ which can be absorbed into $K$. 
 
 \medskip 
 
 \noindent {\it Step 4:} As in Definition \ref{fermi}, the metric in Fermi coordinates in general has the form $$g_0=dt^2 + dx^2 + dy^2 + h$$ \noindent where $h=O(r)$. Compared to the case of the product metric, we now have $e_i= \del_i + O(r)$ and 
  \begin{eqnarray}\nabla_i^{g_0}=\del_i + \Gamma_i.& & \hspace{2cm} \dot g_\eta= \dot g_\eta^\text{prod} + h_1 + h_2  \\ dV_{g_0}= (1+O(r))rdrd\theta dt & & \hspace{2cm} \br -, -\kt_{g_0}= (1+O(r)) \br -,-\kt_{\text{Euc}}.  \label{3x3matrix} \end{eqnarray}  
 
 \noindent where $h_1, h_2$ are as in Corollary \ref{formofDD}. As such, each additional term in $\mathcal B_{\Phi_0}(\eta)$ has {\it either} an additional power of $r$ {\it or} one fewer derivative of $\eta$ compared to the terms for the product case. Using Corollary \ref{CaseBC} and the bounds $$|\Phi_0| + |\nabla_t^k\Phi_0|\leq C_kr^{1/2} \hspace{2cm} |\nabla_z \Phi_0|+ |\nabla_t^k(\nabla_z \Phi_0)|\leq C_kr^{-1/2}$$
 
\noindent we see that all such terms have conormal regularity at least $s=5/2$. The term $\mathcal R(B_0,\chi\eta)=O(1)\eta$ arising in the case that $B_0\neq 0$ likewise has conormal regularity $s>5/2$. In addition, changing $\tfrac{d}{dt}$ to the covariant derivative only contributes to the lower order term $K$.  

\medskip 

 \noindent {\it Step 5:} By Proposition \ref{cokproperties2} we may in general write $$ \Psi_\ell = \chi \Psi_\ell^\circ + \zeta_\ell^{(m)}+ \xi_\ell^{(m)}$$
 \noindent where the latter satisfy the bounds of Corollary \ref{higherordercokernel}.  Set \be  K_1(\eta):=\sum_{\ell} \br \mathcal B_{\Phi_0}(\eta), \zeta^{(m)}_\ell\kt e^{i\ell t} \hspace{2cm}K_2(\eta):=\sum_{\ell} \br \mathcal B_{\Phi_0}(\eta), \xi^{(m)}_\ell \kt e^{i\ell t}. \ee
 
 We claim that the second factors through the map $K_2: H^2\to H^{5/2}\to H^{3/2}$ hence contributes a compact term. By Cauchy-Schwartz and the bound $\|\xi_\ell^{(m)}\|_{L^2}\leq C_m|\ell|^{-2-m}$ from Corollary Corollary \ref{higherordercokernel},  
 
  \bea \|K_2(\eta)\|^2_{H^{5/2}} &=& \sum_{\ell} |\br \mathcal B_{\Phi_0}(\eta), \xi^{(m)}_\ell \kt|^2 |\ell|^{5} \\  &\leq& \sum_{\ell} \|\mathcal B_{\Phi_0}(\eta)\|^2_{L^2} \  \|\xi^{(m)}_\ell\|^2_{L^2} \  |\ell|^{5} \\ &\leq &C \|\mathcal B_{\Phi_0}(\eta) \|^2_{L^2}\sum_{\ell} \frac{|\ell|^{5}}{|\ell|^{4+2m}}  
  \ \ \ \  \leq  \   C \|\eta\|_{H^{2}}^2   \sum_{\ell}\frac{1}{|\ell|^{2m-1}}\leq C \|\eta\|^2_{H^2}  \eea
\noindent for, say, $m=2$. In the last line we have used that $|\mathcal B_{\Phi_0}(\eta)|\leq (|\eta|+ |\eta'|+|\eta''|)r^{-1/2}$ and the latter is integrable on normal disks. 

Likewise, we claim $K_1$ factors through the inclusion $H^{3/2+\delta}\hookrightarrow H^{3/2}$ for $\delta<1/2$. This time, we apply Cauchy-Schwartz on each annulus $A_{n\ell}$ (defined in \ref{cokproperties2}). Write $K_1= K_1' + K_1''$ where $$K_1'(\eta)=  \br  \tfrac{1}{2} d \text{Tr}_{g_0}(\dot g_\eta). \Phi_0 +\tfrac{1}{2} \text{div}_{g_0}(\dot g_\eta).\Phi_0, \zeta_\ell. \kt \hspace{2cm} K_1''(\eta)=\br -\tfrac{1}{2}\dot g_{\eta}(e_i, e_j)e^i. \nabla_j \Phi_0, \zeta_\ell \kt $$
\noindent and we keep the superscript $(m)$ implicit. For the first of these,  

\bea \|K_1'(\eta)\|^2_{H^{3/2+\delta}} &\leq&C \sum_{\ell}\sum_n \| \eta''|\Phi_0|\|^2_{L^2(A_{n\ell})} \  \|\zeta_\ell\|_{L^2(A_{n\ell})}^2 \  |\ell|^{3+2\delta} \\  &\leq&C \sum_{\ell} |\ell|^{3+2\delta}\sum_{n} \|\eta'' r^{1/2}\|^2_{L^2(A_{n\ell})} \  \frac{1}{|\ell|^2}\text{Exp}\left(-\frac{n}{c_1}\right)
\eea
Then, since $r\sim \tfrac{(n+1)R_0}{|\ell|}$ on $A_{n\ell}$, and each has area $O(|\ell|^{-2})$,  the above is bounded by 
\bea &\leq&C\|\eta''\|^2_{L^2(S^1)} \sum_{\ell} |\ell|^{3+2\delta}\sum_{n}  \frac{(n+1)^3}{|\ell|^5}\text{Exp}\left(-\frac{n}{c_1}\right)
  \ \ \leq  \ \  C\|\eta''\|^2_{L^2(S^1)} \sum_{\ell} \frac{1}{|\ell|^{2-2\delta}} \ \leq  \ C\|\eta\|^2_{H^2}.
\eea

\noindent The $K_1''$ term is the same except we first use the Fourier mode restriction that $\zeta_\ell$ has only Fourier modes $p$ with $\ell-\tfrac{|\ell|}{2}\leq p \leq \ell +\tfrac{|\ell|}{2}$ to write $1\sim \tfrac{i\del_t}{|\ell|}$ and then integrate by parts as in {\it Step 2}. 

\end{proof} 

\subsection{The Index of $\mathcal L_{\Phi_0}$}
\label{section6.3}
This section completes the proof of Theorem \ref{mainaprecise} by showing $T_{\Phi_0}$ has Fredholm index 0. The key role and Fredholmness of a similar map was originally observed in \cite{RyosukeThesis}. Here, we present a simplified proof.

\begin{lm} \label{TPhi_0lem}When non-degeneracy as in Definition \ref{regulardef} holds, $$\mathcal L_{\Phi_0}: L^2(\mathcal Z_0;N\mathcal Z_0)\to L^2(\mathcal Z_0; \mathcal C_{0})$$
is an elliptic pseudo-differential operator of index 0. 
\end{lm}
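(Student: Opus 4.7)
First, I would identify $\mathcal L_{\Phi_0}$ as a pseudo-differential operator of order $0$ by decomposing it into its $\mathbb{C}$-linear piece $\xi \mapsto H(c(t)\xi)$ and its $\mathbb{C}$-antilinear piece $\xi \mapsto -d(t)\overline{\xi}$. Each piece is bounded on $L^2(\mathcal Z_0)$; the Hilbert transform $H$ is the classical order-$0$ $\Psi$DO on $S^1$ with Fourier-multiplier symbol $\mathrm{sgn}(\ell)$, and multiplication by smooth sections is manifestly $0$-order. Consequently $\mathcal L_{\Phi_0}$ is a real-linear $0$-order $\Psi$DO whose principal symbol at $(t,\tau) \in T^*\mathcal Z_0 \setminus 0$ is the $\mathbb{R}$-linear map
\[
\sigma(t,\tau)(\xi) \;=\; \mathrm{sgn}(\tau)\,c(t)\,\xi \;-\; d(t)\,\overline{\xi}.
\]

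Next, I would verify ellipticity. Under the trivializations $N\mathcal Z_0 \simeq \mathbb{C}$ and $\mathcal S_{\mathcal Z_0} \simeq \mathbb{C}$ induced by the arclength parameterization and a choice of normal frame, $\sigma(t,\tau)$ is a real $2\times 2$ matrix whose determinant evaluates to $|c(t)|^2 - |d(t)|^2$, independent of the sign of $\tau$. Ellipticity is therefore equivalent to the pointwise non-degeneracy $|c(t)|^2 \neq |d(t)|^2$ on $\mathcal Z_0$. Assumption \ref{assumption2} delivers only the weaker $|c|^2 + |d|^2 > 0$; to upgrade this to the strict inequality I would use the higher-order structure from Proposition \ref{asymptoticexpansion}, exploiting the constraint that $\slashed D_{\mathcal Z_0}\Phi_0 = 0$ imposes on $c$, $d$, and the next-order coefficients to exclude the balanced configuration $|c|=|d|$.

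Once ellipticity is in hand, standard theory of $0$-order elliptic $\Psi$DOs on a closed $1$-manifold implies $\mathcal L_{\Phi_0}$ is Fredholm on $L^2$, and the Fredholm index is a homotopy invariant within the class of invertible principal symbols. I would construct a homotopy $(c_s(t), d_s(t))$ of leading-order data through elliptic pairs (maintaining $|c_s|^2 \neq |d_s|^2$ throughout, for example by linearly shrinking $d_s$ to zero while deforming $c_s$ to a nonzero constant $c_*$) to a reference pair $(c_*,0)$. At the endpoint the operator reduces to the invertible scaling $c_* H$ of the Hilbert transform, whose kernel and cokernel are trivial and whose index is therefore $0$. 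Homotopy invariance then yields $\mathrm{ind}(\mathcal L_{\Phi_0}) = 0$, completing the proof.

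I expect ellipticity to be the principal obstacle. Decomposing the symbol as a $2\times 2$ real matrix and computing its determinant is straightforward, but ruling out the pointwise balance $|c(t)| = |d(t)|$ goes beyond what Assumption \ref{assumption2} directly provides and requires a careful interaction with the polyhomogeneous expansion of $\Z_2$-harmonic spinors and the full Dirac equation. By contrast, the identification as a $\Psi$DO via the structure of $H$ and the index calculation by homotopy to a constant-coefficient model are routine applications of standard microlocal techniques on $S^1$.
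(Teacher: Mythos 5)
Your overall plan---identify $\mathcal L_{\Phi_0}$ as a zeroth-order $\Psi$DO, check ellipticity by computing the symbol, then get the index by a homotopy to a model operator---is the right one and matches the paper's in spirit. But the symbol computation is wrong, and this breaks the proof at the step you yourself flag as the ``principal obstacle.''

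\textbf{Where the symbol computation goes wrong.} You treated $H$ (Fourier multiplier $\operatorname{sgn}(\ell)$) as having symbol $\operatorname{sgn}(\tau)\cdot\mathrm{Id}$ on $\mathbb C \simeq \mathbb R^2$, and then multiplied by the $2\times 2$ real matrices for multiplication by $c$ and for $\xi\mapsto -d\overline\xi$, obtaining $\det\sigma = |c|^2-|d|^2$. This is not the correct symbol, because $H$ does not act componentwise on the real and imaginary parts of a $\mathbb C$-valued function: if $\xi_1$ is real, then $\widehat{H\xi_1}(-\ell) = -\overline{\widehat{H\xi_1}(\ell)}$, so $H\xi_1$ is \emph{purely imaginary}. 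In the real basis, $H$ is off-diagonal: $H = \begin{pmatrix}0 & -\tilde H\\ \tilde H & 0\end{pmatrix}$, where $\tilde H$ is the classical Hilbert transform with multiplier $-i\operatorname{sgn}(\ell)$. The cleanest way to do the computation is to complexify $\mathbb C \otimes_{\mathbb R}\mathbb C\simeq \mathbb C\oplus\overline{\mathbb C}$ and use the key anticommutation $\overline{H\xi} + H\overline\xi = 2\xi_0$ (a smoothing term). In that basis $\mathcal L_{\Phi_0}^{\mathbb C}$ has symbol $\begin{pmatrix}\operatorname{sgn}(\tau)c & -d\\ -\overline d & -\operatorname{sgn}(\tau)\overline c\end{pmatrix}$ with determinant $-(|c|^2+|d|^2)$. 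So ellipticity holds exactly under Assumption~\ref{assumption2}, with \emph{no} need to rule out $|c|=|d|$.

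\textbf{A concrete counterexample to your ellipticity criterion.} Take $c=d=1$ constant, so $\mathcal L(\xi) = H\xi - \overline\xi$; in Fourier modes this is $\xi_\ell\mapsto \operatorname{sgn}(\ell)\xi_\ell - \overline{\xi_{-\ell}}$. Writing out the equations, the kernel is the one-real-dimensional space $\{\xi_\ell = 0\ \forall \ell\neq 0,\ \xi_0\in\mathbb R\}$, the range is $\{\eta: \eta_0\in i\mathbb R\}$ (closed, real codimension one), so the operator is Fredholm of index $0$---even though $|c|=|d|$. This shows your proposed upgrade of $|c|^2+|d|^2 > 0$ to $|c|\neq|d|$ cannot work: the balanced case $|c|=|d|$ is perfectly admissible and the operator remains elliptic. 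In particular Proposition~\ref{asymptoticexpansion} cannot (and does not need to) exclude it.

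\textbf{What the paper does.} It sidesteps the symbol subtleties by exhibiting an explicit parametrix $\mathcal L^\star_{\Phi_0}(\xi) = \overline c H\xi - d\overline\xi$, using $[H,a]$ smoothing (Claim~\ref{commutator}) and the identity $\overline{H\xi}+H\overline\xi = 2\xi_0$ to verify $\mathcal L_{\Phi_0}\circ\mathcal L^\star_{\Phi_0} = (|c|^2+|d|^2)\mathrm{Id} + (\text{smoothing})$, and similarly on the other side. Then $\tfrac{1}{|c|^2+|d|^2}\mathcal L^\star_{\Phi_0}$ is a two-sided parametrix exactly when Assumption~\ref{assumption2} holds. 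Finally, the index-zero homotopy should be taken through pairs with $|c_s|^2+|d_s|^2>0$, i.e.\ through $\mathbb C^2\setminus\{0\}$ (which is simply connected), to the pair $(1,0)$---not through the smaller and topologically different set $\{|c|\neq|d|\}$ that your criterion would force.
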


To begin, we have the following fact. Let $a(t)\in C^\infty(\mathcal Z_0;\C)$ be a smooth function and let $$[H,a]=H\circ a(t)-a(t)\circ H$$ denote the commutator. 

\begin{claim}
\label{commutator}
The commutator $$[H,a]: H^{m}(\mathcal Z_0;\C)\to H^{m+1}(\mathcal Z_0;\C)$$
is a smoothing operator of order 1. 
\end{claim}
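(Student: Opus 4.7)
The plan is to reduce the claim to a direct Fourier-analytic computation on the arclength-parameterized circle $\mathcal{Z}_0 \cong \mathbb{R}/|\mathcal{Z}_0|\mathbb{Z}$. Expanding $a(t) = \sum_k \hat{a}_k e^{2\pi i k t/|\mathcal{Z}_0|}$ and $f(t) = \sum_\ell \hat{f}_\ell e^{2\pi i \ell t/|\mathcal{Z}_0|}$, the Hilbert transform acts as the Fourier multiplier $\operatorname{sgn}$, so a one-line computation gives
\[
\bigl([H,a]f\bigr)_m \;=\; \sum_{\ell \in \mathbb{Z}} \hat{a}_{m-\ell}\bigl(\operatorname{sgn}(m) - \operatorname{sgn}(\ell)\bigr)\hat{f}_\ell.
\]

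The key observation driving the entire argument is that the factor $\operatorname{sgn}(m) - \operatorname{sgn}(\ell)$ vanishes unless $m$ and $\ell$ have opposite signs (modulo the convention $\operatorname{sgn}(0) = 1$, which affects only finitely many terms). On the support of this factor one has $|m - \ell| = |m| + |\ell|$, and in particular $|m-\ell| \geq \max(|m|, |\ell|)$. Combining this with the fact that smoothness of $a$ yields the decay $|\hat{a}_k| \leq C_N (1+|k|)^{-N}$ for every $N$, one obtains pointwise bounds $|\hat{a}_{m-\ell}| \leq C_N(1+|m|+|\ell|)^{-N}$ on the nontrivial terms.

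With this decay in hand, boundedness of $[H,a]$ from $L^{m,2}$ to $L^{m+1,2}$ reduces to Schur's test for the matrix kernel $K_{m\ell} := \hat{a}_{m-\ell}(\operatorname{sgn}(m) - \operatorname{sgn}(\ell))$ between the weighted sequence spaces $\ell^{2}((1+|\ell|)^{2m})$ and $\ell^{2}((1+|m|)^{2(m+1)})$. The rapid decay of $K_{m\ell}$ guarantees that both the weighted row sums and column sums are uniformly finite for any polynomial weight, yielding in fact smoothing of arbitrary order, of which the stated one-order gain is a special case. The main obstacle is bookkeeping: one must be careful with the degenerate cases $m=0$ or $\ell=0$ where the sign convention plays a role, and with the fact that both pieces of $H\circ a$ and $a \circ H$ are individually only bounded on $L^{m,2}$, not on $L^{m+1,2}$—the gain comes entirely from the algebraic cancellation encoded in the sign difference. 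I do not expect this to be a serious obstacle, as the edge cases produce only finite-rank corrections that are trivially smoothing.
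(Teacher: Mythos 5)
Your argument is correct, but it proceeds by a genuinely different route than the paper's. The paper gives a one-line pseudodifferential-calculus proof: both $H$ (a classical $\Psi$DO of order $0$ with symbol $\operatorname{sgn}(\xi)$) and multiplication by $a$ are order-$0$ operators, their principal symbols commute, so $\sigma_0([H,a])=0$ and $[H,a]$ has order $-1$. Your Fourier-side computation is more elementary and more explicit, and in fact yields a stronger conclusion: since the matrix kernel $K_{m\ell} = \hat a_{m-\ell}(\operatorname{sgn}(m)-\operatorname{sgn}(\ell))$ is supported where $|m-\ell|=|m|+|\ell|$, the rapid decay of $\hat a$ transfers into rapid joint decay of $K_{m\ell}$ in $m$ and $\ell$, so $[H,a]$ is smoothing of \emph{every} order, not just order $1$. (The paper's argument can be pushed to the same conclusion, since $\partial_\xi^k\operatorname{sgn}(\xi)=0$ for $k\ge 1$ away from $\xi=0$, so every term in the asymptotic symbol expansion of the commutator vanishes; but the paper only needs the one-order gain for Lemma \ref{TPhi_0lem} and records nothing more.) The trade-off is generality versus transparency: the paper's symbol argument works verbatim for any two commuting-symbol order-$0$ $\Psi$DOs, while your computation leans on the special algebraic structure of the sign multiplier — namely that the difference of signs forces $m$ and $\ell$ into opposite half-lines — but is completely self-contained and does not require invoking the $\Psi$DO calculus. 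One small imprecision: you describe the $m=0,\ell=0$ edge cases under the convention $\operatorname{sgn}(0)=1$ as ``finite-rank corrections,'' but in fact they need no separate treatment, since $|m-\ell|=|m|+|\ell|$ continues to hold when either index is zero, so those rows and columns already satisfy the same Schur bound.
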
  

\begin{proof}Multiplication by $a(t)$ and $H$ are both elliptic pseudodifferential operators of order 0 (with lower order terms of integer order), hence so is the commutator. Using the composition property of principal symbols, its principal symbol of order 0 is $$\sigma_0({[H,a]})= \sigma_0(H) \sigma_0({a}) - \sigma_0(a)\sigma_0(H)=0$$
hence it is a pseudodifferential operator of order $-1$. 
\end{proof}

We now prove the lemma: 

\begin{proof}[Proof of Lemma \ref{TPhi_0lem}] Given $\xi \in L^2(\mathcal Z_0; \mathcal C_{0})\simeq L^2(\mathcal Z_0; \C)$ we define a pseudo-inverse. Set \be \mathcal L^\star_{\Phi_0}(\xi(t))=\overline c(t)H\xi(t) - d(t) \overline{\xi(t)}.\ee
\noindent Using Claim \ref{commutator} to move $H$ past combinations of the smooth functions $c(t), d(t)$ and their conjugates (and noting $H^2=\text{Id}$), we obtain   
 
     \bea
    \mathcal L_{\Phi_0}\circ \mathcal L^\star_{\Phi_0}(\xi(t))&=&((Hc(t) - d(t)\circ  \text{conj}))(\overline c(t)H - d(t)\circ \text{conj})(\xi(t)))\\
    &=& H c \overline c H \xi -d c \overline {H\xi} - Hc d \overline \xi + d \overline d \xi \\
    &=& (|c|^2 + |d|^2)\xi + [H, |c|^2] H \xi - dc (\overline {H\xi}+ H\overline \xi) - [H, cd]\overline \xi \\
    &=&  ((|c|^2 + |d|^2)\text{Id} + K )\xi 
    \eea
    
    \noindent for a smoothing operator $K$ of order $\leq -1$. In the last line we have used $\overline {H\xi}+ H \overline \xi=2\xi_0$ where $\xi_0$ is the zeroeth Fourier mode, which is clearly a smoothing operator. It follows that $$\frac{1}{|c|^2 + |d|^2}\mathcal L^\star_{\Phi_0}$$ provides a right pseudo-inverse for $\mathcal L_{\Phi_0}$ (commuting the scaling factor and $H$ only contributes to the compact term). An equivalent calculation for the reverse composition shows it is also a left pseudo-inverse, thus $\mathcal L_{\Phi_0}$ is Fredholm. 
    
    A fixed choice of Fermi coordinates induces an isomorphism $N\mathcal Z_0\simeq \C$. Since $\pi_1(\C^2-\{0\}, *)$ is trivial, the pair $(c(t), d(t))$ is homotopic through pairs satisfying the condition $|c(t)|^2 + |d(t)|^2>0$ to the constant pair $(1,0)$. The operator $\mathcal L_{\Phi_0}$ is therefore homotopic to the identity through Fredholm operators hence has index 0. 
\end{proof}

Theorem \ref{mainaprecise} is now immediate: 

\begin{proof}[Proof of Theorem \ref{mainaprecise}] Lemma \ref{mainapreciseformula} shows that the operator $$\text{ob}^{-1}(\Pi_0^\perp B_{\Phi_0}(\eta))= -\frac{3|\mathcal Z_0|}{2}(\Delta +1)^{-\frac{3}{4}}\mathcal L_{\Phi_0}(\eta''(t))+K$$ is given as the sum of following compositions:     
    \begin{center}
\tikzset{node distance=4.0cm, auto}
\begin{tikzpicture}
\node(A){$H^{2}(\mathcal Z_0;N\mathcal Z_0)$};
\node(C)[right of=A]{$H^{3/2+\delta}(\mathcal Z_0;\mathcal C_0 )$};
\node(C')[above of=A, yshift=-1.5cm]{$H^{2}(\mathcal Z_0;N\mathcal Z_0)$};
\node(D')[right of=C']{$L^{2}(\mathcal Z_0;N\mathcal Z_0)$};
\node(E')[right of=D']{$L^{2}(\mathcal Z_0;\mathcal C_{0})$};
\node(F')[right of=E']{$H^{3/2}(\mathcal Z_0;\mathcal C_0)$};
\draw[->] (A) to node {$ K$} (C);
\draw[->] (C') to node {$-\tfrac{3|\mathcal Z_0|}{2}\d{}{t^2}$} (D');
\draw[->] (D') to node {$\mathcal L_{\Phi_0}$} (E');
\draw[->] (E') to node {$(\Delta+1)^{-\frac{3}{4}}$} (F');
\draw[<->] (A) to node {$\simeq $} (C');
\draw[->] (C) to node {$\iota $} (F');
\end{tikzpicture}\end{center}
 \noindent where the diagonal arrow is the inclusion, hence compact. All the top arrows are Fredholm of index 0 using Lemma  \ref{TPhi_0lem}; the conclusion therefore follows from the composition law for pseudodifferential operators.

\end{proof}

Given Theorem \ref{mainaprecise}, we now impose one more tacit assumption that this Fredholm operator of index zero is actually invertible. This is expected to hold generically (see \cite{SiqiZ3}), though we do not prove such a result here. At the end of Section \ref{section8}, this assumption can be removed by the use of standard Kuranishi methods.

\begin{assumption}
    
\label{assumption5}
The index zero operator $T_{\Phi_0}: H^{2}(\mathcal Z_0;N\mathcal Z_0) \to H^{3/2}(\mathcal Z_0, \mathcal C_0)$ is an isomorphism. 
\end{assumption}

\bigskip 

\section{Nash-Moser Theory}
\label{section7}

As explained in introduction, deducing the non-linear deformation result (Theorem \ref{mainb}) from the linear one (Theorem \ref{maina}, Theorem \ref{mainaprecise}) requires the Nash-Moser Implicit Function Theorem because of the loss of regularity in the operator $T_{\Phi_0}$. This section gives a brief and practical introduction to the framework of Nash-Moser Theory and states the relevant version of the implicit function theorem. The most complete reference for the full abstract theory is \cite{HamiltonNashMoser}. Here, we more closely follow the expositions in \cite{SecchiNashMoser, StraymondNashMoser, PsiDONashMoser} which are more modest in scope but suffice for our purposes.

\subsection{Tame Fr\'echet  Spaces}
\label{section9.1}
Let $\mathcal X,\mathcal Y$ be Fr\'echet spaces given as the intersection of families of Banach spaces 
\be \mathcal X:= \bigcap_{m\geq 0} X_m \hspace{2cm} \mathcal Y:= \bigcap_{m\geq 0} Y_m\label{frechetnotation}\ee
\noindent whose norms are monotonically non-decreasing so that $$\|x\|_0\leq \|x\|_{1}\leq\ldots $$ and likewise for $\mathcal Y $. The topologies on $\mathcal X,\mathcal Y$ are the ones generated by the countable collection of norms, i.e. a set $U$ is open if and only if for each point $x\in U$ there are $r>0$ and $m\geq 0$ such that the ball $\{ x \ | \ \|x\|_m < r \}\subset U$ measured in the $m$-norm is contained in $U$.

\begin{defn} \label{tamedef}A Fr\'echet  space $\mathcal  X$ is said to be {\bf tame} if it satisfies the two following additional criteria: 

\begin{enumerate}
\item[(I)] For all $m_1<m< m_2$ there are constants $C_{m,m_1,m_2}$ such that the interpolation inequalities
 $$\|x\|_{m}\leq C_{m,m_1, m_2} \|x\|_{m_1}^\alpha\|x\|_{m+2}^{1-\alpha}$$

\noindent hold where $\alpha=\tfrac{m_2-m}{m_2-m_1}$.
\item[(II)] $\mathcal X$ is equipped with a family of smoothing operators $$S_\e: \mathcal X\to \mathcal X$$ for all $\e \in (0,1]$ satisfying the following conditions. 

\begin{enumerate}
\item[(i)] $\|S_\e x\|_{n}\leq C_{mn} \e^{m-n}\|x\|_m $ for $n\geq m$ \ \ \  and  \ \ \  $\|S_\e x\|_m \leq C_{mn}\|x\|_n$ for $n\leq m$. 
\item[(ii)] $\|S_\e x -x\|_{m}\leq C_{mn} \e^{n-m} \|x\|_n$ for $n\geq m.$ 
\item[(iii)]$ \|\d{}{\e} S_\e x\|_n\leq C_{mn} \e^{m-n-1} \|x\|_{m}$ for all $m,n\geq 0.$ 
\end{enumerate}
\end{enumerate}

\end{defn}
\bigskip 

In practice, most reasonable choices of families of norms coming from Sobolev or H\"older norms are tame. Roughly speaking, smoothing operators $S_\e$ are usually constructed by truncating local Fourier transforms at radius $\e^{-1}$ in Fourier space. The Fr\'echet spaces used in the proof of Theorem \ref{mainb} are introduced in Section \ref{section8.35}, and their smoothing operators are constructed in Appendix \ref{appendixbdreg}

\bigskip

Given two tame Fr\'chet spaces $\mathcal  X$ and $\mathcal Y$, 
\begin{defn} A {\bf tame Fr\'echet map} on an open subset $U\subseteq \mathcal X$ 
$$
\mathcal F: U\to \mathcal  Y
$$
 is a smooth map of vector spaces such that for some $r\in \N$ the estimate 
\begin{equation}\label{7.firsttameinequality}
\|\mathcal F(x)\|_{m}\leq C_m  \ ( 1 + \|x\|_{m+r})
\end{equation}
 holds for all sufficiently large  $m$. 
\end{defn}

The definitions of tame spaces and maps extend naturally to define a category of tame Fr\'echet  manifolds with tame Fr\'echet  maps between them (see \cite{HamiltonNashMoser} for details). 
The key point about tame estimates is that each norm depends only on a fixed finite number $r$ of norms larger than it.  Thus, for example, a map with an estimate of the form \eqref{7.firsttameinequality} where $r=2m$ would not be tame. \\

\subsection{The Implicit Function Theorem}
\label{section7.2}

Before stating a precise version of the Nash-Moser Implicit Function Theorem, let us briefly give some intuition. Here, our exposition follows \cite{SecchiNashMoser}.  

Suppose that $\mathcal F: \mathcal  X\to \mathcal  Y$ is a map with $\mathcal F(0)=0$, and we wish to solve \be \mathcal F(x)=f\label{NMtosolve}\ee

\noindent for $f \in \mathcal Y$ small. When $ \mathcal  X$ and $ \mathcal  Y$ are Banach spaces, the (standard)  Implicit Function Theorem is proved using Newton iteration and 
 the Banach  Fixed Point Theorem. More specifically, one begins with an initial approximation $x_0=0$, and (provided that $\text{d}_{x_0}\mathcal F$ is invertible)  defines \be x_{k+1}=x_k + (\text{d}_{x_0}\mathcal F)^{-1}(f-\mathcal F(x_k))\label{baditeration}.\ee

\noindent The sequence $x_k\to x_\infty$ then converges to a unique fixed point solving equation (\refeq{NMtosolve}) for $f\in\mathcal Y$ sufficiently small. Alternatively, one can modify the iteration step \eqref{baditeration} by inverting $\text{d}\mathcal F$ at $x_k$ instead of at $x_0$, taking
\be x_{k+1}=x_k + (\text{d}_{x_k}\mathcal F)^{-1}(f-\mathcal F(x_k))\label{improvediteration}.\ee

\noindent This  iteration scheme has a much faster rate of convergence: like $\sim 2^{-2^k}$. 

Consider now the case of $\mathcal  X, \mathcal  Y$ tame Fr\'echet  spaces when $\text{d}\mathcal F$ displays a loss of regularity of $r$. Given an initial bound on $f\in Y_m$, then $x_1$ is bounded only in $X_{m-r}$ thus $f-\mathcal F(x_1)\in Y_{m-r}$ and $x_2\in X_{m-2r}$ and so forth. In this way, the standard Newton iteration scheme will exhaust the prescribed regularity in a finite number of steps. To circumvent this  loss of regularity, Nash introduced iteration employing smoothing operators at each stage. More precisely, for some $\e_k\in (0,1]$, we set    

\be x_{k+1}=x_k + (\text{d}_{S_{\e_k}(x_k)}\mathcal F)^{-1} S_{\e_k}(f-\mathcal F(x_k))\label{NMiteration},\ee

\noindent where the smoothing operators are those on $\mathcal  X$ and $\mathcal Y$ respectively when applied to $x_k$ and $f-\mathcal F(x_k)$. The key point is that the rate of convergence is rapid enough to overcome the disruption of the smoothing operators, but only if we use this smoothing to modify the improved iteration (\refeq{improvediteration}), rather than the original iteration 
\eqref{baditeration}. Thus, unlike to the Implicit Function Theorem on Banach spaces, the Nash-Moser Implicit Function Theorem requires the linearization be invertible on a neighborhood of the initial guess,  and requires bounds on the second derivatives to control the linearization over this neighborhood. Specifically, the theorem requires the following hypotheses  on a tame map $\mathcal F: U\to \mathcal  Y$:

\begin{hyp1*}
\label{NM1}
There exists a $\delta_0>0$ and an $m_0\geq 0$ such that for  $x\in U_0=B_{\delta_0 }(0, m_0) \cap \mathcal X$, the open ball of radius $\delta_0 $ measured in the $m_0$ norm, then $$\text{d}_x\mathcal F: \mathcal  X\to \mathcal Y$$ is invertible.  
\end{hyp1*}

\begin{hyp2*}
\label{NM2}
  With $x\in U_0$ as above, there are fixed $s, s' \in \N$ such that the unique solution $u$ of $$\text{d}_x \mathcal F(u)=f$$
satisfies the tame estimate
\be  \|u\|_{m}\leq C_m \ \Big( \|f\|_{m+s} +\|f\|_{m_0}\cdot  \|x\|_{m+s'}\Big).
\ee
\end{hyp2*}

\begin{hyp3*}\label{NM3}
With $x\in U_0$ as above, there are fixed $r, r' \in \N$ such that the second derivative satisfies the tame estimate
\be  \|\text{d}^2_x\mathcal F(u, \upsilon)\|_{m}\leq C_m \ \Big( \|u\|_{m+r}\|\upsilon\|_{m_0} \ + \  \|u\|_{m_0}\|\upsilon\|_{m+r}  \ + \ \|u\|_{m_0}\|\upsilon\|_{m_0}\cdot (1+ \|x\|_{m+r'})\Big).
\ee
\end{hyp3*}

\bigskip

For our purposes, we  require a slight extension of the standard Nash-Moser Implicit Function Theorem 
that keeps track of subspaces that have some specified additional property, denoted $(\text{P})$.

\begin{defn}  \label{propagated}A property (P) that is satisfied on linear (not necessarily closed) subspaces  $\bold P_\mathcal  X\subseteq \mathcal X$ and $\bold P_\mathcal Y\subseteq\mathcal  Y$ is said to be {\bf propagated} by the iteration scheme if
\bea
u \in \bold P_\mathcal  X \ ,  \  f\in \bold P_\mathcal Y & \Rightarrow & S_\e(u)\in \bold P_\mathcal  X\ \  ,\ \   S_\e(f)\in \bold P_\mathcal Y\hspace{1cm} \forall \e\in (0, 1] \\ 
u \in \bold P_\mathcal X  & \Rightarrow & \mathcal F(u)\in \bold P_\mathcal Y\\
x \in \bold P_\mathcal X  \ , \ f \in \bold P_\mathcal Y & \Rightarrow & (\text{d}_x\mathcal F)^{-1}f\in \bold P_\mathcal X. 
\eea
\noindent In particular, in the iteration scheme (\refeq{NMiteration}), if $f$ has property (P) then $x_k$ has property (P) for all $k\geq 0$. 
\end{defn}

\medskip

We will use the following version of the Nash-Moser Implicit Function Theorem. The proof is identical to that in \cite{SecchiNashMoser}, with the additional observation that Hypotheses~{\bf (I)}--{\bf (III)} are only ever invoked at elements $x_k$ occurring in the iteration,  and at linear combinations of the $x_k$ and their smoothings. The proof of smooth dependence on parameters is given in \cite[III.1]{HamiltonNashMoser}.

\begin{thm}\label{nashmoser}{\bf (Nash-Moser Implicit Function Theorem)} Suppose that $\mathcal  X$ and $\mathcal  Y$ are tame Fr\'echet  spaces as in (\refeq{frechetnotation}). Moreover, assume that a property (P) satisfied on linear subspaces $\bold P_\mathcal X\subseteq \mathcal X$ and $\bold P_\mathcal  Y\subseteq \mathcal Y$ is propagated, and that Hypotheses {{\bf (I)}}---{\bf {(III)}} hold for $x \in U_0\cap \bold P_\mathcal X$. 

\begin{enumerate}
\item[(A)] There exists an $m_1 \geq m_0$ depending on $s,s', r, r'$ and a $\delta_1\geq 0$ such that if $f\in \mathcal Y$ with $$ f \in \bold P_\mathcal Y \hspace{1.5cm}\text{and}\hspace{1.5cm} \|f\|_{m_1}\leq \delta_1$$
\noindent then there exists a unique solution $x \in \mathcal X$ of $$\mathcal F(x)=f.$$

\item[(B)] Suppose, in addition, that  $\mathcal F$ and $f$ are parameterized (via a smooth tame map)  by   another tame Fr\'echet  space $\mathcal P$ with $f_{p_0}=0$ at $p_0\in\mathcal P$.  If the Hypotheses {{\bf (I)}}--{\bf {(III)}} hold uniformly on an open neighborhood $V_0 \subset \mathcal P$ of $p_0$ and  $\|f_p\|_{m_1}<\delta_1$ for all $p\in V_0$,   then the unique solution $x_p$ of
$$\mathcal F_p(x)=f_p$$
\noindent also depends smoothly on  $p$ locally near $p_0$. 
\end{enumerate}\qed
\end{thm}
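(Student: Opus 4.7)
The plan is to execute the smoothed Newton--Moser iteration \eqref{NMiteration} starting from $x_0 = 0$, and to verify that with an appropriate decay rate for the smoothing parameters $\e_k$, the sequence $\{x_k\}$ converges in every Banach norm on $\mathcal{X}$ to the unique solution of $\mathcal{F}(x) = f$. The natural choice is $\e_k = \kappa^{-\alpha^k}$ for some $\kappa>1$ and $\alpha\in(1,2)$, so that the smoothing parameters decay doubly exponentially, mirroring Newton's quadratic convergence while leaving just enough slack for the smoothing errors. The first order of business is to confirm the iteration is well-defined: using smoothing property (i) together with an inductive bound on $\|x_k\|_{m_0}$, the smoothed iterate $S_{\e_k} x_k$ remains in the neighborhood $U_0$ where Hypothesis~(I) guarantees invertibility of $\text{d}\mathcal{F}$.

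Next, I would establish the main inductive estimates of the form $\|x_{k+1}-x_k\|_m \leq C\,\e_k^{A(m)}$ and $\|e_k\|_m \leq C\,\e_k^{B(m)}$, where $e_k := f - \mathcal{F}(x_k)$ is the residual and the exponents $A(m)$, $B(m)$ depend affinely on $m$ via the tame parameters $s, s', r, r'$. The key identity is
\begin{equation*}
e_{k+1} \;=\; (I - S_{\e_k})\,e_k \;+\; \text{d}_{S_{\e_k}x_k}\mathcal{F}(x_{k+1}-x_k) - \bigl(\mathcal{F}(x_{k+1}) - \mathcal{F}(x_k)\bigr),
\end{equation*}
where the first term is controlled by smoothing property (ii) and the second by Hypothesis~(III) applied along the segment joining $x_k$ to $x_{k+1}$, yielding a bound that is quadratic in $\|u_k\|_{m_0}$ with a mild linear interaction with $\|x_k\|_{m+r'}$. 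Applying Hypothesis~(II) to bound the correction $u_k := x_{k+1}-x_k$ in terms of $S_{\e_k}e_k$, and using the interpolation inequalities of Definition~\ref{tamedef} to pass between norms, one obtains a closed recursive system. Tuning $\alpha$, $\kappa$, the smallness constant $\delta_1$, and the base regularity index $m_1$ so that the gain from the quadratic term outpaces the losses from smoothing and from the $s,s',r,r'$ derivatives is the main technical obstacle; this is precisely the calculation carried out in \cite{SecchiNashMoser}, and it produces the threshold values of $m_1$ and $\delta_1$.

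Once the inductive estimates are in place, assertion (A) follows by passing to the limit $x = \lim x_k$ in the Fr\'echet topology of $\mathcal{X}$. The propagation property for (P) immediately yields that each $x_k$ lies in $\mathbf{P}_{\mathcal{X}}$, so Hypotheses~(I)--(III) only need to be verified on $\mathbf{P}_{\mathcal{X}}$ as stated in the theorem. Uniqueness of the solution within $U_0$ is obtained by subtracting two putative solutions and applying Hypothesis~(II) to derive a contraction estimate on their difference. For assertion (B), differentiating the iteration with respect to $p$ produces a parallel linear iteration for $\partial_p x_{k,p}$ whose convergence follows from the same tame estimates with constants taken uniform over $V_0$; iterating this argument for higher derivatives in $p$ yields smoothness of $p \mapsto x_p$, as in \cite[III.1]{HamiltonNashMoser}. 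The real obstacle throughout is bookkeeping: ensuring that across the countably many balanced estimates no norm ever demands more regularity than Hypotheses~(II) and (III) supply -- this is precisely why tameness of $\mathcal{F}$ and its smoothing operators are indispensable.
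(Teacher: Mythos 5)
Your outline reconstructs the smoothed Newton scheme of \cite{SecchiNashMoser}, to which the paper delegates the entire proof, adding only the observation --- which you also identify --- that all iterates $x_k$ and their smoothings remain inside $\bold P_{\mathcal X}$ (by the propagation property), so Hypotheses (I)--(III) need only be verified there. Your treatment of parameter dependence in (B) via differentiating the iteration matches the cited \cite[III.1]{HamiltonNashMoser}, so the proposal is correct and coincides with the approach the paper endorses.
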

\noindent In case (B), 
 smooth tame dependence on $p$ means that we replace $\|x\|_{m+s'}$ and $\|x\|_{m+r'}$ on the right-hand sides of Hypothesis {\bf (II)} and {\bf (III)} by  $\|(p,x)\|_{m+s'}$ and $\|(p,x)\|_{m+r'}$.  Case (B) is equivalent to the assertion that
 $$\mathcal F^{-1}(f_p) \subset \mathcal P\times \mathcal X$$
is locally a tame Fr\'echet  submanifold that is a graph over $\mathcal P$. 

\bigskip 

\section{Tame Estimates}
\label{section8}

In this final section, we complete the proofs of Theorem \ref{mainb} and Corollary \ref{maind} by verifying the hypotheses of the Nash-Moser Implicit Function Theorem \ref{nashmoser} for the operator 
\be
\overline{\slashed {\mathbb D}}_p: \mathcal P \times \mathcal  X \lre  \mathcal Y \hspace{3cm}\overline{\slashed {\mathbb D}}_p:=( \slashed {\mathbb D}_p-\Lambda\, \text{Id} \ , \ 1-\|\Phi\|^2_{L^2})
\label{extendedunivdirac}\ee
 on tame Fr\'echet spaces $\mathcal X=\{(\eta,\Lambda, \ph)\}$ and $\mathcal Y=\{\psi, c\}$ introduced in Section \ref{section8.35}. Here $\Lambda,c\in \R$ and $\mathcal P=\{(g,B) \}$ is the space of smooth metrics and perturbations (equipped with the standard Fr\'echet structure arising from the $H^{m}$ Sobolev norms on $Y$). 

 In our case, the property (P) that is propagated by the iteration scheme is polyhomogeneity of the spinor. Set: 
\bea \bold P_\mathcal X&:=& \{(\eta, \Lambda, \ph) \in \mathcal X \ | \ \ph \text{ is polyhomogenous with index set } \Z^+\! +\tfrac12 \} \\     \bold P_\mathcal  Y&:=& \{  \ \  \ (\psi,c)\in \mathcal Y \hspace{-.01cm} \  \ | \ \psi \text{ is polyhomogenous with index set } \Z^+\! -\tfrac12  \}\eea

\noindent Here, we use a slightly weaker notion of polyhomogeneity than is given in Definition \ref{asymptoticexpansion}. More specifically, we do not constrain the $\theta$ modes, so that $\ph \in \bold P_\mathcal X, \psi\in \bold P_\mathcal Y$ means that there are respectively asymptotic expansions
\begin{eqnarray} & & \ph\sim \begin{pmatrix} c(t,\theta)  \\ d(t,\theta) \end{pmatrix}r^{1/2}  \  \ + \  \sum_{ n\geq 1  } \sum_{p=0}^{n} \begin{pmatrix}  \ \ c_{n,p}(t,\theta)  \ \  \\  \ \ d_{n,p}(t,\theta)  \ \ \end{pmatrix} r^{n+1/2}(\log r)^p \label{polyhomZ+12} \\  & &\psi\sim \begin{pmatrix} c(t,\theta)  \\ d(t,\theta) \end{pmatrix}r^{-1/2}  +  \ \sum_{ n\geq 1  } \sum_{p=0}^{n} \begin{pmatrix}  \ \ c_{n,p}(t,\theta)  \ \  \\  \ \ d_{n,p}(t,\theta)  \ \ \end{pmatrix} r^{n-1/2}(\log r)^p \label{polyhomZ-12} \end{eqnarray}
\noindent where $c_{n,p}, d_{n,p}\in C^\infty(S^1\times S^1)$ and $\sim$ denotes convergence in the sense of Definition \ref{polyhomogenousdef}.

This section is divided into six subsections. Subsections \ref{section8.1}--\ref{section8.3} cover preliminary material used to verify the hypotheses of the Nash-Moser theorem. Specifically, subsections \ref{section8.1} and \ref{section8.2} are devoted to lemmas used in the verification of the   Hypothesis~{\bf (I)}. Then in subsection \ref{section8.3} the precise form of the derivative and second derivative of $\slashed {\mathbb D}_p$ are derived using the non-linear version of Bourguignon-Gauduchon's Formula (\ref{bourguignon}). Subsection \ref{section8.35} introduces the tame Fr\'echet spaces $\mathcal X,\mathcal Y$, and Subsection \ref{section8.4} derives tame estimates verifying Hypotheses {\bf (I)-(III)}. The final subsection \ref{section8.5} invokes Theorem \ref{nashmoser} to complete the proofs. 
\subsection{The Obstruction Bundle}
\label{section8.1}

This subsection covers preliminary lemmas used in the verification of Hypothesis {\bf (I)} which asserts that the linearization of $\text{d}\slashed {\mathbb D}$ is invertible on a neighborhood of $((g_0,B_0), \mathcal Z_0, \Phi_0)$.  Although the invertibility of the linearization ultimately comes down to the fact that there is an open neighborhood of invertible operators around the identity in a Banach space, the proper context in which to invoke this fact is somewhat subtle.
The first step is to upgrade the obstruction space $\text{\bf Ob}(\mathcal Z_0)$ to a vector bundle. This is the content of the current subsection.

We begin by defining the bundle $\text{\bf Ob}\to V_0$, where $V_0$ is an open ball of radius $\delta_0$ around $p_0\in \mathcal P$ measured in the $m_0$-norm. Here, $m_0 \in \N$ is an integer to be chosen later ($m_0=11$ works). Let $p\in V_0$. By parallel transport on cylinders as in Section \ref{section5.1}, we may think of the Dirac operator for any $p$ as acting on sections of the spinor bundle $S_{0}$; we write 

\be \slashed D_p:= \tau_{0}^p\circ  \slashed D_{g,B}\circ(\tau_{0}^p )^{-1}\label{abuseofnot}\ee

\noindent for this incarnation of the Dirac operator with respect to $p=(g,B)$ (and fixed singular locus $\mathcal Z_0$) on the spinor bundle $S_{0}$ via parallel transport, which we now denote $\tau_0^p$ (rather than $\tau_{g_0}^g$). 

By the (standard) Implicit Function Theorem with the Fredholm operator $\slashed D_p^\star \slashed D_p: rH^1_e \to r^{-1}H^{-1}_e$ and (the second order analogue of) Proposition \ref{asymptoticexpansion} we conclude: 

\begin{lm}\label{secondordereigenvalues}
Provided $0<\delta_0$ is sufficiently small, for every $p\in V_0$ there is a unique eigenvector  $(\Phi_p, \mu_p) \in rH^1_e(Y\setminus \mathcal Z_0; S_0)$ which satisfies $$\slashed D_p^\star \slashed D_p \Phi_p= \mu_p \Phi_p$$ and is equal to $(\Phi_0,0)$ at $p_0=(g_0,B_0)$. Moreover, these satisfy
\bea
\|\Phi_p-\Phi_0\|_{H^{m_0-2,1}_{\text{b},e}} \ + \ |\mu_p|&\leq &C_{m_0}\|p-p_0\|_{m_0}.
\eea  
\noindent and $\Phi_p$ is polyhomogeneous with index set $\Z^++\tfrac12$. 
\qed
\end{lm}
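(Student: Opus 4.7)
My plan is to apply the standard Banach-space Implicit Function Theorem to a map that augments the eigenvalue equation with a normalization condition breaking the $\R$-scaling and sign ambiguity. Define, for an integer $m \geq m_0-2$ to be specified below,
\[
F: V_0 \times rH^{m,1}_{\text{b},e}(Y-\mathcal Z_0; S_0) \times \R \ \lre \ r^{-1}H^{m,-1}_{\text{b},e}(Y-\mathcal Z_0; S_0) \times \R
\]
by
\[
F(p, \Phi, \mu) \ce \bigl(\,\slashed D_p^\star \slashed D_p\,\Phi - \mu\Phi,\ \br \Phi - \Phi_0, \Phi_0 \kt_{L^2}\,\bigr),
\]
where $\slashed D_p$ is the conjugated operator from (\ref{abuseofnot}), which depends smoothly (and in a tame fashion) on $p$ by the trivialization of Lemma \ref{trivializationlem}. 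By construction $F(p_0, \Phi_0, 0) = 0$, and the final scalar component pins down uniqueness.

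\textbf{Invertibility of the linearization.} The partial derivative in $(\Phi,\mu)$ at $(p_0,\Phi_0,0)$ is
\[
d_{(\Phi,\mu)} F(\psi,\lambda) \ = \ \bigl(\slashed D_0^\star \slashed D_0\,\psi - \lambda\Phi_0,\ \br \psi, \Phi_0\kt_{L^2}\bigr).
\]
Given $(f,c) \in r^{-1}H^{m,-1}_{\text{b},e}\times \R$, pairing the first equation with $\Phi_0$ and using $\slashed D_0^\star\slashed D_0 \Phi_0=0$ (self-adjointness together with Assumption \ref{assumption3}) forces $\lambda = -\br f,\Phi_0\kt_{L^2}/\|\Phi_0\|_{L^2}^2$. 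With $\lambda$ so chosen, $f+\lambda\Phi_0$ lies in the $L^2$-orthogonal complement of $\ker(\slashed D_0|_{rH^1_e}) = \R\Phi_0$, so Corollary \ref{higherregularitymappings}(B) produces a unique $\psi_\perp \in rH^{m,1}_{\text{b},e}$ with $\slashed D_0^\star \slashed D_0 \psi_\perp = f+\lambda\Phi_0$ and $\br \psi_\perp,\Phi_0\kt_{L^2}=0$. Adding $(c/\|\Phi_0\|_{L^2}^2)\Phi_0$ to $\psi_\perp$ fulfills the scalar equation and yields the bounded inverse.

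\textbf{Applying the IFT and extracting the estimate.} The Banach IFT now produces a unique smooth family $(\Phi_p, \mu_p)$ with $F(p,\Phi_p,\mu_p)=0$. The tame inequality then follows from the residual estimate
\[
\|(\Phi_p-\Phi_0, \mu_p)\|_{rH^{m,1}_{\text{b},e}\times \R} \ \leq \ C\,\|F(p,\Phi_0,0)\|_{r^{-1}H^{m,-1}_{\text{b},e}\times \R} \ = \ C\,\|\slashed D_p^\star \slashed D_p \Phi_0\|_{r^{-1}H^{m,-1}_{\text{b},e}},
\]
together with the observation that $\slashed D_{p_0}^\star \slashed D_{p_0}\Phi_0=0$ implies $\slashed D_p^\star \slashed D_p \Phi_0$ is a differential expression whose coefficients vanish at $p_0$ and depend on at most two derivatives of $p - p_0$. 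Sobolev multiplication (the same ``mixed-dimension'' estimate used in the proof of Lemma \ref{trivializationlem}, Step 4) then bounds it by $C \|p-p_0\|_{m+2}\cdot\|\Phi_0\|_{rH^{m+1,1}_{\text{b},e}}$. Choosing $m = m_0-2$ gives the claimed estimate, and shrinking $\delta_0$ ensures the linearization remains invertible along the segment from $p_0$ to $p$ so that $C$ is uniform.

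\textbf{Polyhomogeneity and main obstacle.} By the Weitzenb\"ock formula, $\slashed D_p^\star\slashed D_p = \nabla_p^\star \nabla_p + F_p$ with $F_p$ a smooth zeroth-order term, so $\slashed D_p^\star \slashed D_p - \mu_p$ is an elliptic edge operator of order $2$ whose indicial operator, being the square of that of $\slashed D_0$ plus lower order, has the same set of indicial roots $\Z + \tfrac12$ as $\slashed D_0$ itself. Proposition \ref{asymptoticexpansion} (and the underlying regularity theorem of Mazzeo \cite{MazzeoEdgeOperators}) then yields polyhomogeneity of $\Phi_p$ with index set $\Z^+ + \tfrac12$. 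The principal technical nuisance is bookkeeping on Sobolev orders: the coefficients of $\slashed D_p$ involve one derivative of $p$, those of $\slashed D_p^\star \slashed D_p$ involve two, and one further derivative is consumed to obtain uniform bounds on the inverse; fixing $m_0$ sufficiently large (any $m_0 \geq 11$ comfortably suffices) accommodates all three constraints simultaneously.
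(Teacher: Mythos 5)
Your proposal is correct and takes essentially the same route as the paper, which simply invokes ``the (standard) Implicit Function Theorem with the Fredholm operator $\slashed D_p^\star \slashed D_p: rH^1_e \to r^{-1}H^{-1}_e$'' and declares the conclusion; you have filled in the details (the normalization functional to kill the $\R\Phi_0$ kernel, the solvability of the linearized system via Corollary \ref{higherregularitymappings}, the residual bound coming from $\slashed D_0\Phi_0 = 0$, and Mazzeo's edge regularity for polyhomogeneity). The only thing worth tightening is the derivative count in the residual estimate: since the coefficients of $\frak D_p = \slashed D_p - \slashed D_0$ already involve one derivative of $p$ and one Sobolev order is lost composing with $\slashed D_0$, the bound on $\|\slashed D_p^\star \slashed D_p \Phi_0\|_{r^{-1}H^{m,-1}_{\text{b},e}}$ should track $\|p-p_0\|_{m+3}$ rather than $\|p-p_0\|_{m+2}$, which is harmless given the freedom in choosing $m_0$.
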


\medskip

Next, let $rH^1_{\perp}$ denote the $L^2$-orthogonal complement of $\Phi_p$ in $rH^1_e$. A trivial extension of the arguments in Section \ref{section2} shows the following lemma. In the statement, $\slashed D_p^\star$ denotes the adjoint of the Dirac operator with respect to the $L^2$-inner product formed using $g_0$.  

\begin{lm}\label{Dpinj}
For $0<\delta_0$ sufficiently small, the following hold for $p\in V_0$:  
\begin{itemize}
\item[(A)] $\slashed D_p: rH^1_\perp \to L^2$ is injective with closed range. 
\item[(B)] $\slashed D_p^\star \slashed D_p: rH^1_\perp \to r^{-1}H^{-1}_e / \R \Phi_p$ is an isomorphism and the solution operators defined by $$P_p(f)=u \ \ \ \ \  \  \text{ s.t.}  \ \ \ \ \ \ \begin{matrix} \slashed D_p^\star \slashed D_p u =f \mod \Phi_p  \\ \br u, \Phi_p \kt_{L^2}=0 \ \  \  \  \ \ \  \ \  \ \ \ \ \ \ \  \end{matrix}$$
\noindent have uniformly (in $p$) bounded norm as operators $P_p:r^{-1}H^{-1}_e\to rH^1_e$.  
\end{itemize}\qed

\end{lm}

\medskip 

As a result of Item (A), $\frak R:=\text{range}(\slashed D_p|_{rH^1_\perp})\subseteq L^2$ is a smooth Banach subbundle, and we may define

\begin{defn} The {\bf Obstruction bundle} denoted $\text{\bf Ob}\to V_0$ is defined as the $L^2$-orthogonal complement of $\frak R$ so that there is an orthogonal splitting of the trivial bundle $$V_0\times L^2(Y\setminus \mathcal Z_0; S_0) = \text{\bf Ob}\oplus \frak R$$ as smooth Banach vector bundles over $V_0$. For $m\leq m_0-3$, we denote the higher-regularity versions by $\text{\bf Ob}^m:= \text{\bf Ob}\cap H^m_\text{b}$ and $\frak R^m:= \frak R \cap H^m_\text{b}$.
\end{defn}

Notice that $\text{\bf Ob}$ is a vector bundle by construction, without any mention of the map $\text{ob}$ constructed in Section \ref{section4}. This prevents any circularity in the following proposition, the notation of which is explained in the proof.

\begin{prop}\label{Obtrivialization}
Provided $\delta_0$ is sufficiently small, then for every $m\leq m_0-3$ and in particular for $m=5/2$, the map 
 \bea \Xi:   V_0\times \left(H^{m}(\mathcal Z_0; \mathcal C_0) \oplus \R\right) &\lre & \text{\bf Ob}^m \\  (p,\xi(t), c)& \mapsto & \tau_{0}^p\circ \text{ob}_p(\xi(t))  \ + \ c \Pi_p \Phi_p  \eea 
\noindent is a trivialization of the vector bundle $\text{\bf Ob}^m$. Moreover, for $(\xi(t),c)=(e^{i\ell t},0)$, the image $\text{ob}_p(e^{i\ell t})=\Psi_\ell^p$ obeys the conclusions of Proposition \ref{cokproperties2}.
\end{prop}

\begin{proof} The proof is a parameterized version of the construction in Section 4. Mimicking the first step in Section 4, define
\be  \text{ob}_p^\text{pre}(\xi(t)=\Pi_p (\chi_1 \frak P^p_N(\xi(t)))\label{obp}\ee
\noindent where $\frak P_N^p$ is the Poisson extension operator on the normal bundle formed as in Section \ref{section4.1} using the metric, perturbation, and Fermi coordinates of $p=(g,B)$. Note that (\refeq{obp}) is an expression in Fermi coordinates and the accompanying trivialization of the spinor bundle $S_p$ corresponding to the parameter $p=(g,B)$, rather than viewed on $S_0$ via parallel transport. The construction of Fermi coordinates in Definition \ref{fermi} depended on a choice of orthonormal frame along $\mathcal Z_0$; to ensure smoothness as $p$ varies, we adopt the convention that a frame is fixed for $p_0$, and the frame at $p$ is defined by orthonormalizing via Gram-Schmidt beginning with the vector tangent to $\mathcal Z_0$. In a slight abuse of notation, (\refeq{obp}) and what follows use $\Pi_p$ and $\text{\bf Ob}_p$ to denote the subspace and $L^2$-orthogonal projection in both $L^2(Y\setminus \mathcal Z_0; S_p)$ and to its image in $L^2(Y\setminus \mathcal Z_0;S_0)$ under parallel transport $(\tau_{0}^{p})^{-1}$. 

The remainder of the constructions in Section 4 can be done in a parameterized way, to yield (i) a map $\text{ob}_p$ obeying the conclusions of Proposition \ref{cokproperties}, and (ii) a basis $\Psi_\ell^p$ of $\text{\bf Ob}_p^\perp$ obeying the conclusions of Proposition \ref{cokproperties2} uniformly for $p\in V_0$. Here $\text{\bf Ob}_p^\perp=\{ \psi \in \text{\bf Ob}_p \ | \ \br  \psi , \Pi_p\Phi_p\kt_{L^2}=0$. Note here that the only distinction from the construction for $p=p_0$ is that we not necessarily have $\Phi_p\in \text{\bf Ob}_p$ in general, so the projection $\Pi_p$ must be included here and in the definition of $\Xi$. 

Our choice of parameterized Fermi coordinates ensures all these constructions are smooth in $p$ (for values of $m$ up to ones comparable to $m_0$, say $m\leq m_0-3$). It follows that $\text{ob}_p$, and thus $\Xi$ are smooth maps of vector bundles. Moreover, at $p=p_0$, $\Xi_0=\text{ob}_0$ is an isomorphism by Proposition \ref{cokproperties}, and the conclusion follows from the openness of invertible maps on Banach spaces. 
\end{proof}

\subsection{Invertibility on a Neighborhood}
\label{section8.2}
This subsection proves a partial version of Hypothesis {\bf (I)}. Namely, we show that the $\text{\bf Ob}_p$ component of the linearization at $(p_0, \mathcal Z_0, \Phi_0)$ is an isomorphism; the complete version of this statement (at a general $(p, \mathcal Z, \Phi_p)$) is a straightforward extension and is completed in Section \ref{section8.4}.  

It is worth drawing the reader's attention to the importance of the upcoming Proposition \ref{Invertiblepreliminary}. In applications of the Nash-Moser Implicit Function Theorem the key point is often to show that the loss of regularity obeys some ``stability'' property with respect to the parameter. That is, to show that linearizations nearby the central parameter $p=p_0$ are bounded into the same function spaces hence are {\it bounded} perturbations of the central linearization. In our situation here, the crux of this comes down to showing that that the notion of conormal regularity from Section \ref{section6.1} is preserved under perturbation of $p$. Proposition \ref{Invertiblepreliminary} below establishes this, and is the most crucial step in the proof of Theorem \ref{mainb} with much of the remainder being essentially routine (but somewhat lengthy) verifications of tame estimates. 

To begin, extend the map  $T_{\Phi_0}$ from Section \ref{section6} to include the $\lambda$-component by setting  

$$\overline T_{\Phi_0} = \text{ob}^{-1}_0 \circ \Pi_0 \Big[(\text{d}_{(\mathcal Z_0, \Phi_0)} \overline{\slashed {\mathbb D}}_0(\eta,0, \lambda)\Big].$$
Assumption \ref{assumption5} and elliptic bootstrapping imply that  $\overline T_{\Phi_0} :H^3(\mathcal Z_0;N\mathcal Z_0)\oplus \R \to H^{5/2}(\mathcal Z_0; \mathcal C_0)\oplus \R $ is an isomorphism. 

\begin{prop}
\label{Invertiblepreliminary}

Provided that $m_0\geq 10$ and $0<\delta_0<<1$ is sufficiently small, then for $p\in V_0$, the $\text{\bf Ob}_p$ component of the linearization at $(p_0, \mathcal Z_0, \Phi_0)$ in the trivialization provided by $\Xi_p$, i.e.  

\be  \Xi^{-1}_p \circ \Pi_p (\text{d}_{(\mathcal Z_0, \Phi_0)}\overline{\slashed {\mathbb D}}_0) : H^{3}(\mathcal Z_0; N\mathcal Z_0) \oplus \R \lre H^{5/2}(\mathcal Z_0; \mathcal C_0)\oplus \R,\label{linearizationonobp}\ee 
is an isomorphism, and the estimate \be  \|\eta\|_{H^3} + \| \lambda\| \leq C \|\text{d}_{(\mathcal Z_0, \Phi_0)} \overline{\slashed {\mathbb D}}_0 (\eta,0,\lambda)\|_{\text{\bf Ob}^{5/2}\oplus L^2}\ee
hold uniformly for $p \in V_0$. 
\end{prop}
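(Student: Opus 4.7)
The plan is to express the map in question as a small perturbation of $\overline T_{\Phi_0}$ and then invoke the openness of isomorphisms. The starting point is the observation that at $p=p_0$, one has $\Pi_{p_0}=\Pi_0$ and $\Xi_{p_0}=\mathrm{ob}_0^{-1}\circ\Pi_0$, so the composition collapses to $\mathrm{ob}_0^{-1}\circ\Pi_0\circ \text{d}\overline{\slashed{\mathbb D}}_0=\overline T_{\Phi_0}$. By Assumption~5 together with higher-regularity elliptic bootstrapping for the elliptic pseudo-differential operator of order $\tfrac12$ given in Theorem~\ref{mainaprecise} and its explicit form in Lemma~\ref{mainapreciseformula}, $\overline T_{\Phi_0}:L^{3,2}(\mathcal Z_0;N\mathcal Z_0)\oplus\R\to L^{5/2,2}(\mathcal Z_0;\mathcal S_{\mathcal Z_0})\oplus\R$ is an isomorphism with bounded inverse.

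For $p\neq p_0$, I would use the algebraic identity $\Pi_0\circ \slashed D_0=0$, which holds because $\mathrm{Range}(\slashed D_0|_{rH^1_e})$ is $L^2$-perpendicular to $\text{\bf Ob}(\mathcal Z_0)$. Writing $\slashed D_p=\slashed D_0+\mathfrak d_p$ with $\mathfrak d_p$ a first-order operator whose coefficients are $O(\|p-p_0\|_{m_0})$ (using the Bourguignon--Gauduchon formula of Section~\ref{section5.3} and the analogous expression for the perturbation $B$), one computes
\begin{equation*}
\Pi_0\circ\Pi_p \;=\; \Pi_0\bigl(1-\slashed D_pP_p\slashed D_p^\star\bigr)\;=\;\Pi_0-\Pi_0\circ\mathfrak d_p\circ P_p\circ \slashed D_p^\star,
\end{equation*}
so that the full composition splits as $\Xi_p\circ\Pi_p\circ\text{d}\overline{\slashed{\mathbb D}}_0=\overline T_{\Phi_0}+R_p$, where
\begin{equation*}
R_p\;=\;-\mathrm{ob}_0^{-1}\circ\Pi_0\circ\mathfrak d_p\circ P_p\circ \slashed D_p^\star\circ \text{d}\overline{\slashed{\mathbb D}}_0.
\end{equation*}
The uniform bounds on $P_p$ (Lemma~\ref{Dpinj}) and on $\slashed D_p^\star$, combined with $\|\mathfrak d_p\|\leq C\|p-p_0\|_{m_0}$, will give the desired estimate once one can bound $R_p$ between the \emph{correct} function spaces.

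The main obstacle is precisely this last point: showing that $R_p$ is bounded as an operator $L^{3,2}\oplus\R \to L^{5/2,2}(\mathcal Z_0;\mathcal S_{\mathcal Z_0})\oplus\R$ with operator norm $O(\|p-p_0\|_{m_0})$. This requires two ingredients. First, the higher-regularity mapping properties from Corollary~\ref{higherregularitymappings} and the pseudo-differential character of the projection $\slashed D_pP_p\slashed D_p^\star$ acting on $H^m_{\mathbf b}$, so that the interior composition $P_p\circ \slashed D_p^\star\circ \text{d}\overline{\slashed{\mathbb D}}_0$ delivers an element of $rH^{m,1}_{\mathbf b,e}$ of sufficient edge regularity for the trivialization to capture $L^{5/2,2}$ on $\mathcal Z_0$. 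Second, the conormal-regularity argument from Section~\ref{section6.1}: once applied to the output, which has a polyhomogeneous piece along $\mathcal Z_0$ inherited from $\Phi_p$ (Lemma~\ref{secondordereigenvalues}) and the perturbation character of $\mathfrak d_p$, Lemma~\ref{conormalCaseA} and Corollary~\ref{CaseBC} will produce the $L^{5/2,2}$ target regularity with a bound linear in $\|p-p_0\|_{m_0}$. The threshold $m_0\geq 10$ is chosen to leave enough regularity in reserve after the successive losses incurred by $\slashed D_p^\star$, $P_p$, and the edge trivialization $\Xi_p$ provided by Proposition~\ref{Obtrivialization}.

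With the estimate on $R_p$ in hand, the conclusion is immediate: for $\delta_0$ sufficiently small one has $\|R_p\|\cdot\|\overline T_{\Phi_0}^{-1}\|<1$ uniformly on $V_0$, so the operator $\overline T_{\Phi_0}+R_p$ is invertible by Neumann series, with inverse bounded uniformly in $p$. The uniform estimate in the proposition then follows from applying this bounded inverse to the equation $(\overline T_{\Phi_0}+R_p)(\eta,\lambda)=\Xi_p\Pi_p(\text{d}\overline{\slashed{\mathbb D}}_0(\eta,0,\lambda))$.
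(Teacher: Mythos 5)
Your reduction to a Neumann series via the decomposition $\Xi_p\circ\Pi_p\circ\text{d}\overline{\slashed{\mathbb D}}_0 = \overline T_{\Phi_0}+R_p$ and the identity $\Pi_0\Pi_p = \Pi_0 - \Pi_0\,\mathfrak d_p\, P_p\,\slashed D_p^\star$ is algebraically correct, and the overall ``perturb an isomorphism'' strategy is also the one the paper uses. But the crucial step --- bounding $R_p$ as an operator into $L^{5/2,2}(\mathcal Z_0;\mathcal S_{\mathcal Z_0})$ with norm $O(\|p-p_0\|_{m_0})$ --- is a genuine gap, and it is exactly the obstacle the paper's proof is organised to avoid.

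The problem is that the conormal-regularity machinery of Section~\ref{section6.1} (Lemma~\ref{conormalCaseA}, Corollary~\ref{CaseBC}) does not apply to the argument of $\mathrm{ob}_0^{-1}\circ\Pi_0$ in your remainder. Those lemmas require the input to have an explicit local form $f(t)\,\varphi(t,r,\theta)$ with $f\in L^{k,2}(S^1)$ and pointwise control on $\nabla_t^j\varphi$. The spinor $\mathfrak d_p\,P_p\,\slashed D_p^\star\,\mathcal B_{\Phi_0}(\eta)$ is not of this form: $P_p$ is a nonlocal solution operator, and there is no a priori reason its output has a polyhomogeneous expansion whose leading coefficient lies in $L^{2,2}(\mathcal Z_0)$ (that regularity of the coefficient is precisely what you would need to reach conormal regularity $5/2$, since the leading power after $\mathfrak d_p$ is only $r^{-1/2}$). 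Plain $H^m_\text{b}$-counting gives $\mathfrak d_p P_p\slashed D_p^\star\mathcal B_{\Phi_0}(\eta)\in H^{3/2}_\text{b}$ and hence $R_p(\eta)\in L^{3/2,2}$, which falls short of $L^{5/2,2}$ by exactly the expected $r=1$ loss; gaining the missing regularity would require controlling the polyhomogeneous structure of $P_p(\cdot)$, and this is the kind of ``continuity of the Section~\ref{section4} constructions in $p$'' which the paper explicitly warns (before Proposition~\ref{Obtrivialization} and in Remark~\ref{movingspectra}) is not readily available.

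The paper sidesteps this entirely. Rather than decomposing $\Pi_0\Pi_p$ and estimating a remainder, it estimates the $\text{\bf Ob}_p$-component of $\mathcal B_{\Phi_0}(\eta)$ directly by pairing with the basis $\Psi_\ell^p$ built for $\slashed D_p$ in the Fermi coordinates of $p$. This works because the input $\mathcal B_{\Phi_0}(\eta)$ is \emph{explicitly} conormal, so the exponential concentration of $\Psi_\ell^p$ delivers the $5/2$-regularity of the sequence of inner products via the same calculation as in the proof of Theorem~\ref{mainaprecise}. The only new work is the change from the Fermi coordinates of $p_0$ to those of $p$, handled by a Taylor expansion in the normal directions which produces an extra factor of $r_p$ (because normal planes for nearby metrics differ to first order by a linear change of $(x,y)$). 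That bound establishes $\Pi_p\mathcal B_{\Phi_0}(\eta)\in\text{\bf Ob}_p\cap H^{5/2}_\text{b}$ uniformly, and Proposition~\ref{Obtrivialization} then says $\Xi_p$ is a uniform isomorphism onto $L^{5/2,2}\oplus\R$, yielding a continuous family of bounded operators between \emph{fixed} Banach spaces. If you want to salvage your route, you would need the analogue of Lemma~\ref{conormalCaseA} for the image of $P_p$, which is at least as hard as the Fermi-coordinate change of basis the paper actually performs.
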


\begin{proof} At $p=p_0$, then $\Xi^{-1}_{0}\circ \Pi_{0}= \text{ob}^{-1}\circ \Pi_0$ so the map (\refeq{linearizationonobp}) is simply $\overline T_{\Phi_0}$ thus an isomorphism (by assumption \ref{assumption5}). It therefore suffices to show that for $p\in V_0$ with $m_0\geq 10$ and $0<\delta_0<<1$ sufficiently small, that (\refeq{linearizationonobp}) is bounded. Indeed, given this, Lemma \ref{Obtrivialization} shows that (\refeq{linearizationonobp}) is a continuous family of bounded maps between fixed Banach spaces, hence is an isomorphism for $\delta_0$ sufficiently small. 

Boundedness is equivalent to the assertion that for $\overline \eta=(\eta,\lambda) \in H^{3}(\mathcal Z_0;N\mathcal Z_0)\oplus \R$ one has 
\be \Pi_p \overline{ \mathcal B}_{\Phi_0}(\overline \eta)\in \text{\bf Ob}_p\cap H^{5/2}_\text{b} \hspace{1cm}\text{and}\hspace{1cm} \|\Pi_p(\overline{\mathcal B}_{\Phi_0}(\overline \eta))\|_{H^{5/2}}\leq C \|\overline \eta\|_{H^{3}}.\label{3/2regularityp}\ee 

\noindent where the latter estimate holds uniformly over $p\in V_0$. We establish (\refeq{3/2regularityp}) via a parameterized version of the conormal regularity Lemma  \ref{conormalCaseA} from Section \ref{section6.1}, which completes the proof.  

Let $\text{ob}_p, \Psi_\ell^p, \Phi_p$, and $\tau_{0}^p$ be as in the proof of Proposition \ref{Obtrivialization}, and let $(t_p,x_p,y_p)$ denote the Fermi coordinates formed using the metric of $p=(g,B)$. Since parallel transport $\tau_{0}^{p}$ is an isometry and preserves $H^m_\text{b}$ (for $m\leq m_0-3$, by the differentiability of ODE solutions with respect to parameters), Corollary \ref{higherordercokernel} implies that $\Pi_p\mathcal B_{\Phi_0}(\eta)\in H^m_\text{b}(Y\setminus \mathcal Z_0)$ if and only if the sequence of inner products 

$$\text{ob}_p^{-1}\Pi_p\left((\tau_0^p)^{-1}\mathcal B_{\Phi_0(\eta)}\right)=\sum_{\ell \in \Z }\br  (\tau_{0}^{p})^{-1}\mathcal B_{\Phi_0}(\eta), \Psi_\ell^p\kt_{\C} \ e^{i\ell t_p}=\sum_{\ell \in \Z }\br  \mathcal B_{\Phi_0}(\eta), \tau_{0}^p\Psi_\ell^p\kt_{\C} \ e^{i\ell t_p},$$

\noindent lies in $H^m(\mathcal Z_0;\mathcal C_0)$, where we again commit the abuse of notation of conflating $\Pi_p$ on both sides of the parallel transport. Since $\Pi_p(\Phi_p) \in H^m_\text{b}$ for any $m$ by Lemma \ref{secondordereigenvalues}, the distinction between $\mathcal B_{\Phi_0}, \overline{\mathcal B_{\Phi_0}}$ is immaterial. 

Now, we may write $\mathcal B_{\Phi_0}$ as a collection of terms of the form \be \eta'(t) \Theta_0(t_p, x_p,y_p) \hspace{1cm}\text{or}\hspace{1cm} \eta''(t) \Theta_1(t_p,x_p,y_p),\label{twotypes}\ee 

\noindent and apply Case (C) of Corollary \ref{CaseBC} in the Fermi coordinates of $p$. Since $r_p\sim r$, the bounds $|\nabla_{t_p}^m \Theta_k| \leq Cr^{k-1/2}$ hold for $m\leq m_0$ equally well for $r_p$. It remains to write $\eta'(t),\eta''(t)$ in terms of the Fermi coordinates $(t_p,x_p,y_p)$, since curves parallel to $\mathcal Z_0$ in $g_0$ need not be parallel anymore in $g$. Expanding in Taylor series along $\mathcal Z_0$ in the normal directions, $\eta'(t)=w(t_p) +  F(t_p, x_p, y_p)$ where $w(t_p)\in H^{2}(S^1;\C)$ and 
\bea |F(t_p, x_p,y_p)|&\leq &Cr_p(|\nabla_{x_p} \eta'(t)| +|\nabla_{y_p} \eta'(t)|\\
&\leq &  Cr^2_p |\eta''(t)| 
\eea 
\noindent where we have written $x_p(t)= a_0(t) x + b_0(t)y + O(r) t + \ldots$ and likewise for $y_p$. The crucial point here is that an extra factor of $r_p$ arises since normal planes in the metrics of $p_0$ and $p$ differ to first order in $r_p$ by a linear coordinate change of $x,y$. Similar arguments apply to the second derivative $\eta''(t)$. Applying Corollary \ref{CaseBC} for the parameter $p$ then shows that all terms in (\refeq{twotypes}) have conormal regularity at least $5/2$. This establishes (\refeq{3/2regularityp}), completing the proof. 
\end{proof}

\subsection{Quadratic and Error Terms}
\label{section8.3}
This section calculates linearization, the second derivative, and the initial error $f_p$ at any arbitrary tuple $(p,\mathcal Z, \Phi)$ near $(p_0, \mathcal Z_0, \Phi_0)$. This is done with the non-linear version of Bourguignon-Gauduchon's formula \cite{Bourguignon} for the metric variation of the Dirac operator. 

To state Bourguignon-Gauduchon's formula, let $p=(g,B)$ be a parameter pair of a metric and perturbation on $Y$. The Dirac operator $\slashed D_p$ is viewed as an operator on sections of $S_0$ as in (\refeq{abuseofnot}).  Let $a_{g_0}^g, \frak a\in \text{End}(TY)$ be defined respectively by $$g(V,W)=g_0(a_{g_0}^g V, W) \hspace{3cm} \frak a=(a_{g_0}^g)^{-1/2}$$  
where the latter is understood via the eigenvalues of $(a_{g_0}^g)^\star a_{g_0}^g$, which are non-zero for $h$ sufficiently close to $g_0$. 

\begin{thm} {\bf (Bourguignon-Gauduchon, \cite{Bourguignon})} The Dirac operator $\slashed D_p$ is given by \be\slashed D_p\Psi=\left( \sum_{i} e^i. \nabla^{B}_{\frak a(e_i)} \  + \  \frac{1}{4}\sum_{ij}e^i e^j.   \left(\frak a^{-1}(\nabla^{g_0}_{\frak a(e_i)} \frak a) e^j + \frak a^{-1}(\nabla^g-\nabla^{g_0})_{\frak a(e_i)} \frak a(e^j) \right).\right) \Psi  \label{NLBG}\ee 
 where $e^i$ and $.$ are an orthonormal basis and Clifford multiplication for $g_0$, and $\nabla^g$ denotes the unperturbed spin connection of the metric $g$ and likewise for $g_0$. 
 \label{nonlinearBG}
 \end{thm}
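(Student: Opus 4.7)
The plan is to pull back the Dirac operator $\slashed D_p$ acting on $S_h$ to an operator on $S_{g_0}$ via the cylindrical parallel transport $\tau_{g_0}^h$ and then rewrite it in a $g_0$-orthonormal frame. Two elementary observations drive the computation. First, since $h(X,Y)=g_0(a_{g_0}^h X,Y)$, the endomorphism $a_{g_0}^h$ is $g_0$-self-adjoint and positive, so $\frak a=(a_{g_0}^h)^{-1/2}$ is well defined, and a direct calculation shows $h(\frak a(e_i),\frak a(e_j))=g_0(e_i,e_j)=\delta_{ij}$. Thus if $\{e_i\}$ is $g_0$-orthonormal, then $\{\tilde e_i:=\frak a(e_i)\}$ is $h$-orthonormal, with dual frame $\tilde e^i$ satisfying $\tilde e^i(\tilde e_j)=\delta_{ij}$. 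Second, it is a standard property of the Bourguignon-Gauduchon identification that $\tau_{g_0}^h$ intertwines Clifford multiplication in the sense that $\tau_{g_0}^h\circ\gamma_h(\tilde e^i)=\gamma_{g_0}(e^i)\circ\tau_{g_0}^h$; this is because parallel transport on the spin cover of the cylinder $[0,1]\times Y$ intertwines the lift of parallel transport on the orthonormal frame bundle, and by construction $\tilde e_i$ at time $s=1$ corresponds to $e_i$ at $s=0$ via the interpolating frames. Granting these two points, writing $\slashed D_h\Psi=\sum_i \gamma_h(\tilde e^i)\nabla^h_{\tilde e_i}\Psi$ and conjugating by $\tau_{g_0}^h$ immediately converts the leading Clifford factor to $\sum_i e^i\,.$ applied to the pulled-back spinor.

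Next I would handle the spin connection. The perturbation $B$ is added last, so the main work is to compare the unperturbed Levi-Civita spin connections $\nabla^h$ and $\nabla^{g_0}$ after pullback. A convenient trick is to factor the computation through the auxiliary frame $\{\tilde e_i\}$: one has $\tau_{g_0}^h\nabla^h_{\tilde e_i}(\tau_{g_0}^h)^{-1}=\nabla^{g_0}_{\tilde e_i}+\tfrac14\sum_{jk}\Gamma^{(h,g_0)}_{ijk}\,e^j.e^k.$ where $\Gamma^{(h,g_0)}_{ijk}$ are the components, in the $g_0$-orthonormal frame $\{e_i\}$, of the 1-form valued endomorphism $(\nabla^h-\nabla^{g_0})$ transported to act on $S_{g_0}$. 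This is the standard lift of the Levi-Civita-difference tensor to the spinor bundle. It remains to expand $\nabla^{g_0}_{\tilde e_i}$. Because $\tilde e_j=\frak a(e_j)$, the Leibniz rule gives
\[
\nabla^{g_0}_{\frak a(e_i)}\tilde e_j = \frak a(\nabla^{g_0}_{\frak a(e_i)} e_j)+(\nabla^{g_0}_{\frak a(e_i)}\frak a)(e_j),
\]
and substituting the frame coefficients of $(\nabla^{g_0}_{\frak a(e_i)}\frak a)(e_j)$ into the usual formula $\nabla^{\text{spin}}=\d+\tfrac14\sum \omega_{ij}e^i.e^j.$ yields the first correction term $\tfrac14\sum_{ij}e^i.e^j.\,\frak a^{-1}(\nabla^{g_0}_{\frak a(e_i)}\frak a)e^j$. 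Similarly, the second correction term captures exactly $\Gamma^{(h,g_0)}$ once one rewrites $(\nabla^h-\nabla^{g_0})$ in the frame $\{\tilde e_i\}$ and conjugates back by $\frak a^{-1}$ to return to the $\{e_i\}$ frame. Adding the perturbation $B$ to the leading term (which commutes with Clifford multiplication by hypothesis) gives $\nabla^B$ in place of $\nabla^{g_0,\text{spin}}$.

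The routine but slightly delicate part is verifying the Clifford-intertwining property of $\tau_{g_0}^h$. The cleanest route is to observe that on the cylinder $([0,1]\times Y,ds^2+g_{s})$ with $g_0$ at $s=0$ and $g_s=h$ at $s=1$, parallel transport along $\gamma_y(s)=(s,y)$ in the 4-dimensional spin bundle $S_X^+$ restricts on the boundary to an isometry $S_{g_0}\to S_h$ covering the frame bundle isomorphism induced by parallel transport of orthonormal frames along $\gamma_y$. Since the interpolating frames can be chosen so that the $s=0$ frame $\{e_i\}$ transports to the $s=1$ frame $\{\tilde e_i\}=\{\frak a(e_i)\}$ (up to a fixed SO(3) rotation absorbed into the initial choice), the intertwining property follows from naturality of Clifford multiplication under the spin representation. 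Once this is in hand, the algebraic manipulations above assemble into formula (\ref{NLBG}). The main obstacle I anticipate is bookkeeping for the two correction terms: careful accounting is needed to see that the pulled-back connection difference really splits as the sum of a ``derivative of $\frak a$'' piece and a ``Levi-Civita difference'' piece in precisely the form claimed, without double-counting the $\frak a$-induced rotation of frames.
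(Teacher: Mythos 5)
The paper itself offers no proof of this statement: Theorem \ref{nonlinearBG} is simply cited from Bourguignon--Gauduchon \cite{Bourguignon}, with \cite{DiracVariationBar} cited earlier (around Theorem \ref{bourguignon}) for a concise English treatment. So there is no in-paper argument to compare yours against; I can only assess your sketch on its own merits.

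Your outline captures the standard argument correctly. The observation that $\frak a=(a_{g_0}^h)^{-1/2}$ sends a $g_0$-orthonormal frame $\{e_i\}$ to an $h$-orthonormal frame $\{\tilde e_i\}$ is right (using that $a$ is $g_0$-self-adjoint positive, $h(\frak a X,\frak a Y)=g_0(a^{1/2}X,a^{-1/2}Y)=g_0(X,Y)$), and the Leibniz-rule split of $\nabla^{g_0}_{\frak a(e_i)}(\frak a\,e_j)$ into a ``derivative of $\frak a$'' piece plus $\frak a\nabla^{g_0}e_j$ followed by the $\tfrac14\sum\omega_{jk}e^j.e^k$ lift is exactly how the two correction terms in (\ref{NLBG}) arise. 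The algebra assembles, and the bookkeeping you worry about at the end does work out without double counting.

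The genuinely thin spot is the Clifford-intertwining property $\tau_{g_0}^h\circ\gamma_h(\tilde e^i)=\gamma_{g_0}(e^i)\circ\tau_{g_0}^h$, and you deserve credit for flagging it but your dismissal of it is too quick. For Bourguignon--Gauduchon's original identification this intertwining is true by construction --- they define the map directly on the spin frame bundle via the lift of $\frak a$. But the map $\tau_{g_0}^h$ used in this paper (Lemma \ref{trivializationlem}, Step 2, and then in the definition \eqref{abuseofnot} of $\slashed D_p$) is parallel transport in $S_X^+$ along the $s$-direction of the cylinder $[0,1]\times Y$. It is not obvious that this transport carries the spin lift of $\{e_i\}$ at $s=0$ to the spin lift of $\{\frak a(e_i)\}$ at $s=1$; generically it lands on some frame differing by a $y$-\emph{dependent} $SO(3)$ rotation $\rho(\sigma(y))$. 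Your remark that this ``can be absorbed into the initial choice'' is accurate only if $\sigma$ is constant. If $\sigma$ depends on $y$, then $\tau_{\rm cyl}=\tau_{\rm BG}\circ\sigma$ and $\tau_{\rm cyl}\slashed D_h\tau_{\rm cyl}^{-1}=\tau_{\rm BG}(\sigma\slashed D_h\sigma^{-1})\tau_{\rm BG}^{-1}$ differs from the Bourguignon--Gauduchon formula by a zeroth-order term $-\sum_i e^i.\,\gamma(\rho(\sigma)e_i)(\nabla\sigma)\sigma^{-1}$. To complete the argument you must either show that the cylindrical transport does reproduce the Bourguignon--Gauduchon lift on frames (which is the content one leans on \cite{DiracVariationBar} for), or verify that the discrepancy lies in the kernel of the symbol contraction. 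As written, that step is asserted rather than proved, and it is the one place where the sketch could actually fail.
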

 
 \subsubsection{Error Terms:}
 \label{errortermssection} We begin by applying Theorem  \ref{nonlinearBG} to calculate the initial error terms $f_p$ for the application of the Nash-Moser Implicit Function Theorem (\ref{nashmoser}). The initial error is given by \be \boxed{f_p:= \slashed D_p \Phi_0.}\label{errortermdef}\ee
 
 Let $U_1\subset \mathcal P$ denote the ball around $p_0$ of radius $\delta_1$ measured in the $m_1+3$ norm. Here, $m_1$ (like $m_0$) is an integer to be chosen later. To simplify notation, we omit the reference to the spaces from the notation from the norms, so that e.g. $\|-\|_{m}$ means the $H^{m,1}_{\text{b},e}$-norm for elements of the domain, the $H^m_\text{b}$-norm for elements of the codomain. 
 \begin{lm} \label{errortermslem}The Dirac operator at parameter $p$ can be written \be \slashed D_p= \slashed D_0 + \frak D_p\label{Diracatp}\ee where the latter satisfies \be \|\frak D_p \ph\|_{m_1}\leq C_{m_1} \| p\|_{m_1+3} \|\ph\|_{m_1}\label{812}.\ee
 It follows that $\|f_p\|_{m_1}\leq C \delta_1$. 
 \end{lm}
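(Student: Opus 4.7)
The plan is to first derive an explicit expression for $\frak D_p := \slashed D_p - \slashed D_0$ from the non-linear Bourguignon--Gauduchon formula (Theorem \ref{nonlinearBG}), and then deduce the tame estimate (\refeq{812}) from Moser-type multiplication estimates adapted to the mixed edge-boundary spaces of Section \ref{section2}. Write $h = g_0 + \widetilde h$ and $B = B_0 + \widetilde B$. For $\widetilde h$ sufficiently small, $\frak a = (a_{g_0}^h)^{-1/2}$ is a smooth nonlinear function of $\widetilde h$ with $\frak a - \mathrm{Id}$ vanishing at $\widetilde h = 0$, and $\nabla^h - \nabla^{g_0}$ is a $C^\infty$-linear expression in $g_0$ and $\nabla^{g_0}\widetilde h$ vanishing at $\widetilde h = 0$. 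Substituting into (\refeq{NLBG}) and cancelling the zero-th order contribution $\slashed D_0$ yields an explicit first-order operator
\begin{equation*}
\frak D_p \ph \;=\; \sum_i A_i(p)\,\nabla^{B_0}_{e_i}\ph \;+\; C(p)\,\ph,
\end{equation*}
where $A_i(p)$ and $C(p)$ are smooth $\End(S_0)$-valued sections on $Y$ that depend smoothly (as nonlinear functions) on $\widetilde h$, $\nabla^{g_0}\widetilde h$, $\widetilde B$, and $\nabla^{g_0}\widetilde B$, and that vanish identically at $p = p_0$.

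For the tame estimate, the standard Moser inequality on the compact manifold $Y$ together with the smoothness of the nonlinearities $p \mapsto A_i(p), C(p)$ gives, for $m_1 \geq 0$ chosen large enough that the Sobolev embedding $L^{m_1+2,2}(Y) \hookrightarrow C^0(Y)$ holds,
\begin{equation*}
\|A_i(p)\|_{L^{m_1+2,2}(Y)} + \|C(p)\|_{L^{m_1+2,2}(Y)} \;\leq\; C_{m_1}\|p\|_{m_1+3}.
\end{equation*}
Since the coefficients $A_i(p)$ and $C(p)$ are smooth across $\mathcal Z_0$ and do not degenerate there, multiplication by $A_i(p)$ or $C(p)$ defines a bounded operator on both $H^{m_1}_{\text{b}}$ and $H^{m_1,1}_{\text{b},e}$. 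Using the equivalent form of the norm given by (\refeq{equivalentbenorm}) along with iteration of $[\nabla^{\text{b}},A_i(p)]$-type commutators reduces these edge-space multiplication bounds to the standard Moser estimate on $Y$. Combining with the boundedness of $\nabla^{B_0}: H^{m_1,1}_{\text{b},e} \to H^{m_1}_{\text{b}}$ gives (\refeq{812}).

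Finally, the estimate on $f_p = \slashed D_p \Phi_0 = \frak D_p \Phi_0$ (using $\slashed D_0 \Phi_0 = 0$) follows directly: by Proposition \ref{asymptoticexpansion}, $\Phi_0$ is polyhomogeneous and so $\|\Phi_0\|_{m_1} \leq C$ is a fixed constant, giving $\|f_p\|_{m_1} \leq C_{m_1}\|p\|_{m_1+3}\|\Phi_0\|_{m_1} \leq C \delta_1$. The step most warranting care is verifying the tame multiplication estimate on the mixed edge-boundary space $H^{m_1,1}_{\text{b},e}$, whose top-order part is of the form $\|\nabla \ph/r\|_{H^{m_1}_\text{b}}$: one must check that multiplication by the smooth (across $\mathcal Z_0$) coefficients $A_i(p)$ commutes harmlessly with the weight $r^{-1}$, which it does because $A_i(p)$ is a genuine function on $Y$ rather than an edge symbol with singular behavior along $\mathcal Z_0$.
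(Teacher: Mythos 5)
Your proof follows the same route as the paper's: expand the nonlinear Bourguignon--Gauduchon formula about $p_0$, identify $\frak D_p$ as a first-order operator with smooth coefficients on $Y$ that are at least linear in $p - p_0$, and control its operator norm via Sobolev embedding and the algebra property of $C^k$. Two small remarks: you correctly supply the justification ($\Phi_0$ polyhomogeneous) for $\|\Phi_0\|_{m_1}\leq C$ that the paper leaves implicit, but your description of the top-order part of the $rH^{m_1,1}_{\text{b},e}$-norm as $\|\nabla\ph/r\|_{H^{m_1}_\text{b}}$ should, by \eqref{equivalentbenorm}, read $\|\nabla\ph\|_{H^{m_1}_\text{b}} + \|\ph/r\|_{H^{m_1}_\text{b}}$ --- and in fact only the multiplication bound on $H^{m_1}_\text{b}$ is actually needed here, since that is the codomain of $\frak D_p$.
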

 
 \begin{proof}
 Write $p= (g_0, B_0)+ (k,b)$ for $\|(k,b)\|_{m_1+3}\leq \delta$. In an orthonormal frame for $g_0$ we have $a_{g_0}^{g_0+k}=\text{Id}+k$ where we also use $k$ to denote the corresponding matrix in this orthonormal frame. Then $\frak a= (\text{Id}+k)^{-1/2}$. Substituting this into (\refeq{NLBG}) shows that $$\slashed D_p\ph= \slashed D_0\ph + \frak d_1 \ph + \frak d_0\ph$$
 where $\frak d_1, \frak d_0$ are respectively a first order and zeroth order operator satisfying $\|\frak d_1 \ph \|_{m_1} \leq C\|p\|_{m_1+3} \|\ph\|_{m_1}$ and $\|\frak d_0 \ph\|_{m_1}\leq C\|p\|_{m_1+3}\|\ph\|_{m_1}$. To see this, note that the coefficients of $\frak d_1$ are formed from sums and products of entries of $k$ (by expanding $(\text{Id}+k)^{-1/2})$, and these all lie in $C^{m_1+1}(Y)\hookrightarrow H^{m_1+3}(Y)$ by the Sobolev embedding and the fact that $C^{m_1+1}$ is an algebra. Likewise, coefficients of $\frak d_0$ lie in $C^{m_1}(Y)$ because they are formed from sums and products of up to first derivatives of $k,b$. Since every term is at least linear in $p$, and $\|(k,b)\|_{m_1+3}\leq \delta_1<<1$, the bound (\refeq{812}) follows. 
 
 Since $f_p= \slashed D_p \Phi_0= \frak d_0 \Phi_0 +\frak d_1 \Phi_0$ and $\|\Phi_0\|_{m_1}\leq C_{m_1}$, the second statement is then immediate for $p\in U_1$.     \end{proof}
 
 \subsubsection{Quadratic Terms}
 
 For the tame estimates on $\text{d}\slashed{\mathbb D}_p$ and $\text{d}^2\slashed{\mathbb D}_p$, we must first investigate the higher-order terms of $\slashed {\mathbb D}_p$. Expanding, we may write $$\slashed {\mathbb D}_p((\mathcal Z_0,\Phi_0) + (\eta,\ph))=f_p \ + \  \text{d}_{(\mathcal Z_0,\Phi_0)}\slashed{\mathbb D}_p(\eta,\ph)  \ + \  Q_p(\eta,\ph)$$ 
 
 \noindent where $Q_p$ is comprised of second order and higher terms.

 The middle term at $p_0$ is given by Corollary \ref{formofDD}. For a general $p $, we can write the derivative of pullback metric as \be \d{}{s} \Big |_{s=0} F_{s\eta}^* (g_0 +k)= \dot g_\eta + \dot k_\eta,\label{linearizedpullbackp}\ee
 where $\dot g_\eta$ is as calculated in (\refeq{pullbacks2}) and analogously for $k$. Analogous to the formula for $\mathcal B_{\Phi_0}(\eta)$ in Corollary \ref{formofDD}, we set 
 
 \be  \frak B_{\Phi_0,p}(\eta):=\left(-\frac{1}{2}\sum_{ij} \dot k_{\eta}(e_i,e_j) e^i . \nabla^{g_0}_j  + \frac{1}{2} d \text{Tr}_{g_0}(\dot k_{\eta}). +\frac{1}{2} \text{div}_{g_0}(\dot k_{\eta}).  + \mathcal R(b,\chi\eta).\right)\Phi_0 \label{BPhiatp} \ee
 to be the term arising from the perturbation $(k,b)$ to $p_0$. Here $\mathcal R(b,\chi\eta)$ is a zeroth order term in $\eta$ with coefficients depending on the perturbation $b$ to $B_0$ and its derivatives. The proof of the following proposition is given in Appendix \ref{appendixC}.
 
 \begin{prop} \label{fullynonlinear}
The universal Dirac operator at the parameter $p \in U_1$ for at a point $(\mathcal Z_0,\Phi_0)+(\eta,\ph)$ with $\|(\eta,\ph)\|_{m_0}\leq C\delta$ is given by 

\be \slashed {\mathbb D}_p((\mathcal Z_0,\Phi_0) + (\eta,\ph))=f_p \ + \  \text{d}_{(\mathcal Z_0,\Phi_0)}\slashed{\mathbb D}_p(\eta,\ph)  \ + \  Q_p(\eta,\ph)\ee
\noindent where 

\begin{enumerate}
\item[(A)] $f_p=\slashed D_p\Phi_0$ as in Lemma  \ref{errortermslem}. 
\item[(B)] The derivative is given by 

$$ \text{d}_{(\mathcal Z_0,\Phi_0)}\slashed{\mathbb D}_p(\eta,\ph)=\Big(\mathcal B_{\Phi_0}(\eta) + \slashed D_0\ph\Big) \  + \  \Big ( \mathfrak B_{\Phi_0,p}(\eta)+ \frak D_p(\ph) \Big )$$

\noindent  where $\mathcal B_{\Phi_0}(\eta)$ is as defined in \ref{abstractfirstvariation} (cf. Corollary \ref{formofDD}),  and $\mathfrak D_p,\mathfrak B_{\Phi_0,p}$ are as in (\refeq{Diracatp}) and (\refeq{BPhiatp}) respectively.

\item[(C)] The non-linear terms may be written
\bea Q_p(\eta,\ph)&=&(\mathcal B_\ph + \mathfrak B_{\ph, p})(\eta) \ \ + \ \ M^1_p(\eta', \eta')\nabla( \Phi_0+ \ph)  \ \ + \ \  M^2_p (\eta',\eta'') (\Phi_0+\ph) \ \ + \ \ F_p(\eta, \Phi_0+ \ph)  \eea

\noindent where \begin{enumerate}
\item[(i)] $\mathcal B_{\ph}, \mathfrak B_{\ph,p}$ are defined identically to $\mathcal B_{\Phi_0}, \mathfrak B_{\Phi_0,p}$ but with $\ph $ replacing $\Phi_0$. 
\item[(ii)] $M_1^p$ a finite sum of terms involving quadratic combinations of $\chi \eta', \eta d\chi, \chi \eta$, and linearly depending on $\nabla(\Phi_0+ \ph)$ and smooth endomorphisms $m_i$, e.g.   $$m_i(y) ( \chi \eta' )(\chi \eta') \nabla_j (\Phi_0 + \ph)$$

where $m_i(y)$ depend on $g_0+k$ (and no derivatives). 

\item[(iii)]  $M_2^p$ a finite sum of terms involving quadratic combinations of $  \eta''\chi , \eta'd\chi, \eta d^2 \chi ,\eta'\chi, \eta d\chi,  \eta\chi$, with at most one factor of $\eta''$, and linearly depending on $\Phi_0+ \ph$ and smooth endomorphisms $m_i$, e.g.   $$m_i(y) ( \chi \eta'' )(d\chi \eta').(\Phi_0 + \ph)$$

where $m_i(y)$ depend on up to first derivatives of  $g_0+k$ and $B_0+b$. 
\item[(iv)] $F_p$ is formed from a finite sum of similar terms but involving cubic and higher combinations of $\eta, \eta', \eta''$, with at most one factor of $\eta''$. 
\end{enumerate} 
\end{enumerate}

\end{prop}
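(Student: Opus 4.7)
The strategy is to apply the non-linear Bourguignon–Gauduchon formula (Theorem~\ref{nonlinearBG}) in the trivialization provided by Lemma~\ref{trivializationlem}, then expand every resulting ingredient as a (finite or formal) Taylor series in the perturbation data $(\eta,\varphi,k,b)$ and collect terms by order. Part~(1) is tautological from the definition \eqref{errortermdef}. Part~(2) for the $\varphi$-variable is immediate from linearity: $\slashed D_p(\Phi_0+\varphi)=\slashed D_p\Phi_0+\slashed D_p\varphi=f_p+(\slashed D_0+\mathfrak D_p)\varphi$ using Lemma~\ref{errortermslem}. The $\eta$-part of Part~(2) and the entire Part~(3) are obtained by a single Taylor expansion of the $\eta$-dependent pullback.

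After conjugating by $\tau_{g_0}^{g_{\eta}^p}$ as in the proof of Proposition~\ref{abstractfirstvariation}, the universal Dirac operator becomes an operator on sections of $S_0$ determined by the pullback metric $F_\eta^*(g_0+k)$ and pulled-back perturbation $F_\eta^* (B_0+b)$. By Remark~\ref{nonlinearpullbackmetric} we may write
\begin{equation*}
F_\eta^*(g_0+k) \;=\; g_0 + k + \dot g_\eta + \dot k_\eta + \mathfrak q(\eta,\eta),
\end{equation*}
so the endomorphism $A$ representing this perturbation in a $g_0$-orthonormal frame has a decomposition $A=A_1+A_2+A_{\geq 3}$ according to order in $(\eta,k,b)$, where $A_1$ collects the linear pieces in the deformation data, $A_2$ the genuinely quadratic pieces coming from $\mathfrak q$ (which by Remark~\ref{nonlinearpullbackmetric} only involves products of $\chi\eta', \eta\,d\chi, \chi\eta$ and $\widetilde h-h$), and $A_{\geq 3}$ the cubic and higher remainder. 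Expanding $\mathfrak a=(\mathrm{Id}+A)^{-1/2}=\mathrm{Id}-\tfrac{1}{2}A+\tfrac{3}{8}A^2+\cdots$ and substituting into (\ref{NLBG}) yields an operator of the form
\begin{equation*}
\slashed D_p^\eta \;=\; \slashed D_0 \;+\; \mathfrak D_p \;+\; \mathcal L_\eta \;+\; \mathfrak L_\eta^p \;+\; \mathcal Q(\eta,\eta;p),
\end{equation*}
where $\slashed D_0+\mathfrak D_p=\slashed D_p$ (reproducing Lemma~\ref{errortermslem}), $\mathcal L_\eta$ is linear in $\eta$ with coefficients depending only on $g_0$, $\mathfrak L_\eta^p$ is linear in $\eta$ with coefficients linear in $(k,b)$ and its derivatives, and $\mathcal Q$ collects all genuinely higher-order-in-$\eta$ contributions. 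Applying $\slashed D_p^\eta$ to $\Phi_0+\varphi$ and identifying $\mathcal L_\eta\Phi_0=\mathcal B_{\Phi_0}(\eta)$ via Corollary~\ref{formofDD}, and $\mathfrak L_\eta^p\Phi_0=\mathfrak B_{\Phi_0,p}(\eta)$ by the same derivation with $(k,b)$ in place of $(g_0,B_0)$ (as in (\ref{BPhiatp})), establishes Part~(2) together with the $\mathcal B_\varphi+\mathfrak B_{\varphi,p}$ contribution to Part~(3)(i) — the latter coming precisely from $\mathcal L_\eta\varphi+\mathfrak L_\eta^p\varphi$.

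It remains to verify the structural claims (ii)–(iv) of Part~(3) for $\mathcal Q(\eta,\eta;p)(\Phi_0+\varphi)$. These follow by inspection of where $\eta$-derivatives can appear. The only source of second derivatives $\eta''$ in the entire expansion is the divergence/trace terms applied to $\dot g_\eta$ (which contains $\eta'$, so one further differentiation produces $\eta''$); since $\dot g_\eta$ enters $A_1$ linearly and $A_2=\mathfrak q(\eta,\eta)$ only involves $\eta,\eta'$ (never $\eta''$), the quadratic contribution $A_2$ alone produces no $\eta''$, while the cross terms $A_1\cdot A_1$ in the expansion of $\mathfrak a$ can contribute at most a single $\eta''$. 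Splitting $\mathcal Q$ according to whether the spinor factor is differentiated (producing $M_p^1$) or not (producing $M_p^2$) gives Parts (ii)–(iii), with coefficients $m_i(y)$ smooth in $g_0+k$ and its first derivatives as dictated by which connection terms appear in (\ref{NLBG}). All remaining terms are at least cubic in $\eta$ and, by the same argument, carry at most one $\eta''$, producing the $F_p$ term of (iv). The main obstacle is purely bookkeeping: carefully tracking the cut-off and metric-dependence in every factor of $A$; however, all bounds and structural features follow directly from Remark~\ref{nonlinearpullbackmetric} and the analyticity of $x\mapsto(1+x)^{-1/2}$, without any new analytic input.
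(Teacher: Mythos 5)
Your proposal follows the same route as the paper's proof: both parts (1) and (2) are read off from the definitions and Corollary \ref{formofDD}, and part (3) is obtained by substituting the decomposition $F_\eta^*(g_0+k)=g_0+k+\dot g_\eta + \dot k_\eta + \frak q(\eta,\eta)$ from Remark \ref{nonlinearpullbackmetric} into the non-linear Bourguignon--Gauduchon formula (Theorem \ref{nonlinearBG}), expanding $\frak a = (\mathrm{Id}+A)^{-1/2}$ by its Taylor series, and collecting by order in $(\eta,\varphi)$. The structural bookkeeping about where $\eta''$ can appear — only once, and only from the divergence/trace (Christoffel-derivative) terms in (\ref{NLBG}) applied to $\dot g_\eta$ — is the correct way to justify (ii)--(iv). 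Your exposition is somewhat more explicit than the paper's (which compresses this to ``some calculation yields the formula''), but there is no methodological divergence and no gap.

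=== END REVIEW ===
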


Straightforward differentiation now shows the following precise forms for the first and second derivatives. In these formulas, we use the notation that e.g. $F(p^3, q^2, s)$ denotes a term depending cubicly on $p$ and its derivatives, quadratically on $q$ and its derivatives, and linearly on $s$ and its derivatives:  

\begin{cor}\label{tamefirstderiv}
The derivative at a point $(\mathcal Z_0, \Phi_0)+ (\eta,\ph)$ is given by 
\bea
\text{d}_{(\eta,\ph)}\slashed {\mathbb D}_p(v,\phi) &= & \ \ \ \  \ \text{d}_{(\mathcal Z_0, \Phi_0)}\slashed {\mathbb D}_p(v,\phi)  \\  & & \ +  \ (\mathcal B_{\ph} + \frak B_{\ph})(v) \ + \ (\mathcal B_{\phi} + \frak B_{\phi})(\eta)   \\
& &  \ +  \ M^1(\eta', v')\nabla (\Phi_0 + \ph) \ + \  M^1(\eta' , \eta')\nabla \phi \\
& &  \ +  \ M^2(\eta', v'')(\Phi_0 + \ph) \ \ + \  M^2(v', \eta'') (\Phi_0 + \ph) \ + \   M^2(\eta' , \eta'')\phi \\
 & &   \ + \  F^1(\eta^2, v, \Phi_0 + \ph) \ + \ F^2(\eta^3, \phi)
\eea
where the subscript $p$ is kept implicit on the right hand side. Moreover, provided $\|\eta\|_{C^1}\leq 1$, the cubic and higher order terms $F^1$ and $F^2$ are of the form 
\be F^2(\eta^3, \phi)=O(\|\eta'\|_{C^1}) \Big[M^1(\eta',\eta')\nabla \phi \Big]+ O(\|\eta'\|_{C^1})\Big[ M^2(\eta',\eta'')\phi\Big]\label{Fhigherorder}\ee

\noindent and likewise for $F^1$, i.e. they include the same orders of derivatives as the quadratic terms but with additional powers of $\eta', \eta$.   
\end{cor}

 Alternatively, the terms linear in $\phi$ in the expression of Corollary \ref{tamefirstderiv} combine to form the Dirac operator \be \slashed D_{p_\eta} \phi =\slashed D_p\phi +  \ (\mathcal B_{\phi} + \frak B_{\phi})(\eta) +M^1(\eta' , \eta')\nabla \phi + M^2(\eta' , \eta'')\phi + F^2(\eta^3, \phi)\label{petapullback}\ee
with respect to the pullback metric  and perturbation $p_\eta:= F^*_\eta (p)$. In particular, no terms involving $\phi$ have components in $\text{\bf Ob}^p$, provided $\br\phi, \Phi_p \kt_{L^2}=0$. 

\begin{cor}\label{tamesecondderiv}
The second derivative at a point $(\mathcal Z_0, \Phi_0)+ (\eta,\ph)$ is given by 
\bea
\text{d}^2_{(\eta,\ph)}\slashed {\mathbb D}_p\Big((v,\phi), (w,\psi)\Big) &= & \   \ (\mathcal B_{\psi}+\frak B_\psi)(v) \ + \ (\mathcal B_{\phi}+\frak B_\phi)(w)   \\
& & \ + \ M^1(w', v')\nabla(\Phi_0 + \ph)  \ + \  M^1(\eta', v')\nabla \psi  \ + \  M^1(\eta', w')\nabla \phi \\ 
& & \ + \  M^2(w', v'')(\Phi_0 + \ph) \ +  \ M^2(v', w'') (\Phi_0 + \ph) \  + \  M^2(\eta', v'')\psi   \\ & &  \   + \ M^2(v', \eta'')\psi \ +  \ M^2(\eta', w'')\phi \  +  \ M^2(w', \eta'')\phi\\ & & \ + \ F^3(\eta^2, w,  \phi) \  + \  F^4(\eta^2, v, \psi) \  + \  F^5(\eta, v,w, \Phi_0+\ph)
\eea
\noindent where the subscript $p$ is kept implicit on the right hand side. The higher order terms again have the form (\refeq{Fhigherorder}) as in Corollary \ref{tamefirstderiv}.\qed
\end{cor}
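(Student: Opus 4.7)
The plan is to obtain the expression for the second derivative by direct termwise differentiation of the explicit formula for the first derivative established in Corollary \ref{tamefirstderiv}. Viewing $\text{d}_{(\eta,\ph)}\slashed{\mathbb{D}}_p(v,\phi)$ as a function of the basepoint displacement $(\eta,\ph)$ with the tangent direction $(v,\phi)$ held fixed, one differentiates along $(w,\psi)$ via the Leibniz rule. Since the central-point derivative $\text{d}_{(\mathcal{Z}_0,\Phi_0)}\slashed{\mathbb{D}}_p(v,\phi)$ in that formula is entirely independent of $(\eta,\ph)$, it contributes nothing to the second derivative. Every remaining term is a polynomial expression in $(\eta,\ph)$ whose coefficients are the multilinear operators $\mathcal{B}$, $\mathfrak{B}$, $M^1$, $M^2$ and the remainders $F^i$ identified in Proposition \ref{fullynonlinear}.

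Running through the list term by term: the linear-in-$\ph$ expression $(\mathcal{B}_{\ph}+\mathfrak{B}_{\ph})(v)$ differentiates in $\ph$ to produce $(\mathcal{B}_{\psi}+\mathfrak{B}_{\psi})(v)$, and the linear-in-$\eta$ expression $(\mathcal{B}_{\phi}+\mathfrak{B}_{\phi})(\eta)$ differentiates in $\eta$ to produce $(\mathcal{B}_{\phi}+\mathfrak{B}_{\phi})(w)$. For the $M^1$ family, the Leibniz rule applied to $M^1(\eta',v')\nabla(\Phi_0+\ph)$ yields the pair $M^1(w',v')\nabla(\Phi_0+\ph) + M^1(\eta',v')\nabla\psi$, while $M^1(\eta',\eta')\nabla\phi$, being quadratic in $\eta$, yields the symmetric contribution $M^1(\eta',w')\nabla\phi$ after collecting. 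Each of the three $M^2$ expressions $M^2(\eta',v'')(\Phi_0+\ph)$, $M^2(v',\eta'')(\Phi_0+\ph)$ and $M^2(\eta',\eta'')\phi$ similarly contributes two summands under differentiation, producing the six $M^2$-terms in the statement. Finally, $F^1(\eta^3,\phi)$ and $F^2(\eta^2,v,\Phi_0+\ph)$ differentiate to the three remainder tensors $F^3,F^4,F^5$ with the multilinear structure asserted.

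The only point that requires verification is that each output of the Leibniz rule falls into the correct structural class and that the constraint of containing at most one factor of $\eta''$ is preserved. This is automatic from the definitions in Proposition \ref{fullynonlinear}, since the classes $M^1$, $M^2$, $F^i$ are distinguished entirely by the numbers of basepoint-factors, tangent-factors, and second derivatives appearing, and these counts transform predictably when a single factor is differentiated: differentiating an $\eta'$ in direction $w$ yields $w'$ without producing new second derivatives, and differentiating an $\eta''$ yields $w''$ in a position that can only increase the $v$- or $w$-count by one. Consequently the derivation is purely algebraic and requires no analytic input beyond the smooth structure of the formulas in Proposition \ref{fullynonlinear}, which is why the statement is presented as a corollary.
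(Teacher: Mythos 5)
Your proof is correct and coincides with the paper's (implicit) argument: the text preceding Corollary \ref{tamefirstderiv} states that ``straightforward differentiation now shows the following precise forms for the first and second derivatives'' and offers no written proof for Corollary \ref{tamesecondderiv}, and your termwise application of the Leibniz rule to Corollary \ref{tamefirstderiv} is exactly that differentiation carried out in detail. One minor observation: your calculation would produce $F^3(\eta^2, w, \phi)$, depending on the tangent direction $\phi$ via differentiating $F^1(\eta^3,\phi)$ in the direction $w$, whereas the printed statement has $F^3(\eta^2, w, \ph)$ with the basepoint displacement $\ph$; this appears to be a misprint in the paper rather than a gap in your reasoning.
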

\subsection{Tame Fr\'echet Spaces}
\label{section8.35}

This section introduces the tame Fr\'echet spaces used in the proof of Theorem \ref{mainb} and Corollary \ref{maind}. 

 While there is a natural Fr\'echet space of normal vector fields $\eta$ (this being $C^\infty(\mathcal Z_0;N\mathcal Z_0)$ with the Fr\'echet structure arising from the $H^{m}$-norms), there are several possible choices of Fr\'echet spaces for the spinors, arising from different versions of the boundary and edge spaces. The relevant spinors are those lying in $\bold P_\mathcal X$ and $\bold P_\mathcal Y$ for the domain and codomain respectively, i.e. those spinors with polyhomogeneous expansions (\refeq{polyhomZ+12}-\refeq{polyhomZ-12}). While the spaces $\bold P_\mathcal X$ and $\bold P_\mathcal Y$ are themselves tame Fr\'echet spaces, these Fr\'echet structures are rather unwieldy and it is advantageous to enlarge the domain and codomain to spaces where it is easier to obtain estimates and then invoke Theorem \ref{nashmoser} with the property (P) of polyhomogeneity which holds on $\bold P_\mathcal X$ and  $\bold P_\mathcal Y$.  
 
The mixed boundary and edge spaces $rH^{m,1}_{\text{b},e}$ and $H^m_\text{b}$ defined in Section \ref{section2} enlarge the domain and codomain and their norms facilitate much easier estimates using the material of Sections \ref{section2}-\ref{section4}. Unfortunately, these spaces are slightly too large and it is impossible to control the higher order terms of the expansions (\refeq{polyhomZ+12}-\refeq{polyhomZ-12}) simply in terms of of these norms.  To balance these conflicting advantages of $rH^{m,1}_{\text{b},e}$ and $\bold P_\mathcal X$, we opt for intermediate spaces which supplement the $rH^{m,1}_{\text{b},e}$ and $H^m_\text{b}$-norms with the norm of the higher order terms in (\refeq{polyhomZ+12}-\refeq{polyhomZ-12}) using a stronger weight.

Analogously to $rH^{m,1}_{\text{b},e}$ and $H^m_\text{b}$ denote $r^{1+\nu}H^{m,1}_{\text{b},e}$ and $r^\nu H^m_\text{b}$ the spaces formed by adding an overall weight of $r^{-2\nu}$ in the norm (\refeq{bnorm}). Equivalently, $$\ph \in  rH^{m,1}_{\text{b},e} \ \  \Leftrightarrow  \ \ r^\nu \ph \in r^{1+\nu}H^{m,1}_{\text{b},e}$$ so that the multiplication map $r^\nu$ is a linear isomorphism bounded by a constant depending only on $m$, and similarly for $r^\nu H^m_\text{b}$. Fix $\nu=0.9$ and define Banach spaces
 \bea
 r\mathcal H^{m,1}&: =& \left\{ \ph \ \ \Big | \ \ \|\ph\|_{r\mathcal H^{m,1}}:=\left(\|\ph\|^2_{rH^{m,1}_{\text{b},e}}  \ + \  \|(r\del_r - \tfrac{1}{2}) \ph\|^2_{r^{1+\nu} H^{m-1,1}_e} \ \right)^{1/2}<\infty \right\}\\
  \mathcal H^{m,0}&: =& \left\{ \psi \ \ \Big | \ \ \|\psi\|_{\mathcal H^{m,0}}:=\left( \  \|\psi\|^2_{H^{m}_{\text{b}}} \   +  \  \|(r\del_r + \tfrac{1}{2}) \psi\|^2_{r^\nu H^{m-1}_\text{b}} \ \right)^{1/2}<\infty \right\}\\
   r^{-1} \mathcal H^{m,-1}&: =& \left\{ \psi \ \ \Big | \ \ \|\psi\|_{r^{-1}\mathcal H^{m,-1}}:=\left( \  \|\psi\|^2_{r^{-1}H^{m,-1}_{\text{b},e}} \   +  \  \|(r\del_r + \tfrac{3}{2}) \psi\|^2_{r^{-1+\nu} H^{m-1,-1}_{\text{b},e}} \ \right)^{1/2}<\infty \right\}.
 \eea
with the indicated norms. These spaces are defined using the Fermi coordinates and norms of the base parameter $p_0$ and do not depend on $p\in \mathcal P$.  

Using these, we now define the spaces used in the proofs of Theorem \ref{mainb} and \ref{maind}. 

\begin{lm}\label{spaces}
The spaces $$\mathcal X:= \bigcap_{m\geq 0} X_m' \oplus \R \oplus  X_m''   \hspace{2cm}\mathcal Y:= \bigcap_{m\geq 0} Y_m' \oplus Y_m'' \oplus \R$$
\noindent where
\begin{eqnarray}X_m'&:=&H^{m}(\mathcal Z_0; N\mathcal Z_0) \hspace{4.1cm} Y_m' := \text{\bf Ob}\cap H^{m+2}_\text{b}(Y\setminus \mathcal Z_0; S_0)\\
X_m''&:= & r\mathcal H^{m,1}(Y\setminus \mathcal Z_0; S_0) \hspace{3.5cm} Y_m'' := {\frak R} \ \cap \  \mathcal H^{m,0}(Y\setminus \mathcal Z_0; S_0)
\end{eqnarray}
\noindent and $\R$ has the standard norm are tame Fr\'echet spaces as in Definition \ref{tamedef}, and on an open neighborhood $U\subset \mathcal X$,  $$\overline{\slashed{\mathbb D}}_p: \mathcal P\times U\to \mathcal Y$$ is a tame Fr\'echet map. \end{lm}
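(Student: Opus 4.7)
The plan has three main components: verify the interpolation inequalities on $\mathcal X$ and $\mathcal Y$, construct smoothing operators, and then derive the tame estimate for $\overline{\slashed{\mathbb D}}_p$. To set the stage, I would observe that each constituent norm is a sum of two summands: a standard edge (or boundary-and-edge) Sobolev norm already handled in Section~\ref{section2}, together with an extra term measuring $(r\partial_r \pm \tfrac{c}{2})$ applied to the spinor in a more heavily weighted edge norm. The role of the extra summand is precisely to encode the leading-order $r^{\pm 1/2}$ behavior of polyhomogeneous expansions: $(r\partial_r - \tfrac12)$ annihilates the $r^{1/2}$ leading term of an element of $\bold P_\mathcal X$, so the remainder is one power of $r$ better and therefore lies in the stronger weighted space. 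For $X_m'$ the tame Fr\'echet structure is classical (Fourier truncation on $\mathcal Z_0\simeq S^1$), so the work is all on $X_m''$ and $Y_m''$.

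For interpolation, since both summands of the $r\mathcal H^{m,1}$ and $\mathcal H^{m,0}$ norms satisfy the standard interpolation inequalities of Lemma~\ref{interpolationinequalities} (the second summand with the weight $r^{\nu}$ simply being interpolation on the corresponding weighted edge spaces), interpolating each summand separately and summing gives the required inequality on the total norm. For the smoothing operators, the cleanest approach is to use the coordinate change $s=\log r$ from the proof of Lemma~\ref{interpolationinequalities} to identify $Y-\mathcal Z_0$ near $\mathcal Z_0$ with a manifold $Y^\circ$ having a cylindrical end $T^2\times(-\infty,\log r_0)$. Under this identification the weight $r^\nu$ becomes $e^{\nu s}$ and, crucially, $r\partial_r$ becomes $\partial_s$. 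I would define $S_\epsilon$ as Fourier truncation in the $(t,\theta,s)$ variables at frequency $\epsilon^{-1}$ on the cylindrical end, glued with a standard heat-kernel smoothing on the compact part of $Y^\circ$. This construction commutes with $\partial_s=r\partial_r$ and preserves the weight $r^\nu$, so both summands of the $\mathcal H$-norms are smoothed simultaneously and the estimates (i)--(iii) of Definition~\ref{tamedef}(II) follow from the standard Fourier-truncation estimates in the weighted $L^{m,2}$-spaces on $Y^\circ$.

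For the tameness of $\overline{\slashed{\mathbb D}}_p$, Proposition~\ref{fullynonlinear} expresses its vertical component on a sufficiently small $C^{m_0}$-neighborhood as a finite sum of terms that are, schematically, products of coefficients depending smoothly on $(g,B)$ and on up to second derivatives of $\eta$, multiplied by $\Phi_0+\ph$ or its first covariant derivative, with at most two derivatives of $\eta$ per term and at most one factor of $\eta''$. Moser-type product estimates derived from the interpolation inequalities of Lemma~\ref{interpolationinequalities} give $H^m_\text{b}$-bounds of the form $C_m(1+\|p\|_{m+3})(1+\|\eta\|_{m+2})(1+\|\ph\|_m)$, which is tame with fixed loss $r=3$. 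Splitting the codomain according to $L^2=\mathbf{Ob}\oplus\frak R$ and its $H^m_\text{b}$-analogues preserves tameness by Corollary~\ref{higherregularitymappings}(C) and Proposition~\ref{Obtrivialization}, since the projection is a zeroth-order edge pseudodifferential operator. The normalization component $1-\|\Phi\|_{L^2}^2$ is quadratic in $\Phi$ and trivially tame via $\mathcal H^{m,0}\hookrightarrow L^2$.

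The principal obstacle is controlling the additional summand $\|(r\partial_r+\tfrac12)\psi\|$ on the codomain side under $\overline{\slashed{\mathbb D}}_p$: the Dirac operator does not commute with $r\partial_r$. However, inspection of the local expression in Lemma~\ref{Dlocalexpression} shows that $[\slashed D_0,r\partial_r]=\slashed D_0$ plus a zeroth-order operator with coefficients $O(r)$, so $(r\partial_r+\tfrac12)\slashed D_0\ph = \slashed D_0(r\partial_r-\tfrac12)\ph + \slashed D_0\ph + O(r)\nabla\ph$. The right-hand side is manifestly controlled by the $r\mathcal H^{m,1}$-norm of $\ph$ that already incorporates $(r\partial_r-\tfrac12)\ph$, precisely matching the choice of exponents in the definition of the spaces. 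A similar commutator computation handles the nonlinear $M^1,M^2,F$ contributions from Proposition~\ref{fullynonlinear}, provided the weight exponent $\nu=0.9$ is strictly less than $1$, which ensures that all the $O(r)$ correction terms can be absorbed into the stronger weighted norm. Tracking these commutators uniformly in $p$ and $\eta$ on $U$, together with the verification that the smoothing operators respect the $\bold P_\mathcal X,\bold P_\mathcal Y$ polyhomogeneity property, will be the main bookkeeping task of the proof.
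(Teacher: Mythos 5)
The overall architecture you propose---interpolation componentwise, smoothing operators adapted to the degenerate geometry near $\mathcal Z_0$, and commutator control to pass the $(r\del_r\pm\tfrac12)$ conditions through $\slashed D$---is the same as the paper's. The paper constructs the smoothing operators in Appendix \ref{appendixII} as Schwartz kernels supported near the lifted diagonal in a blown-up double space $Y^2_\text{b}$, using the projective coordinate $s=r/r'$; your logarithmic change of variables $s=\log r$ is morally the same device (``$s=r/r'$ near $1$'' is ``$\log r-\log r'$ small''). But be aware that neither construction \emph{exactly} commutes with $r\del_r$ or with multiplication by $r^\nu$. What one actually has, and all that is needed, are uniformly bounded commutators $[\nabla^e,S_\e^\text{b}]$ and $[r^\alpha,S_\e^\text{b}]$, obtained from the observation that $r/r'$ is bounded on the support of the kernel; your claim of exact commutation is too strong, and a global Fourier truncation in $s$ on a half-line is also not the right object---use a compactly supported convolution kernel.

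The concrete error is in the commutator step. You write $(r\del_r+\tfrac12)\slashed D_0\ph = \slashed D_0(r\del_r-\tfrac12)\ph + \slashed D_0\ph + O(r)\nabla\ph$. The exact identities are
$$(r\del_r+\tfrac12)\del_r=\del_r(r\del_r-\tfrac12),\qquad (r\del_r+\tfrac12)\tfrac1r\del_\theta=\tfrac1r\del_\theta(r\del_r-\tfrac12),\qquad (r\del_r+\tfrac12)\del_t=\del_t(r\del_r-\tfrac12)+\del_t,$$
so that in fact $(r\del_r+\tfrac12)\slashed D_0\ph = \slashed D_0(r\del_r-\tfrac12)\ph + \gamma(dt)\nabla_t\ph$: the remainder is a single \emph{tangential} derivative, not $\slashed D_0\ph$ plus $O(r)$ terms. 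This distinction is essential. For $\ph\in r\mathcal H^{m,1}$ the correct remainder satisfies $\nabla_t\ph\in rH^{m-1,1}_{\text{b},e}\subseteq r^\nu H^{m-1}_\text{b}$, because one can trade the $\tfrac1r$ in the edge norm for $r^{-\nu}$ when $\nu<1$. Your remainder $\slashed D_0\ph$, by contrast, lives only in $H^m_\text{b}$, and $H^m_\text{b}\not\subseteq r^\nu H^{m-1}_\text{b}$ once $\nu>\tfrac12$: a spinor $\psi\sim r^{-1/2+\epsilon}$ near $\mathcal Z_0$ lies in $H^m_\text{b}$ for every $m$ but $r^{-\nu}\psi\notin L^2$ when $0<\epsilon<\nu-\tfrac12$. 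The Dirac operator loses a full power of $r$ on b-spaces while $\del_t$ does not, and that asymmetry is exactly why the definition of $\mathcal H^{m,0}$ is viable. If your formula were right, the boundedness of $\slashed D_p:r\mathcal H^{m,1}\to\mathcal H^{m,0}$ would fail; the saving feature is that the true remainder is purely tangential. This is what the paper records in the proof of Lemma \ref{DoncalH} as $\slashed D(r\del_r-\tfrac12)\ph=(r\del_r+\tfrac12)\slashed D\ph+B\ph$ with $B$ bounded $r^\nu H^1_\text{b}\to r^\nu L^2$.
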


\begin{proof} The interpolation inequalities in item (I) of Definition \ref{tamedef} are immediate from those on the standard spaces $H^{m}(\mathcal Z_0; N\mathcal Z_0)$ and those from Lemma \ref{interpolationinequalities} which apply equally well for different weights. The smoothing operators whose existence is the content of item (II) of Definition \ref{tamedef} are constructed in Appendix \ref{appendixII}.

 That $\overline{\slashed{\mathbb D}}_p$ is a tame Fr\'echet map is obvious for the $H^m_\text{b}$-norms, and for the $(r\del_r - \tfrac{1}{2})$ terms follows from the commutation relations in the upcoming Lemma \ref{DoncalH}. 
\end{proof}

\begin{rem}
Since $ \text{\bf Ob}$ consists of solutions of the elliptic edge operator (this being $\slashed D$ or $\slashed D-\Lambda_p\text{Id}$) which have expansions with index set $\Z^+-\tfrac12$, edge bootstrapping (see \cite{MazzeoEdgeOperators} Equation (7.7) and the accompanying discussion)  implies that for $\psi \in \text{\bf Ob}$ obeys 

$$\|(r\del_r + \tfrac{1}{2}) \psi \|_{rH^{m-1}_\text{b}}\leq C \|\psi\|_{H^m_\text{b}}.$$

\noindent Since $\nu\leq 1$ it follows that the $H^m_\text{b}$ and the $\mathcal H^{m,0}$ norms are equivalent on $Y_m'$. 
\end{rem}

As explained at the beginning of the subsection, the point is that the additional terms allows control of the higher order terms of expansions (\refeq{polyhomZ+12}-\refeq{polyhomZ-12}) in $\bold P_\mathcal X\cap r\mathcal H^{m,1}$. The following key lemma, proved in Appendix \ref{appendixII} makes this precise:

\begin{lm}\label{coefficientregularity} Suppose that $\ph \in  r\mathcal H^{m+4,1}$ is a spinor. Then the bound

 $$|(\nabla^\text{b})^m \ph|\leq C_m r^{1/2} \|\ph\|_{r\mathcal H^{m+4,1}}$$

\noindent holds pointwise on $Y\setminus \mathcal Z_0$.
 \qed
\end{lm}

 The final two lemmas needed before the verification of Hypotheses {\bf (I)-(III)} are effectively bookkeeping that show the Dirac operator $\slashed D: r\mathcal H^{1,1}\to \mathcal H^{1,0}$ has the same semi-Fredholm properties as on the spaces from Section \ref{section2}. Fixing a parameter $p$, let $r\mathcal H^\perp$ be the $L^2$-orthogonal complement of $\Phi_p$ in $r\mathcal H^{1,1}\subset rH^1_e$. Additionally, fixing a spinor $\ph$ near $0$, denote the extended Dirac operator with the $\R$-factor included at a parameter $p$ by

$$\overline{\slashed D}_{p}= \begin{pmatrix} \slashed D_{p} & 0 \\  \br -, \Phi_0 + \ph\kt  & \br -, \Phi_0 + \ph\kt   \end{pmatrix}  \begin{matrix} r \mathcal H^\perp\\ \oplus \\ \R \Phi_{p}  \end{matrix} \ \  \lre \ \   \begin{matrix} \frak R_{p}\\ \oplus \\   \R,\end{matrix}$$

\noindent which arises as the partial derivative at $\Phi_0+\ph$ of (\refeq{extendedunivdirac}) with respect to the spinor.  

\begin{lm}\label{DoncalH} Provided that $m_0\geq 10$ and $0<\delta_0<<1$ is sufficiently small, then for $p\in V_0$ the extended Dirac operator $$\overline{\slashed D}_p : r\mathcal H^{1,1}\oplus \R \to (\frak R_p \cap \mathcal H^{1,0}) \oplus \R$$ is an isomorphism and the estimate \be\|\psi\|_{r\mathcal H^{1,1}}\leq C \|\overline{\slashed D}_p\ph \|_{ \mathcal H^{1,0}\oplus \R}\label{814ellipticest}\ee holds uniformly for $p \in V_0$.

\end{lm}

\begin{proof} We begin by showing that the (unextended) Dirac operator $\slashed D$ satisfies the following estimate: if $\slashed D\psi = f$ then  
\begin{eqnarray}
 \|\psi\|_{rH^{1,1}_{\text{b},e}} + \|(r\del_r - \tfrac{1}{2}) \psi\|_{r^{1+\nu} H^1_e} \leq C\Big (\|f \|_{H^1_\text{b}} + \|(r\del_r + \tfrac{1}{2})f\|_{r^\nu L^2}) +  \|\psi\|_{r^{\nu}H^1_\text{b}} \Big ).\label{calHelliptic} \end{eqnarray}

 \noindent Since $\nu<1$ by choice, the inclusion $rH^{1,1}_{\text{b},e}\hookrightarrow r^{\nu} H^1_\text{b}$ is compact, hence so is the final term on the right. 
 
 We first prove (\refeq{calHelliptic}) for $p=p_0$. That the first term is bounded by the right-hand side immediate from the estimate for $\slashed D: rH^{1,1}_{\text{b},e}\to H^1_\text{b}$ (Corollary \ref{higherordersemielliptic}). For the second term, we apply the elliptic estimate
 \be \|\psi\|_{r^{1+\nu}H^1_\e}\leq C\Big(\| \slashed D \psi\|_{r^\nu L^2} + \|\psi\|_{r^\nu L^2}\Big)\label{ellipticest2}\ee
 
\noindent  for $\slashed D: r^{1+\nu}H^1_e \to r^{\nu}L^2$ to term $(r\del_r -\tfrac{1}{2})\psi$. This estimate cannot be derived by integration by parts as in Section \ref{section2} and instead follows from parametrix methods (see Theorem 6.1 of  \cite{MazzeoEdgeOperators} or \cite{FangyunThesis}).  Then, since the commutation relations
 \begin{eqnarray}
 (r\del_r + \tfrac{1}{2})\del_r& =& \del_r (r\del_r -\tfrac{1}{2})\label{commutation1}\\
  (r\del_r + \tfrac{1}{2})\tfrac{1}{r}\del_\theta& =& \tfrac{1}{r}\del_\theta (r\del_r -\tfrac{1}{2})\label{commutation2}.
 \end{eqnarray}
 
\noindent hold  writing $\slashed D=\slashed D_0 + \frak d$ as in Lemma \ref{Dlocalexpression} shows that $$\slashed D(r\del_r -\tfrac{1}{2})\psi=(r\del_r +\tfrac{1}{2})\slashed D \psi + B\psi $$
where $B$ is a lower order term such that $B: r^\nu H^1_b \to r^\nu L^2$ is bounded. Applying (\refeq{ellipticest2}) and substituting this expression yields (\refeq{calHelliptic}) for $p=p_0$. 

The estimate (\refeq{814ellipticest}) follows from a standard proof by contradiction (e.g. \cite{LiviuLecturesOnGeometry} Lemma 10.4.9), provided $\ph$ is sufficiently small. For $p\neq p_0$ it is straightforward to show that writing ${\slashed D}_{p}={\slashed D}_{p_0} + \frak d_p$ and using the commutations (\refeq{commutation1}-\refeq{commutation2}) yields $$\|\frak d_p \psi\|_{\mathcal H^{1,0}}\leq C\delta_0 \|\psi\|_{r\mathcal H^{1,1}}$$
completing the lemma.

\end{proof}

Finally, the projection operators to $\text{\bf Ob}$ and ${\frak R}$ are well-behaved on the new spaces analogously to Corollary \ref{higherregularitymappings} item (C). 

\begin{lm}\label{calHprojections}
The projection operators 
$$1-\Pi_p= \slashed D_p P_p \slashed D^\star _p: \mathcal H^m \to \frak R_p\cap  \mathcal H^m \hspace{2cm}  \Pi_p=1-\slashed D_p P_p \slashed D_p^\star :\mathcal H^m \to \text{\bf Ob}_p \cap \mathcal H^m  $$ are bounded. 
\end{lm}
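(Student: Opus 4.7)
The plan is to split the $\mathcal{H}^m$-norm into its two constituent pieces and handle them separately. The standard $H^m_\text{b}$ boundedness of both $\Pi^{\text{Range}} = \slashed D_p P_p \slashed D_p^\star$ and $\Pi^{\text{ker}} = \text{Id} - \Pi^{\text{Range}}$ follows directly from Corollary \ref{higherregularitymappings}(C) once it is promoted to general parameters $p \in V_0$; the necessary perturbative argument is identical to the one used in the proof of Lemma \ref{DoncalH}, where writing $\slashed D_p = \slashed D_0 + \mathfrak{d}_p$ with $\|\mathfrak{d}_p\|$ small gives uniformity in $p$. Since $\Pi^{\text{ker}} = \text{Id} - \Pi^{\text{Range}}$, it then suffices to bound the weighted term $\|(r\partial_r + \tfrac{1}{2})\Pi^{\text{Range}}\psi\|_{r^\nu H^{m-1}_\text{b}}$.

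For that weighted term I would exploit the commutation relations established in the proof of Lemma \ref{DoncalH}. Because $(r\partial_r + \tfrac12)$ commutes with $\partial_t$ and satisfies $(r\partial_r + \tfrac12)\partial_r = \partial_r(r\partial_r - \tfrac12)$ and the analogous identity for $\tfrac1r \partial_\theta$, the model operator $\slashed D_0$ in a product metric intertwines the weighted derivatives:
\[
(r\partial_r + \tfrac12)\slashed D_0 = \slashed D_0 (r\partial_r - \tfrac12).
\]
For $\slashed D_p = \slashed D_0 + \mathfrak{d}_p$ this upgrades to $(r\partial_r + \tfrac12)\slashed D_p = \slashed D_p(r\partial_r - \tfrac12) + B_p$, where $B_p := [(r\partial_r + \tfrac12),\mathfrak{d}_p]$ is a first-order operator with coefficients of size $O(r)$ and is therefore bounded $r^{1+\nu}H^{m-1,1}_{\text{b},e} \to r^{\nu}H^{m-1}_\text{b}$. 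The analogous identity holds for $\slashed D_p^\star$, giving $(r\partial_r - \tfrac12)\slashed D_p^\star = \slashed D_p^\star(r\partial_r + \tfrac12) + B_p'$ with $B_p'$ of the same type.

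Letting $u = P_p \slashed D_p^\star \psi$, so that $\Pi^{\text{Range}}\psi = \slashed D_p u$, the first identity gives
\[
(r\partial_r + \tfrac12)\Pi^{\text{Range}}\psi = \slashed D_p v + B_p u, \qquad v := (r\partial_r - \tfrac12)u.
\]
The term $B_p u$ is handled by the $rH^{1,1}_{\text{b},e}$-boundedness of $u = P_p\slashed D_p^\star \psi$ (from Lemma \ref{DoncalH} combined with bootstrapping via Corollary \ref{higherordersemielliptic}). To control $\slashed D_p v$ in $r^\nu H^{m-1}_\text{b}$, I would show $v \in r^{1+\nu}H^{m-1,1}_{\text{b},e}$ by commuting $(r\partial_r - \tfrac12)$ through the defining equation $\slashed D_p^\star \slashed D_p u = \slashed D_p^\star \psi \bmod \ker$. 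A short calculation using both commutation identities above yields
\[
\slashed D_p^\star \slashed D_p v = \slashed D_p^\star(r\partial_r + \tfrac12)\psi \ + \ (\text{terms linear in $u$, $\slashed D_p u$, $\psi$ with $B_p,B_p'$ coefficients}),
\]
whose right-hand side lies in $r^{-1+\nu}H^{m-1,-1}_{\text{b},e}$ with a bound by $\|\psi\|_{\mathcal{H}^{m,0}}$.

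The main obstacle, and the one step that truly uses the theory of elliptic edge operators rather than the self-contained machinery of Section \ref{section2}, is the weighted solvability estimate
\[
\|v\|_{r^{1+\nu}H^{m-1,1}_{\text{b},e}} \ \leq \ C\bigl(\|\slashed D_p^\star\slashed D_p v\|_{r^{-1+\nu}H^{m-1,-1}_{\text{b},e}} + \|\pi_0 v\|_{L^2}\bigr),
\]
i.e.\ a Fredholm-type bound for $\slashed D_p^\star \slashed D_p$ at the shifted weight $r^{1+\nu}$ rather than $r$. This is precisely the kind of statement that holds by the parametrix construction of \cite{MazzeoEdgeOperators} provided the weight exponent $1+\nu$ avoids the indicial roots of the normal operator of $\slashed D_p^\star \slashed D_p$; these roots are half-integers, and the choice $\nu = 0.9$ was made to lie safely between them. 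Uniformity of the constant $C$ in $p \in V_0$ follows once again by the perturbation argument of Lemma \ref{DoncalH}, since $\nu$ is fixed and the indicial structure is determined by the model operator. With this estimate in hand, $v \in r^{1+\nu}H^{m-1,1}_{\text{b},e}$ and the bound $\|\Pi^{\text{Range}}\psi\|_{\mathcal{H}^{m,0}} \leq C\|\psi\|_{\mathcal{H}^{m,0}}$ follows, completing the proof.
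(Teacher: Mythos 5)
Your high-level plan (split $\|\cdot\|_{\mathcal H^{m,0}}$ into the $H^m_\text{b}$ piece and the weighted $(r\del_r + \tfrac12)$ piece, dispose of the first via Corollary \ref{higherregularitymappings}, and attack the second via commutators) agrees with the paper. But there is a genuine error at the very center of your weighted-term argument: the claimed intertwining relation
$$(r\del_r + \tfrac12)\slashed D_0 = \slashed D_0 (r\del_r - \tfrac12)$$
is false. You correctly note that $(r\del_r + \tfrac12)$ shifts by $-1$ when commuted past $\del_r$ and $\tfrac1r\del_\theta$, and that it commutes unshifted with $\del_t$ — but the Dirac operator $\slashed D_0 = \gamma_t\nabla_t + \gamma_x\nabla_x + \gamma_y\nabla_y$ has a $\gamma_t\del_t$ summand, so only the normal part of $\slashed D_0$ undergoes the $\tfrac12\mapsto -\tfrac12$ shift. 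The correct identity (as the paper derives in the proof of this lemma and already uses in the proof of Lemma \ref{DoncalH}) is
$$\slashed D_0(r\del_r - \tfrac12) = (r\del_r + \tfrac12)\slashed D_0 - \gamma_t\nabla_t,$$
and the residual $\gamma_t\nabla_t$ is precisely the interesting part of the commutator.

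This is not a cosmetic omission. Once the $\gamma_t\nabla_t$ term is restored, your decomposition becomes $(r\del_r + \tfrac12)\Pi^\text{Range}\psi = \slashed D_p v + \gamma_t\nabla_t u + B_p u$, and the new term $\gamma_t\nabla_t u$ must be tracked. More seriously, your strategy of putting $v = (r\del_r - \tfrac12)u$ and controlling it via a shifted-weight solvability estimate for $\slashed D^\star_p\slashed D_p$ runs into trouble: when you commute $(r\del_r - \tfrac12)$ through $\slashed D^\star_p\slashed D_p u = \slashed D^\star_p\psi$, the right-hand side picks up terms of the form $(\gamma_t\nabla_t\slashed D_p + \slashed D_p\gamma_t\nabla_t)u$ and $\gamma_t\nabla_t\psi$ carrying \emph{no} improvement in weight; these live only in $r^{-1}H^{m-1,-1}_{\text{b},e}$ and $H^{m-1}_\text{b}$ respectively, neither of which is contained in the $r^{-1+\nu}H^{m-1,-1}_{\text{b},e}$ that your shifted-weight parametrix needs. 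The paper circumvents this by never introducing $v$ at all: it computes $[(r\del_r + \tfrac12), \slashed D_0 P_0\slashed D_0]$ explicitly as a sum of three terms, each of which is a composition running through $P_0$, and reads off from the standard (unweighted) mapping properties $\slashed D_0: rH^{m,1}_{\text{b},e}\to H^m_\text{b}$, $P_0: r^{-1}H^{m,-1}_\text{b}\to rH^{m,1}_{\text{b},e}$ and $\nabla_t: rH^{m,1}_{\text{b},e}\to rH^{m-1,1}_{\text{b},e}$ that the aggregate commutator gains a full factor of $r$ (and loses one derivative), which suffices since $\nu<1$. The weight gain comes from the composition with $P_0$ built into the projection, not from a weighted elliptic estimate for an auxiliary $v$; your route would need additional structural input about $u = P_p\slashed D_p^\star\psi$ (e.g.\ refined asymptotics) to rescue the $\nu$-weight, and that is not provided.
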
  

\begin{proof}For the ${H^m_\text{b}}$-term of the $\mathcal H^m$-norm this follows directly from Corollary \ref{higherregularitymappings}. For the second term, notice that by (\refeq{ellipticest2}) and the analogous estimate for $\slashed D^\star_p \slashed D_p : r\mathcal H^{m,1}\to r^{-1}\mathcal H^{m,-1}$, one has that \be \slashed D_pP_p\slashed D_p^\star: r^\nu H^m_\text{b}\to r^\nu H^m_\text{b}\label{weightedproj}\ee is bounded. Writing $$(r\del_r+\tfrac{1}{2})D_pP_p\slashed D_p^\star=D_pP_p\slashed D_p^\star (r\del_r+\tfrac{1}{2}) + [(r\del_r+\tfrac12),D_pP_p\slashed D_p^\star ]$$
and applying (\refeq{weightedproj}) to the first term, then using that $[(r\del_r+\tfrac12),D_pP_p\slashed D_p^\star ]: H^{m}_\text{b}\to rH^{m-1}_\text{b}\subseteq r^\nu H^{m-1}_\text{b}$ is bounded for the second term yields the result.  

To finish, we therefore prove that the commutator $[(r\del_r+\tfrac12),D_pP_p\slashed D_p^\star ]: H^{m}_\text{b}\to rH^{m-1}_\text{b}\subseteq r^\nu H^{m-1}_\text{b}$ is bounded, beginning with the product metric on $Y_\circ=S^1\times \R^2$ as in Example \ref{euclideanexample}. In the product case, the commutation relations (\refeq{commutation1}-\refeq{commutation2}) imply
\bea \slashed D_0(r\del_r -\tfrac{1}{2})\phi&=&(r\del_r +\tfrac{1}{2})\slashed D_0 \phi - \gamma_t \nabla_t \phi\\
 P_0(r\del_r +\tfrac32)f &=&(r\del_r -\tfrac12) P_0f  + P_0 (\gamma_t \nabla_t \slashed D_0 + \slashed D_0 \gamma_t \nabla_t)P_0f \eea

 \noindent where $P_0f=u \Leftrightarrow \slashed D_0\slashed D_0u=f$, and $\gamma_t=\gamma(dt)$. The latter expression follows from applying the first twice with $\slashed D_0 \slashed D_0$ and then applying $P_0$. Using these, one has 
  \bea
 (r\del_r + \tfrac12)\slashed D_0 P_0 \slashed D_0&=& \slashed D_0(r\del_r -\tfrac12)P_0\slashed D_0  + \gamma_t \nabla_t P_0\slashed D_0 \\
 &=&  \slashed D_0P_0(r\del_r + \tfrac32)\slashed D_0 - \slashed D_0 P_0 (\gamma_t \nabla_t \slashed D_0 + \slashed D_0 \gamma_t \nabla_t)P_0 \slashed D_0   + \gamma_t \nabla_t P_0\slashed D_0\\
  &=&  \slashed D_0P_0\slashed D_0 (r\del_r + \tfrac12)+\slashed D_0 P_0 \gamma_t\nabla_t - \slashed D_0 (P_0 (\gamma_t \nabla_t \slashed D_0 + \slashed D_0 \gamma_t \nabla_t)P_0) \slashed D_0   - \gamma_t \nabla_t P_0\slashed D_0
 \eea
 
 \noindent so that $$[(r\del_r + \tfrac{1}{2}), \slashed D_0 P_0\slashed D_0]= \slashed D_0 P_0 \gamma_t\nabla_t - \slashed D_0 (P_0 (\gamma_t \nabla_t \slashed D_0 + \slashed D_0 \gamma_t \nabla_t)P_0) \slashed D_0   +\gamma_t \nabla_t P_0\slashed D_0.
$$
 
\noindent Each term in the above has a factor of $r$ better than {\it a priori} might be expected: the final term, for instance, is a composition of bounded operators  $$H^m_\text{b} \ \overset{\slashed D_0} \lre \  r^{-1}H^{m,-1}_\text{b} \ \overset{P_0}\lre  \ r H^{m,1}_{\text{b},e} \ \overset{\nabla_t}\lre  \ rH^{m-1,1}_{\text{b},e} \ \hookrightarrow  \ rH^{m-1}_\text{b}   $$
 
 \noindent and similarly for the first and middle terms. 

For the parameter $p=p_0$ the error terms arising from the difference $\frak d=\slashed D_{p_0}- \slashed D_0$ also has an additional factor of $r$. Indeed, $|\frak d\psi|\leq C( r|\nabla \psi| + |\psi|)$ as in Lemma \ref{Dlocalexpression} is bounded $r^{1+\nu}H^{m,1}_{\text{b},e}\to r^{1+\nu}H^m_\text{b}$. For a general $p$, the same argument applies in the Fermi coordinates formed using $p$ for the boundary Sobolev spaces defined using $p$, whose norms are uniformly (tamely) equivalent. 
\end{proof}

\subsection{Tame Estimates for the Linearization}
\label{section8.4}

This subsection verifies Hypotheses {\bf (I)-(III)} of the Nash-Moser Implicit Function Theorem from Section \ref{NM1}, employing the formulas for $\text{d}\slashed{\mathbb D}_p$ and $\text{d}^2\slashed{\mathbb D}_p$ from Corollaries \ref{tamefirstderiv} and \ref{tamesecondderiv}.

Recall that $V_0\subset \mathcal P$ denotes the open ball of radius $\delta_1$ around $p_0$ measured in the $m_0$-norm, and let $U_0\subset \mathcal X$ denote the ball of the same radius around $(\mathcal Z_0, \Phi_0)$, also measured in the $m_0$-norm.

\begin{lm}
\label{hypotheses1check} Hypothesis {\bf (I)} of Theorem \ref{nashmoser} holds for $\overline{\slashed {\mathbb D}}_p$, i.e.  there is an $m_0\geq 0$ such that for $\delta_0$ sufficiently small,  $p \in V_0$ that implies the  linearization 
$$\text{d}_{(\ph, \eta)}\overline{\slashed {\mathbb D}}_p: \mathcal  X\to \mathcal  Y$$ 
is invertible for $(\eta,\ph)\in U_0\cap \bold P_{\mathcal X}$. 

\end{lm}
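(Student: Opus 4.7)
The plan is to view the linearization at a general $(\eta, \ph) \in U_0 \cap \bold P_\mathcal X$ as a small perturbation of $L_p := \text{d}_{(0,0)}\overline{\slashed{\mathbb D}}_p$ and to conclude via a Neumann-series argument. Using Corollary~\ref{tamefirstderiv}, one has $\text{d}_{(\eta,\ph)}\overline{\slashed{\mathbb D}}_p = L_p + K_{p,\eta,\ph}$, where $K_{p,\eta,\ph}$ collects the additional terms, each of which vanishes at $(\eta,\ph)=(0,0)$ and is at least linear in $\eta$ or $\ph$. Invertibility of the perturbed operator follows once $L_p$ is an isomorphism at each finite regularity level (with uniform bounds over $V_0$) and the operator norm of $K_{p,\eta,\ph}$ is sufficiently small. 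The essential role of the polyhomogeneity condition defining $\bold P_\mathcal X$ is to ensure that the conormal regularity analysis of Section~\ref{section6.1} continues to apply when $\Phi_0$ is replaced by $\Phi_0 + \ph$, so that $K_{p,\eta,\ph}$ can be treated as a bounded perturbation in operator norm.

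To establish invertibility of $L_p$, I would work in the splitting $\mathcal Y_m = (\text{\bf Ob}_p \cap H^m_{\text{b}}) \oplus (\frak R_p \cap \mathcal H^{m,0}) \oplus \R$ of Section~\ref{section8.1}, composing the first factor with the trivialization $\Xi_p$ of Proposition~\ref{Obtrivialization}. In this splitting $L_p$ is block lower-triangular. Its upper-left block differs from the operator of Proposition~\ref{Invertiblepreliminary} (the corresponding block at $p = p_0$) by a perturbation of operator norm $O(\|p - p_0\|_{m_0+3})$ coming from Lemma~\ref{errortermslem} and the $\frak B_{\Phi_0,p}$ term of Proposition~\ref{fullynonlinear}; for $\delta_0$ sufficiently small, it is therefore still an isomorphism $L^{3,2} \oplus \R \to L^{5/2,2} \oplus \R$. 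Its lower-right block is $\overline{\slashed D}_p$, which is an isomorphism with uniform bounds by Lemma~\ref{DoncalH}. Higher regularity levels $m \leq m_0 - 3$ are reached by edge-elliptic bootstrapping via Corollary~\ref{higherordersemielliptic}. Since invertibility of a block lower-triangular operator reduces to that of its diagonal blocks, $L_p: \mathcal X_m \to \mathcal Y_m$ is an isomorphism with a uniform bound on $\|L_p^{-1}\|$ over $p \in V_0$.

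The main obstacle is the operator-norm estimate $\|K_{p,\eta,\ph}\|_{\mathcal X_m \to \mathcal Y_m} \leq C\|(\eta,\ph)\|_{m_0}$, uniformly in $p$. The $\frak R_p$-valued components of $K_{p,\eta,\ph}$ reduce to multiplication estimates in the mixed boundary/edge spaces, where the pointwise bound $|(\nabla^{\text{b}})^k \ph| \leq C_k r^{1/2}\|\ph\|_{r\mathcal H^{k+4,1}}$ from Lemma~\ref{coefficientregularity} controls $\ph$ and its derivatives with the correct $r^{1/2}$ weight. The hard part will be the $\text{\bf Ob}_p$-valued piece of the leading term $\mathcal B_\ph(v)$: this is precisely where polyhomogeneity is indispensable, since the $r^{1/2}$ leading-order asymptotics of $\ph$ supplied by $\ph \in \bold P_\mathcal X$ are what allow the conormal regularity computation from the proof of Proposition~\ref{Invertiblepreliminary} (Lemma~\ref{conormalCaseA}) to be repeated with $\Phi_0$ replaced by $\ph$, yielding a bound of the form $\|\Xi_p \Pi_p \mathcal B_\ph(v)\|_{L^{5/2,2}} \leq C\|\ph\|_{m_0}\|v\|_{L^{3,2}}$. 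The $M^i$-, $F^i$- and $\frak B$-type contributions of $K_{p,\eta,\ph}$ are handled analogously, with additional factors of $\eta$ absorbed via the Sobolev embedding $L^{m_0,2}(\mathcal Z_0) \hookrightarrow C^0$. Choosing $m_0 \geq 11$ and then $\delta_0$ smaller than the reciprocal of $\sup_{p \in V_0}\|L_p^{-1}\|$, the Neumann series for $(L_p + K_{p,\eta,\ph})^{-1}$ converges and gives the claimed invertibility.
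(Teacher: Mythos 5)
Your overall strategy — block decomposition of the linearization, uniform invertibility of the diagonal blocks, and smallness of the remainder — is the right shape, and your treatment of $\mathcal B_\ph(v)$ via conormal regularity is correct. But the choice of splitting is not a matter of convenience here, and the one you chose ($L^2 = \text{\bf Ob}_p \oplus \mathfrak R_p$, the splitting of Section \ref{section8.1}) makes the remainder $K_{p,\eta,\ph}$ genuinely unbounded, not small. The paper flags this explicitly in the remark immediately after the proof: "one \emph{must} use the splitting of $L^2$ determined by $p_\eta$ ... trying to prove these estimates using a different splitting leads the top right entry in the analogue of (\refeq{blockdecomp3}) to have infinite-dimensional image and be unbounded in the $H^{5/2}$-norm."

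Concretely, the problem is the $\text{\bf Ob}_p$-valued component of the $\phi$-linear part of $K_{p,\eta,\ph}$. By (\refeq{petapullback}), the terms of the linearization that are linear in $\phi$ assemble to $\slashed D_{p_\eta}\phi$, so the corresponding piece of $K$ is $(\slashed D_{p_\eta}-\slashed D_p)\phi$ and its $\text{\bf Ob}_p$-component is $\Pi_p^{\text{Ob}}(\slashed D_{p_\eta}-\slashed D_p)\phi$. This is a first-order operator in $\phi$ whose coefficients are $O(\|\eta\|_{m_0})$-small, so it is small as a map $r\mathcal H^{m,1}\to \mathcal H^{m,0}$. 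However, the codomain block it must land in with small norm is $\text{\bf Ob}_p\cap H^{m+3/2}_\text{b}$, because that is where the upper-left block $\overline T_{p}$ (hence $L_p^{-1}$) lives after Proposition \ref{Invertiblepreliminary}. Since $\Pi_p^{\text{Ob}}$ only preserves $\mathcal H^{m,0}$-regularity (Lemma \ref{calHprojections}), $\Pi_p^{\text{Ob}}(\slashed D_{p_\eta}-\slashed D_p)\phi$ is controlled only in $\text{\bf Ob}_p\cap\mathcal H^{m,0}$ — a strictly weaker norm — and there is no mechanism to gain the extra $3/2$ derivatives. Smallness of coefficients cannot manufacture regularity gain, so $\|K_{p,\eta,\ph}\|_{\mathcal X_m\to\mathcal Y_m}$ is \emph{not} $O(\delta)$, and the Neumann series does not converge. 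Note that the conormal-regularity argument you invoke (Lemma \ref{conormalCaseA}) does not apply to this block either: it is driven by the polyhomogeneity of the \emph{base point} spinor $\Phi_0+\ph$, but the variation $\phi$ ranges over all of $\mathcal X$ and need not be polyhomogeneous, so a term like $\eta''\cdot\phi$ has no controlled conormal order.

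The paper's fix is to use the pullback splitting $L^2=\text{\bf Ob}_{p_\eta}\oplus \mathfrak R_{p_\eta}$ in which $\slashed D_{p_\eta}\phi\in\mathfrak R_{p_\eta}$ exactly for $\phi\perp\Phi_{p_\eta}$, so the off-diagonal $\text{\bf Ob}_{p_\eta}$-block $*$ in (\refeq{blockdecomp3}) has image in the one-dimensional span of $\slashed D_{p_\eta}\Phi_{p_\eta}$, which is polyhomogeneous (Lemma \ref{secondordereigenvalues}) and hence automatically in $\text{\bf Ob}_{p_\eta}^{5/2}$ with small norm. The upper-left block is then handled by Proposition \ref{Invertiblepreliminary} applied at the parameter $p_\eta$ together with the perturbative estimate (\refeq{delta32regularity}), and the lower-right block by Lemma \ref{DoncalH}. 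No Neumann series over a fixed splitting is needed; the block structure itself is exact. You should reorganize your proof around the $p_\eta$-splitting rather than the $p$-splitting.
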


\begin{proof}
Fix $m_0\geq 11$. We first investigate the obstruction component of the linearization. By decreasing $\delta_1$, we can ensure the $p\in V_0$ and $(\eta,\ph)\in U_0$ implies that the pullback of the parameter by the diffeomorphism $F_\eta$ defined in (\refeq{diffeos}) satisfies \be \label{petadef}p_\eta= F_\eta^*(g,B) \in V_0'\ee

\noindent where $V_0'$ is the ball of radius $\delta_0$ in the $m_0-1=10$-norm, hence Proposition \ref{Invertiblepreliminary} applies. 

 Using Corollary \ref{tamefirstderiv}, the $\text{\bf Ob}_{p_\eta}$-component of the linearization in the trivialization of Lemma \ref{Obtrivialization} acting on $(v,  \lambda, 0 )\in X_0'\oplus \R \oplus  X_0''$ may be written
\begin{eqnarray} \overline T_{p,\eta}&:=&\Xi_{p_\eta}\circ \Pi_{p_\eta} (\text{d}_{(\eta,\ph)}\overline{\slashed {\mathbb D}}_p(v,0, \lambda)) \nonumber \\ &=&\Xi_{p_\eta}\circ \Pi_{p_\eta} (\text{d}_{(\mathcal Z_0,\Phi_0)}\overline{\slashed {\mathbb D}}_0(v,0,\lambda) \ + \ldots+ \  F^2(\eta^2, v, \Phi_0+\ph)) \nonumber \\ &=& \overline T_{0,p}(v,\lambda) \ + \  \frak t_{p_\eta}(v,\lambda)\label{T+S}\end{eqnarray}

\noindent where $\overline T_{0,p}$ denotes the invertible map (\refeq{linearizationonobp}) from Proposition \ref{Invertiblepreliminary}, and $\frak t_{p_\eta}$ encompasses the error terms. Explicitly, via Corollary \ref{tamefirstderiv}) 
\begin{eqnarray}  \frak t_{p_\eta}(v,\lambda)&:=&\Xi_{p_\eta} \circ \Pi_{p_\eta}\Big[ \frak B_{\Phi_0,p}(v) +  \ (\mathcal B_{\ph} + \frak B_{\ph})(v)  \ +  \ M^1(\eta', v')\nabla (\Phi_0 + \ph) \nonumber \\ & & \ + \  M^2(\eta', v'')(\Phi_0 + \ph) \ \ + \  M^2(v', \eta'') (\Phi_0 + \ph)    \ + \  F^2(\eta^2, v, \Phi_0 + \ph) - \lambda \ph\Big].\label{allextraterms}\end{eqnarray} 

\noindent Since $\overline{T}_{0,p }: H^3\oplus \R \to H^{5/2}\oplus \R$ is invertible by Proposition \ref{Invertiblepreliminary}, we show that the perturbation $\frak t_{p_\eta}: H^{3}\oplus \R \to H^{5/2}\oplus \R$ is bounded, i.e. that there is a constant $C_1$ such that 
\be \|\frak t_{p_\eta}(v,\lambda)\|_{H^{5/2}}\leq C \delta_0  \Big (  \|v\|_{H^{3}} \ + \ |\lambda| \Big).\label{delta32regularity}\ee

\noindent holds for $(v,\eta)\in X_1'\oplus \R=H^3(\mathcal Z_0;N\mathcal Z_0)\oplus \R$.

 (\refeq{delta32regularity}) follows from the same argument as (\refeq{3/2regularityp}) in Proposition \refeq{Invertiblepreliminary}. Indeed, Proposition  \ref{fullynonlinear} shows that each term of $\frak t_{p_\eta}$ is of the form either $v'(t) \Theta_0(t_p, x_p,y_p)$ or  $v''(t) \Theta_1(t_p,x_p,y_p)$ just as in (\refeq{3/2regularityp}). Here, $\Theta_0, \Theta_1$ can be written more explicitly using the notation of Corollary \ref{fullynonlinear} as the sum of terms $m_{p_\eta}(y) \nabla (\Phi_0+\ph)$ and $m_0(y)\nabla \ph$ or  $m_{p_\eta}(y)(\Phi_0+\ph)$ and $m_0(y)\ph$, where $m_{p_\eta}, m_0$ are smooth endomorphisms bounded in terms of the $m_0$ norms of $p_\eta, p_0$ respectively. For the subterms of $\Theta_0, \Theta_1$ involving $\Phi_0$, the argument as in Proposition \refeq{Invertiblepreliminary} shows that there are pointwise bounds 

 $$ |\nabla_t^m \Theta_k|\leq Cr^{k-1/2} \|(p-p_0,\eta,\ph)\|_{m+m_0}$$

 \noindent for $k=1,2$. It then follows from Corollary \ref{CaseBC} (applied in the Fermi coordinates of $p_\eta$) that these terms have cornormal regularity at least $5/2$ for $v\in H^3$. For the subterms of $\Theta_0,\Theta_1$ involving $\ph$ rather than $\Phi_0$, the same applies using Lemma \ref{coefficientregularity} in place of the universal bounds (\refeq{polyhom2}) on the expansion of $\Phi_0$. (\refeq{delta32regularity}) follows, and we conclude that \refeq{T+S} is an isomorphism $\overline T_{p,\eta}:H^3\oplus \R \to H^{5/2}\oplus \R$.

For $m=1$, the full linearization acting on $(v,\lambda, \phi)$ now has the block-diagonal form 

\be \text{d}_{(\eta,\ph)}\overline{\slashed {\mathbb D}}_p = \begin{pmatrix} \overline{T}_{p_\eta}  & \pi_{p_\eta}  \\   \\  (1-\Pi_{p_\eta})\mathcal B& \overline {\slashed D}_{p_\eta}\end{pmatrix} \ : \  \begin{matrix} H^{3}(\mathcal Z_0; N\mathcal Z_0)\oplus \R \\ \oplus \\ r \mathcal H^{1,1}  \end{matrix} \ \  \lre \ \   \begin{matrix}  \text{\bf Ob}^{5/2}_{p_\eta}\\ \oplus \\  (\frak R_{p_\eta} \cap \mathcal H^{1,0}) \oplus  \R.\end{matrix}.\label{blockdecomp3}\ee

\noindent where $\mathcal B=\text{d}_{(\eta,\ph)}\overline{\slashed {\mathbb D}}_p- \overline {\slashed D}_{p_\eta}$ is the partial derivative with respect to $v$ (Cf. \refeq{petapullback}), and $\Pi_{p_\eta}$ is the $L^2$-orthogonal projection to $\text{\bf Ob}_{p_\eta}$, and \be \pi_{p_\eta}(\phi)=\br \slashed D_{p_\eta}\phi, \Pi_{p_\eta}\Phi_{p_\eta} \kt_{L^2} \Pi_{p_\eta}\Phi_{p_\eta}\label{pipdef}\ee

\noindent has rank 1 ($\Phi_{p_\eta}$ being the eigenvector from Lemma \ref{secondordereigenvalues} with parameter $p_\eta$ from \refeq{petadef}). In writing (\refeq{blockdecomp3}), we commit the minor abuse of notation of conflating $\overline T_{p_\eta}$ and $\text{ob}_{p_\eta}^{} \overline T_{p_\eta}$.  

If the rank $1$ component $\star$ did not appear, (\refeq{blockdecomp3}) would obviously be an isomorphism if $\mathcal B$ were bounded, since we have shown above that (\refeq{T+S}) is an isomorphism, and the bottom right entry is an isomorphism by construction. In fact, the presence of the rank 1 entry makes no difference, because $\pi_{p_\eta}=O(\delta_0)$; indeed, 
\bea \|\Pi_{p_\eta}\Phi_{p_\eta}\|_{5/2}=\|\slashed D_{p_\eta}^\star P_{p_\eta}\slashed D_{p_\eta}\Phi_{p_\eta}\|_{5/2}&=&\|\slashed D_{p_\eta}^\star P_{p_\eta}[\frak d_{p_\eta}(\Phi_{p_\eta})+\slashed D_0 (\Phi_{p_\eta}-\Phi_0) ]\|_{5/2} \\ &\leq & C\|(p-p_0, \eta)\|_{m_0}\leq C\delta_0.\eea

\noindent To conclude (\refeq{blockdecomp3}) is an isomorphism, it therefore only remains to show that $\mathcal B:H^3(\mathcal Z_0;N\mathcal Z_0)\to \mathcal H^{1,0}\oplus \R$ is bounded. Boundedness into $H^1_\text{b}$ is obvious; for the boundedness of $(r\del_r +\tfrac{1}{2})$ into $r^\nu L^2$, note that since $\ph \in r\mathcal H^{1,1}\cap \bold P_{\mathcal X}$ is polyhomogeneous with index set $\Z^++\tfrac12$, the operator $(r\del_r +\tfrac{1}{2})$ annihilates the order $r^{-1/2}$ term of $\nabla (\Phi_0+ \ph)$ and all other terms are $O(r^{1/2})$ so are integrable with the stronger weight in the normal directions. Since $\mathcal B$ consists of product of $\nabla(\Phi_0+\ph)$ or $\Phi_0+\ph$ with terms having integer Taylor expansions along $\mathcal Z_0$, $\mathcal B$ is likewise integrable in the higher weight, hence bounded.

If follows that (\refeq{blockdecomp3}) is an isomorphism for $m=1$. invertibility for higher $m$, thus on $\mathcal X, \mathcal Y$ follows from bootstrapping using the tame estimate in the next lemma.  
\end{proof}

\medskip

\begin{lm}
\label{hypotheses2check}

 Hypothesis {\bf (II)} of Theorem \ref{nashmoser} holds for $\overline{\slashed {\mathbb D}}_p$, i.e.  there are $s,s' \in \N$ such that the following estimates hold provided $\delta_0$ is sufficiently small:  for $p \in V_0$ and $(\ph,\eta)\in U_0$ the unique solution $u=(v,\phi,\lambda)$ of 
$$\text{d}_{(\ph, \eta)}\overline{\slashed {\mathbb D}}_p u=f$$ 
obeys the tame estimate
\be  \|u\|_{m}\leq C_m \ \Big( \|f\|_{m+s} + \|(p-p_0,\eta, \ph)\|_{m+s'} \|f\|_{m_0} \Big).
\ee
uniformly over $V_0\times (U_0\cap \bold P_\mathcal  X)$ for all $m\geq m_0$.

\end{lm}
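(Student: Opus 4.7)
The plan is to upgrade the uniform $L^2$-type bound already established in Lemma~\ref{hypotheses1check} to a tame estimate on all higher-regularity spaces by applying the block decomposition~(\ref{blockdecomp3}) componentwise and tracking how the parameter dependence enters each block. Write $u=(v,\dot\lambda,\phi)$ and $g=(g_1,g_2,c)$. From the lower block one has
\begin{equation*}
\overline{\slashed D}_{p_\eta}\phi \;=\; g_2 - \mathcal B^{\text{Rg}}(v,\dot\lambda),
\end{equation*}
and from the upper block, after projecting with $\Xi_{p_\eta}\circ\Pi_{p_\eta}$,
\begin{equation*}
\overline T_{p_\eta}(v,\dot\lambda) \;=\; \Xi_{p_\eta}\Pi_{p_\eta}(g_1) - \Xi_{p_\eta}\Pi_{p_\eta}\big(*\cdot\phi\big).
\end{equation*}

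The first step is to establish parameterized tame estimates for each of the ingredients. For the Dirac operator I would prove
\begin{equation*}
\|\phi\|_{m}\;\leq\; C_m\bigl(\|\overline{\slashed D}_{p_\eta}\phi\|_{m}+ \|(p,\eta)\|_{m+s_1}\,\|\phi\|_{m_0}\bigr)
\end{equation*}
by iterating the elliptic estimate of Corollary~\ref{higherordersemielliptic} (in its $\mathcal H$-norm version from Lemma~\ref{DoncalH}), writing $\slashed D_{p_\eta}=\slashed D_0+\mathfrak d_{p_\eta}$ and treating $\mathfrak d_{p_\eta}$ as a first-order perturbation whose coefficients are $C^m$ Moser products in $(p,\eta)$. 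The standard Moser-type estimates for products and compositions on boundary Sobolev spaces supply the linear-in-top-norm form required by a tame estimate. Analogously, for $\overline T_{p_\eta}$ I would prove
\begin{equation*}
\|(v,\dot\lambda)\|_{m+2}\;\leq\; C_m\bigl(\|\overline T_{p_\eta}(v,\dot\lambda)\|_{m+3/2} + \|(p,\eta,\ph)\|_{m+s_2}(\|v\|_{m_0}+|\dot\lambda|)\bigr),
\end{equation*}
by combining the explicit pseudodifferential formula of Lemma~\ref{mainapreciseformula} (so $\overline T_{p_0}$ is elliptic of order $\tfrac12$ and invertible on $L^{3,2}\oplus\R\to L^{5/2,2}\oplus\R$) with the conormal-regularity analysis of Proposition~\ref{Invertiblepreliminary}: the correction $\mathfrak t_{p_\eta}$ is a sum of terms of the form $v^{(j)}(t)\Xi_j(t_p,x_p,y_p)$ whose coefficients are tame Moser products in $(p,\eta,\ph)$, and Case~(C) of Corollary~\ref{CaseBC} together with Lemma~\ref{coefficientregularity} converts this into the desired tame gain. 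Similarly, $\mathcal B^{\text{Rg}}$ and the one-dimensional block $*$ are tame of finite order.

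The second step is a standard bootstrap. Feed the interior tame estimate for $\overline T_{p_\eta}$ into the equation for $(v,\dot\lambda)$, using Lemma~\ref{calHprojections} to see that $\Xi_{p_\eta}\Pi_{p_\eta}$ is a pseudodifferential operator of order zero on the mixed b-e spaces whose operator norm at level~$m$ depends tamely on $(p,\eta,\ph)$; this yields
\begin{equation*}
\|(v,\dot\lambda)\|_{m+2}\;\leq\; C_m\bigl(\|g_1\|_{m+3/2}+\|\phi\|_{m+3/2} + \|(p,\eta,\ph)\|_{m+s_2}\|g\|_{m_0}\bigr),
\end{equation*}
where the second summand uses the $L^2$-bound of Lemma~\ref{hypotheses1check} to convert low-regularity factors of $u$ into $\|g\|_{m_0}$. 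Substituting this into the Dirac estimate for $\phi$ absorbs the $\mathcal B^{\text{Rg}}(v,\dot\lambda)$ term (which loses at most one derivative but gains none because $\mathcal B$ is first order in $\eta$ and $\mathcal B^{\text{Rg}}$ lands in $r\mathcal H^{1,1}$ by the argument after~(\ref{blockdecomp3})). A single further iteration then decouples the two components and gives
\begin{equation*}
\|u\|_m\;\leq\; C_m\bigl(\|g\|_{m+s}+\|(p,\eta,\ph)\|_{m+s'}\,\|g\|_{m_0}\bigr)
\end{equation*}
with $s=\tfrac32$ and $s'$ determined by the largest shift appearing in the Moser estimates above (a finite increment over $m_0$, independent of $m$). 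Interpolation via Lemma~\ref{interpolationinequalities} absorbs the cross-terms $\|(p,\eta,\ph)\|_{m+s'}\|g\|_{m'}$ for $m_0<m'<m+s$ into the two displayed terms.

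The main obstacle will be the bookkeeping in the first step: every operator in the block decomposition—$\slashed D_{p_\eta}$, the projection $\Pi_{p_\eta}$, the trivialization $\Xi_{p_\eta}$, and the parametrix $P_{p_\eta}$—depends on $p_\eta=F_\eta^{\ast}p$, and each depends on finitely many derivatives of $(p,\eta)$, so one must verify that Moser-type product and composition estimates give a genuine linear-in-top-norm bound rather than an exponential accumulation of derivatives. Here the key input is that the basis $\Psi_\ell^{p_\eta}$ of Proposition~\ref{cokproperties2} (and its higher-regularity refinement from Corollary~\ref{higherordercokernel}) satisfies bounds whose constants depend on only a fixed finite number of derivatives of $(p,\eta)$, so the pseudodifferential order-zero operator $\Pi_{p_\eta}$ is tame with a fixed shift. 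Combined with the commutation trick used in Lemma~\ref{calHprojections} (writing $[r\partial_r,\slashed D_{p_\eta}P_{p_\eta}\slashed D_{p_\eta}^\star]$ as a composition of bounded operators), this ensures that the loss of regularity remains bounded by a fixed $s'$ as $m$ varies, which is exactly the tameness requirement.
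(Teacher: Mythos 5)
Your overall strategy — prove separate tame elliptic estimates for $\overline{\slashed D}_{p_\eta}$ and $\overline T_{p_\eta}$ and then thread them through the block decomposition~(\ref{blockdecomp3}), with the base case supplied by Lemma~\ref{hypotheses1check} — is indeed the paper's route, and your Step~1 (iterated commutators plus Moser/interpolation to keep the shift $s'$ fixed as $m$ grows) is correct in outline. The problem is in Step~2, and it is not cosmetic.

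In the estimate you display for the upper block,
\[
\|(v,\dot\lambda)\|_{m+2}\;\leq\; C_m\bigl(\|g_1\|_{m+3/2}+\|\phi\|_{m+3/2} + \|(p,\eta,\ph)\|_{m+s_2}\|g\|_{m_0}\bigr),
\]
you carry the term $\|\phi\|_{m+3/2}$ coming from the off-diagonal entry $*$, treating $*$ as an arbitrary order-zero operator. With that estimate, the bootstrap you describe does \emph{not} close: substituting the Dirac estimate $\|\phi\|_m\lesssim\|g_1\|_m+\|\mathcal B^{\text{Rg}}(v,\dot\lambda)\|_m$ into the display above (and vice versa) yields $\|(v,\dot\lambda)\|_m\lesssim\ldots+\|(v,\dot\lambda)\|_{m+k+3/2}$ for the positive shift $k$ in the $\mathcal B^{\text{Rg}}$ estimate, which is a genuine net loss of regularity at every round — ``a single further iteration'' cannot decouple the system. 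The paper's argument closes precisely because the top-right entry $*$ is \emph{not} a generic order-zero map: by the alternative decomposition~(\ref{petapullback}) its image lies in the one-dimensional span of $\slashed D_{p_\eta}\Phi_{p_\eta}$, so $*\cdot\phi=c\,\slashed D_{p_\eta}\Phi_{p_\eta}$ where $c$ is the scalar $\Phi_{p_\eta}$-component of $\phi$. The contribution to the $m+3/2$ norm is then $|c|\cdot\|\slashed D_{p_\eta}\Phi_{p_\eta}\|_{m+3/2}$, and Lemma~\ref{hypotheses1check} gives $|c|\leq\|u\|_0\leq\|g\|_{m_0}$ while Lemma~\ref{secondordereigenvalues} bounds $\|\slashed D_{p_\eta}\Phi_{p_\eta}\|_{m+3/2}$ tamely by $\|(p,\eta,\ph)\|_{m+4}$. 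This converts the apparently problematic cross-term into the harmless $\|(p,\eta,\ph)\|_{m+4}\|g\|_{m_0}$, after which a \emph{single, one-directional} pass (first $(v,\dot\lambda)$, then $\phi$ via $\mathcal B^{\text{Rg}}$) produces the tame estimate. You mention the one-dimensionality of $*$ in passing in Step~1, but your Step~2 does not use it where it is load-bearing; as written, the argument does not establish the claimed tame inverse estimate.
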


\begin{proof}
We claim that it suffices show that there are tame elliptic estimates of the following form for $\overline T_{p_\eta}  $ and $\overline{\slashed D}_{p_\eta}$ individually: if $\overline {T}_{p_\eta}v=f_0$ and $\overline{\slashed D}_{p_\eta}\phi=f_1$ then 

\begin{eqnarray}
\|(v,\lambda)\|_{m}&\leq& C_m \Big( \|f_0\|_{m+3/2 }   \ + \  \|(p-p_0,\ph,\eta)\|_{m+s'} \| f_0 \|_{m_0} \Big) \label{Tptame}\\ 
\|\phi \|_{m}&\leq& C_m \Big( \|f_1\|_{m }     \ + \   \|(p-p_0,\ph,\eta)\|_{m+s'} \| f_1 \|_{m_0} \Big) \label{Dptame} 
\end{eqnarray}

\noindent for $m_0=11$. 

Indeed, given (\refeq{Tptame}--\refeq{Dptame}), one concludes the lemma as follows: write $f=(f_0, f_1)$, so that by the decomposition (\refeq{blockdecomp3}) one has $\overline{T}_{p_\eta}(v)= f_0 - \pi_{p_\eta}(\phi)$.
Applying (\refeq{Tptame}) shows   

\begin{eqnarray}
\|(v,\lambda)\|_{m} & \leq& C_m \Big( \|f_0 - \pi_{p_\eta}(\phi)\|_{m+3/2}  \ + \   \|(p-p_0, \ph,\eta)\|_{m+s'} \|f_0-\pi_{p_\eta}(\phi)\|_{m_0}\Big) \\ 
& \leq& C_m \Big( \|f_0 \|_{m+3/2} \  + \  \|\pi_{p_\eta}\phi\|_{m+3/2} \\  &  &\ + \  \  \|(p-p_0, \ph,\eta)\|_{m+s'} \|f_0\|_{m_0} \ + \  \|(p-p_0, \ph,\eta)\|_{m+s'} \|\pi_{p_\eta}(\phi)\|_{m_0}\nonumber \\ 
 &\leq & C_m \Big( \|f_0 \|_{m+3/2} \  +  \  \|(p-p_0,\ph,\eta)\|_{m+s'} \|f\|_{m_0}  \Big)  \label{tosubstitutevlambda}  \end{eqnarray} 

\noindent In the last step, we have used that the 1-dimensional image of $\pi_{p_\eta}$ obeys 

$$\|\pi_{p_\eta}(\phi)\|_{m+3/2} \leq \|\phi\|_{1} \ \cdot \  \| \Pi_{p_\eta}\Phi_{p_\eta}\|_{m+3/2} \leq C_m\|\phi\|_{1}  \cdot \|(p-p_0, \eta,\ph)\|_{m+4} $$

\noindent by Cauchy-Schwartz on (\refeq{pipdef}), and elliptic bootstrapping of the eigenvector $\Phi_{p_\eta}$ using the second-order operator from Lemma \ref{secondordereigenvalues}, and $\|\phi\|_1\leq \|f\|_{m_0}$ by Lemma \ref{hypotheses1check}.  

Similarly, for the second component, (\refeq{Dptame}) shows 
\bea
{\|\phi\|_{m} }&=& C_m \Big( \|f_1 -(1-\Pi_{p_\eta})\mathcal B(v,\lambda) \|_{m} \  +  \  \|(p-p_0,\ph,\eta)\|_{m+s'} \|f_1-(1-\Pi_{p_\eta})\mathcal B(v,\lambda)\|_{0} \Big)  \\ 
 &=& C_m \Big( \|f_1 \|_{m} \  +  \  \|(v,\lambda)\|_{m+s} +  \|(p-p_0,\ph,\eta)\|_{m+s'} \|f\|_{m_0} + \|(p-p_0,\ph,\eta)\|_{m+s'} \|(v,\lambda)\|_{4}   \Big).   
\eea 

\noindent where we have used that there is a (tame) boundedness estimate \be \|(1-\Pi_{p_\eta})\mathcal B(v,\lambda)\|_{m}\leq C_m \Big( \|(v,\lambda)\|_{m+s}+ \|(p-p_0,\eta,\ph)\|_{m+4} \|(v,\lambda)\|_{m_0}\Big)\label{tameprojection}\ee
\noindent for the range components. Such an estimate follows from showing $(1-\Pi_{p_\eta}), \mathcal B$ are individually bounded tame maps; the first of these is Lemma \ref{calHprojections} (in which the boundedness is easily seen to tame), and the second follows from interpolation and Young's inequality (cf. the subsequent Lemma \ref{hypothesis3check}).  Substituting the previous estimate  (\refeq{tosubstitutevlambda}) on $\|(v,\lambda)\|_m$ and using that $\|(v,\lambda)\|_{4}\leq \|g\|_{m_0}$ by Lemma \ref{hypotheses1check}  then shows that 
\bea \|(v,\phi,\lambda)\|_m &\leq& C_m \Big( \|f_1 \|_{m+3/2} \  + \   \|(p-p_0,\ph,\eta)\|_{m+s'} \|f\|_{m_0}  \Big) \eea
as desired. 

To complete the lemma, we now prove (\refeq{Tptame}) and (\refeq{Dptame}). The latter follows from differentiating elliptic estimates in the standard way. To elaborate briefly, we begin with the estimate for $\slashed D_{p_\eta}$ and the $H^m_\text{b}$ term in the norms. One shows by iterating commutators that there is an elliptic estimate of the form 
\be
\|\phi\|_{rH^{m,1}_{\text{b},e}}  \leq C_m \Big ( \|\overline{\slashed D}_{p_\eta} \phi\|_{H^m_\text{b}}  \ + \  \|(p-p_0,\eta,\ph)\|_{s'} \|\phi\|_{rH^{m-1,1}_{\text{b},e}} \ + \  \ldots \  +  \  \|(p-p_0,\eta,\ph)\|_{m+ s'} \|\phi\|_{rH^1_e} \Big)
\ee
for each $m$ and $s'< m_0$. Given such an estimate, the $k^{th}$ middle term can be absorbed into the $k=0,m$ ones by Young's inequality and interpolation with $m_2=m+s'$ and $m_1=s'$ on the first factor and $m_2=m-1$ and $m_2=0$ on the second factor. The tame estimates are then a consequence of induction by substituting the tame estimate on  $ \|\phi\|_{rH^{m-1,1}_{\text{b},e}}$ beginning with the base case provided by Lemma \ref{hypotheses1check}, and using that $\|(p-p_0,\eta,\ph)\|_{s'}\leq 1$.  The same argument applies for the spaces $r\mathcal H^{m,1}$ and $\mathcal H^{m,0}$ using the elliptic estimate and commutation relations from Lemma \ref{DoncalH}. (\refeq{Dptame}) follows.  

Similarly, for (\refeq{Tptame}) it suffices to show 
\be
\|v\|_{{m+2}}  \leq C_m \Big ( \|\overline{T}_{p_\eta} v\|_{{m+3/2}}  \ + \  \|(p-p_0,\eta,\ph)\|_{s'} \|v\|_{{m-1+3/2}} \ + \  \ldots \  +  \  \|(p-p_0,\eta,\ph)\|_{m+ s'} \| v\|_{3/2} \Big),\label{Tpderivs}
\ee
and applying the same combination of interpolation and Young's inequality. (\refeq{Tpderivs}) follows from iterating commutators again for each term of (\refeq{T+S}). 
To use the term $\mathcal B_{\Phi_0}(v)$ as an example, one has $$\mathcal B_{\Phi_0}(\nabla_t^m v)= \nabla_t^m \mathcal B_{\Phi_0}(v) + (\nabla_t \mathcal B_{\Phi_0}) (\nabla_t^{m-1}v)\ +   \ldots  +  \  (\nabla_t^m\mathcal B_{ \Phi_0})(v).$$

\noindent By the same argument as in Proposition \ref{Invertiblepreliminary} and Lemma \ref{hypotheses1check}, $v\in H^{m+3/2}$ implies that the first term has conormal regularity $3/2$, while each of the remaining terms has conormal regularity at least $5/2$. Proceeding now by induction, assume the estimate holds for $m-1$, and applying the $m=1$ estimate from Lemma \ref{hypotheses1check} leads to $\nabla_t^mv$: 
\bea
\|\nabla_t^mv\|_{2} & \leq &C_m \Big ( \|\Xi_{p_\eta}^{-1}\Pi_{p_\eta}(\nabla_t^m \mathcal B_{\Phi_0}(v) + \ldots + \nabla_t^m F^2(\eta^2, v, \ph))\|_{{3/2}} \\ & &  \ + \  \|(p-p_0,\eta,\ph)\|_{s'} \|v\|_{{m-1+3/2}} \ + \  \ldots \  +  \  \|(p-p_0,\eta,\ph)\|_{m+ s'} \| v\|_{3/2} \Big)
\eea
\noindent where the induction hypothesis has been applied to all but the first term. As in (\refeq{tameprojection}), the projection $\Pi_{p_\eta}$ behaves in a tame fashion, and it is easy to check from the construction in Section 4 (e.g. Lemma \ref{Utame}) that Corollary \ref{regularitycoincides} also behaves tamely. Commuting $\nabla_t$ past $\Xi_{p_\eta}^{-1}\Pi_{p_\eta}$ therefore contributes to the lower order terms, and we conclude 

\bea
\|\nabla_t^mv\|_{2} &\leq & C_m \Big ( \| \nabla_t^m \overline T_{p,\eta}(v)\|_{{3/2}}   \ + \  \|(p-p_0,\eta,\ph)\|_{s'} \|v\|_{{m-1+3/2}} \ + \  \ldots \  +  \  \|(p-p_0,\eta,\ph)\|_{m+ s'} \| v\|_{3/2} \Big)  \\
&\leq & C_m \Big( \|f_0\|_{m+3/2 }   \ + \  \|(p-p_0,\ph,\eta)\|_{m+s'} \| f_0 \|_{m_0} \Big)
\eea
which yields (\refeq{Tptame}).
\end{proof}

\medskip

\begin{lm}
\label{hypothesis3check}

 Hypothesis {\bf (III)} of Theorem \ref{nashmoser} holds for $\overline{\slashed {\mathbb D}}_p$, i.e.  there are $r,r' \in \N$ such that the following holds provided $\delta_0$ is sufficiently small: for $p\in V_0$ and $(\ph,\eta)\in U_0$, the second derivative obeys  the tame estimate
\be  \|\text{d}^2_{(\eta,\ph)} \overline{\slashed {\mathbb D}}_p(u, \upsilon)\|_{m}\leq C_m \ \Big( \|u\|_{m+r}\|\upsilon \|_{m_0} \ + \  \|u\|_{m_0}\|\upsilon \|_{m+r}  \ + \ \|u\|_{m_0}\|\upsilon \|_{m_0}\cdot (1+ \|(p,\eta,\ph)\|_{m+r'})\Big).
\ee
for $u,\upsilon \in X$ uniformly over $V_0\times (U_0\cap \bold P_\mathcal X)$ for all $m\geq m_0$. 
\end{lm}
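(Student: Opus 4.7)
The plan is to verify the tame quadratic estimate term-by-term using the explicit formula for $\text{d}^2_{(\eta,\ph)}\overline{\slashed{\mathbb D}}_p$ in Corollary \ref{tamesecondderiv}, reducing everything to a standard Moser-type multiplication inequality on the weighted edge spaces. Each summand on the right-hand side of Corollary \ref{tamesecondderiv} is polynomial in $(p,\eta,\ph)$ and their derivatives (up to order two in $\eta$ and order one in $\ph$), bilinear in the two inputs $u=(v,\dot\lambda,\phi)$ and $\upsilon=(w,\dot\mu,\psi)$, and carries a background factor of $\Phi_0+\ph$ or $\nabla(\Phi_0+\ph)$. Splitting the codomain into $Y_m' \oplus Y_m''\oplus\R$ via the trivialization $\Xi_{p_\eta}$ of Proposition \ref{Obtrivialization}, the range component is controlled by Lemma \ref{calHprojections} and the obstruction component by the conormal regularity argument of Lemma \ref{conormalCaseA} and Corollary \ref{CaseBC} (as already deployed in the proofs of Proposition \ref{Invertiblepreliminary} and Lemma \ref{hypotheses1check}). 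Hence the heart of the matter is a Leibniz-type tame estimate for products in the spaces $L^{m,2}(\mathcal Z_0;N\mathcal Z_0)$, $r\mathcal H^{m,1}$, and $\mathcal H^{m,0}$.

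First I would record the fundamental Moser inequality
\[
\|f\cdot g\|_{m} \ \leq \ C_m\bigl(\|f\|_{m}\|g\|_{m_0} \ + \ \|f\|_{m_0}\|g\|_{m}\bigr)
\]
valid for the three spaces in question (with $m_0=11$ large enough that Sobolev embedding gives $\|g\|_{C^0}\leq C\|g\|_{m_0}$). For ordinary Sobolev spaces on $\mathcal Z_0$ this is classical; for the boundary-edge spaces $r\mathcal H^{m,1}$ and $\mathcal H^{m,0}$ one obtains it by commuting the boundary vector fields $\nabla^\text{b}$ past products and applying the interpolation inequalities of Lemma \ref{interpolationinequalities}, together with the pointwise control $|(\nabla^\text{b})^k\ph|\leq C r^{1/2}\|\ph\|_{r\mathcal H^{k+4,1}}$ from Lemma \ref{coefficientregularity}. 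Iterated multilinear versions then follow by induction: for a product of $j$ factors $f_1\cdots f_j$, one obtains
\[
\|f_1\cdots f_j\|_m \ \leq \ C_{m,j}\sum_{i=1}^{j} \|f_i\|_m \prod_{k\neq i}\|f_k\|_{m_0}.
\]

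Second, I would plug this into each summand of Corollary \ref{tamesecondderiv}. The leading bilinear terms $(\mathcal B_\psi+\frak B_\psi)(v)$ and $(\mathcal B_\phi+\frak B_\phi)(w)$ consist of two derivatives of $v$ (or $w$) contracted against $\psi$ (or $\phi$) and its first derivative, with coefficients depending smoothly on the background $(p,\Phi_0)$; the Moser inequality applied with the natural distribution of norms yields exactly the three terms on the right-hand side of the hypothesis, with $r=2$ and some $r'$ absorbed into the coefficient estimates through the Sobolev embedding on $\mathcal P$. The $M^1$, $M^2$, and $F^{3,4,5}$ terms are treated identically, viewing each as a product of a bounded number of factors drawn from $\{\eta,\eta',\eta'',v,v',v'',w,w',w'',\phi,\psi,\nabla\phi,\nabla\psi,\Phi_0+\ph,\nabla(\Phi_0+\ph),\text{smooth coefficients of }p\}$; the polyhomogeneity of $\ph$ (property (P)) ensures that the factor $\Phi_0+\ph$ (or its gradient) behaves like $r^{1/2}$ (resp.\ $r^{-1/2}$) pointwise, providing the weight needed to place $M^2(\cdot,\eta'')(\Phi_0+\ph)$ into $\mathcal H^{m,0}$, and likewise for the obstruction component into $\text{\bf Ob}^{5/2}_{p_\eta}$ via the conormal-regularity argument of Section \ref{section6.1}.

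Third, I would handle the projections. For the range component, Lemma \ref{calHprojections} shows $\Pi^{\mathfrak R}_{p_\eta}:\mathcal H^{m,0}\to \mathcal H^{m,0}$ is bounded uniformly in $p_\eta$; since the coefficients of this projection depend tamely on $p_\eta$ via the commutator relations in its proof, composing with it preserves the tame estimate. For the obstruction component, I would invoke the computation behind Lemma \ref{hypotheses1check} verbatim: each term in the second derivative, after projection, can be written as $v'(t)\Xi_0 + v''(t)\Xi_1$ (and similarly for $w$) in Fermi coordinates adapted to $p_\eta$, where $|\nabla_t^k\Xi_i|\leq C r^{i-1/2}$ with constant governed tamely by $\|(p,\eta,\ph)\|_{k+m_0}$, so the conormal-regularity bound of Case (C) of Corollary \ref{CaseBC} delivers the $L^{5/2,2}$ norm on $\mathcal Z_0$ with precisely the asserted multilinear structure.

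The main obstacle I expect is not conceptual but bookkeeping: the $M^2(v',\eta'')(\Phi_0+\ph)$ and $M^2(w',\eta'')(\Phi_0+\ph)$ terms contain \emph{two} derivatives of $\eta$, which sits at the borderline of what the domain space $X_m'=L^{m,2}$ accommodates for small $m$ after projection to $\text{\bf Ob}^{5/2}$. Here the argument already used to establish (\ref{delta32regularity}) must be run in its bilinear form: one shows that when $\eta''$ is paired with $v'$ (rather than differentiated once more, which the second derivative never does), the factor $\Phi_0+\ph=O(r^{1/2})$ supplies the missing $r^{1/2}$ that, via the $\mathcal Z_0$-integral, translates into the $L^{5/2,2}$ (rather than $L^{3/2,2}$) gain. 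Verifying this cleanly with a uniform constant requires the polyhomogeneity of $\ph$ (i.e.\ the propagation hypothesis $(v,\phi)\in \bold P_\mathcal X$ implicitly entering through the background configuration $(\eta,\ph)\in U_0\cap\bold P_\mathcal X$), which is why Hypothesis {\bf (III)} only needs to be checked on $\bold P_\mathcal X$. Once this single conormal-regularity bookkeeping is in place, the overall tame estimate assembles immediately from the Moser inequalities, with $r=2$, $r'=m_0+3$, and constants depending only on $m_0$ and the universal bounds on $(p_0,\mathcal Z_0,\Phi_0)$.
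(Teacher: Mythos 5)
Your overall strategy is the same as the paper's: Leibniz expansion of $(\nabla^\text{b})^m$ applied to each term of Corollary~\ref{tamesecondderiv}, followed by the interpolation inequalities of Lemma~\ref{interpolationinequalities} and Young's inequality to arrive at the tame multilinear form. The paper phrases this as a direct calculation on the representative term $M^2(w',v'')(\Phi_0+\ph)$ (with $r=r'=6$) rather than isolating an abstract Moser product inequality first, but the content is identical.

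Where your proposal diverges — and where it overcomplicates — is in the third step, the extended treatment of projections and conormal regularity. You invoke the trivialization $\Xi_{p_\eta}$, Corollary~\ref{CaseBC}, and the $L^{5/2,2}$-gain from Section~\ref{section6.1} to control the $\text{\bf Ob}_{p_\eta}$-component of the second derivative. This machinery is essential for Hypotheses~\textbf{(I)} and \textbf{(II)}, where the linearization must be inverted and a gain of $3/2$ derivatives in the obstruction direction is exactly what is being proved. But Hypothesis~\textbf{(III)} asks only for a tame \emph{boundedness} estimate on the $\mathcal Y$-norm of $\text{d}^2\overline{\slashed{\mathbb D}}_p$ — no gain is required. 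By the remark following Lemma~\ref{spaces}, the norm on $Y_m'=\text{\bf Ob}\cap H^m_\text{b}$ is equivalent to the $\mathcal H^{m,0}$-norm, so the full $\mathcal Y$-norm is controlled by the $\mathcal H^{m,0}$-norm of the unprojected second derivative (with the bounded projections of Lemma~\ref{calHprojections} absorbed into constants). The paper therefore estimates $\|\nabla_\text{b}^m(\cdot)\|_{L^2}$ and $\|(r\del_r+\tfrac12)(\cdot)\|_{r^\nu H^{m-1}_\text{b}}$ directly and never needs the conormal regularity argument here. Your ``main obstacle'' paragraph about $M^2(v',\eta'')(\Phi_0+\ph)$ being borderline after projection to $\text{\bf Ob}^{5/2}$ is therefore a phantom difficulty for this particular hypothesis (the polyhomogeneity of $\ph$ does matter, but only to give the $(r\del_r+\tfrac12)$-weight term the integrability needed for the $\mathcal H^{m,0}$-norm, not to gain $3/2$ derivatives). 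Your claimed loss $r=2$ is also optimistic; after the Sobolev embeddings used to separate out the $C^0$-controlled factors, the paper's value $r=6$ is the honest count.
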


\begin{proof}
This tame estimate follows directly from using the boundedness of the terms comprising $\text{d}^2_{(\eta,\ph)}\overline{\slashed {\mathbb D}}_p$ in conjunction with the interpolation inequalities.

As in Corollary \ref{tamesecondderiv}, the second derivative is given by 
\bea
\text{d}^2_{(\eta,\ph)}\slashed {\mathbb D}_p\Big((v,\phi), (w,\psi)\Big) &= & \   \ (\mathcal B_{\psi}+\frak B_\psi)(v) \ + \ (\mathcal B_{\phi}+\frak B_\phi)(w)   \\
& & \ + \ M^1(w', v')\nabla(\Phi_0 + \ph)  \ + \  M^1(\eta', v')\nabla \psi  \ + \  M^1(\eta', w')\nabla \phi \\ 
& & \ + \  M^2(w', v'')(\Phi_0 + \ph) \  + \  M^2(\eta', v'')\psi  \ +  \ M^2(v', w'') (\Phi_0 + \ph) \\ & &  \   + \ M^2(v', \eta'')\psi \ +  \ M^2(\eta', w'')\phi \  +  \ M^2(w', \eta'')\phi\\ & & \ + \ F^3(\eta^2, w,  \ph) \  + \  F^4(\eta^2, v, \psi) \  + \  F^5(\eta, v,w, \Phi_0+\ph)
\eea

\noindent For the sake of the proverbial deceased horse, we will prove the lemma for the term $M^2(w', v'')(\Phi_0 + \ph)$; it is straightforward to verify that the same argument applies equally well to the remaining terms. 

To begin, we bound that $H^m_\text{b}$-term in the norm. By Proposition \ref{fullynonlinear} Item (C) part (iii), this term is itself a sum of terms of the form $m_p(y) w' v'' (\Phi_0 + \ph)$. Differentiating the part involving $\ph$ of such a term, 

\bea
\|\nabla_\text{b}^m  (m_p(y) w' v''  \ph)\|_{L^2} &\leq   &  C_m \sum_{0\leq k \leq m} \|\nabla_\text{b}^k (v' w''  ) \nabla_\text{b}^{m-k}(m_p \ph))\|_{L^2} \\ 
&\leq &  C_m \sum_{0\leq k \leq m} \|\nabla_\text{b}^k (v' w''  ) \|_{L^{2,2}(S^1)}\| \nabla_\text{b}^{m-k}(m_p \ph)\|_{H^2_\text{b}} \\ 
 \ &\leq &  C_m \sum_{0\leq k \leq m} \| v' w''   \|^{1-\tfrac{k}{m}}_{H^2(S^1)} \ \| v' w''   \|^{\tfrac{k}{m}}_{H^{m+2}(S^1)}  \   \| m_p  \ph \|^{1-\tfrac{k}{m}}_{H^2_\text{b}}\ \| m_p \ph \|^{\tfrac{k}{m}}_{H^{m+2}_\text{b}} \\ 
\eea
where we have used the Sobolev embedding $C^0\hookrightarrow H^2(S^1)$ and then the interpolation inequalities with $m_2 = m+2, m_1= 2$. By Young's inequality with exponents $p= \tfrac{m}{k}$ and $q=\tfrac{m}{m-k}$, one finds the above is bounded by 
\bea
{\phantom {\ } } &\leq  &  C_m\left(  \ \| v' w''   \|_{L^{m+2,2}(S^1)}  \| m_p  \ph \|_{H^{2}(S^1)}  \ + \   \| v' w''   \| _{H^{2}(S^1)} \| m_p \ph \|_{H^{m+2}_\text{b}}\right) \\
 &\leq & C_m\Big(  \ \| v'   \|_{H^{m+4}(S^1)}  \| w'' \|_{H^{4}(S^1)}  \ + \ \| v'   \|_{H^{4}(S^1)}  \| w'' \|_{H^{m+4}(S^1)} \   \\  & & 
 + \   \| v'  \| _{H^{4}(S^1)}\| w'' \| _{H^{4}(S^1)}\left( \| m_p \|_{H^{m+4}_\text{b}}\| \ph \|_{H^{4}_\text{b}}+ \| m_p \|_{H^{4}_\text{b}}\| \ph \|_{H^{m+4}_\text{b}}\right)\Big) \\ 
 &\leq &  C_m\Big(  \ \| v   \|_{H^{m+5}(S^1)}  \| w \|_{H^{6}(S^1)}  \ + \  \| v   \|_{H^{5}(S^1)}  \| w \|_{H^{m+ 6}(S^1)} \ + \    \| v  \| _{H^{5}(S^1)}\| w'' \| _{H^{6}(S^1)} \ \cdot \   \|(p, \eta, \ph)\|_{m+6}\Big) \
\eea
\noindent where we have repeated the interpolation and Young's steps from above with $m_2=m+4$ and $m_1=4$ on both products, and then used the fact that $6\leq m_0$ so that $\|m_p\|_{H^4_\text{b}} +  \|\ph\|_{H^4_\text{b}}
\leq C$.  This shows the desired estimate for $r, r'=6$. The same steps apply to the $r^{\nu}H^{m-1}_\text{b}$ term in the norms (Definition \ref{spaces}) using the commutation relations from Lemma \ref{DoncalH}. The other terms are similar, with the constant term  in $(1+ \|(p,\eta,\ph)\|_{m+r'})$ on the right hand side arising from the terms not involving $(p,\ph,\eta)$ such as $\mathcal B_\psi (v)$. \end{proof}

\subsection{Proofs of Theorem \ref{mainb} and Corollary \ref{maind}}
\label{section8.5}

In this subsection, we invoke the Nash-Moser Implicit Function Theorem \refeq{nashmoser} to conclude the proofs of Theorem \ref{mainb} and Corollary \ref{maind}, beginning with the latter. 

\begin{proof}[Proof of Corollary \ref{maind}] Lemmas \ref{hypotheses1check}, \ref{hypotheses2check}, and \ref{hypothesis3check} verify respectively that hypotheses {\bf (I)}, {\bf (II)}, and {\bf (III)} from Section \ref{NM1} hold on $V_0 \times (U_0 \cap \bold P_\mathcal X)$. Lemma \ref{errortermslem}  (which extends easily to the spaces $\mathcal H^{m,0}$ from Lemma \ref{spaces}) shows that $f_p$ obeys $\|f_p\|_{m}\leq C \|p\|_{m+s}$. Moreover, since term in $f_p=\slashed D_p\Phi_0$ is the product of functions that are smooth across $\mathcal Z_0$ with the polyhomogeneous $\Phi_0$ and its derivatives (cf. Appendix \ref{appendixC}), one has that $f_p\in \bold P_\mathcal Y$ is also polyhomogeneous with index set $\Z^+-\tfrac12$.

 It remains to show that the property (P) of being polyhomogeneous is propagated by the iteration in the sense of Definition \ref{propagated}. Lemma \refeq{spaces} and its proof in Appendix \ref{appendixII} show that the smoothing operators $S_\e, S_\e^\text{b}$ preserve polyhomogeneity.
 The argument above that $f_p\in \bold P_\mathcal Y$, in fact applies equally well to show that $\slashed D_{p_\eta}(\Phi_{0} +\ph)$ is polyhomogeneous for any pullback parameter $p_\eta=F^*_\eta p$, provided $\ph$ is polyhomogeneous with index set $\Z^++\tfrac12$. This is to say that \be \ph \in \bold P_\mathcal X  \ \ \Rightarrow \ \  \overline{\slashed {\mathbb D}}_p(\eta, \Phi_0+\ph) \in \bold P_\mathcal Y\label{propagated2}\ee  preserves polyhomogeneity. To show polyhomogeneity (P) is propagated, we therefore verify that \bea  \ph \in \bold P_\mathcal X   \text{ \ , \ } f\in \bold P_\mathcal Y \ \  \Rightarrow \ \  (\text{d}_{(\eta,\ph)}\overline{\slashed {\mathbb D}}_p)^{-1}f \in \bold P_\mathcal X,\eea

 \noindent this being the third requirement in Definition \ref{propagated}. 

Suppose that $\ph\in \bold P_{\mathcal X}, f\in \bold P_\mathcal Y$ is polyhomogeneous with index set $\Z^+-\tfrac12$, and suppose that $$(\phi, v, \lambda)=(\text{d}_{(\eta,\ph)}\overline{\slashed {\mathbb D}}_p)^{-1}f \in \mathcal X$$ is the unique solution guaranteed by Lemma \ref{hypotheses1check}. By the block-diagonal decomposition (\refeq{blockdecomp3}) from Lemma \ref{hypotheses1check}, this solution obeys 
\begin{eqnarray} \overline T_{p_\eta}(v) + \lambda c_1 \Pi_{p_\eta}\Phi_{p_\eta}&=& \Pi_{p_\eta}f \\ 
(1-\Pi_{p_\eta})\mathcal B_{\ph}(v) + \overline {\slashed D}_{p_\eta}\phi&=&(1-\Pi_{p_\eta})f \label{blockdiagonalpropagated}
\end{eqnarray}

\noindent where $c_1=\br\slashed D_{p_\eta}\phi, \Phi_{p_\eta}\Phi_{p_\eta} \kt_{L^2}$. The projection $\Pi_{p_\eta}$ preserves polyhomogeneity, since \be \Pi_{p_\eta}=\slashed D_{p_\eta}P_{p_\eta} \slashed D_{p_\eta}^\star\label{projector}\ee

\noindent (by Corollary \ref{higherregularitymappings} or Proposition \ref{calHprojections}) is the composition of three operators, all of which individually preserve polyhomogeneity. That each of these preserves polyhomogeneity can be seen by differentiating or solving the expansions term by term (cf. \cite[Prop. 7.17]{MazzeoEdgeOperators} for a general proof). Just as in the proof of (\refeq{propagated2}), $\mathcal B_\ph(v)$ consists of products of functions that are smooth across $\mathcal Z_0$ with the polyhomogeneous $\ph, \nabla \ph$. Rearranging (\refeq{blockdiagonalpropagated}), 
\be  \overline {\slashed D}_{p_\eta}\phi = (1-\Pi_{p_\eta})f \ - \ (1-\Pi_{p_\eta})\mathcal B_{\ph}(v)\label{restrictedpropagated}\ee
\noindent and all the terms on the right side are therefore polyhomogeneous with index set $\Z^+-\tfrac12$. This implies, again solving term by term (cf. \cite[Prop. 7.17]{MazzeoEdgeOperators}) that $\phi$ is polyhomogeneous with index set $\Z^++\tfrac12$, with the caveat that we may {\it a priori} have a logarithm term on the $r^{1/2}$ coefficient. 

To rule out the appearance of logarithm terms with radial dependence $r^{1/2}\log(r)$, we investigate the first term of the polyhomogeneous expansion, which is obtained by formally solving this initial term. In particular, the non-appearance of logarithm terms is a consequences of a restriction on the $\theta$-Fourier modes that appear with the $r^{1/2}$ coefficient. To elaborate, let $(t,r,\theta)$ denote the polar Fermi coordinates (Definition \ref{fermi}) of the metric in the parameter $p_\eta$. Logarithm terms $e^{ik\theta}r^{1/2}\log(r)$ would arise from the right-hand side having terms $r^{-1/2}e^{\pm 3i\theta/2}$. Proceeding by induction, suppose  that $\ph$ hadsno such term at the $N^{th}$ iteration, and we claim no such term can then appear at the $(N+1)^{st}$ stage. Indeed, if $\ph$ has leading order terms $e^{\pm i\theta/2}r^{1/2}$, then the error $f_N$ at this stage also has only leading order terms $e^{\pm \theta/2}r^{-1/2}$ (these leading order terms arise as the leading order of $\nabla_z\ph,\nabla_{\overline z}\ph$ times smooth functions of $t$ via (\refeq{propagated2})). The terms from $\mathcal B_{\ph}$ obey the same restriction, and this restriction is respected by the projection $\Pi_{p_\eta}$ since the same argument applies to each of the three operators (\refeq{projector}). It follows that all the terms on the right side in (\refeq{restrictedpropagated}) have leading order $e^{i\pm \theta/}r^{-1/2}$. Formally solving shows that the correction $\phi$ has the same leading order terms as $\ph$. Since the smoothing operators $S_\e^\text{b}$ were construction to preserve this property (see Appendix \ref{appendixII}), this closes the induction. We conclude that the property (P) of being polyhomogeneous in the sense of having expansions of the form (\refeq{polyhomZ+12})-(\refeq{polyhomZ-12}) without logarithm terms is propagated. 

By the  Nash-Moser Implicit Function Theorem \refeq{nashmoser}, there is therefore an open neighborhood $\mathscr V_0\subset \mathcal P$ of smooth parameters such that for $p\in \mathscr V_0$ there exists a unique solution $(\mathcal Z_p, \Phi_p , \Lambda_p)$ to the equation \be {\slashed {\mathbb D}_p(\mathcal Z_p,\Phi_p)}=\Lambda_p \Phi_p\label{tobeshown}\ee  
and the triples $(\mathcal Z_p, \Phi_p , \Lambda_p)$ define a smooth tame graph over $\mathscr V_0 \times \{0\}\subset \mathscr V_0\times \mathcal  X$. This completes the proof of Corollary \refeq{maind} in the presence of Assumption \ref{assumption5}. In the absence of Assumption \ref{assumption5}, the standard Kuranishi framework (see, e.g. Section 3.3 of \cite{DWDeformations}) applies to show that the set of parameters for which (\refeq{tobeshown}) holds is described by the zero set of a smooth tame map $$\kappa_p : \mathscr V_0\times \R^n\to \R^n$$
where $n=\dim(\ker(T_{\Phi_0}))$ is the dimension of the kernel of the index 0 map from Section \ref{section6.3}. 
 \end{proof}
 
 \bigskip

 \begin{proof}[Proof of Theorem \ref{mainb}] The projection $\pi(\mathscr M_{\Z_2})\subseteq \mathscr V_0 \cap \mathcal P$ of the universal moduli space of $\Z_2$-harmonic spinors to the parameter space is defined by the zero-set $$\pi(\mathscr M_{\Z_2})=\Lambda^{-1}(0)\cap \mathscr V_0$$
 of the eigenvalue $\Lambda: \mathscr V_0\to \R$ in Corollary \ref{maind}, and there is locally a unique $\Z_2$-harmonic spinor $(\mathcal Z_p, \Phi_p)$ up to normalization and sign for each $p\in \Lambda^{-1}(0)$, hence the projection $\pi$ is a local homeomorphism. 
 
To conclude the theorem, we show that Assumption \ref{assumption5} implies that the map $\Lambda: \mathscr V_0\to \R$ is transverse to 0. To see this, let $p(s)$ be a path of parameters with $p(0)=p_0$ to be specified momentarily. By Corollary \ref{maind}, such a path implicitly defines triples $(\mathcal Z_s, \Phi_s, \Lambda_s)$ satisfying (\refeq{tobeshown}) for $s$ sufficiently small. Differentiating (\refeq{tobeshown}) at $s=0$ yields the relation that  \be \left(\d{}{s}\Big |_{s=0}\slashed D_{\mathcal Z(s), p_0}\right) \Phi_0 + \left(\d{}{s}\Big|_{s=0} \slashed D_{\mathcal Z_0, p(s)}\right)\Phi_0 +\slashed D_0 \dot \Phi =\dot \Lambda \Phi_0\label{lambdavariation}\ee

\noindent where $\cdot$ denotes the $s$-derivative at $s=0$. We now choose $p(s)=(g(s), B(s))$ so that the derivative $(\dot g, \dot B)$ has the following properties. Let $\dot B$ be a smooth perturbation supported on a neighborhood disjoint from $N_{r_0}(\mathcal Z_0)$ such that $\br \gamma(\dot B) \Phi_0, \Phi_0\kt\neq 0$. Given this, we define $\dot g$ in terms of $\dot B$ as follows. By Assumption \ref{assumption5}, we know that $\text{ob}_0\circ T_{\Phi_0}: H^2(\mathcal Z_0; N\mathcal Z_0)\to \text{ker}(\slashed D_0|_{L^2})$ is injective, and the closure of its range has 1-dimensional orthogonal complement. Let $\Phi_1$ be the normalized spinor whose span is this 1-dimensional space (note that in general $\Phi_0\neq \Phi_1$, since $T_{\Phi_0}$ need not have image orthogonal to $\Phi_0$).  Decompose $\Pi_0(\gamma(\dot B)\Phi_0)=(c_0\Phi_1, \xi)$, and take the path $g(s)$ such that $\dot g=\dot g_\eta$ where $\text{ob}_0\circ T_{\Phi_0}(\eta)=-\xi$, so that $$\Pi_0(\mathcal B_{\Phi_0}(\eta))=(0, -\xi)\in \R \Phi_1 \oplus \text{Im}(T_{\Phi_0})\simeq \text{\bf Ob}_0$$ 
where $\text{\bf Ob}_0=\text{\bf Ob}(\mathcal Z_0)$ is as in Section \ref{section4}. Moreover, since $\overline T_{\Phi_0}$ is an isomorphism by Assumption \ref{assumption5}, one has $\br\Phi_1, \Phi_0 \kt\neq 0$. Since $\dot B$ is smooth and supported away from $\mathcal Z_0$, $\eta \in C^\infty(\mathcal Z_0;N\mathcal Z_0)$ by Item (B) of Corollary \ref{CaseBC}. By design, the first stage of iteration in this case now requires no correction to $\mathcal Z_0$, hence $\mathcal Z(s)=O(s^2)$ (no smoothing is needed in this first stage as $\eta$ is already smooth). Taking the inner product of (\refeq{lambdavariation}) with $\Phi_1$ then yields

  \be\br\cancel{ \tfrac{d}{ds}|_{s=0}\slashed D_{\mathcal Z(s),p_0} \Phi_0, \Phi_1} \kt + \br -c\Phi_0, \Phi_1\kt +\cancel{\br \slashed D_0 \dot \Phi, \Phi_1\kt } =\dot \Lambda \br\Phi_0, \Phi_1\kt \label{lambdavariation}\ee

\noindent wherein the first term vanishes because $\mathcal Z(s)=O(s^2)$, and the third via integration by parts since $\Phi_1\in \ker(\slashed D_0|_{L^2})$. We conclude that $\dot \Lambda\neq 0$ and that $\Lambda: \mathscr V_0\to \R$ is transverse to 0 at $p_0$, hence on $\mathscr V_0$ after possibly intersecting with a smaller open set. 
 
 When Assumption \ref{assumption5} fails, the standard Kuranishi framework (Section 3.3 of \cite{DWDeformations}) applies again to show that the $\pi(\mathscr M_{\Z_2})$ is given locally by the zero-set of a smooth tame map 
 
 $$(\kappa_p,\Lambda) : \mathscr V_0\times \R^n\to \R^n\times \R$$
 where $n=\dim(\ker(T_{\Phi_0}))$ is the dimension of the kernel of the index 0 map from Section \ref{section6.3}. This concludes the proof of Theorem \ref{mainb}. 
 \end{proof}
\appendix 
\section{Exponential Decay}
\label{appendixI}

This appendix proves Lemma \ref{discretedecay}, which was used to establish the exponential decay estimates in Section \ref{section4}.  To recall the notation, $\slashed D_\circ$ here denotes the Dirac operator on $Y_\circ=(S^1\times \R^2, dt^2 + dx^2 + dy^2)$. The lemma asserted that given a solution \be \slashed D_\circ \slashed D_\circ u_\ell = f_\ell\label{secondordereqB}\ee \noindent of the second order equation where $f_\ell$ satisfies the hypotheses (\refeq{409}--\refeq{expBnl}), then the solution $u_\ell$ satisfies (\refeq{expdecayzetalemma}). The proof relies on a discretized version of the maximum principle based on similar found in \cite[A.2.1]{SivekTaubesECH}. 
 
 \begin{proof}
 The integration by parts arguments of Proposition \ref{semifredholm} and \ref{secondorderoperator} holds equally well on the non-compact $Y_\circ$, since the boundary term at $r\to \infty$ vanishes (and the $L^2$-term is compactly supported). In fact, since $B_0=0$ and the product metric is flat, the Weitzenb\"ock formula implies that $rH^1_e$-kernel is empty. It follows, as in Lemma \ref{secondorderoperator} that $$\slashed D_\circ\slashed D_\circ: rH^1_e(Y_\circ\setminus \mathcal Z_0)\lre rH^{-1}_e(Y_\circ\setminus \mathcal Z_0)$$ is an isomorphism, hence (\refeq{secondordereqB}) admits a unique solution. Since $\slashed D_\circ$ preserves Fourier modes, $u_\ell$ automatically satisfies the same Fourier mode restriction as  $f_\ell$ in (\refeq{409}), and it suffices to prove the decay estimate.

Recall that $A_{n\ell}$ denotes the sequence of annuli (\refeq{annuli}) from Part (B) of Proposition \ref{cokproperties}. Let $\chi_n$ be a cutoff-function equal to $1$ on $A_{n_\ell}$ such that $$\text{supp}(d\chi_n)\subseteq A_{(n-1) \ell} \cup A_{(n+1) \ell} \hspace{2cm} |d\chi_{n}|\leq \frac{c|\ell|}{R_0}.$$
  
  \noindent Taking the inner product of (\refeq{secondordereqB})  with $\chi_n^2 u_\ell$, integrating by parts, and using Young's inequality yields the following for universal constants $c_1,c_2$, where we denote $B_{n\ell}=  A_{(n-1) \ell} \cup A_{n \ell}\cup A_{(n+1) \ell}$ as in the statement of the lemma:

  \begin{eqnarray} 
 \int_{B_{n\ell}}\chi_n^2 |\slashed D_\circ{u_\ell}|^2 \ dV_\circ  &=& -\int_{B_{n\ell}} \br 2\chi_n d\chi_n.  u_\ell, \slashed D_\circ u_\ell \kt \ dV_\circ +  \int_{B_{n\ell}} \br  \chi_n^2 u_\ell, -f_\ell \kt  \ dV_\circ\label{410}\\
  &\leq & 2c_1^2 \frac{|\ell|^2 }{R_0^2} \|u_\ell\|_{L^2(B_{n\ell})} + \frac{1}{2}    \|\chi_n \slashed D_\circ{u_\ell}\|^2_{L^2(B_{n\ell})} \label{411} \\ & & + \frac{1}{2c_2}\|f_{\ell}\|^2_{rH^{-1}_e(B_{n\ell})}    +\frac{c_2}{2}  \|\chi_n^2 u_\ell\|^2_{rH^1_e(B_{n\ell})}\label{412}
  \end{eqnarray}
  
 \noindent where the inner product and volume form $dV_\circ$ are defined using the product metric $g_\circ$. 
 
The $\|\chi_n\slashed D_\circ u_\ell\|^2$ term on the right may be absorbed into the same term on the left because of the factor of $1/2$. Similarly, by choosing $c_2$ small, the $\|\chi_n^2u_\ell\|_{rH^1_e}$ term may be absorbed into the others as follows. The elliptic estimate for $\slashed D_\circ: rH^1_e\to \text{Range}(\slashed D_\circ)$ applied to $\chi_n^2 u_\ell$ shows
 
 \be  \frac{c_2}{2} \|\chi_n^2 u_\ell\|^2_{rH^1_e(B_{n\ell})}\leq \frac{c_2C}{2} \|\slashed D_\circ (\chi_n^2 u_\ell) \|_{L^2(Y_0)}^2 \ \leq \frac{c_2C }{2} \left(\ \|\chi_n^2\slashed D_\circ u_\ell \|_{L^2(B_{n\ell})}^2 +\|  d\chi_n.u_\ell\|^2_{L^2(B_{n\ell})} \right). \label{413}\ee
  By choosing $c_1$ sufficiently small (and using that $|\chi_n^2|\leq |\chi_n|$), the $\|\chi_n^2\slashed D_0u_\ell\|^2$ term can again be absorbed on the left hand side of (\refeq{410}), and the $\|d\chi_n.u_\ell\|^2$ term can be absorbed into (\refeq{411}) by increasing $c^2$. 
  
Applying the elliptic estiamte (\refeq{413}) again without the factor of $c_2$ and substituting the result of (\refeq{410}--\refeq{413}) yields  
   \bea \|u_\ell\|^2_{rH^1_e(A_{n\ell})} \leq \|\chi_n^2 u_\ell\|^2_{rH^1_e(B_{n\ell})} & \leq &  C_1 \frac{|\ell|^2 }{R_0^2} \|u_\ell\|_{L^2(B_{n\ell})}  +\frac{1}{2c_1} \|f_\ell\|^2_{ rH^{-1}_e(B_{n\ell})}\eea which shows, invoking the assumption on $f_\ell$, that 
\begin{eqnarray}  \|u_\ell\|^2_{rH^1_e(A_{n\ell})}&\leq &   \frac{C_1|\ell|^2 }{R_0^2} \|u_\ell\|^2_{L^2(B_{n\ell})}  +\frac{C'_m}{|\ell|^{2+2m}}e^{-2 n/c_m } \label{eq2.4lm},\end{eqnarray}

\noindent for a universal constant $C_1$. In addition, the Fourier mode restriction on $u_\ell$ means that the first term on the right obeys $$  \frac{C_1|\ell|^2 }{R_0^2} \int_{B_{n\ell}} |u_\ell|^2 \ dV  \leq  \frac{4C_1}{R_0^2}\left( \int_{A_{(n-1)\ell}} |\nabla u_\ell|^2 \ dV+  \int_{A_{n\ell}} |\nabla u_\ell|^2 \ dV+  \int_{A_{(n+1)\ell}} |\nabla u_\ell|^2 \ dV\right). $$

Now set $\frak u_n= \|u_\ell\|^2_{rH^1_e (A_{n\ell})}$, and choose $R_0$ so that $4 C_1/R_0^2 < 1/200$. Substituting the above relation from the Fourier mode restriction into (\refeq{eq2.4lm}) yields the discrete differential inequality
 
 $$\frak u_n - \tfrac{1}{100}(\frak u_{n-1} + \frak u_{n+1}) \leq \frak s_n,$$

 \noindent where $\frak s_n= \frac{C'_m}{|\ell|^{2+2m}}e^{-2 n/c_m }$.

 To conclude, we apply a discrete version of the maximum principle: $\frak s_n$ trivially satisfies  
  $$\frak s_n - \tfrac{1}{100}(\frak s_{n-1} + \frak s_{n+1}) \leq \frak s_n$$
  
\noindent because it is positive, thus the difference $\frak r_n= \frak u_n-\frak s_n$ satisfies 
 
 \be \frak r_n - \tfrac{1}{100}(\frak r_{n-1} + \frak r_{n+1}) \leq0 \label{diffineqlm}.\ee 

\noindent Additionally, $\frak r_n\to 0$ as $n\to \infty$ by integrability, and it may be arranged (by increasing $C'_m$)  that $\frak r_0\leq 0$. An interior maximum with $\frak r_n \geq \frak r_{n-1}, \frak r_{n+1}$ would violate (\refeq{diffineqlm}), thus by the ``maximum principle'' we conclude $\frak r_n=\frak u_n- \frak s_n\leq 0$ for all $n\in \N$, thus $u_\ell$ satisfies 

\be \|u_\ell\|^2_{rH^1_e(A_{n\ell})} \leq \frac{C'_m}{|\ell|^{2+2m}}\text{Exp}\left(-\frac{2n}{c_m'}\right)  \ee
\noindent which completes the lemma. 
 \end{proof} 
\section{Boundary and Edge Regularity}
\label{appendixbdreg}

This appendix gives proofs of two facts about regularity in the boundary and edge Sobolev spaces, namely Lemma \refeq{spaces} and Lemma \refeq{coefficientregularity}. 

Recall the Fr\'echet spaces defined in Lemma \refeq{spaces}. To restate the assertion of Lemma \refeq{spaces} succinctly: 

\begin{lm}For $0<\e\leq 1$ there exist smoothing operators $$S_\e: C^\infty(\mathcal Z_0; N\mathcal Z_0)\to C^\infty(\mathcal Z_0; N\mathcal Z_0)  \hspace{2cm} S_\e^{\text{b}} : \bigcap_{m\geq 0}   X_m'' \to \bigcap_{m\geq 0}  X_m''  \hspace{2cm} S_\e^\text{b} :\mathcal  Y\to \mathcal  Y$$ 
satisfying properties (i)-(iii) of Definition \ref{tamedef} and preserving the property (P) of polyhomogeneity defined by  (\refeq{polyhomZ+12}--\refeq{polyhomZ-12}). Additionally, in Fermi coordinates around $\mathcal Z_0$, $S_\e^\text{b}$ does not introduce new Fourier modes in $\theta$. 
\end{lm}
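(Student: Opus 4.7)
The plan is to construct the smoothing operators by Fourier truncation on $\mathcal Z_0$, combined with a partition of unity and Mellin-type mollification in the spinor direction, exploiting the fact that after the change of variable $s = -\log r$ the mixed boundary/edge spaces near $\mathcal Z_0$ reduce to standard weighted Sobolev spaces on a half-cylinder. The isomorphism $\text{ob}$ from Proposition~\ref{cokproperties} further allows us to transfer the $\mathbf{Ob}$-summand of $\mathcal Y$ to smoothing on $L^2(\mathcal Z_0;\mathcal S_{\mathcal Z_0})$, which is the same construction as on $\mathcal Z_0$.

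Concretely, for $C^\infty(\mathcal Z_0;N\mathcal Z_0)\simeq C^\infty(S^1;\mathbb R^2)$, take $S_\e$ to be the Fourier truncation projector onto modes $\{|\ell|\leq \e^{-1}\}$; properties (i)--(iii) of Definition~\ref{tamedef} then follow at once from Parseval's identity, and there is no polyhomogeneity requirement.  For $X_m''$ and $Y_m''$, fix a smooth cutoff $\chi$ with $\chi=1$ on $N_{r_0/2}(\mathcal Z_0)$ and supported in $N_{r_0}(\mathcal Z_0)$, and decompose $\phi = \chi\phi + (1-\chi)\phi$.  On $(1-\chi)\phi$, which lives in a fixed compact subset of $Y - \mathcal Z_0$ on which all of the boundary/edge norms are equivalent to standard Sobolev norms, apply a standard Friedrichs mollifier $S^{\mathrm{far}}_\e$.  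On $\chi\phi$, work in Fermi coordinates $(t,r,\theta)$ and pass to $s=-\log r$, which identifies $N_{r_0}(\mathcal Z_0)\setminus\mathcal Z_0$ with the half-cylinder $S^1_t\times S^1_\theta\times[s_0,\infty)_s$.  Under this change of variable $\nabla_t$, $r\nabla_r=-\partial_s$, $\nabla_\theta$ become the standard cylindrical vector fields, the weights $r^\nu$ become $e^{-\nu s}$, and the spaces $r\mathcal H^{m,1}$ and $\mathcal H^{m,0}$ become exponentially weighted standard Sobolev spaces.  Define
\[
S^{\mathrm{near}}_\e \;:=\; F^{(t,\theta)}_{\leq \e^{-1}} \circ (\rho_\e *_s \,\cdot\,),
\]
where $F^{(t,\theta)}_{\leq \e^{-1}}$ is Fourier truncation to modes $|\ell_t|,|\ell_\theta|\leq \e^{-1}$ (acting on the compact directions) and $\rho_\e\in C^\infty_c(\mathbb R)$ is a standard mollifier of scale $\e$, with $\chi\phi$ extended by zero across $s=s_0$ before convolution.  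Finally, set
\[
S^{\mathrm{b}}_\e \phi \;:=\; \chi\, S^{\mathrm{near}}_\e(\chi\phi) \;+\; S^{\mathrm{far}}_\e\bigl((1-\chi)\phi\bigr).
\]
The tame estimates (i)--(iii) reduce to their standard analogues for Fourier truncation and Friedrichs mollification on the Euclidean scale, and the extra $(r\partial_r\mp\tfrac12)$-terms appearing in the $r\mathcal H^{m,1}$ and $\mathcal H^{m,0}$ norms are controlled by applying the same estimates to $(r\partial_r\mp\tfrac12)\phi$, using that $r\partial_r=-\partial_s$ commutes with both factors of $S^{\mathrm{near}}_\e$.

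The two structural properties of the lemma are verified as follows.  Fourier truncation in $\theta$ never produces a new mode, so the Fourier-mode condition holds automatically.  For polyhomogeneity, the basis monomials $r^{n+1/2}(\log r)^p$ transform under $s=-\log r$ to $(-s)^p e^{-(n+1/2)s}$, and a direct binomial expansion gives
\[
\bigl(\rho_\e *_s\bigr)\!\bigl((-s)^p e^{-(n+1/2)s}\bigr) \;=\; e^{-(n+1/2)s}\sum_{k=0}^{p}\binom{p}{k}(-s)^{k}\!\int\!\rho_\e(\tau)\,\tau^{p-k}e^{(n+1/2)\tau}\,d\tau,
\]
which is again a finite combination of monomials $r^{n+1/2}(\log r)^{k}$ with $k\leq p$; the Fourier truncation only alters the $(t,\theta)$-coefficients.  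Hence $S^{\mathrm{near}}_\e$ sends polyhomogeneous asymptotics with index set $\mathbb Z^++\tfrac12$ into themselves, and analogously for index $\mathbb Z^+-\tfrac12$ in $\mathcal Y$; the same argument forbids the creation of an $r^{1/2}\log r$-term from lower-order data. For the $\mathbf{Ob}$-summand of $\mathcal Y$, define the smoothing as $\text{ob}\circ S_\e\circ \text{ob}^{-1}$, which inherits all desired properties from the $\mathcal Z_0$-case and automatically lands in $\mathbf{Ob}\cap H^m_{\mathrm b}$ by Lemma~\ref{regularitycoincides}.

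The main technical obstacle is the patching of the near- and far-field constructions: the commutator $[\chi, S^{\mathrm{near}}_\e]$ and the cross-terms between $\chi$ and $(1-\chi)$ are supported in the compact annulus $\{r_0/2\leq r\leq r_0\}$, where every norm in $X_m''$ or $\mathcal H^{m,0}$ is uniformly equivalent to a standard Sobolev norm, so the resulting error terms can be absorbed using the interpolation inequalities (property (I) of Definition~\ref{tamedef}) already established in Lemma~\ref{interpolationinequalities}.  The remaining verifications are routine once the change of variable is in place.
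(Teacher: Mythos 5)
Your proposal is correct but takes a genuinely different route from the paper's. The paper builds $S_\e^{\text b}$ as a convolution operator with a Schwartz kernel living on the blown-up product $Y^2_{\text b} := (Y - N_{r_0}\mathcal Z_0)^2 \cup D(B)$, using the projective coordinate $s=[r,r']$ near the front face and appealing to the pushforward theorem for polyhomogeneity preservation; your version instead passes to the half-cylinder via $s=-\log r$, splits near/far with a cutoff, and composes a standard scale-$\e$ mollifier in $s$ with Fourier truncation in the compact directions $(t,\theta)$. Both constructions are standard in the analysis of manifolds with ends, but yours is more elementary in that it reduces polyhomogeneity preservation to an explicit binomial identity for $\rho_\e * \bigl((-s)^p e^{-(n+1/2)s}\bigr)$, rather than citing the abstract pushforward theorem, and it reduces the tame estimates to the textbook ones for 1-D mollifiers and Fourier truncation. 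The trade-off is that your construction requires the additional patching argument (commutators with $\chi$ supported in a fixed annulus where norms are equivalent, absorbed by interpolation), which the blown-up-product kernel handles automatically since $r/r'$ is uniformly bounded on the support of the kernel. A small point worth tightening: on the $\mathbf{Ob}$-summand you conjugate by $\text{ob}$, but $\text{ob}$ alone is not onto $\mathbf{Ob}(\mathcal Z_0)$ --- the isomorphism in Proposition~\ref{cokproperties} is $\text{ob}\oplus\iota$, with $\iota$ the inclusion of the finite-dimensional space $\ker(\slashed D|_{rH^1_e})$; one should declare $S_\e$ to be the identity on that finite-dimensional summand, which is harmless for the tame estimates and for propagation of (P). With that caveat the argument is sound.
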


\begin{proof} 
On $\mathcal  X_0=C^\infty(\mathcal Z_0;N\mathcal Z_0)$, $S_\e$ may be defined as a convolution operator using a Schwartz kernel that smoothly approximates the $\delta$-distribution along the diagonal in $(\mathcal Z_0)^2=\mathcal Z_0 \times \mathcal Z_0$. Let $\chi(r)$ be a cut-off function equal to 1 near $r=0$ and vanishing for $r>1$. Fix a collection $U_j \times \C$ for $j=1,..,n$ of trivializations of $N\mathcal Z_0$ on contractible open sets, and for each $j$, choose nested cut-off functions $\xi_j, \beta_j$ such that $\text{supp}(\beta_j)\Subset \{\xi_j=1\}$. Then define \be S_\e(\eta)(t):= \frac{1}{\e}\sum_{j=1}^n\xi_j(t)\int_{\mathcal Z_0} \chi\left(\tfrac{|t-t'|}{\e}\right) \beta_j(t')\eta(t') dt'. \label{smoothingcompact}\ee    

\noindent where the constant $\tfrac1\e$ serves to normalize $\chi$ in $L^2$. Properties (i)-(iii) now follow easily. 

The construction of $S_\e^\text{b}$ is analogous, but now we de-singularize the $\delta$-distribution on the diagonal in the blown-up product defined as follows. Let $B=\mathcal Z_0 \times \mathcal Z_0 \subset Y\times Y$, and let $\mathbb S(B)$ denote the sphere bundle of radius $r_0$ in the normal bundle. Define the blown-up product by $$Y^2_\text{b}:= (Y\setminus (N_{r_0}\mathcal Z_0))^2 \cup \mathbb S(B).$$

\noindent This blow-up is a compact 6-manifold with corners, having three boundary strata of codimension 1 consisting of the interiors of $\del( N_{r_0}\mathcal Z_0\times Y), \del( Y\times N_{r_0}\mathcal Z_0 ), \del(\mathbb S(B))$ which intersect along codimension 2 corners. This space can be given local coordinates $(s,\rho, \theta, \theta', t, t')$ in a neighborhood of the diagonal, where $s=[r, r']$ is a projective coordinate along the blow-up boundary, and $\rho=r'$. 

Away from these strata,  $S_\e^\text{b}$ can be defined analogously to (\refeq{smoothingcompact}); near the boundary strata it is defined as a product $$S_\e^\text{b}:= S_\e^\theta \circ S'_\e$$ where $S_\e^\theta$ is defined by truncation of the $\theta$-Fourier modes in a local trivialization, and $S_\e'$ is given in Fermi coordinates $(s, \rho, t,t')$ by 

\be S'_\e(\psi)(r,t,\theta):= \frac{1}{\e^2}\chi\int_{Y\setminus \mathcal Z_0} \chi\left(\tfrac{|s-1|}{\e}\right)\chi\left(\tfrac{|t-t'|}{\e}\right)\frac{1}{r'}  (\beta \psi)   dt' dr'\label{bsmoothing}\ee
where the factor of $1/r'$ appears because $|r-r'|\sim r's$ and the $\delta$-distribution is homogeneous of order $-1$. 

The properties (i)-(iii) for the spaces $H^m_\text{b}$ now follow analogously to the compact case. That $S_\e^\text{b}$ introduces no new Fourier modes in $\theta$ is manifest from the definition, and the fact that polyhomogeneity is preserved is a consequence of the pushforward theorem or of direct inspection of the integral (\refeq{bsmoothing}) (see \cite[Sec. 3.1]{grieser2001basics}). Since the ratio $r/r'$ is uniformly bounded where $\chi \neq 0$, the commutators $[\nabla^e, S_\e^\text{b}]$ and $[r^\alpha, S_\e^\text{b}]$ are uniformly bounded, properties (i)-(iii) for the space $\bigcap_{m\geq 0}H^{m,1}_{\text{b},e}$ follows from the equivalent description of the norm (\refeq{equivalentbenorm}). The same applies for the terms $(r\del_r \pm \tfrac12)\psi$ and therefore for the spaces $r\mathcal H^{m,1}$ and $\mathcal H^{m,0}$. 

\end{proof}

What remains is to prove Lemma \refeq{coefficientregularity}, which requires several steps.

\begin{lm}
If $\ph \in r^\alpha H^3_\text{b}$ for $\alpha >1$ then the $\ph$ satisfies the pointwise bound 

\be |\ph(x)| \leq C \| \ph\|_{rH^3_\text{b}}.\label{coefficientest1}\ee 
\label{Rafe0.1}
\end{lm}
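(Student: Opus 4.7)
The plan is to pass to the ``projective'' coordinate $s = \log r$ near $\mathcal Z_0$, which transforms the weighted boundary Sobolev space into a standard Sobolev space on a three-dimensional Riemannian manifold of bounded geometry, where the classical Sobolev embedding $H^3 \hookrightarrow C^0$ applies directly. The role of the hypothesis $\alpha > 1$ in the argument is only to guarantee the continuous inclusion $r^\alpha H^3_\text{b} \hookrightarrow rH^3_\text{b}$ on the tubular neighborhood $N_{r_0}(\mathcal Z_0)$: since $r$ may be assumed bounded by $1$ after shrinking $r_0$, one has $r^{-2} \leq r^{-2\alpha}$ pointwise, so $\|\ph\|_{rH^3_\text{b}} \leq \|\ph\|_{r^\alpha H^3_\text{b}}$. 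This reduces the statement to the embedding $rH^3_\text{b} \hookrightarrow L^\infty$ with the asserted bound.

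First I would split $\ph = (1-\chi_0)\ph + \chi_0\ph$ using a cutoff $\chi_0$ supported in $N_{r_0}(\mathcal Z_0)$ and equal to $1$ on $N_{r_0/2}(\mathcal Z_0)$. On $\text{supp}(1-\chi_0)$, the weight $r$ is bounded away from zero and the boundary vector fields $\nabla^\text{b}$ agree with the ordinary covariant derivative up to bounded smooth factors, so the $rH^3_\text{b}$-norm is uniformly equivalent to the standard $H^3$-norm on a compact three-manifold with boundary. The pointwise bound for $(1-\chi_0)\ph$ is then immediate from the classical Sobolev embedding $H^3 \hookrightarrow C^0$.

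For the near-boundary piece $\chi_0 \ph$ I would work in Fermi coordinates $(t,r,\theta)$ around each component of $\mathcal Z_0$ and apply the diffeomorphism $s = \log r$ to identify $\{0 < r \leq r_0\}$ with the half-cylinder $\mathcal C = S^1_t \times S^1_\theta \times (-\infty, \log r_0]_s$. Under this change of variables the volume form $r\,dr\,d\theta\,dt$ pulls back to $e^{2s}\,ds\,d\theta\,dt$, the boundary vector field $r\partial_r$ becomes the ordinary $\partial_s$, and the weight $r^{-2}$ becomes $e^{-2s}$, the exponentials canceling in the $L^2$-integrand. A direct computation then shows that on $\chi_0\ph$ the $rH^3_\text{b}$-norm is uniformly equivalent to the ordinary unweighted $H^3$-norm on $\mathcal C$:
$$
\|\chi_0\ph\|^2_{rH^3_\text{b}} \sim \sum_{|I| \leq 3}\int_{\mathcal C} |\partial^I \ph|^2 \, ds\,d\theta\,dt,
$$
the equivalence holding up to bounded commutator terms coming from the $O(r)$ deviation of $g_0$ from the product metric in Fermi coordinates.

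The concluding step is the standard Sobolev embedding $H^3(\mathcal C) \hookrightarrow L^\infty(\mathcal C)$. Because $\mathcal C$ is a three-dimensional Riemannian manifold with bounded geometry (it has a product metric on a cylindrical end), the local embedding $H^2(B_1(p)) \hookrightarrow L^\infty(B_{1/2}(p))$ holds with Sobolev constant uniform in $p \in \mathcal C$; taking the supremum over $p$ yields $\|\ph\|_{L^\infty(\mathcal C)} \leq C\|\ph\|_{H^3(\mathcal C)}$. Combining the two pieces then gives the claimed pointwise bound. The main technical point, and the chief obstacle, is verifying the uniform equivalence of the $rH^3_\text{b}$-norm with the standard $H^3$-norm on $\mathcal C$ carefully enough that the lower-order corrections from the change of variables and from the $O(r)$ terms in $g_0$ remain bounded uniformly as $s \to -\infty$; this is essentially a bookkeeping exercise, after which the Sobolev embedding closes the estimate.
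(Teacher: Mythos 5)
Your proposal is correct, but it takes a genuinely different route from the paper's. The paper first proves a one-dimensional statement: by a dyadic decomposition of $(0,1]$ and the fundamental theorem of calculus it shows that a function of $r$ alone obeys $|\ph(x)|\leq C\|\ph\|_{rH^1_\text{b}(rdr)}$, and it then handles the transverse variables by applying this bound along rays of constant $(t,\theta)$ together with the Sobolev restriction theorem (this is what consumes the extra derivatives and produces the $H^3_\text{b}$ hypothesis). You instead pass globally to the cylinder via $s=\log r$, check that the $rH^3_\text{b}$-norm of the near-boundary piece is uniformly equivalent to the standard $H^3$-norm for the product metric $ds^2+d\theta^2+dt^2$, and conclude by the uniform local Sobolev embedding on a bounded-geometry three-manifold. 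Both arguments rest on the same underlying fact, namely that the weight $r^{-2}\,dV$ and the b-derivatives are translation invariant in $\log r$, but your packaging is the standard b-calculus one: it avoids chaining estimates over dyadic scales altogether and makes transparent that $\alpha>1$ enters only to guarantee finiteness of the right-hand side (the embedding $rH^3_\text{b}\hookrightarrow L^\infty$ itself needs no decay hypothesis), whereas in the paper's chaining the summation over scales is most cleanly closed either by telescoping squared quantities or by exploiting the geometric decay that $\alpha>1$ supplies. One small item to fold into your acknowledged bookkeeping: besides the $O(r)$ deviation of $g_0$ from the product metric, the norm equivalence on the cylinder also uses that the connection on $S_0=S\otimes_\R\ell_0$ (including the $\tfrac12\,d\theta$ term appearing in the non-singular trivialization) contributes only coefficients that are bounded, along with their $\partial_s,\partial_\theta,\partial_t$ derivatives; this is true and of exactly the same nature as the corrections you already mention, so the argument closes as you describe.
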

\begin{proof}
We first prove the lemma in the 1-dimensional case: consider $\R^+=(0,\infty)$ with the measure $rdr$ and suppose that $\ph \in rH^1_b(rdr)$ has $\text{supp}(\ph)\subseteq (0,1]$. Then we claim that there is a constant $C$ so that  \be |\ph(x)| \leq C \| \ph\|_{rH^1_\text{b}(rdr)}= C\left(\int_{\R^+} \frac{|\ph|^2}{r^2} + |\nabla \ph|^2  \  rdr \right)^{1/2}.\label{coefficientest1}\ee 

\noindent This follows from a dyadic decomposition. Since $r$ is uniformly bounded on $[1/2,2]$ and $H^{1}[1/2,2] \hookrightarrow C^0[1/2,2]$ by the standard Sobolev embedding, we have $|\ph(1)|^2\leq c\int |\ph|^2 + |\nabla \ph|^2 dr\leq c\|\ph\|_{rH^1_\text{b}}^2$. Then, by the Fundamental Theorem of Calculus, 
\bea
|\ph(1/2)| &\leq& |\ph(1)| + \int_{1/2}^1 |\ph'(\rho)|d\rho \leq  |\ph(1)| + \left(\int_{1/2}^1 |\ph'(\rho)|^2 \rho d\rho\right)^{1/2}\left(\int_{1/2}^2 \frac{1}{\rho} d\rho \right)^{1/2} \\
&\leq & |\ph(1) | +(\log 2)^{1/2} \|\ph\|_{rH^1_\text{b}([1/2,1], rdr)}
\eea
Similarly,  $|\ph(1/4)| \leq |\ph(1/2) | +(\log 2)^{1/2} \|\ph\|_{rH^1_\text{b}([1/4,1/2], rdr)} \leq |\ph(1) | +(\log 2)^{1/2} \|\ph\|_{rH^1_\text{b}([1/4,1], rdr)}$ where the second inequality follows from substituting the above. In general, using the estimate on $|\ph(1)|$ we conclude that $$|\ph{(2^{-k})}|\leq C\|\ph\|_{rH^1_\text{b}(rdr)}.$$

\noindent (\refeq{coefficientest1}) then follows from applying the Fundamental Theorem of calculus again for $x\in [2^{-k}, 2^{-k+1}]$.  

 In general, for $\ph \in H^m_\text{b}(Y\setminus \mathcal Z_0; S_0)$, the lemma follows from the above by applying (\refeq{coefficientest1}) to rays of constant $(t,\theta)$ and after using the Sobolev restriction theorem $rH^3_\text{b}(Y\setminus \mathcal Z_0)\to rH^1_b( rdr)$, where the one-dimensional space is a normal ray emanating from $\mathcal Z_0$. 

\end{proof}

Next, we have the following fundamental fact about ODEs. For it, we use the 1-dimensional $\text{b}$-spaces $r^\alpha H^{1}_\text{b}([0,1], rdr)$ and $r^\alpha L^2([0,1], rdr)$ defined by the norms  $$\|u\|_{r^\alpha H^{1}_\text{b}} =\left( \int_{0}^1 (|r\del_r u|^2 + |u|^2 )r^{-2\alpha} rdr\right)^{1/2} \hspace{2cm}\|u\|_{r^\alpha L^{2}_\text{b}} =\left( \int_{0}^1 |u|^2 r^{-2\alpha} rdr \right)^{1/2}.$$

\begin{lm}\label{B3} Provided $\alpha > 3/2$ then 

$$(r\del_r - \tfrac{1}{2}): r^\alpha H^{1}_\text{b}(0,1] \to r^\alpha L^2_\text{b}(0,1] $$
is an isomorphism, hence there is a constant $C$ such that

\be \|u\|_{r^\alpha H^{1}_\text{b}} \leq C\|(r\del_r - \tfrac{1}{2})u\|_{r^\alpha L^2_\text{b}}\label{1dimest}\ee
\noindent holds. 
\end{lm}

\begin{proof}Setting $r=e^s$ for $s\in (-\infty,0]$ the problem is equivalent to the analogous statement for

$$\del_s - \tfrac{1}{2}: e^{(1-\alpha)s}L^{1,2}((-\infty,0], ds)\lre e^{(1-\alpha)s}L^{2}((-\infty,0], ds)$$

\noindent which is conjugate to 

$$\tfrac{1}{e^{(\alpha-1)s}}(\del_s - \tfrac{1}{2}) e^{(\alpha-1)s}=(\del_s + \alpha-\tfrac{3}{2}): L^{1,2}((-\infty,0], ds)\to  L^{2}((-\infty,0], ds).$$

\noindent where $L^{1,2}((-\infty,0],ds)$ is the standard Sobolev space on the half-line. The claim then follows directly from integrating by parts since the boundary term $(\alpha-\tfrac32)|u(0)|^2>0$ is strictly positive.  
\end{proof}

We now conclude the proof of Lemma \refeq{coefficientregularity}. 

\begin{proof}[Proof of Lemma \refeq{coefficientregularity}] If $\ph$ is compactly supported away from $\mathcal Z_0$ in $Y\setminus N_{r_0/2}(\mathcal Z_0)$, the lemma is immediate from the standard Sobolev Embedding Theorem since $r$ is bounded below in this region. We may therefore assume that $\ph$ is supported in a tubular neighborhood of $\mathcal Z_0$. Since $\ph \in r\mathcal H^{m,1} \cap \bold P_\mathcal X$ by assumption, we may write $$\ph= A(t,\theta)r^{1/2} + B(t,\theta,r)$$
in local coordinates, after which it suffices to show the bound for each term individually. 

Using Lemma \ref{B3} by applying (\refeq{1dimest}) to derivatives and integrating over the $t,\theta$ variables leads to 

$$\|r\del_r u\|_{r^\alpha H^m_\text{b}}   + \|u \|_{r^\alpha H^m_\text{b}} \leq C\|(r\del_r -\tfrac{1}{2}) u \|_{r^\alpha H^m_\text{b}}$$
 
\noindent for $\alpha>3/2$ and in particular for $\alpha=1+\nu$. Applying this to $B(t,\theta,r)$ and discarding the derivative term on the left shows that

\be \|B(r,t, \theta)\|_{r^{1+\nu} H^\text{m}_\text{b}}\leq C \|(r\del_r -\tfrac{1}{2})B\|_{r^{1+\nu}H^m_\text{b}} = C \|(r\del_r -\tfrac{1}{2})\ph \|_{r^{1+\nu}H^m_\text{b}} \leq C \|\ph\|_{r\mathcal H^{m+1,1}} \label{Bbound}\ee

\noindent since $(r\del_r -\tfrac12)$ annihilates $A(t,\theta)r^{1/2}$. Then, applying Lemma \ref{Rafe0.1} to $B(t,\theta,r)r^{-1/2}$ and substituting (\refeq{Bbound}) shows that 

$$|B(r,t,\theta)r^{-1/2}| \leq  \|B(r,t, \theta)\|_{r^{3/2} H^\text{m}_\text{b}}\leq \|B(r,t, \theta)\|_{r^{1+\nu} H^\text{m}_\text{b}}\leq C \|\ph\|_{r\mathcal H^{m+1,1}}$$

\noindent and the result for $B(t,\theta,r)$ follows after multiplying by $r^{1/2}$. 

For the first term, the triangle inequality and  (\refeq{Bbound}) shows that $$\|A(t,\theta)r^{1/2}\|_{rH^m_\text{b}} = \|A(t,\theta)r^{1/2} + B(r,t,\theta)\|_{rH^m_\text{b}} +  \|B(r,t, \theta)\|_{r H^\text{m}_\text{b}}\leq C \|\ph\|_{r\mathcal H^{m+2,1}}.$$ 

\noindent Finally, since $\|A(t,\theta)r^{1/2} \|_{rH^m_\text{b}}\sim \|A(t,\theta)\|_{H^{m}(T^2)}$, the bound for the first term follows from the Sobolev embedding on $T^2$ after increasing $m+2$ to $m+4$.  
\end{proof}

\label{appendixII}

\section{Non-Linear Deformation Terms}
\label{appendixC}

This appendix proves Proposition \ref{fullynonlinear}. Retaining the notation of that proposition and the preceding discussion, Proposition \ref{fullynonlinear} asserted in Item (C) that the non-linear terms of $\slashed{\mathbb D}_p$ may be written. 

\bea Q_p(\eta,\ph)&=&(\mathcal B_\ph + \mathfrak B_{\ph, p})(\eta) \ \ + \ \ M^1_p(\eta', \eta')\nabla( \Phi_0+ \ph)  \ \ + \ \  M^2_p (\eta',\eta'') (\Phi_0+\ph) \ \ + \ \ F_p(\eta, \Phi_0+ \ph).  \eea

\begin{proof}[Proof of Proposition \ref{fullynonlinear}]

The constant (Item A) and linear (Item B) terms are immediate from, respectively, the definition (\refeq{errortermdef}) and the proof of Corollary \ref{formofDD} using the pullback metric (\refeq{linearizedpullbackp}) in place of $\dot g_\eta$.

We prove the above formula in Item (C) for the parameter $p=p_0$, as the general case differs only in notation. The quadratic terms in Item (C) are calculated by writing the full expression (\refeq{NLBG}) and subtracting off the linear terms (the constant term vanishes for $p=p_0$). To this end, we apply (\refeq{NLBG}) to $s\eta$, and collect the terms quadratic in $s$. By Remark \ref{nonlinearpullbackmetric}, the pullback metric can be written $$g_{s\eta}=
 g_0 + s\dot g_\eta + s^2\frak q(\eta) \hspace{1cm}\text{where}\hspace{1cm} |\frak q(\eta)|\leq C  \big( \ | \eta' \chi |+ |\eta d\chi |+ |\eta \chi| \ \big)^2.$$

 \noindent i.e. $\frak q(\eta)$ is comprised of terms that are quadratic and higher order in $\eta$; in particular, it vanishes to second order at $s=0$ (we omit the dependence of $\frak q$ on $s$ from the notation). Working in an orthonormal frame with respect to $g_0$, the Taylor expansion shows that $\frak a$ in (\refeq{NLBG}) and its inverse are given by
 \bea
 \frak a&=&\text{Id}-\tfrac s2 \dot g_\eta + s^2\frak p_1\\
  \frak a^{-1}&=&\text{Id}+\tfrac s2 \dot g_\eta + s^2\frak p_2,
 \eea

 \noindent where $\frak p_i$ have entries consisting of sums and products of smooth functions depending on the metric and on quadratic and higher combinations of $\eta'\chi, \eta d\chi$ and $\eta \chi$.

We now substitute these expressions into (\refeq{NLBG}). Working in an orthonormal frame of $g_0$, with indices ranging over $1,2,3$ the first term becomes 
\be 
\sum_i e^i . \nabla^{g_0}_{\frak a(e_i)} = \sum_{i} e^i \nabla^{g_0} _i  -\frac{s}{2}\sum_{ij} \dot g_{\eta} (e_i, e_j)e^i. \nabla_j^{g_0} +s^2 {\sum_{ij} \frak p_1(e_i, e_j) e^i .\nabla_j^{g_0}}.\label{aleph1}
\ee

\noindent Therefore, in the trivialization of Lemma 5.1, the contribution of this first term is 
\bea \slashed{\mathbb D}(s\eta, \Phi_0 + s\ph)&=&\left(\sum_{i} e^i \nabla^{g_0} _i  -\frac{s}{2}\sum_{ij} \dot g_{\eta} (e_i, e_j)e^i. \nabla_j^{g_0} +s^2 {\sum_{ij} \frak p_1(e_i, e_j) e^i .\nabla_j^{g_0}}\right) (\Phi_0 + s\ph) \ + \ \ldots \\
&=& s\left(\slashed D_0\ph  -\frac{1}{2}\sum_{ij}\dot g_\eta(e_i,e_j)e^i. \nabla_j^{g_0}\Phi_0\right) \ + \   \\ 
& &  s^2 \left( -\frac{1}{2}\sum_{ij}\dot g_\eta(e_i,e_j)e^i. \nabla_j^{g_0}\ph + \sum_{ij}\frak p_1(e_i,e_j)e^i.\nabla_j^{g_0}(\Phi_0+s\ph)\right)\ + \ \ldots  \\
&=& s \cdot \text{d}\slashed{\mathbb D}(\eta,\ph) + s^2 \Big(\mathcal B_{\ph}(\eta) \ + \  M^1(\eta,\eta')\nabla^{g_0}(\Phi_0 + s\ph) \ + \ F(\eta, \Phi_0 + \ph) \Big)  \ + \ \ldots
\eea

\noindent since $\slashed D_0\Phi_0=0$, where $\ldots$ constitutes the contribution from the remaining terms besides (\refeq{aleph1}). The quadratic terms of $\frak p_1$, by what was said above, contains exactly the type of terms asserted to be part of $M^1(\eta, \eta')$, with $F(\eta,\Phi_0+\ph)$ being the terms of higher than quadratic order.

The remaining terms of (\refeq{NLBG}) proceed in a similar fashion. Explicitly, some (quite a lot actually) of computation shows that they are 
\bea
\frac{1}{4}\sum_{ij} e^i. e^j. ( \frak a^{-1}( \nabla^{g_0}_{\frak a(e_i)}\frak a)e^j)&=&s\cdot  \frac{1}{4}d\text{Tr}_{g_0}(\dot g_\eta) \ + \ s^2 \Big (-\frac{1}{2}d\Tr_{g_0}(\frak p_1) + \frac{1}{8} \Tr_{g_0}((\dot g_\eta + s \frak p_2 ) \nabla^{g_0} (  \dot g_\eta - 2s \frak p_1).   )  \\
 & &  +   \frac{1}{4}\sum_{ijkm\ell}  \Big[(\dot g_\eta -2 s \frak p_1)_{j\ell} (\text{Id} + \tfrac{1}{2}s \dot g_\eta + s^2 \frak p_2)_{jm}[\nabla^{g_0}_\ell (\dot g_\eta -2 s \frak p_1)]_{mk}\Big] e^i. e^j. e^k. \Big ) \eea
 \noindent and
 \bea 
\frac{1}{4}\sum_{ij}e^i e^j.   \left(\frak a^{-1}(\nabla^{g_\eta}-\nabla^{g_0})_{\frak a(e_i)} \frak a(e^j) \right). &=& s\cdot \Big (\frac{1}{2} \text{div}_{g_0}(\dot g_\eta). + \frac{1}{4} d \Tr_{g_0}(s\dot g_\eta)\Big)  \\ 
& + & s^2 \Big ( \frac{1}{4}\sum_{ijk \ell}  e^i. e^j.\Big [ \left(\text{Id} + \tfrac{1}{2}\dot g_\eta + s \frak p_2\right)      \Big( \Gamma_i (-\tfrac{1}{2}\dot g_\eta + s \frak p_1)_{j\ell}e^\ell.   \\ & &  + \   (-\tfrac{1}{2}\dot g_\eta + s \frak p_1)_{ik} \Gamma_k(\text{Id}-\tfrac{1}{2}\dot g_\eta + s \frak p_1) _{j \ell}e^\ell. \Big)  \Big ] \\
 & & + \ \frac{1}{4} \sum_{ij} e^i. e^j. \left[ \left(\tfrac{1}{2} \dot g_\eta + s \frak p_2\right) \left(\tfrac{1}{2} \text{div}_{g_0}(\dot g_\eta). + \tfrac{1}{4} d \Tr_{g_0}(\dot g_\eta)\right)\right]. \Big )
\eea

\noindent where \bea s\Gamma_i e^j&=&(\nabla^{g_\eta}_i -\nabla^{g_0}_i)e^j\\ &=& \frac{s}{2}\sum_{\ell k } (g_\eta)^{-1}_{\ell k} \left( (\nabla_i^{g_0} (\dot g_\eta  + s \frak p_1))_{j\ell}+ (\nabla_j^{g_0} (\dot g_\eta + s \frak p_1))_{i\ell}-(\nabla_\ell^{g_0} (\dot g_\eta + s \frak p_1))_{ij}\right)e^k.
\eea

\noindent is the difference of the Levi-Civita connections. The terms linear in $s$ combine to yield the remaining terms of $\text{d}\slashed{\mathbb D}$ in Item (B), while the $s^2$ terms are combined into $M^2(\eta', \eta'')$ or absorbed into $F(\eta,\ \Phi_0+\ph)$. Note that, by the product rule, each term of $M^2(\eta', \eta'')$ contains are most a single instance of the second derivative $\eta''$.   
\end{proof}


{\small

\medskip

\bibliographystyle{amsplain}
{\small 
\bibliography{BibliographyPartIISecond}}
\end{document}



\end{document}